\def\@tocline#1#2#3#4#5#6#7{\relax
  \ifnum #1>\c@tocdepth 
  \else
    \par \addpenalty\@secpenalty\addvspace{#2}%
    \begingroup \hyphenpenalty\@M
    \@ifempty{#4}{%
      \@tempdima\csname r@tocindent\number#1\endcsname\relax
    }{%
      \@tempdima#4\relax
    }%
    \parindent\z@ \leftskip#3\relax \advance\leftskip\@tempdima\relax
    \rightskip\@pnumwidth plus4em \parfillskip-\@pnumwidth
    #5\leavevmode\hskip-\@tempdima
      \ifcase #1
       \or\or \hskip 1em \or \hskip 2em \else \hskip 3em \fi%
      #6\nobreak\relax
    \hfill\hbox to\@pnumwidth{\@tocpagenum{#7}}\par
    \nobreak
    \endgroup
  \fi}
\newcommand{\Z}{\mathbb{Z}}
\newcommand{\perf}{\mathrm{perf}}
\DeclareRobustCommand{\graded}{%
    \@ifnextchar\bgroup{\graded@with}{\ensuremath{\mathrm{gr}}}%
}
\newcommand{\graded@with}[1]{\ensuremath{#1\mathrm{-gr}}}
\newcommand{\grmapspt}{\operatorname{map}^{\graded}}
\DeclareMathOperator*{\grlim}{grlim}
\DeclareMathOperator*{\grcolim}{grcolim}
\newcommand{\grotimes}{\mathbin{\otimes}^{\graded}}
\newcommand{\Lgrotimes}{\mathbin{\otimes}^{L\graded}}
\newcommand{\cLgrotimes}{\mathbin{\widehat{\otimes}}^{L\graded}}
\DeclareRobustCommand{\comp}[1]{%
    \@ifnextchar\bgroup{\comp@with{#1}}{\comp@without{#1}}%
}
\newcommand{\comp@with}[2]{\Lambda_{#1}(#2)}
\newcommand{\comp@without}[1]{\ensuremath{#1\mathrm{-comp}}}
\newcommand{\dcomp}[2]{d\Lambda_{#1}(#2)}
\newcommand{\grcomp}[2]{\Lambda^{\graded}_{#1}(#2)}
\newcommand{\dgrcomp}[2]{d\Lambda^{\graded}_{#1}(#2)}
\DeclareMathOperator{\grwedge}{{\wedge \graded}}
\DeclareMathOperator{\coMod}{coMod}
\DeclareMathOperator{\Supp}{Supp}
\DeclareMathOperator{\Coker}{Coker}
\DeclareMathOperator{\Ker}{Ker}
\DeclareMathOperator{\Ind}{Ind}
\DeclareMathOperator{\Pro}{Pro}
\newcommand{\proj}{\mathrm{proj}}
\theoremstyle{plain}
\newtheorem{theorem}{Theorem}[section]
\newtheorem{theoremA}{Theorem}
\crefname{theoremA}{Theorem}{Theorems}
\Crefname{theoremA}{Theorem}{Theorems}
\newaliascnt{lemma}{theorem}
\newtheorem{lemma}[lemma]{Lemma}
\crefname{lemma}{Lemma}{Lemmas}
\Crefname{lemma}{Lemma}{Lemmas}
\newaliascnt{proposition}{theorem}
\newtheorem{proposition}[proposition]{Proposition}
\crefname{proposition}{Proposition}{Propositions}
\Crefname{proposition}{Proposition}{Propositions}
\newaliascnt{corollary}{theorem}
\newtheorem{corollary}[corollary]{Corollary}
\crefname{corollary}{Corollary}{Corollaries}
\Crefname{corollary}{Corollary}{Corollaries}
\newaliascnt{claim}{theorem}
\newtheorem{claim}[claim]{Claim}
\crefname{claim}{Claim}{Claims}
\Crefname{claim}{Claim}{Claims}
 \newtheorem*{claim*}{Claim}
\theoremstyle{definition}
\newaliascnt{definition}{theorem}
\newtheorem{definition}[definition]{Definition}
\crefname{definition}{Definition}{Definitions}
\Crefname{definition}{Definition}{Definitions}
\newaliascnt{notation}{theorem}
\newtheorem{notation}[notation]{Notation}
\crefname{notation}{Notation}{Notations}
\Crefname{notation}{Notation}{Notations}
\newaliascnt{example}{theorem}
\crefname{example}{Example}{Examples}
\Crefname{example}{Example}{Examples}
\theoremstyle{remark}
\newaliascnt{remark}{theorem}
\newtheorem{remark}[remark]{Remark}
\crefname{remark}{Remark}{Remarks}
\Crefname{remark}{Remark}{Remarks}
\newaliascnt{setting}{theorem}
\crefname{setting}{Setting}{Settings}
\Crefname{setting}{Setting}{Settings}
\newaliascnt{construction}{theorem}
\newtheorem{construction}[construction]{Construction}
\crefname{construction}{Construction}{Constructions}
\Crefname{construction}{Construction}{Constructions}
\newenvironment{claimproof}[0]
  {%
   \paragraph{\it Proof.}%
  }
  {%
    \hfill$\blacksquare$%
  }
\numberwithin{equation}{section}
\title{Derived graded modules}
\author{Ryo Ishizuka}
\address{Institute of Science Tokyo, Tokyo 152-8551, Japan}
\email{ishizuka.r.ac@m.titech.ac.jp}
\author{Shou Yoshikawa}
\address{Institute of Science Tokyo, Tokyo 152-8551, Japan}
\email{yoshikawa.s.9fe9@m.isct.ac.jp}
\thanks{2020 {\em Mathematics Subject Classification\/}: 13A02, 13D09, 16T15}
\keywords{Graded modules, Graded rings, Derived completion, Comodules}
\begin{document}

\begin{abstract}
We introduce the notion of the \(\infty\)-category of (complete) derived $G$-graded modules over a $G$-graded ring \(R\) for a torsion-free abelian group $G$, and we study its foundational properties.
Moreover, we prove a categorical equivalence between (complete) derived \(G\)-graded modules over \(R\) and derived (formal) comodules over a certain comonad constructed from the group ring \(R[G]\) of \(G\) over \(R\).
\end{abstract}

\maketitle 

\tableofcontents

\section{Introduction}

Graded modules over a graded ring appear in various mathematical contexts, such as algebraic geometry, commutative algebra, algebraic topology, and representation theory. For instance, one of the most fundamental correspondences is that between graded modules over a graded ring and quasi-coherent sheaves on the projective spectrum of the ring.

To provide a flexible framework for modern applications, it is natural to generalize these notions to an $\infty$-categorical setting.
In this direction, previous work has developed the theory of derived $\setZ$-graded modules over rings with trivial grading, especially in connection with filtrations and Rees constructions (see, for example, \cites{lurie2015Rotation,antieau2025Filtrations,moulinos2021Geometry,raksit2026Hochschild}\footnote{More generally, these works consider the $\infty$-category of $\setZ$-graded objects in a given $\infty$-category.}).
However, many geometric contexts---such as projective varieties in mixed characteristic---require a theory that accommodates both nontrivial gradings on the base ring and more general grading groups, such as $\setZ^2$ and $\setZ[1/p]$.





For the purposes of this introduction, let $R$ be a $G$-graded ring, where $G$ is a torsion-free abelian group, and let $I$ be a finitely generated homogeneous ideal of $R$.
In this paper, we introduce the $\infty$-category $\mcalD_{\graded{G}}(R)$ of \emph{derived $G$-graded modules} and its full subcategory $\mcalD_{\graded{G}}^{\comp{I}}(R)$ of \emph{derived gradedwise $I$-complete}\footnote{In our previous work \cite{ishizuka2025Graded}, we developed a graded variant of perfectoid rings, called \emph{graded perfectoid rings}. 
For a $G$-graded ring $R$ to be graded perfectoid, one needs $1/p \in G$ except in trivial cases, so we must consider general $G$ rather than only $\setZ$. Moreover, in order to define graded perfectoid rings, we need the notion of gradedwise completeness, which is a counterpart of $p$-adic completeness for perfectoid rings. 
The content of this paper serves as a foundational step toward the study of ``absolute graded perfectoidization'' in our forthcoming work \cite{ishizuka2026Absolute}.} objects, and we establish foundational properties of these categories that are familiar from the non-graded or non-derived setting:

\begin{enumerate}
    \item The category $\mcalD_{\graded{G}}(R)$ is a compactly generated, stable, closed symmetric monoidal $\infty$-category (\Cref{PropertiesDerivedGradedModules}).
    \item It is equivalent to the derived $\infty$-category $\mcalD(\Mod_{\graded{G}}(R))$ of $G$-graded $R$-modules (\Cref{DerivedGradedCatEquiv}).
    \item The mapping space of \(\mcalD_{\graded{G}}(R)\) can be realized as the fiber product of mapping spaces of \(\mcalD(R)\) and \(\mcalD_{\graded{G}}(\setZ)\) over those of \(\mcalD(\setZ)\) (\Cref{FiberMappingSpaceDgr}).
    \item The morphism of mapping spaces $\Map_{\mcalD_{\graded{G}}(R)}(M, N) \to \Map_{\mcalD(R)}(M, N)$ admits a retraction satisfying suitable compatibilities with tensor products and composition (\Cref{ExistenceRetracts}).
    \item The inclusion functor \(\mcalD_{\graded{G}}^{\comp{I}}(R) \hookrightarrow \mcalD_{\graded{G}}(R)\) admits a left adjoint, which is called the derived gradedwise completion functor \(\dgrcomp{I}{-}\) (\Cref{DerivedGradedwiseComp}).
    \item The functor \(\dgrcomp{I}{-}\) can be written as the limit of derived quotients (\Cref{ind-generator}).
    \item A Nakayama-type result holds for derived gradedwise completeness (\Cref{graded-derived-nakayama}).
\end{enumerate}

Our goal is not only to establish these foundations but also to clarify the relationship between derived graded modules and derived comodules.
Classically, a $G$-grading on an abelian group corresponds to a $\setZ[G]$-coaction; for example, $\setZ$-graded abelian groups correspond to abelian groups equipped with a $\setG_m$-action.
We obtain a generalization of this correspondence in our setting:



\begin{theoremA}[{\Cref{ComonadicityGradedModules}}]\label{intro}
    Let $\rho_R \colon R \to R[G]$ be the $\setZ[G]$-coaction induced by the $G$-grading on $R$ (see \Cref{def-coaction}).
    Then there exists an equivalence of \(\infty\)-categories
    \begin{align*}
        \mcalD_{\graded{G}}^{\comp{I}}(R) & \xrightarrow{\simeq} \coMod_{G}^{\comp{I}}(R); \\
        M & \mapsto (\dcomp{I}{M}, \rho \colon \dcomp{I}{M} \to \dcomp{I}{M[G]})
    \end{align*}
    where \(\dcomp{I}{-} \colon \mcalD(R) \to \mcalD^{\comp{I}}(R)\) is the derived \(I\)-completion functor, the derived \(R\)-module \(M[G]\) is defined by \(\rho_{R, *}(M \otimes^L_R R[G])\), and $\coMod_{G}^{\comp{I}}(R)$ is the \(\infty\)-category of comodules over the comonad \(\mcalT^I \colon M \mapsto \dcomp{I}{M[G]}\) on \(\mcalD^{\comp{I}}(R)\).\footnote{As we explain in \Cref{RemarkComoduleNotation}, the $\infty$-category $\coMod_G^{\comp{I}}(R)$ is \emph{not} necessarily the $\infty$-category of comodules over the coalgebra $\dcomp{I}{R[G]}$ in $\mcalD^{\comp{I}}(R)$ unless the grading on $R$ is trivial.}
\end{theoremA}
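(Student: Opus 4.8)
The plan is to deduce the equivalence from the $\infty$-categorical (comonadic) Barr--Beck--Lurie theorem, carried out in two stages: first at the uncompleted level, where the hypotheses are easy to check, and then by descent along the completion localizations, where the real work lies.

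\emph{Stage 1 (uncompleted comonadicity).} Let $\Phi\colon\mcalD_{\graded{G}}(R)\to\mcalD(R)$ be the functor forgetting the grading. Since $\mcalD_{\graded{G}}(R)$ is compactly generated stable (\Cref{PropertiesDerivedGradedModules}) and $\Phi$ preserves colimits, $\Phi$ admits a right adjoint $\Phi^{R}$, and I claim the induced comonad $\Phi\Phi^{R}$ is $M\mapsto M[G]=\rho_{R,*}(M\otimes^L_R R[G])$. The counit and comultiplication are induced by the augmentation $R[G]\to R$, $g\mapsto 1$, and the group-like comultiplication $R[G]\to R[G]\otimes_{R}R[G]$, $g\mapsto g\otimes g$; the restriction of scalars along $\rho_{R}$ is exactly what records the grading of $R$, and is the source of the twist noted in \Cref{RemarkComoduleNotation}. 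As $\Phi$ is moreover a right adjoint (it has a left adjoint given by induction), it preserves all totalizations, in particular those of $\Phi$-split cosimplicial objects; and $\Phi$ is conservative because a morphism of graded objects that is an equivalence after forgetting the grading is degreewise an equivalence, hence an equivalence (this also follows from the splitting in \Cref{ExistenceRetracts}). Barr--Beck--Lurie then gives $\mcalD_{\graded{G}}(R)\simeq\coMod_{\mcalT}(\mcalD(R))$ with $\mcalT(M)=M[G]$, refining the classical equivalence between $G$-gradings and $\setZ[G]$-coactions (\Cref{def-coaction}).

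\emph{Stage 2 (descent to complete objects).} Write $\iota\colon\mcalD_{\graded{G}}^{\comp{I}}(R)\hookrightarrow\mcalD_{\graded{G}}(R)$ for the inclusion. The key point is that the adjunction $\Phi\dashv\Phi^{R}$ is compatible with the reflective localizations $\dgrcomp{I}{-}$ (\Cref{DerivedGradedwiseComp}) and $\dcomp{I}{-}$: I would first show that ordinary derived $I$-completion absorbs gradedwise completion, i.e.\ that the natural map $\dcomp{I}{\Phi(X)}\to\dcomp{I}{\Phi(\iota\dgrcomp{I}{X})}$ is an equivalence, so that the completed forgetful functor $U\colon M\mapsto\dcomp{I}{M}$ preserves colimits and hence (by presentability) admits a right adjoint $U^{R}$ with $UU^{R}\simeq\mcalT^{I}$, where $\mcalT^{I}(M)=\dcomp{I}{M[G]}$. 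Conservativity of $U$ follows from that of $\Phi$ together with the graded derived Nakayama lemma (\Cref{graded-derived-nakayama}); preservation of $U$-split totalizations follows because $\Phi$ and $\iota$ are continuous and, for a $U$-split cosimplicial object, the relevant totalization is absolute after applying $U$, so the description of $\dcomp{I}{-}$ as a limit of derived quotients (\Cref{ind-generator}) lets one commute completion past it. A second application of Barr--Beck--Lurie then identifies $\mcalD_{\graded{G}}^{\comp{I}}(R)$ with $\coMod_{\mcalT^{I}}(\mcalD^{\comp{I}}(R))=\coMod_{G}^{\comp{I}}(R)$, and tracing through the comparison functor shows it is precisely $M\mapsto(\dcomp{I}{M},\rho)$.

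The main obstacle is Stage 2, and specifically the interaction of derived $I$-completion with the group-ring construction $-\otimes^L_R R[G]$, whose underlying module is the coproduct $\bigoplus_{G}$. Because derived $I$-completion does not commute with infinite coproducts, the comonad $\mcalT^{I}$ is genuinely the completion $\dcomp{I}{M[G]}$ of the twisted cofree object and \emph{not} the cofree comonad on the coalgebra $\dcomp{I}{R[G]}$ (this is the content of \Cref{RemarkComoduleNotation}); it is exactly to repair this failure that one must work with gradedwise $I$-completeness rather than naive $I$-completeness. Concretely, verifying that $U^{R}$ lands in gradedwise complete objects and that the localized comonad is the correct $\mcalT^{I}$ is where the mapping-space fiber-product description (\Cref{FiberMappingSpaceDgr}), the retraction (\Cref{ExistenceRetracts}), and the limit formula for completion (\Cref{ind-generator}) do the essential work.
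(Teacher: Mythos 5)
Your overall strategy (Barr--Beck--Lurie, conservativity via the graded derived Nakayama lemma, and checking preservation of split totalizations) matches the paper's, and you correctly locate the difficulty in the interaction of derived $I$-completion with the infinite coproduct underlying $M[G]$. However, there is a genuine gap at the decisive step, and it appears in both of your stages. You assert that the forgetful functor $\Phi\colon\mcalD_{\graded{G}}(R)\to\mcalD(R)$ ``is a right adjoint (it has a left adjoint given by induction)'' and hence ``preserves all totalizations'', and later that ``$\Phi$ and $\iota$ are continuous''. This is false: \Cref{NonIsomGradedLimit} gives the explicit counterexample $\grlim_{n}\setZ[X]/(X^{n})\cong\setZ[X]$ versus $\lim_{n}\setZ[X]/(X^{n})\cong\setZ[|X|]$, and the introduction explicitly warns that $\mcalF^{I}$ does not preserve limits. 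The induction functor $M\mapsto M[G]$ is the \emph{right} adjoint of the forgetful functor (\Cref{FactorizationMG}), not a left adjoint; since $\Phi$ fails to preserve infinite products it cannot admit a left adjoint at all. Consequently the preservation of $\mcalF^{I}$-split totalizations is not formal: a cosimplicial object that becomes split \emph{after} applying $\mcalF^{I}$ need not have an absolute totalization upstairs, so one must actually prove that the limit computed in $\mcalD_{\graded{G}}^{\comp{I}}(R)$ is carried to the split totalization downstairs.

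That proof is the technical heart of the paper and is absent from your proposal. The paper's mechanism (\Cref{LimitCommutative} together with \Cref{TakingGCommutativeSplit}) is to realize each graded piece $M_{j,g}$ as the pullback of $\dcomp{I}{M_j}\to\mcalT^{I}(\dcomp{I}{M_j})\leftarrow\dcomp{I}{M_j}\cdot t^{g}$, to use the trapezoid lemma (\Cref{TrapezoidLemma}) and the retractions of \Cref{ExistenceRetracts} to assemble these into a single pullback square exhibiting the graded object over $M[G]$ and $M$, and then to pass to the limit over the index category; the hypothesis that $\mcalT^{I}(\lim_{J}\dcomp{I}{M_j})\to\lim_{J}\mcalT^{I}(\dcomp{I}{M_j})$ is an equivalence --- automatic for split cosimplicial objects, since split totalizations \emph{in the target} are absolute --- is exactly what makes the limit of these pullback squares again a pullback square of the correct shape. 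You name the right ingredients in your closing paragraph (\Cref{FiberMappingSpaceDgr}, \Cref{ExistenceRetracts}, \Cref{ind-generator}), but the argument you actually give for the key step rests on continuity of functors that are provably not continuous, so it would not survive. Your two-stage factorization (uncompleted comonadicity first, then descent along the localizations) is a reasonable alternative organization, but the same gap occurs in each stage and is not repaired by the localization formalism.
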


\noindent
One of the key ingredients in the proof of \Cref{intro} is the verification of a sufficient condition (\Cref{LimitCommutative}) ensuring that the limit of a diagram is preserved by the functor\footnote{Note that the functor $\mcalF^I$ does not preserve limits in general (see \Cref{NonIsomGradedLimit}).}
\[
\mcalF^I \colon \mcalD_{\graded{G}}^{\comp{I}}(R) \xrightarrow{\mathrm{forget}} \mcalD(R) \xrightarrow{\dcomp{I}{-}} \mcalD^{\comp{I}}(R).
\]
By combining this criterion with the Barr--Beck--Lurie theorem (\Cref{BarrBeckLurie}), we obtain \Cref{intro}.


\subsection*{Structure of this paper}

This paper is organized as follows. In \Cref{SectionPreliminaries}, we fix notation and recall basic properties of derived quotients and comodules over coalgebras.
In \Cref{SectionDerivedGradedModules}, we define the \(\infty\)-category of derived graded modules over graded rings and study its basic properties.
In \Cref{SectionDerivedGradedwiseComp}, the notion of derived gradedwise completion is introduced.
\Cref{SectionRetraction} describes some general results on pullback diagrams and adjunctions in \(\infty\)-categories, and introduces a notion of ``morphism retraction'' which is useful for handling morphisms of derived graded modules. 
Finally, in \Cref{SectionBBL}, we prove \Cref{LimitCommutative} and our main theorem (\Cref{intro}).

\subsection*{Acknowledgments}
The first-named author was supported by JSPS KAKENHI Grant number 24KJ1085.
The second-named author was supported by JSPS KAKENHI Grant number JP24K16889.

\section{Preliminaries} \label{SectionPreliminaries}

In this section, we fix notation and recall some definitions and lemmas on derived quotients and coactions of coalgebras that will be used in later sections.

\subsection{Notation and terminology}

Throughout this paper, we use the following notation and terminology.

\subsubsection{Graded rings} \label{GradedNotation}
\begin{enumalphp}
    \item Let \(G\) be a torsion-free abelian group with identity element \(0\).
    \item A \emph{\(G\)-graded ring} is a pair of a commutative ring \(R\) and additive subgroups \(\{R_g\}_{g \in G}\) such that \(R\) is decomposed as the coproduct \(R = \bigoplus_{g \in G} R_g\) and satisfies \(R_g R_{g'} \subseteq R_{g + g'}\) for any \(g, g' \in G\). A \emph{homogeneous element} of \(R\) is an element in \(R_g\) for some \(g \in G\) and a \emph{homogeneous ideal} is an ideal of \(R\) which is generated by homogeneous elements.
    The subset $\{g \in G \mid R_g \neq 0\}$ of $G$ is called the \emph{support of $R$} and is denoted by $\Supp{R}$.
    Furthermore, for a submonoid $H$ of $G$, if the support of $R$ is contained in $H$, then we say that $R$ is an \emph{$H$-graded ring}.
    \item A \emph{\(G\)-graded ring morphism} between \(G\)-graded rings \(R = \bigoplus_{g \in G} R_g\) and \(S = \bigoplus_{g \in G} S_g\) is a ring homomorphism \(\varphi \colon R \to S\) such that \(\varphi(R_g) \subseteq S_g\) for any \(g \in G\).
\end{enumalphp}

\subsubsection{Higher categorical stuff}
\begin{enumalphp}
    \item We use the notion of \(\infty\)-category (more precisely, \((\infty, 1)\)-category). Our main resources are \cites{lurie2017Higher, lurie2018Spectral}.
    \item If we say \emph{discrete} rings and modules, we mean commutative rings and modules without higher homotopies. Since we hardly use topological rings with discrete topology (and when we use it, we caution it carefully), we hope that there is no confusion.
    \item For (discrete) \(G\)-graded ring \(R\), we will define \emph{the \(\infty\)-category \(\mcalD_{\graded{G}}(R)\) of graded \(R\)-modules} in \Cref{SectionDerivedGradedModules}.
    \item In \(\mcalD_{\graded{G}}(R)\) of a \(G\)-graded ring \(R\), the (co)limits and symmetric products are written by \(\grlim\), \(\grcolim\), and \(- \Lgrotimes -\) (if they exist) but in \(\mcalD(R)\) of a discrete ring \(R\), i.e., derived limits and derived tensor products, they are often written by \(\lim\) (or \(R\lim\)), \(\colim\) and \(- \otimes^L -\).
\end{enumalphp}

\subsubsection{Completion functors}

Let $R$ be a ring and let $I$ be a finitely generated ideal of $R$.
Let $S$ be a $G$-graded ring and let $J$ be a finitely generated homogeneous ideal.
\begin{enumalphp}
    \item The symbol \(\comp{I}{-}\) is only used to denote the (classical) \(I\)-adic completion of discrete rings and modules.
    \item The symbol \(\dcomp{I}{-}\) is used to denote the derived (\(I\)-)completion of objects in a derived category such as \(\mcalD(R)\).
    \item The symbol \(\grcomp{J}{-}\) is the \emph{gradedwise (\(J\)-)completion} of discrete graded rings and modules which is defined in \cite{ishizuka2025Graded}*{Construction 3.3}. In loc. cit., it is denoted by \((-)^{\grwedge}\) but in this paper, we use the symbol \(\grcomp{J}{-}\) to avoid confusion with other derived completion functors.
    \item The symbol \(\dgrcomp{J}{-}\) is the \emph{derived gradedwise \(J\)-completion} of objects in \(\mcalD_{\graded{G}}(S)\), which is defined in \Cref{DerivedGradedwiseComp}.
    \item The full subcategory of \emph{derived \(I\)-complete} objects in \(\mcalD(R)\) is denoted by \(\mcalD^{\comp{I}}(R)\) and the full subcategory of \emph{derived gradedwise \(J\)-complete} objects in \(\mcalD_{\graded{G}}(S)\) is denoted by \(\mcalD_{\graded{G}}^{\comp{J}}(S)\) (\Cref{DerivedGradedwiseComp}).
    \item We will denote the derived (gradedwise) completed tensor products by \(- \widehat{\otimes}^L -\) and \(- \cLgrotimes -\) in \(\mcalD^{\comp{I}}(R)\) and \(\mcalD_{\graded{G}}^{\comp{J}}(S)\) respectively.
\end{enumalphp}

\subsection{Derived quotients}

We fix some notation and recall several results on derived quotients that will be used in this paper.

\begin{definition}
Let $R$ be a ring and let $I$ be a finitely generated ideal of $R$. 
\begin{itemize}
    \item For an $R$-module $M$, we say that $M$ is  \emph{$I^\infty$-torsion} if for every $m \in M$, there exists $n \in \Z_{>0}$ such that $I^n m=0$.
    \item For $M \in \mcalD(R)$, we say that $M$ is \emph{cohomologically $I^\infty$-torsion} if $\pi_i(M)$ is $I^\infty$-torsion for every $i \in \Z$.
\end{itemize}
\end{definition}

\begin{lemma}\label{extension-I-infty-torsion}
Let $R$ be a ring and let $I$ be a finitely generated ideal of $R$.
Consider a short exact sequence of $R$-modules
\[
0 \to M_1 \to M_2 \to M_3 \to 0.
\]
If $M_1$ and $M_3$ are $I^\infty$-torsion, then so is $M_2$.
\end{lemma}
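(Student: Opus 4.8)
The plan is to verify the defining torsion condition element by element, using the finite generation of \(I\) in an essential way. Write \(M_1 \hookrightarrow M_2 \xrightarrow{\pi} M_3\) for the given exact sequence, identify \(M_1\) with its image \(\ker \pi\) in \(M_2\), and fix a finite generating set \(f_1, \dots, f_r\) of \(I\). Let \(m \in M_2\) be arbitrary; the task is to produce some \(n \in \Z_{>0}\) with \(I^n m = 0\).

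First I would push \(m\) into \(M_3\). Since \(\pi(m) \in M_3\) and \(M_3\) is \(I^\infty\)-torsion, there is an integer \(a > 0\) with \(I^a \pi(m) = 0\), that is, \(\pi(I^a m) = 0\), so that \(I^a m \subseteq \ker \pi = M_1\). Because \(I\) is finitely generated, so is the ideal \(I^a\); choose generators \(g_1, \dots, g_s\) of \(I^a\). Then each product \(g_j m\) lies in \(M_1\), and since \(M_1\) is \(I^\infty\)-torsion there is \(b_j > 0\) with \(I^{b_j} g_j m = 0\). Setting \(b = \max_j b_j\) yields \(I^b g_j m = 0\) for every \(j\) simultaneously.

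It then remains to assemble these into a single annihilating power. Any element of \(I^{a+b} = I^b I^a\) is a finite sum of products \(cd\) with \(c \in I^b\) and \(d \in I^a\); writing \(d = \sum_j r_j g_j\) with \(r_j \in R\) gives \(cdm = \sum_j r_j\, c(g_j m) = 0\), since \(c(g_j m) \in I^b g_j m = 0\). Hence \(I^{a+b} m = 0\), which is the desired conclusion with \(n = a+b\).

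The only subtle point—and the reason the hypothesis that \(I\) is finitely generated cannot be dropped—is the reduction to the finitely many generators \(g_1, \dots, g_s\) of \(I^a\): it is exactly the finite generation of \(I^a\) that allows the individual torsion exponents \(b_j\) to be replaced by a single uniform bound \(b\) killing all of \(I^a m\) at once. For an infinitely generated ideal no such uniform bound need exist, so I expect this uniformization to be the sole (minor) obstacle; the rest is a routine diagram chase through the short exact sequence.
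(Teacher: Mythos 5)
Your proof is correct and follows essentially the same strategy as the paper's: kill the image in $M_3$ by a power of $I$, observe that the resulting finitely many elements (here generators of $I^a$ applied to $m$, in the paper the powers $f_i^n$ of fixed generators of $I$) land in $M_1$, use $I^\infty$-torsion of $M_1$ to get a uniform exponent, and combine. The only cosmetic difference is that the paper concludes with $I^{r(n+n')}a=0$ via a pigeonhole on monomial degrees, whereas you factor $I^{a+b}=I^bI^a$ directly; both are fine.
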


\begin{proof}
Take generators $f_1,\ldots,f_r$ of $I$ and $a \in M_2$.
Since $M_3$ is $I^\infty$-torsion, there exists $n>0$ such that 
\[
f_i^n a \in \Ker(M_2 \to M_3) \cong M_1
\]
for all $1 \leq i \leq r$.
As $M_1$ is $I^\infty$-torsion, there exists $n'>0$ such that $f_i^{n+n'}a=0$ for all $1 \leq i \leq r$.
Hence $I^{r(n+n')}a=0$, so $M_2$ is $I^\infty$-torsion, as desired.
\end{proof}

\begin{definition}
Let $R$ be a ring, $M \in \mathcal{D}(R)$, and $f_1,\ldots,f_r \in R$.
We define the \emph{derived quotient of $M$ by $f_1,\ldots,f_r$} as
\[
M/^L(f_1,\ldots,f_r)  \defeq  M \otimes^L_{R[X_1,\ldots,X_r]} R[X_1,\ldots,X_r]/(X_1,\ldots,X_r),
\]
where $M$ is regarded as an object of $\mcalD(R[X_1,\ldots,X_r])$ via the $R$-algebra homomorphism
\[
R[X_1,\ldots,X_r] \to R, \quad X_i \mapsto f_i.
\]
In particular,
\[
M/^L(f_1,\ldots,f_r) \simeq M \otimes^L_{R} R/^L(f_1,\ldots,f_r)
\]
holds in \(\mcalD(R)\).
\end{definition}

\begin{proposition}\label{to-nakayama}
Let $R$ be a ring, let $f_1,\ldots,f_r \in R$, and set $I=(f_1,\ldots,f_r)$.
Let $M \in \mcalD(R)$ be a connective and bounded object.
Then $M/^L(f_1,\ldots,f_r)$ is connective, bounded, and cohomologically $I^\infty$-torsion.
\end{proposition}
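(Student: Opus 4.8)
The plan is to reduce the statement to the structure of the Koszul complex. Writing $K$ for $R/^L(f_1,\dots,f_r)$, the definition gives $M/^L(f_1,\dots,f_r) \simeq M \otimes^L_R K$, and $K$ is the iterated tensor product $K(f_1) \otimes^L_R \cdots \otimes^L_R K(f_r)$ of the two-term complexes $K(f_j) = \Cone(R \xrightarrow{f_j} R)$, each a complex of free modules concentrated in homological degrees $0$ and $1$. Thus $K$ is a perfect complex concentrated in degrees $[0,r]$. I would treat the three assertions separately, the first two being formal and the torsion one resting on a single nullhomotopy.

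For connectivity and boundedness I would induct on $r$ using the recursive identity $M/^L(f_1,\dots,f_r) \simeq N/^L f_r$ with $N = M/^L(f_1,\dots,f_{r-1})$, where $N/^L f_r = \Cone(N \xrightarrow{f_r} N)$. The fiber sequence $N \xrightarrow{f_r} N \to N/^L f_r$ yields a long exact sequence presenting $\pi_i(N/^L f_r)$ as an extension of $\Ker(f_r \mid \pi_{i-1}N)$ by $\Coker(f_r \mid \pi_i N)$; hence if $N$ is connective and bounded, so is $N/^L f_r$, with its upper bound raised by at most one. Starting from $M$ (the case $r=0$), which lies in degrees $[0,d]$ by hypothesis, after $r$ steps we obtain that $M/^L(f_1,\dots,f_r)$ is connective and bounded (concentrated in $[0,d+r]$). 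Alternatively, one observes directly that the tensor product of the bounded complex $M$ with the finite complex of free modules $K$ is bounded and connective.

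The essential input is the torsion statement, for which the key observation is that multiplication by each generator $f_j$ is nullhomotopic on $M/^L(f_1,\dots,f_r)$, i.e.\ it is the zero morphism in $\mcalD(R)$. On the single factor $K(f_j) = [R \xrightarrow{f_j} R]$ the endomorphism $f_j \cdot \mathrm{id}$ is contracted by the homotopy given by the identity map from the degree-$0$ summand to the degree-$1$ summand, so $f_j \cdot \mathrm{id}_{K(f_j)} \simeq 0$. Since multiplication by $f_j$ on $M \otimes^L_R K(f_1) \otimes^L_R \cdots \otimes^L_R K(f_r)$ may be realized on the $j$-th tensor factor, it equals $\mathrm{id}_M \otimes \cdots \otimes (f_j \cdot \mathrm{id}_{K(f_j)}) \otimes \cdots \otimes \mathrm{id}_{K(f_r)}$ and is therefore nullhomotopic. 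Consequently $f_j$ acts as the zero map on every homotopy group $\pi_i(M/^L(f_1,\dots,f_r))$, so $I = (f_1,\dots,f_r)$ annihilates each $\pi_i$. In particular every $\pi_i$ is $I^\infty$-torsion (indeed already $I$-torsion), which is the desired conclusion.

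I do not expect a genuine obstacle here; the only point requiring care is the passage from the explicit chain nullhomotopy on the Koszul factor to the vanishing of $f_j \cdot \mathrm{id}$ as a morphism in $\mcalD(R)$, together with the fact that the $R$-action on a tensor product may be carried by any single factor. Once this is in place, the exactness of $M \otimes^L_R (-)$ delivers the nullhomotopy on $M/^L(f_1,\dots,f_r)$ and the torsion claim is immediate.
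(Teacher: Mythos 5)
Your proof is correct, and for the essential part --- the torsion statement --- it takes a genuinely different route from the paper. For connectivity and boundedness you argue exactly as the paper does, by induction on $r$ via the fiber sequence $N \xrightarrow{f_r} N \to N/^L f_r$ and the resulting short exact sequences $0 \to \pi_i(N)/f_r\pi_i(N) \to \pi_i(N/^L f_r) \to \pi_{i-1}(N)[f_r] \to 0$. For the torsion claim, however, the paper stays with these same short exact sequences and invokes its Lemma~2.2 (an extension of $I^\infty$-torsion modules is $I^\infty$-torsion) to propagate the property through the induction, whereas you identify $R/^L(f_1,\dots,f_r)$ with the Koszul complex $K(f_1)\otimes^L_R\cdots\otimes^L_R K(f_r)$ and use the explicit contracting homotopy showing $f_j\cdot\mathrm{id}_{K(f_j)}\simeq 0$, so that $f_j$ acts nullhomotopically on the whole derived quotient. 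Your argument is sound (the identification with the Koszul complex is valid because $X_1,\dots,X_r$ is a regular sequence in $R[X_1,\dots,X_r]$, and the $R$-action on a relative tensor product can indeed be carried by any single factor), and it yields a strictly stronger conclusion: each $\pi_i(M/^L(f_1,\dots,f_r))$ is annihilated by $I$ itself, not merely $I^\infty$-torsion --- a classical fact about Koszul homology. What the paper's approach buys is that it avoids any explicit model for the derived quotient and needs no nullhomotopy bookkeeping, at the cost of the extension lemma and a weaker (though sufficient) conclusion; what yours buys is a one-step, non-inductive proof of the torsion statement and the sharper annihilation result.
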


\begin{proof}
We proceed by induction on $r$.  

For $r=1$, set $f \defeq f_1$.  
From the fiber sequence
\[
R[X] \xrightarrow{\times X} R[X] \to R[X]/(X)
\]
in \(\mcalD(R[X])\), we obtain, after tensoring with $M$ over \(R[X]\), a fiber sequence
\begin{equation}\label{eq:derived-quotient}
   M \xrightarrow{\times f} M \to M/^L(f)
\end{equation}
in \(\mcalD(R[X])\), where $M$ is regarded as an object of $\mcalD(R[X])$ via the $R$-algebra homomorphism $R[X] \to R$ defined by $X \mapsto f$.  
The fiber sequence \eqref{eq:derived-quotient} yields an exact sequence
\[
0 \to \pi_{i}(M)/f\pi_{i}(M) \to \pi_{i}(M/^L(f)) \to \pi_{i-1}(M)[f] \to 0.
\]
Thus $M/^L(f)$ is connective and bounded since \(M\) is. 
By \cref{extension-I-infty-torsion}, each $\pi_i(M/^L(f))$ is $I^\infty$-torsion, hence $M/^L(f)$ is cohomologically $I^\infty$-torsion.

For $r \geq 2$, note that
\[
M/^L(f_1,\ldots,f_r) \simeq (M/^L(f_1,\ldots,f_{r-1}))/^L(f_r).
\]
Set $N \defeq M/^L(f_1,\ldots,f_{r-1})$.  
By the inductive hypothesis, $N$ is connective, bounded, and cohomologically $(f_1,\ldots,f_{r-1})^\infty$-torsion.  
Applying the $r=1$ case to $N$ and $f_r$, we see that \(N/^L(f_r)\) is connective, bounded, and cohomologically \((f_r)^{\infty}\)-torsion.
Moreover, using the exact sequence
\[
0 \to \pi_{i}(N)/f_r\pi_{i}(N) \to \pi_{i}(N/^L(f_r)) \to \pi_{i-1}(N)[f_r] \to 0,
\]
and the fact that both $\pi_i(N)/f_r\pi_i(N)$ and $\pi_{i-1}(N)[f_r]$ are $I^\infty$-torsion, we deduce that 
\[
\pi_{i}(M/^L(f_1,\ldots,f_r)) = \pi_i(N/^L(f_r))
\]
is $I^\infty$-torsion for all $i$.  
Thus the claim follows.
\end{proof}

\begin{lemma} \label{BoundedProIsom}
    Let \(R\) be a ring and let \(M\) be an \(R\)-module.
    Take an element \(f\) of \(R\) and assume that \(M\) has bounded \(f^{\infty}\)-torsion.
    Then the morphism of pro-systems
    \begin{equation*}
        \{M/^Lf^n\}_{n \geq 1} \to \{M/f^nM\}_{n \geq 1}
    \end{equation*}
    induced from the canonical morphism \(M/^Lf^n \to M/f^nM\) becomes a pro-isomorphism.
\end{lemma}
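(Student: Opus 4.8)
The plan is to reduce the claim to a statement about homotopy pro-groups and there exploit the boundedness hypothesis directly. Since $M$ is a discrete module, the fiber sequence $M \xrightarrow{\times f^n} M \to M/^L f^n$ (as in \eqref{eq:derived-quotient}) shows that $M/^L f^n$ is concentrated in homotopical degrees $0$ and $1$, with
\[
\pi_0(M/^L f^n) \cong M/f^n M, \qquad \pi_1(M/^L f^n) \cong M[f^n],
\]
where $M[f^n]$ denotes the $f^n$-torsion submodule of $M$. Under this identification the canonical morphism $M/^L f^n \to M/f^n M$ is the truncation $\tau_{\le 0}$, whose fiber is $M[f^n][1]$. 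Thus the level map of towers $\{M/^L f^n\} \to \{M/f^n M\}$ has levelwise fiber $\{M[f^n][1]\}$, and since $\Pro(\mcalD(R))$ is stable with fibers of level maps computed levelwise, it suffices to prove that the tower $\{M[f^n][1]\}_{n \ge 1}$ is pro-zero, equivalently that $\{M[f^n]\}_{n \ge 1}$ is pro-zero.

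Next I would identify the transition maps of this tower. The tower $\{M/^L f^n\}$ arises from the maps of two-term complexes $[M \xrightarrow{f^{n+1}} M] \to [M \xrightarrow{f^n} M]$ that are the identity in degree $0$ and multiplication by $f$ in degree $1$ (this is the unique choice making the square commute, since $f^n \cdot f = f^{n+1}$); tracing this through shows that the transition map $\pi_1(M/^L f^{n+1}) \to \pi_1(M/^L f^n)$ is the map $M[f^{n+1}] \to M[f^n]$ given by multiplication by $f$. Hence for $m \ge n$ the composite transition map $M[f^m] \to M[f^n]$ is multiplication by $f^{m-n}$.

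The boundedness hypothesis then finishes the argument. By assumption there is an integer $c \ge 0$ with $M[f^\infty] = M[f^c]$; in particular $f^c$ annihilates $M[f^m]$ for every $m$, since $M[f^m] \subseteq M[f^\infty] = M[f^c]$. Therefore, for each $n$, the transition map $M[f^{n+c}] \to M[f^n]$, being multiplication by $f^c$, is zero. By definition this means $\{M[f^n]\}$ is pro-zero, and hence the original level map is a pro-isomorphism.

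I expect the only genuinely delicate point to be the bookkeeping in the second step: pinning down that the induced transition map on $\pi_1$ is \emph{exactly} multiplication by $f$ (rather than, say, the canonical inclusion $M[f^n] \hookrightarrow M[f^{n+1}]$ or an isomorphism), since this is precisely what makes the boundedness hypothesis bite. The reduction in the first step and the final torsion estimate are routine once the transition maps are correctly identified, and the stability of $\Pro(\mcalD(R))$ together with the levelwise computation of fibers of level maps is standard.
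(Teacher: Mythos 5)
Your proposal is correct and is essentially the proof the paper points to: the paper's proof is just a citation of \cite[Lemma 4.7(2)]{bhatt2018Integral}, and the argument there is exactly yours — identify $\pi_1(M/^Lf^n)\cong M[f^n]$ with transition maps given by multiplication by $f$, and use bounded torsion to conclude that this tower (hence the fiber of the level map) is pro-zero. Your worry about the transition map on $\pi_1$ is resolved as you suspect: the map of Koszul complexes $[M\xrightarrow{f^{n+1}}M]\to[M\xrightarrow{f^n}M]$ is $(\times f,\mathrm{id})$, unique up to homotopy since these are complexes of a single module in degrees $0,1$ lifting the identity on $H_0$.
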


\begin{proof}
    See the proof of \cite{bhatt2018Integral}*{Lemma 4.7(2)}.
\end{proof}

\subsection{Graded rings and coactions}

It is well known that there is a correspondence between graded rings and rings equipped with coactions of coalgebras.
We recall this correspondence as follows.

\begin{definition}\label{def-coaction}
Let $R$ be a ring.
\begin{enumerate}
\item
Let
\[
R[G]\defeq \bigoplus_{g \in G} R \cdot t^g
\]
be the group ring with multiplication given by $t^g \cdot t^h=t^{g+h}$, namely, \(R[G]\) is the group ring of \(G\) over \(R\).
Then we have a natural isomorphism $R[G] \simeq \Z[G] \otimes_{\Z} R$.
Furthermore, $R[G]$ has the structure of a coalgebra over $R$ given by
\begin{align*}
\begin{alignedat}{2}
\Delta &\colon R[G] \to R[G] \otimes_R R[G]
&\qquad t^g &\mapsto t^g \otimes t^g, \\
e      &\colon R[G]\to R
&\qquad t^g   &\mapsto 1, \\
\iota  &\colon R[G] \to R[G]
&\qquad t^g &\mapsto t^{-g}.
\end{alignedat}
\end{align*}

\item
Let $R$ be a ring.
A \emph{$\Z[G]$-coaction} on $R$ is a ring homomorphism
\[
\rho \colon R \to R \otimes_{\Z} \Z[G]\simeq R[G]
\]
such that the following diagrams commute:
\begin{center}
\begin{minipage}{0.45\textwidth}
\[
\begin{tikzcd}
    R \arrow[r,"\rho"] \arrow[d,"\rho"'] 
        & R \otimes_{\Z} \Z[G] \arrow[d,"\id \otimes \Delta"] \\
    R \otimes_{\Z} \Z[G] \arrow[r,"\rho \otimes \id"] 
        & R \otimes_{\Z} \Z[G] \otimes_{\Z} \Z[G]
\end{tikzcd}
\]
\end{minipage}
\hspace{0.5cm}
\begin{minipage}{0.35\textwidth}
\[
\begin{tikzcd}
    R \arrow[r,"\rho"] \arrow[d,equal] 
        & R \otimes_{\Z} \Z[G] \arrow[ld,"\id \otimes e"] \\
    R.
\end{tikzcd}
\]
\end{minipage}
\end{center}
By abuse of notation, we denote by $e$ the ring homomorphism
\[
R[G] \simeq R \otimes_{\Z} \Z[G] \xrightarrow{\id \otimes e} R .
\]
\end{enumerate}
\end{definition}

\begin{proposition}\label{corr-graded-coaction}
The following categories are equivalent:
\begin{enumerate}
    \item the category $\mathcal{C}_1$ of $G$-graded rings, and
    \item the category $\mathcal{C}_2$ of rings equipped with $\Z[G]$-coactions.
\end{enumerate}
\end{proposition}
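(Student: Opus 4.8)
The plan is to construct explicit functors in both directions and verify that they are mutually inverse, with morphisms corresponding on both sides.

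Going from $\mathcal{C}_1$ to $\mathcal{C}_2$ is routine. Given a $G$-graded ring $R=\bigoplus_{g\in G}R_g$, I would define $\rho\colon R\to R[G]$ on a homogeneous element $a\in R_g$ by $\rho(a)=a\,t^g$ and extend additively. That $\rho$ is a ring homomorphism follows from $R_gR_{g'}\subseteq R_{g+g'}$ together with $1\in R_0$, and the two coaction axioms are immediate: the counit axiom reduces to $\sum_g a_g=a$ for the homogeneous decomposition of $a$, while both composites in the comultiplication square send $a\in R_g$ to $a\,t^g\otimes t^g$. A $G$-graded morphism $\varphi\colon R\to S$ visibly commutes with the coactions, so this assignment is functorial.

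The substantive direction is from $\mathcal{C}_2$ to $\mathcal{C}_1$. Given $(R,\rho)$, I would write $\rho(a)=\sum_{g\in G}\rho_g(a)\,t^g$ with $\rho_g(a)\in R$ and almost all terms zero, thereby defining additive maps $\rho_g\colon R\to R$. Comparing the coefficients of $t^h\otimes t^g$ in the comultiplication axiom yields the orthogonality relations
\[
\rho_h(\rho_g(a))=\delta_{h,g}\,\rho_g(a)\qquad(g,h\in G),
\]
and the counit axiom gives $\sum_g\rho_g(a)=a$. Setting $R_g\coloneq\{a\in R: \rho(a)=a\,t^g\}$, the first relation shows $\rho_g(a)\in R_g$ for every $a$, and the second then gives $R=\sum_g R_g$. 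I would prove directness by applying $\rho_h$ to a vanishing finite sum $\sum_g a_g=0$ with $a_g\in R_g$: since $\rho_h(a_g)=\delta_{h,g}a_g$, this forces $a_h=0$ for each $h$. As $\rho$ is a ring homomorphism, for $a\in R_g$ and $b\in R_{g'}$ one gets $\rho(ab)=\rho(a)\rho(b)=ab\,t^{g+g'}$, so $R_gR_{g'}\subseteq R_{g+g'}$, while $\rho(1)=1$ gives $1\in R_0$; hence $R=\bigoplus_g R_g$ is a $G$-graded ring. A ring homomorphism commuting with the coactions carries $R_g$ into $S_g$ by the same $t^g$-coefficient comparison, so it is a graded morphism.

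Finally, the two constructions are mutually inverse: starting from a grading, forming $\rho$ and then reading off $\{a:\rho(a)=a\,t^g\}$ returns the original $R_g$, while starting from a coaction and forming the associated grading recovers $\rho$ because $\rho$ is determined by its values on homogeneous elements. Combined with the morphism correspondence above, this yields the claimed equivalence of categories. I expect the main obstacle to be precisely the coaction-to-grading direction, namely deducing the idempotent orthogonality relations $\rho_h\rho_g=\delta_{h,g}\rho_g$ from the comultiplication axiom and using them to establish that the homogeneous decomposition both exists and is direct; once these component identities are in hand, the remaining verifications are formal.
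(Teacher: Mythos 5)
Your proposal is correct and follows essentially the same route as the paper: the same explicit quasi-inverse functors, the same definition of $R_g$ (your $\{a:\rho(a)=a\,t^g\}$ coincides with the paper's $\rho^{-1}(R\otimes\Z\cdot t^g)$ via the counit axiom), and the same use of coassociativity and counitality to show the homogeneous pieces decompose $R$. If anything, your explicit derivation of the orthogonality relations $\rho_h\rho_g=\delta_{h,g}\rho_g$ makes the role of coassociativity clearer than the paper's terse "by the definition of $R_g$, each $f_g$ lies in $R_g$."
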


\begin{proof}
First, we construct a functor $\Phi \colon \mathcal{C}_1 \to \mathcal{C}_2$.
Let $R$ be a $G$-graded ring.
Then the ring homomorphism
\[
\rho \colon R \to R \otimes_{\Z} \Z[G], 
\qquad 
f=\sum_{g \in G} f_g \longmapsto \sum_{g \in G} f_g \otimes t^g,
\]
defines a coaction on $R$.
We set $\Phi(R) \defeq  (R,\rho)$.

Next, we construct a functor $\Psi \colon \mathcal{C}_2 \to \mathcal{C}_1$.
Let $R$ be a ring equipped with a coaction
\[
\rho \colon R \to R \otimes_{\Z} \Z[G].
\]
For each $g \in G$, we define an abelian subgroup $R_g$ of $R$ by
\[
R_g \defeq \rho^{-1}(R \otimes_{\Z} \Z \cdot t^g) \subseteq R.
\]
We define the $G$-graded structure on $R_{\gr}:=\bigoplus_{g \in G} R_g$.
For $g,h \in G$, we consider the homomorphism
\[
R_g \otimes_{\Z} R_h \to R \otimes_{\Z} R \xrightarrow{m} R \xrightarrow{\rho} R[G],
\]
where $m$ is the multiplication map on $R$.
The image of $r \otimes r'$ under the above map is
\[
\rho(rr')=\rho(r)\rho(r') \in (R\cdot t^g)(R \cdot t^h) \subseteq R \cdot t^{g+h}.
\]
Therefore, we obtain the multiplication
\[
R_g \otimes R_h \to R_{g+h},
\]
which defines the $G$-graded structure on $R_{\gr}$.

We then consider the natural ring homomorphism
\[
\varphi \colon R_{\gr} \defeq  \bigoplus_{g \in G} R_g \to R.
\]
We claim that $\varphi$ is an isomorphism.
By definition, $\varphi$ is injective.
Let $f \in R$, and write
\[
\rho(f)=\sum_{g \in G} f_g \otimes t^g.
\]
By the definition of $R_g$, each $f_g$ lies in $R_g$, and hence
\[
\rho\!\left(\sum_{g \in G} f_g\right)
= \sum_{g \in G} f_g \otimes t^g
= \rho(f).
\]
Since $e\circ \rho=\id$, it follows that
\[
f=\sum_{g \in G} f_g.
\]
Therefore, $\varphi$ is surjective, and $R$ admits a $G$-graded ring structure with $R=\bigoplus_{g \in G} R_g$.
We set $\Psi(R,\rho)\defeq \bigoplus_{g \in G} R_g$.

By construction, the functors $\Phi$ and $\Psi$ are quasi-inverse to each other.
\end{proof}

\section{Category of derived graded modules} \label{SectionDerivedGradedModules}

In this section, we define the \(\infty\)-category of derived \(G\)-graded modules, study its first properties, and introduce the notion of derived gradedwise completion.

\subsection{Definitions and properties}

We define the \(\infty\)-category of derived \(G\)-graded modules as follows. Later, we will show that this \(\infty\)-category is equivalent to the derived \(\infty\)-category of the abelian category of \(G\)-graded abelian groups (\Cref{SectionDerivedCat}) but for now, we just work on this \(\infty\)-category for simplicity.
Actually, results in this subsection only contains formal arguments on \(\infty\)-category theory and if the reader can accept the definition and properties of \(\mcalD_{\graded{G}}(R)\) written in \Cref{PropertiesDgrZ} and \Cref{PropertiesDerivedGradedModules}, one can skip this subsection.

\begin{notation} \label{DefGradedModuleCat}
    Let \(G^{ds}\) be the (ordinary) category of elements of \(G\) with trivial morphisms.
    We define the \emph{\(\infty\)-category of derived \(G\)-graded modules} \(\mcalD_{\graded{G}}(\setZ)\) as the \(\infty\)-category \(\Fun(G^{ds}, \mcalD(\setZ))\) of functors (\cite{lurie2009Higher}*{Notation 1.2.7.2}) from \(G^{ds}\) to the derived \(\infty\)-category \(\mcalD(\setZ)\) of \(\setZ\).
    For any \(M \in \mcalD_{\graded{G}}(\setZ)\) and an element \(g \in G\), we will denote the image of \(g\) by the functor \(M\) as \(M_g \in \mcalD(\setZ)\) and we call it the \emph{\(g\)-graded part} of \(M\).
    Based on the usual notation of graded modules, we sometimes denote \(M\) as \(\bigoplus_{g \in G} M_g\) and we can define the shift of \(M\) by \(M(g) \in \mcalD_{\graded{G}}(\setZ)\) where \(M(g)_h = M_{g+h}\) for any \(h \in G\).
    
    Since \(G^{ds}\) can be seen as a symmetric monoidal \(\infty\)-category by the additive structure \(+ \colon G^{ds} \times G^{ds} \to G^{ds}\), we can use \cite{lurie2017Higher}*{Example 2.2.6.17} to equip the \(\infty\)-category \(\mcalD_{\graded{G}}(\setZ)\) with a symmetric monoidal structure \(- \Lgrotimes_{\setZ} -\), which is called the \emph{Day convolution}: For any \(M\) and \(N\) in \(\mcalD_{\graded{G}}(\setZ) = \Fun(G^{ds}, \mcalD(\setZ))\), the Day convolution \(M \Lgrotimes_{\setZ} N\) is a left Kan extension of
    \begin{equation*}
        G^{ds} \times G^{ds} \xrightarrow{M \times N} \mcalD(\setZ) \times \mcalD(\setZ) \xrightarrow{\otimes^L_{\setZ}} \mcalD(\setZ)
    \end{equation*}
    along the additive structure \(+ \colon G^{ds} \times G^{ds} \to G^{ds}\).
    This is concretely given by
    \begin{equation*}
        M \Lgrotimes_{\setZ} N : G^{ds} \ni g \mapsto \colim_{s + t \to g} (M_s \otimes^L_{\setZ} N_t) = \bigoplus_{g = s+t} (M_s \otimes^L_{\setZ} N_t) \in \mcalD(\setZ)
    \end{equation*}
    for any \(g \in G\) since morphisms of \(G^{ds}\) are only trivial ones.
\end{notation}

\begin{remark}
    We can also define the derived \(\infty\)-category of \(G\)-graded abelian groups.
    In \Cref{SectionDerivedCat}, we will show that the derived \(\infty\)-category of the abelian category of \(G\)-graded abelian groups is equivalent to \(\mcalD_{\graded{G}}(\setZ)\).

    On the abelian category of \(G\)-graded abelian groups, we can also define a symmetric monoidal structure in the same way as above.
    We will write it by \(- \grotimes_{\setZ} -\).
    More generally, for a \(G\)-graded ring \(R\), we can define the abelian category of \(G\)-graded \(R\)-modules and its symmetric monoidal structure \(- \grotimes_{R} -\) in a usual way.
\end{remark}

\begin{lemma} \label{PropertiesDgrZ}
    On the symmetric monoidal \(\infty\)-category \(\mcalD_{\graded{G}}(\setZ)\), we have the following properties.
    \begin{enumerate}
        \item \label{PresentabilityDgrZ} The \(\infty\)-category \(\mcalD_{\graded{G}}(\setZ)\) is presentable and stable and is equivalent to the product \(\prod_{g \in G} \mcalD(\setZ)\) of \(\infty\)-categories (without the symmetric monoidal structure).
        \item \label{LimitsColimitsDgrZ} It has all small limits and small colimits. Moreover, any limits and colimits can be calculated gradedwise, i.e., the isomorphisms
        \begin{equation} \label{IsomColimGradedwise}
            (\grcolim_{i \in I} M_i)_g \cong  \colim_{i \in I} (M_i)_g \quad \text{and} \quad (\grlim_{i \in I} M_i)_g \cong  \lim_{i \in I} (M_i)_g
        \end{equation}
        hold in \(\mcalD(\setZ)\) for any \(g \in G\) and any simplicial sets \(I\) if these limits and colimits exist.
        \item \label{tStructureDgrZ} There exists a left and right complete accessible \(t\)-structure on \(\mcalD_{\graded{G}}(\setZ)\) such that \(M \in \mcalD_{\graded{G}}(\setZ)\) is connective (resp., coconnective) if and only if \(M_g \in \mcalD(\setZ)\) satisfies \(\pi_i(M_g) = 0\) for any \(i < 0\) (resp., \(i > 0\)) for all \(g \in G\).
        \item \label{ConnectivePartDgrZ} The connective part \(\mcalD_{\graded{G}, \geq 0}(\setZ)\) is stable under small colimits and extensions. The heart \(\mcalD_{\graded{G}}^{\heartsuit}(\setZ)\) is isomorphic to the abelian category \(\Mod_{\graded{G}}(\setZ)\) of \(G\)-graded abelian groups along \(M \mapsto \bigoplus_{g \in G} M_g\).
        \item \label{HomotopyGroupDgrZ} The homotopy group \(\pi_i(M)\) of \(M \in \mcalD_{\graded{G}}(\setZ)\) given by the \(t\)-structure is computed as the discrete \(G\)-graded abelian group \(\bigoplus_{g \in G}\pi_i(M_g)\) by using the homotopy group \(\pi_i(M_g)\) on \(\mcalD(\setZ)\).
        \item \label{ForgetfulFunctorDgrZ} Taking the coproduct, we can define a forgetful functor
        \begin{equation*}
            \mcalD_{\graded{G}}(\setZ) \to \mcalD(\setZ); \quad M = (M_g)_{g \in G} \mapsto \bigoplus_{g \in G} M_g.
        \end{equation*}
        This functor is conservative, symmetric monoidal, \(t\)-exact, and preserves all colimits.
        \item \label{GradedMappingSpectrumDgrZ} The symmetric monoidal structure on \(\mcalD_{\graded{G}}(\setZ)\) is closed, i.e., for any \(M \in \mcalD_{\graded{G}}(\setZ)\), the endofunctor \(- \Lgrotimes_{\setZ} M\) on \(\mcalD_{\graded{G}}(\setZ)\) admits a right adjoint, which we will write \(\grmapspt(M, -)\) and call it the \emph{graded mapping spectrum functor}.
    \end{enumerate}
\end{lemma}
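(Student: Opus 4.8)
The plan is to exploit the fact that \(G^{ds}\) is a \emph{discrete} category: a functor \(G^{ds} \to \mcalD(\setZ)\) is simply a \(G\)-indexed family of objects with no coherence data, so \(\mcalD_{\graded{G}}(\setZ) = \Fun(G^{ds}, \mcalD(\setZ))\) is equivalent to the product \(\prod_{g \in G} \mcalD(\setZ)\). Once this identification is in hand, every assertion except those concerning the monoidal structure reduces to the corresponding property of \(\mcalD(\setZ)\) applied gradedwise, while the monoidal assertions follow from the explicit Day convolution formula recorded in \Cref{DefGradedModuleCat}. Concretely, for the equivalence in (1), presentability is inherited since a small product of presentable \(\infty\)-categories is presentable \cite{lurie2009Higher}, and stability since a functor \(\infty\)-category with stable target is stable \cite{lurie2017Higher}. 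For (2), limits and colimits in a functor category are computed pointwise when they exist in the target \cite{lurie2009Higher}, and \(\mcalD(\setZ)\) is complete and cocomplete; this yields the gradedwise formulas \eqref{IsomColimGradedwise} at once.

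For the assertions (3)--(5) I would equip \(\prod_{g \in G} \mcalD(\setZ)\) with the \emph{product \(t\)-structure}, declaring \(M\) connective (resp.\ coconnective) precisely when each \(M_g\) is. Accessibility together with left and right completeness descend from the standard \(t\)-structure on \(\mcalD(\setZ)\) via the pointwise computation of (co)limits from (2), because Postnikov and Whitehead towers are formed factorwise and their limits are computed factorwise. The truncation functors are then gradedwise, giving \(\pi_i(M) \cong \bigoplus_{g \in G} \pi_i(M_g)\) as in (5); the connective part \(\prod_{g \in G} \mcalD_{\geq 0}(\setZ)\) is stable under colimits and extensions because these are pointwise and the corresponding stability holds in \(\mcalD(\setZ)\); and the heart is the product \(\prod_{g \in G} \mcalD^{\heartsuit}(\setZ) = \prod_{g \in G} \mathrm{Ab}\), which is exactly \(\Mod_{\graded{G}}(\setZ)\), proving (4).

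For (6), the coproduct functor \((M_g)_{g \in G} \mapsto \bigoplus_{g \in G} M_g\) is left adjoint to the diagonal \(\mcalD(\setZ) \to \prod_{g \in G} \mcalD(\setZ)\), hence preserves all colimits; it is \(t\)-exact and conservative because \(\pi_i(\bigoplus_{g} M_g) \cong \bigoplus_{g} \pi_i(M_g)\) and a direct sum of maps of abelian groups is an isomorphism if and only if each summand is. Its symmetric monoidality is a direct computation from the Day convolution formula, using that \(\otimes^L_{\setZ}\) distributes over direct sums:
\[
\bigoplus_{g \in G} (M \Lgrotimes_{\setZ} N)_g \;\simeq\; \bigoplus_{g \in G} \bigoplus_{g = s+t} (M_s \otimes^L_{\setZ} N_t) \;\simeq\; \Bigl(\bigoplus_{s} M_s\Bigr) \otimes^L_{\setZ} \Bigl(\bigoplus_{t} N_t\Bigr).
\]
Finally, for (7) the tensor product preserves colimits separately in each variable---immediate from the Day convolution formula and the fact that \(\otimes^L_{\setZ}\) preserves colimits---so by presentability and the adjoint functor theorem \cite{lurie2009Higher} each endofunctor \(- \Lgrotimes_{\setZ} M\) admits a right adjoint.

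The only point requiring genuine care, and hence the expected main obstacle, is checking that the product \(t\)-structure is truly accessible and both left and right complete; once the factorwise formation of truncations and their limits is spelled out this becomes routine, and everything else is a formal consequence of the product description together with the explicit convolution formula.
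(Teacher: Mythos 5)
Your proposal is correct and follows essentially the same route as the paper: identify \(\mcalD_{\graded{G}}(\setZ)\) with \(\prod_{g\in G}\mcalD(\setZ)\) via discreteness of \(G^{ds}\), deduce (1)--(5) gradedwise, verify symmetric monoidality of the coproduct functor by distributing \(\otimes^L_{\setZ}\) over direct sums in the Day convolution, and obtain (7) from the adjoint functor theorem. The only cosmetic difference is that for colimit preservation in (7) you argue directly from the convolution formula while the paper reduces to the non-graded case via conservativity of the forgetful functor; these are interchangeable.
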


\begin{proof}
    \Cref{PresentabilityDgrZ}: Since \(\mcalD(\setZ)\) is presentable (e.g., \cite{lurie2017Higher}*{Proposition 1.3.5.21}), then the presentability of \(\mcalD_{\graded{G}}(\setZ)\) follows from \cite{lurie2009Higher}*{Proposition 5.5.3.6}. The stability follows from the stability of \(\mcalD(\setZ)\) (\cite{lurie2017Higher}*{Proposition 1.3.5.9 and Proposition 1.1.3.1}).
    Since \(G^{ds}\) is a discrete category, the equivalence \(\mcalD_{\graded{G}}(\setZ) \simeq \prod_{g \in G} \mcalD(\setZ)\) follows from the definition of functor categories and products of categories \citeKero{0060} and \citeKero{00HW}.

    \Cref{LimitsColimitsDgrZ}: Since \(\mcalD_{\graded{G}}(\setZ)\) is presentable by \Cref{PresentabilityDgrZ}, it admits all small colimits and all small limits (\cite{lurie2009Higher}*{Definition 5.5.0.1 and Corollary 5.5.2.4}).
    The isomorphisms in \eqref{IsomColimGradedwise} follow from \cite{lurie2009Higher}*{Corollary 5.1.2.3} and its dual.

    \Cref{tStructureDgrZ}: For \(\mcalC \defeq \mcalD_{\graded{G}}(\setZ)\), we define the full subcategory \(\mcalC_{\geq 0}\) (resp., \(\mcalC_{\leq 0}\)) of \(\mcalC\) spanned by objects \(M\) such that \(M_g \in \mcalD(\setZ)_{\geq 0}\) (resp., \(M_g \in \mcalD(\setZ)_{\leq 0}\)) for any \(g \in G\).
    We will check that this pair \((\mcalC_{\geq 0}, \mcalC_{\leq 0})\) forms an accessible t-structure on \(\mcalC\).
    The accessibility means that \(\mcalC_{\geq 0}\) is presentable (\cite{lurie2017Higher}*{Definition 1.4.4.12}), which follows from the equivalence
    \begin{equation*}
        \mcalC_{\geq 0} \simeq \prod_{g \in G} \mcalD_{\geq 0}(\setZ)
    \end{equation*}
    of \(\infty\)-categories by \Cref{PresentabilityDgrZ} and the accessibility of the standard \(t\)-structure on \(\mcalD(\setZ)\) (\cite{lurie2017Higher}*{Proposition 7.1.1.13}).
    To prove the left and right completeness of the \(t\)-structure, we want to show that
    \begin{equation*}
        \mcalC_{\geq \infty} \defeq \bigcap_{n \geq 0} \mcalC_{\geq n} \simeq \prod_{g \in G} \bigcap_{n \geq 0} \mcalD_{\geq n}(\setZ)
\quad \text{and} \quad
\mcalC_{\leq -\infty} \defeq \bigcap_{n \leq 0} \mcalC_{\leq n} \simeq \prod_{g \in G} \bigcap_{n \leq 0} \mcalD_{\leq n}(\setZ)
    \end{equation*}
    consist only of zero objects of \(\mcalC\) (and its dual statement holds).
    This follows from the left and right completeness of the \(t\)-structure on \(\mcalD(\setZ)\).
    
    We will show that this pair defines a \(t\)-structure on \(\mcalC\).
    By \Cref{LimitsColimitsDgrZ}, we know that the \(n\)-shift \(M[n]\) in \(\mcalC\), which is now given by a pullback diagram in \(\mcalC\), is \(\bigoplus_{g \in G} M_g[n]\), where \(M_g[n] \in \mcalD(\setZ)\) is the \(n\)-shift of \(M_g\) in \(\mcalD(\setZ)\).
    Since we use the standard \(t\)-structure on \(\mcalD(\setZ)\), first we have \(\mcalC_{\geq 0}[1] \subseteq \mcalC_{\geq 0}\) and \(\mcalC_{\leq 0}[-1] \subseteq \mcalC_{\leq 0}\).
    Composing the truncation functors \(\tau_{\geq n}\) and \(\tau_{\leq n}\) on \(\mcalD(\setZ)\), we can define functors\footnote{The notation \(\tau^{\graded}_{\gtreqqless}\) used here is temporary and will not be used later. This is just to distinguish the truncation functors on \(\mcalD(\setZ)\) and we will use only \(\tau_{\gtreqqless}\) for the truncation functors on any stable \(\infty\)-category with a \(t\)-structure.}
    \begin{align*}
        \tau^{\graded}_{\geq n} \colon \mcalC \to \mcalC_{\geq n}; & \quad M = (M_g)_{g \in G} \mapsto (\tau_{\geq n}M_g)_{g \in G} \\
        \tau^{\graded}_{\leq n} \colon \mcalC \to \mcalC_{\leq n}; & \quad M = (M_g)_{g \in G} \mapsto (\tau_{\leq n}M_g)_{g \in G}
    \end{align*}
    such that any \(M \in \mcalC\) has a sequence
    \begin{equation*}
        \tau^{\graded}_{\geq 0}M \to M \to \tau^{\graded}_{\leq -1}M
    \end{equation*}
    in \(\mcalC\). Since this sequence gives a fiber sequence on each graded part, this is a fiber sequence in \(\mcalC\) by \Cref{LimitsColimitsDgrZ}.
    Finally, for any \(M \in \mcalC_{\geq 0}\) and \(N \in \mcalC_{\leq -1}\), we have
    \begin{equation} \label{MappingSpaceDecomp}
        \Map_{\mcalC}(M, N) \simeq \prod_{g \in G} \Map_{\mcalD(\setZ)}(M_g, N_g) \simeq 0,
    \end{equation}
    where the first equivalence follows from \Cref{PresentabilityDgrZ} and \citeKero{034Y} and \(\Map_{\mcalD(\setZ)}(M_g, N_g) \cong 0\) holds for any \(g \in G\) by the definition of the \(t\)-structures on \(\mcalD(\setZ)\) and \(\mcalC\).
    These three properties show that \((\mcalC_{\geq 0}, \mcalC_{\leq 0})\) is a \(t\)-structure on \(\mcalC\).

    \Cref{ConnectivePartDgrZ}: Using \Cref{LimitsColimitsDgrZ}, the connective part \(\mcalD_{\graded{G}, \geq 0}(\setZ)\) is stable under small colimits and extensions since the same property holds on \(\mcalD(\setZ)\) by \cite{lurie2017Higher}*{Proposition 7.1.1.13}.
    The heart \(\mcalD_{\graded{G}}^{\heartsuit}(\setZ)\) consists of \(M \in \mcalD_{\graded{G}}(\setZ)\) such that \(M_g \in \mcalD^{\heartsuit}(\setZ) \simeq \Ab\) for any \(g \in G\).
    Therefore, \(M\) can be seen as a \(G\)-graded abelian group \(\bigoplus_{g \in G} M_g\).

    \Cref{HomotopyGroupDgrZ}: The homotopy group \(\pi_i(M)\) of \(M \in \mcalD_{\graded{G}}(\setZ)\) belongs to the heart \(\mcalD_{\graded{G}}^{\heartsuit}(\setZ)\) and thus it is a \(G\)-graded abelian group.
    This is defined as \(\tau^{\graded}_{\geq 0}\tau^{\graded}_{\leq 0}(M[-i])\) and thus its \(g\)-graded part is \(\tau_{\geq 0}\tau_{\leq 0}(M_g[-i]) \cong \pi_i(M_g)\) for any \(g \in G\) by the definition of \(\tau^{\graded}_{\gtreqqless 0}\).

    \Cref{ForgetfulFunctorDgrZ}: This functor is the same as the functor which sends \(G^{ds}\)-indexed diagram \((M_g)_{g \in G}\) in \(\mcalD(\setZ)\) to its coproduct \(\bigoplus_{g \in G} M_g\) in \(\mcalD(\setZ)\).
    The conservativity is clear since if \(\bigoplus_{g \in G} M_g \cong 0\) in \(\mcalD(\setZ)\), then \(M_g \cong 0\) in \(\mcalD(\setZ)\) for any \(g \in G\) and thus \(M \cong 0\) in \(\mcalD_{\graded{G}}(\setZ)\).
    Also, taking the coproduct preserves all colimits and then \Cref{LimitsColimitsDgrZ} ensures the preservation of colimits.
    The \(t\)-exactness follows from \Cref{tStructureDgrZ}.
    On the symmetric monoidal structure, the symmetric product \(M \Lgrotimes_{\setZ} N\) in \(\mcalD_{\graded{G}}(\setZ)\) for any \(M\) and \(N\) in \(\mcalD_{\graded{G}}(\setZ)\) goes to
    \begin{equation*}
        M \Lgrotimes_{\setZ} N \mapsto \bigoplus_{g \in G} \parenlr{\bigoplus_{g = s + t} (M_s \otimes^L_{\setZ} N_t)} \cong \parenlr{\bigoplus_{s \in G} M_s} \otimes^L_{\setZ} \parenlr{\bigoplus_{t \in G} N_t}
    \end{equation*}
    in \(\mcalD(\setZ)\) for any small index set \(I\). This finishes the proof.

    \Cref{GradedMappingSpectrumDgrZ}: Since \(\mcalD_{\graded{G}}(\setZ)\) is presentable, we can use the adjoint functor theorem (\cite{lurie2009Higher}*{Corollary 5.5.2.9}) and then it is enough to show that the endofunctor \(- \Lgrotimes_{\setZ} M\) preserves all small colimits, i.e., the canonical morphism \(\grcolim_{i \in I} (N_i \Lgrotimes_{\setZ} M) \to (\grcolim_{i \in I} N_i) \Lgrotimes_{\setZ} M\) is an isomorphism in \(\mcalD_{\graded{G}}(\setZ)\) for any small colimit diagram \(i \mapsto N_i\) in \(\mcalD_{\graded{G}}(\setZ)\).
    Using the conservativity in \Cref{ForgetfulFunctorDgrZ} above, this can be reduced to the same problem on the endofunctor \(- \otimes^L_{\setZ} M\) on \(\mcalD(\setZ)\). This is a usual property so we are done.
    \qedhere
\end{proof}

\begin{remark} \label{NonIsomGradedLimit}
    Since the forgetful functor in \Cref{PropertiesDgrZ}\Cref{ForgetfulFunctorDgrZ} preserves all colimits, it is exact and especially it preserves finite limits (\cite{lurie2017Higher}*{Proposition 1.1.4.1}).
    However, this does not preserve (infinite) limits in general.
    For example, take \(G = \setZ_{\geq 0}\) and consider the diagram \(F \colon \setZ_{\geq 0} \to \mcalD_{\graded{G}}(\setZ)\) given by \(F(n) = \setZ[X]/(X^n)\) with \(\deg(X) = 1\) and the natural projection morphisms.
    Then, we can show that
    \begin{equation*}
        \grlim_{n \geq 0} \setZ[X]/(X^n) \cong \setZ[X] \quad \text{in } \mcalD_{\graded{G}}(\setZ)
    \end{equation*}
    but the usual limit \(\lim_{n \geq 0} \setZ[X]/(X^n)\) in \(\mcalD(\setZ)\) is isomorphic to the ring of formal power series \(\setZ[|X|]\).
\end{remark}

Based on the above results on \(\mcalD_{\graded{G}}(\setZ)\), we can define the derived \(\infty\)-category of \(G\)-graded modules over a (discrete) \(G\)-graded ring \(R\) as follows.

\begin{definition} \label{DefDerivedGradedModules}
    Let \(R\) be a (discrete) \(G\)-graded ring which we can regard as an \(\setE_{\infty}\)-ring object in the symmetric monoidal \(\infty\)-category \(\mcalD_{\graded{G}}(\setZ)\) by setting \(R_g\) as a discrete object of \(\mcalD(\setZ)\) for any \(g \in G\).

    Then the \emph{\(\infty\)-category \(\mcalD_{\graded{G}}(R)\) of derived \(G\)-graded \(R\)-modules} is the \(\infty\)-category \(\Mod_{R}(\mcalD_{\graded{G}}(\setZ))\) of \(R\)-modules in the symmetric monoidal \(\infty\)-category \(\mcalD_{\graded{G}}(\setZ)\).
    Similarly, we define \emph{the \(\infty\)-category \(\CAlg(\mcalD_{\graded{G}}(R))\) of \(G\)-graded \(R\)-algebras} as the \(\infty\)-category \(\CAlg_{R}(\mcalD_{\graded{G}}(\setZ))\) of \(R\)-algebras in the symmetric monoidal \(\infty\)-category \(\mcalD_{\graded{G}}(\setZ)\).

    This notation is unambiguous in view of \Cref{DefGradedModuleCat}, since if \(R=\setZ\), then
\[
\mcalD_{\graded{G}}(R)=\Mod_{\setZ}(\mcalD_{\graded{G}}(\setZ)) \simeq \mcalD_{\graded{G}}(\setZ)
\]
by \cite{lurie2017Higher}*{Proposition 4.2.4.9}.
\end{definition}

\begin{proposition} \label{PropertiesDerivedGradedModules}
    On \(\mcalD_{\graded{G}}(R)\), we have the following properties.
    \begin{enumerate}
        \item \label{PresentabilityDgrR} The \(\infty\)-category \(\mcalD_{\graded{G}}(R)\) is presentable and stable.
        \item \label{LimitsColimitsDgrR} The canonical functor \(\mcalD_{\graded{G}}(R) \to \mcalD_{\graded{G}}(\setZ)\) induced from \(\setZ \to R\) is conservative and preserves all limits and colimits. In particular, its left adjoint is given by the base change \(- \Lgrotimes_{\setZ} R \colon \mcalD_{\graded{G}}(\setZ) \to \mcalD_{\graded{G}}(R)\) used by the symmetric monoidal structure on \(\mcalD_{\graded{G}}(\setZ)\).
        \item \label{ClosedMonoidalDgrR} The symmetric monoidal structure on \(\mcalD_{\graded{G}}(\setZ)\) induces a symmetric monoidal structure \(- \Lgrotimes_R -\) on \(\mcalD_{\graded{G}}(R)\) and it is closed, i.e., for any \(M \in \mcalD_{\graded{G}}(R)\), the endofunctor \(- \Lgrotimes_R M\) on \(\mcalD_{\graded{G}}(R)\) admits a right adjoint \(\grmapspt_R(M, -)\).
        \item \label{tStructureDgrR} Using the \(t\)-structure on \(\mcalD_{\graded{G}}(\setZ)\) in \Cref{PropertiesDgrZ}\Cref{tStructureDgrZ}, we can equip \(\mcalD_{\graded{G}}(R)\) with a left and right complete accessible \(t\)-structure such that \(M \in \mcalD_{\graded{G}}(R)\) is connective (resp., coconnective) if and only if \(M_g \in \mcalD(\setZ)\) satisfies \(\pi_i(M_g) = 0\) for any \(i < 0\) (resp., \(i > 0\)) for all \(g \in G\).
        \item \label{ConnectivePartDgrR} The connective part \(\mcalD_{\graded{G}, \geq 0}(R)\) is stable under small colimits and extensions. The heart \(\mcalD_{\graded{G}}^{\heartsuit}(R)\) of this \(t\)-structure is isomorphic to the abelian category \(\Mod_{\graded{G}}(R)\) of (discrete) \(G\)-graded \(R\)-modules.
        \item \label{HomotopyGroupsDgrR} The \(t\)-structure inherits the homotopy groups \(\pi_i(M)\) of \(M \in \mcalD_{\graded{G}}(R)\) whose underlying \(G\)-graded abelian group is \(\bigoplus_{g \in G} \pi_i(M_g)\) by using the homotopy group \(\pi_i(M_g)\) on \(\mcalD(\setZ)\).
        \item \label{ForgetfulFunctorDgrR} The functor \(\mcalD_{\graded{G}}(\setZ) \to \mcalD(\setZ)\) in \Cref{PropertiesDgrZ}\Cref{ForgetfulFunctorDgrZ} induces a forgetful functor
        \begin{equation*}
            \theta = \theta_R \colon \mcalD_{\graded{G}}(R) \to \mcalD(R); \quad M = (M_g)_{g \in G} \mapsto \bigoplus_{g \in G} M_g,
        \end{equation*}
        which is conservative, symmetric monoidal, \(t\)-exact, and preserves all colimits.
    \end{enumerate}
\end{proposition}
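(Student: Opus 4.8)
The plan is to deduce every assertion from the $R=\setZ$ case recorded in \Cref{PropertiesDgrZ}, exploiting that, by \Cref{DefDerivedGradedModules}, the $\infty$-category $\mcalD_{\graded{G}}(R)$ is the category $\Mod_R(\mcalD_{\graded{G}}(\setZ))$ of $R$-modules in the presentable stable closed symmetric monoidal $\infty$-category $\mcalD_{\graded{G}}(\setZ)$. The point is that each listed property is a formal consequence, via the module-$\infty$-category machinery of \cite{lurie2017Higher}, of the corresponding property of the base $\mcalD_{\graded{G}}(\setZ)$ together with the compatibility (\Cref{PropertiesDgrZ}\Cref{GradedMappingSpectrumDgrZ}) of the Day-convolution tensor product with colimits.

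For \Cref{PresentabilityDgrR}, presentability of $\Mod_R(\mcalD_{\graded{G}}(\setZ))$ follows from presentability of $\mcalD_{\graded{G}}(\setZ)$ (\Cref{PropertiesDgrZ}\Cref{PresentabilityDgrZ}) and the colimit-preservation of $- \Lgrotimes_{\setZ} R$ by \cite[Corollary 4.2.3.7]{lurie2017Higher}, while stability is inherited from that of $\mcalD_{\graded{G}}(\setZ)$, as module categories over an algebra object of a stable presentable symmetric monoidal $\infty$-category are again stable (\cite[\S7.1]{lurie2017Higher}). For \Cref{LimitsColimitsDgrR}, the canonical functor is restriction of scalars along $\setZ \to R$, i.e.\ the forgetful functor of $\Mod_R(\mcalD_{\graded{G}}(\setZ))$; it is conservative, preserves all limits (having the left adjoint $- \Lgrotimes_{\setZ} R$), and preserves all colimits because the tensor product does, by \cite[\S4.2.3]{lurie2017Higher}. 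For \Cref{ClosedMonoidalDgrR}, the relative tensor product endows $\Mod_R(\mcalD_{\graded{G}}(\setZ))$ with a symmetric monoidal structure $- \Lgrotimes_R -$ by \cite[Theorem 4.5.2.1]{lurie2017Higher}, and closedness follows from presentability together with colimit-preservation of $- \Lgrotimes_R M$, via the adjoint functor theorem, exactly as in \Cref{PropertiesDgrZ}\Cref{GradedMappingSpectrumDgrZ}.

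For \Cref{tStructureDgrR}--\Cref{HomotopyGroupsDgrR}, I would transport the $t$-structure along the conservative, limit- and colimit-preserving forgetful functor of \Cref{LimitsColimitsDgrR}: declare $M \in \mcalD_{\graded{G}}(R)$ connective (resp.\ coconnective) precisely when its image in $\mcalD_{\graded{G}}(\setZ)$ is, equivalently when $M_g$ is connective (resp.\ coconnective) for every $g$. Since the $t$-structure on $\mcalD_{\graded{G}}(\setZ)$ is compatible with the symmetric monoidal structure (the connective part is closed under $\Lgrotimes_{\setZ}$ and contains the unit, by \Cref{PropertiesDgrZ}\Cref{tStructureDgrZ} and \Cref{ConnectivePartDgrZ}) and $R$ is connective, the truncation functors on $\mcalD_{\graded{G}}(\setZ)$ lift to $R$-modules and yield an accessible, left- and right-complete $t$-structure on $\mcalD_{\graded{G}}(R)$ for which the forgetful functor is $t$-exact; this is the relative analogue of \cite[Proposition 7.1.1.13]{lurie2017Higher}. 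The $t$-exactness then gives the stated connectivity criterion and the computation $\pi_i(M) = \bigoplus_{g \in G}\pi_i(M_g)$ of \Cref{HomotopyGroupsDgrR}, while stability of the connective part under colimits and extensions descends from \Cref{PropertiesDgrZ}\Cref{ConnectivePartDgrZ} along the colimit-preserving forgetful functor. To identify the heart, I would invoke the equivalence $\Mod_R(\mcalD_{\graded{G}}(\setZ))^{\heartsuit} \simeq \Mod_{R}(\mcalD_{\graded{G}}^{\heartsuit}(\setZ))$; as $R$ is discrete, $\pi_0 R = R$ and $\mcalD_{\graded{G}}^{\heartsuit}(\setZ) \simeq \Mod_{\graded{G}}(\setZ)$ (\Cref{PropertiesDgrZ}\Cref{ConnectivePartDgrZ}), so the right-hand side is the abelian category $\Mod_{\graded{G}}(R)$ of discrete $G$-graded $R$-modules.

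Finally, for \Cref{ForgetfulFunctorDgrR}, the symmetric monoidal forgetful functor $\mcalD_{\graded{G}}(\setZ) \to \mcalD(\setZ)$ of \Cref{PropertiesDgrZ}\Cref{ForgetfulFunctorDgrZ} carries $R$ to its ungraded underlying ring and hence induces $\theta_R \colon \mcalD_{\graded{G}}(R) = \Mod_R(\mcalD_{\graded{G}}(\setZ)) \to \Mod_R(\mcalD(\setZ)) = \mcalD(R)$; symmetric monoidality, $t$-exactness, and colimit-preservation of $\theta_R$ descend from the same properties of the underlying functor, and conservativity holds because post-composing with the conservative forgetful functor $\mcalD(R) \to \mcalD(\setZ)$ recovers a conservative composite. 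The step I expect to require the most care is the heart identification in \Cref{ConnectivePartDgrR}: one must verify that forming the heart commutes with passing to $R$-modules, i.e.\ the equivalence $\Mod_R(\mcalD_{\graded{G}}(\setZ))^{\heartsuit} \simeq \Mod_{R}(\mcalD_{\graded{G}}^{\heartsuit}(\setZ))$, which is where compatibility of the $t$-structure with the monoidal structure and the discreteness of $R$ are genuinely used, rather than a purely formal transport of structure.
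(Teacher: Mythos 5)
Your proposal is correct and follows essentially the same strategy as the paper: deduce everything from the $R=\setZ$ case via the module-category machinery of \cite{lurie2017Higher}, with the forgetful functor $\Mod_R(\mcalD_{\graded{G}}(\setZ)) \to \mcalD_{\graded{G}}(\setZ)$ detecting the $t$-structure. The only place you are thinner than the paper is \Cref{tStructureDgrR}: where you appeal to the general principle that a compatible $t$-structure lifts to modules over a connective algebra, the paper actually proves this (imitating \cite[Proposition 2.1.1.1]{lurie2018Spectral}) by generating the $t$-structure from the connective part via \cite[Proposition 1.4.4.11]{lurie2017Higher} and then checking that the coconnective part is detected by the forgetful functor using the adjunction and the simplicial-resolution argument of \cite[Proposition 4.7.3.14]{lurie2017Higher} — the same content you are citing as a black box.
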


\begin{proof}
    \Cref{PresentabilityDgrR}: Since \(\mcalD_{\graded{G}}(\setZ)\) is presentable by \Cref{PropertiesDgrZ}\Cref{PresentabilityDgrZ} and the functor \(- \otimes_{\setZ} M\) on \(\mcalD_{\graded{G}}(\setZ)\) preserves all small colimits for any \(M \in \mcalD_{\graded{G}}(\setZ)\) by \Cref{PropertiesDgrZ}\Cref{GradedMappingSpectrumDgrZ}, the presentability of \(\mcalD_{\graded{G}}(R)\) follows from \cite{lurie2017Higher}*{Corollary 4.2.3.7}.
    The stability follows from \cite{lurie2017Higher}*{Proposition 4.2.3.4 and Proposition 7.1.1.4}.

    \Cref{LimitsColimitsDgrR}: First note that \(\setZ \to R\) in \(\CAlg(\mcalD_{\graded{G}}(\setZ))\) induces a functor \(\Mod_R(\mcalD_{\graded{G}}(\setZ)) \to \Mod_{\setZ}(\mcalD_{\graded{G}}(\setZ))\), which gives the functor \(\mcalD_{\graded{G}}(R) \to \mcalD_{\graded{G}}(\setZ)\) by \Cref{DefDerivedGradedModules}.
    Preserving limits (resp., colimits) follows from \cite{lurie2017Higher}*{Corollary 4.2.3.3(2) (resp., Corollary 4.2.3.5(2))} since the symmetric product on \(\mcalD_{\graded{G}}(\setZ)\) commutes with colimits by \Cref{PropertiesDgrZ}\Cref{GradedMappingSpectrumDgrZ}.
    The description of a left adjoint is given by \cite{lurie2017Higher}*{Remark 4.5.3.3}.

    \Cref{ClosedMonoidalDgrR}: Using \Cref{PropertiesDgrZ}\Cref{GradedMappingSpectrumDgrZ}, the symmetric product on \(\mcalD_{\graded{G}}(\setZ)\) preserves all small colimits in each variable. Then \cite{lurie2017Higher}*{Definition 4.5.1.1, Theorem 4.5.2.1, and Proposition 4.4.3.12} shows that we can define a symmetric monoidal structure \(- \Lgrotimes_R -\) on \(\mcalD_{\graded{G}}(R)\) by the Bar construction.
    For the closedness, we can use the adjoint functor theorem as in \Cref{PropertiesDgrZ}\Cref{GradedMappingSpectrumDgrZ}. Actually, \(\mcalD_{\graded{G}}(R)\) is presentable by \Cref{PresentabilityDgrR} and the relative tensor product \(- \Lgrotimes_R -\) preserves all small colimits by \cite{lurie2017Higher}*{Corollary 4.4.2.15}.

    \Cref{tStructureDgrR}: We will imitate the proof of \cite{lurie2018Spectral}*{Proposition 2.1.1.1}.
    For simplicity, we set \(\mcalC_R \defeq \mcalD_{\graded{G}}(R)\) and \(\mcalC_{\setZ} \defeq \mcalD_{\graded{G}}(\setZ)\) and write the functor in \Cref{LimitsColimitsDgrR} as \(\tau \colon \mcalC_R \to \mcalC_{\setZ}\).
    By \Cref{LimitsColimitsDgrR} and the \(t\)-structure on \(\mcalC_{\Z}\) by \Cref{PropertiesDgrZ}\Cref{tStructureDgrZ}, the full subcategory \(\mcalC_{R, \geq 0}\) of \(\mcalC_R\) spanned by objects \(M\) such that \(\tau(M) \in \mcalC_{\setZ, \geq 0}\) is closed under small colimits and extensions in \(\mcalC_R\).
    Then \(\mcalC_{R, \geq 0}\) is presentable and there exists an accessible \(t\)-structure \((\mcalC_{R, \geq 0}, \mcalC_R')\) on \(\mcalC\) by \cite{lurie2017Higher}*{Proposition 1.4.4.11}.
    It suffices to show that an object \(M\) belongs to \(\mcalC_R'\) if and only if \(\tau(M)\) belongs to \(\mcalC_{\setZ, \leq 0}\).
    
    (Only if part): Assume that \(M \in \mcalC_{R}'\) and take any \(N \in \mcalC_{\setZ, \geq 0}\).
    It suffices to show that \(\Map_{\mcalC_{\setZ}}(N, \tau(M)[-1]) \simeq 0\) holds.
    Since \Cref{LimitsColimitsDgrR} shows that a left adjoint functor \(- \Lgrotimes_{\setZ} R \colon \mcalC_{\setZ} \to \mcalC_R\) of \(\tau\) is given by the symmetric monoidal structure on \(\mcalC_{\setZ}\), the definition of the Day convolution in \Cref{DefGradedModuleCat} says that \(\tau(N \Lgrotimes_{\setZ} R) \in \mcalC_{\setZ}\) is connective.
    Therefore, we have \(N \Lgrotimes_{\setZ} R \in \mcalC_{R, \geq 0}\) by the definition and then the equivalences
    \begin{equation*}
        \Map_{\mcalC_{\setZ}}(N, \tau(M)[-1]) \simeq \Map_{\mcalC_R}(N \Lgrotimes_{\setZ} R, M[-1]) \simeq 0
    \end{equation*}
    holds since we already know that the pair \((\mcalC_{R, \geq 0}, \mcalC_R')\) is a \(t\)-structure.
    This shows that \(\tau(M)\) belongs to \(\mcalC_{\setZ, \leq 0}\).

    (If part): It suffices to show that \(\Map_{\mcalC_R}(N, M[-1]) \simeq 0\) holds for any \(N \in \mcalC_{R, \geq 0}\).
    Take a full subcategory \(\mcalM\) of \(\mcalC_R\) consisting of objects \(N \in \mcalC_R\) such that \(\Map_{\mcalC_R}(N, M[-1]) \simeq 0\).
    We want to show that \(\mcalM\) contains \(\mcalC_{R, \geq 0}\).
    Since the functors \(\tau \colon \mcalC_R \to \mcalC_{\setZ}\) and \(\mcalC_{\setZ, \geq 0} \hookrightarrow \mcalC_{\setZ}\) are conservative and preserve all small colimits by \Cref{LimitsColimitsDgrR} and \Cref{PropertiesDgrZ}\Cref{ConnectivePartDgrZ}, they induce a functor
    \begin{equation*}
        \mcalD_{\graded{G}, \geq 0}(R) = \mcalC_{R, \geq 0} \xrightarrow{\tau} \mcalC_{\setZ, \geq 0} = \mcalD_{\graded{G}, \geq 0}(\setZ)
    \end{equation*}
    which is again conservative and preserves all small colimits and has a left adjoint \(- \Lgrotimes_{\setZ} R\).
    Using \cite{lurie2017Higher}*{Proposition 4.7.3.14}, any object \(N \in \mcalC_{R, \geq 0}\) can be written as a simplicial colimit of objects contained in the essential image of the left adjoint \(- \Lgrotimes_{\setZ} R\) of \(\tau\).
    The full subcategory \(\mcalM\) is stable under small colimits and then it suffices to show that it contains the essential image of \(- \Lgrotimes_{\setZ} R \colon \mcalC_{\setZ, \geq 0} \to \mcalC_{R, \geq 0}\).
    Take any \(N' \in \mcalC_{\setZ, \geq 0}\).
    The adjunction says that
    \begin{equation*}
        \Map_{\mcalC_R}(N' \Lgrotimes_{\setZ} R, M[-1]) \simeq \Map_{\mcalC_{\setZ}}(N', \tau(M)[-1]) \simeq 0
    \end{equation*}
    because of the assumption \(\tau(M) \in \mcalC_{\setZ, \leq 0}\).

    Consequently, \(\mcalD_{\graded{G}}(R)\) has an accessible \(t\)-structure such that the conservative functor \(\mcalD_{\graded{G}}(R) \to \mcalD_{\graded{G}}(\setZ)\) is \(t\)-exact.
    Especially, as in the proof of \Cref{PropertiesDgrZ}\Cref{tStructureDgrZ}, this \(t\)-structure is left and right complete since so is the one on \(\mcalD_{\graded{G}}(\setZ)\).

    \Cref{ConnectivePartDgrR}: The first assertion follows from the above proof of \Cref{tStructureDgrR}. The second one follows from that \(M \in \mcalD_{\graded{G}}(R)\) belongs to the heart if and only if \(M\) belongs to \(\mcalD_{\graded{G}}^{\heartsuit}(\setZ)\) with an \(R\)-module structure.
    This is equivalent to saying that \(M\) is a discrete \(G\)-graded \(R\)-module by \Cref{PropertiesDgrZ}\Cref{ConnectivePartDgrZ}.

    \Cref{HomotopyGroupsDgrR}: Since the functor \(\mcalD_{\graded{G}}(R) \to \mcalD_{\graded{G}}(\setZ)\) is \(t\)-exact by \Cref{tStructureDgrR}, the underlying \(G\)-graded abelian group of \(\pi_i(M)\) is given by \(\bigoplus_{g \in G} \pi_i(M_g)\).

    \Cref{ForgetfulFunctorDgrR}: Using \cite{lurie2017Higher}*{Remark 4.8.3.25}, the functor in \Cref{PropertiesDgrZ}\Cref{ForgetfulFunctorDgrZ} induces a commutative diagram
    \begin{center}
        \begin{tikzcd}
            \mcalD_{\graded{G}}(R) \arrow[r] \arrow[d] & \mcalD(R) \arrow[d] \\
            \mcalD_{\graded{G}}(\setZ) \arrow[r]       & \mcalD(\setZ)      
        \end{tikzcd}
    \end{center}
    where the vertical functors are conservative, \(t\)-exact, and preserve limits and colimits by \Cref{LimitsColimitsDgrR} and \Cref{tStructureDgrR}.
    On the other hand, the horizontal functors are symmetric monoidal by the construction and the lower one is conservative, symmetric monoidal, \(t\)-exact, and preserves all colimits by \Cref{PropertiesDgrZ}\Cref{ForgetfulFunctorDgrZ}.
    Combining these results, we can conclude the desired properties of the upper horizontal functor.
\end{proof}

\subsection{Graded Milnor exact sequences}

Using the properties of \(\mcalD_{\graded{G}}(R)\) in \Cref{PropertiesDerivedGradedModules}, we can show the following graded variant of the Milnor exact sequence, which is useful to compute graded limits in \(\mcalD_{\graded{G}}(R)\).
Especially, this shows that the graded limit \(\grlim_{n \geq 0} M_n\) in \(\mcalD_{\graded{G}}(R)\) matches the non-derived graded limit in the abelian category \(\Mod_{\graded{G}}(R)\) when all \(M_n\) are discrete \(G\)-graded \(R\)-modules and satisfies the Mittag-Leffler condition (\Cref{MittagLefflerDiscrete}).

\begin{lemma} \label{MilnorExactSequence}
    Let \((M_n, f_{n+1} \colon M_{n+1} \to M_n)_{n \geq 0}\) be a \(\setZ_{\geq 0}\)-indexed diagram in \(\mcalD_{\graded{G}}(R)\).
    Then we have a fiber sequence
    \begin{equation*}
        \grlim_{n \geq 0} M_n \to \prod^{\graded}_{n \geq 0} M_n \xrightarrow{\id - s} \prod^{\graded}_{n \geq 0} M_n
    \end{equation*}
    in \(\mcalD_{\graded{G}}(R)\), where \(\prod^{\graded}\) is the product in \(\mcalD_{\graded{G}}(R)\) and the morphism \(\id - s \colon \prod^{\graded}_{n \geq 0} M_n \to \prod^{\graded}_{n \geq 0} M_n\) is given by \((m_n)_{n \geq 0} \mapsto (m_n - f_{n+1}(m_{n+1}))_{n \geq 0}\).
    Especially, the following exact sequence of (discrete) \(G\)-graded \(R\)-modules, which is a graded variant of the \emph{Milnor exact sequence},
    \begin{equation*}
        0 \to R^1\!\grlim_{n \geq 0} \pi_{i+1}(M_n) \to \pi_i(\grlim_{n \geq 0} M_n) \to R^0\!\grlim_{n \geq 0} \pi_i(M_n) \to 0
    \end{equation*}
    exists for any \(i \in \setZ\), where \(R^i\grlim(-) = \pi_{-i}(\grlim(-))\).
\end{lemma}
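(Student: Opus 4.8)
The plan is to reduce the whole statement gradedwise to the classical Milnor fiber sequence in \(\mcalD(\setZ)\), and then to read off the short exact sequence from the long exact sequence of homotopy groups. The tool that makes this reduction legitimate is the conservative functor
\(\mcalD_{\graded{G}}(R) \to \mcalD_{\graded{G}}(\setZ) \simeq \prod_{g \in G} \mcalD(\setZ)\)
supplied by \Cref{PropertiesDerivedGradedModules}\Cref{LimitsColimitsDgrR} together with \Cref{PropertiesDgrZ}\Cref{PresentabilityDgrZ} and \Cref{LimitsColimitsDgrZ}: it is conservative, exact, and preserves all small limits and colimits, and limits in \(\mcalD_{\graded{G}}(\setZ)\) are computed gradedwise. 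Consequently the graded product \(\prod^{\graded}_n M_n\), the graded limit \(\grlim_n M_n\), and fibers are all computed on each graded piece, and a candidate sequence in \(\mcalD_{\graded{G}}(R)\) is a fiber sequence precisely when each of its \(g\)-graded pieces is a fiber sequence in \(\mcalD(\setZ)\) (pullbacks are detected by this jointly conservative, pullback-preserving family). Note also that \(\prod^{\graded}\) exists at all because \(\mcalD_{\graded{G}}(R)\) is presentable by \Cref{PropertiesDerivedGradedModules}\Cref{PresentabilityDgrR}.

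First I would construct the comparison map. The limit cone gives a canonical morphism \(\grlim_n M_n \to \prod^{\graded}_n M_n\) whose composite with \(\id - s\) is canonically nullhomotopic, this nullhomotopy being exactly the cone compatibility \(m_n \simeq f_{n+1}(m_{n+1})\); hence the morphism factors through \(F \defeq \mathrm{fib}\bigl(\prod^{\graded}_n M_n \xrightarrow{\id - s} \prod^{\graded}_n M_n\bigr)\), yielding a canonical map \(\grlim_n M_n \to F\). To see it is an equivalence I apply the functor above and pass to the \(g\)-graded part for each \(g \in G\). Since that functor preserves products and fibers, the map becomes the classical comparison \(\lim_n (M_n)_g \to \mathrm{fib}\bigl(\prod_n (M_n)_g \xrightarrow{\id - s} \prod_n (M_n)_g\bigr)\) in \(\mcalD(\setZ)\), which is an equivalence by the standard Milnor fiber sequence for a tower in a stable \(\infty\)-category with countable products. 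Conservativity then upgrades this to \(\grlim_n M_n \simeq F\), that is, to the asserted fiber sequence.

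The short exact sequence of homotopy groups now follows formally from the long exact sequence attached to this fiber sequence. The one point to check is that \(\pi_i\) commutes with the graded product, i.e.\ \(\pi_i(\prod^{\graded}_n M_n) \cong \prod^{\graded}_n \pi_i(M_n)\) in \(\Mod_{\graded{G}}(R)\); by \Cref{PropertiesDerivedGradedModules}\Cref{HomotopyGroupsDgrR} this again reduces gradedwise to the fact that the cohomology of a complex commutes with products in \(\mcalD(\setZ)\). Under this identification, the endomorphism induced by \(\id - s\) on \(\prod^{\graded}_n \pi_i(M_n)\) has kernel \(R^0\!\grlim_n \pi_i(M_n)\) and cokernel \(R^1\!\grlim_n \pi_i(M_n)\), by the standard explicit description of the derived limit of a tower of discrete \(G\)-graded modules (and \(R^j\!\grlim\) vanishes for \(j \geq 2\)). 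Splitting the long exact sequence at \(\pi_i(\grlim_n M_n)\), with the cokernel term coming from degree \(i+1\) and the kernel term from degree \(i\), produces the claimed short exact sequence.

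I expect the only genuinely nonformal ingredient to be the classical Milnor fiber sequence in \(\mcalD(\setZ)\) invoked in the second paragraph; everything else is gradedwise bookkeeping. The subtlest bookkeeping point is the commutation of \(\pi_i\) with infinite products, which can fail for a general \(t\)-structure but holds here because it holds for the standard \(t\)-structure on \(\mcalD(\setZ)\) and all the relevant objects are computed gradedwise.
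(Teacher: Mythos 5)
Your proposal is correct and follows essentially the same route as the paper: reduce gradedwise to the classical Milnor fiber sequence in \(\mcalD(\setZ)\) using the fact that limits in \(\mcalD_{\graded{G}}(R)\) are computed on each graded piece, then extract the short exact sequence from the long exact sequence of homotopy groups by identifying the kernel and cokernel of \(\id - s\) on \(\prod^{\graded}_n \pi_i(M_n)\) with \(R^0\!\grlim\) and \(R^1\!\grlim\) of the discrete towers. The paper makes that last identification by reapplying the same fiber sequence to the discrete diagrams \((\pi_i(M_n))\), which is exactly the ``standard explicit description'' you invoke.
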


\begin{proof}
    Since all small limits can be computed gradedwise (\Cref{PropertiesDerivedGradedModules}\Cref{LimitsColimitsDgrR} and \Cref{PropertiesDgrZ}\Cref{LimitsColimitsDgrZ}), it suffices to show that the same fiber sequence exists in \(\mcalD(\setZ)\) for the diagram \((M_{n, g}, f_{n+1, g} \colon M_{n+1, g} \to M_{n, g})_{n \geq 0}\) for any \(g \in G\).
    This existence is a usual result on derived limits and homotopy limits in \(\mcalD(\setZ)\).
    
    Taking the homotopy long exact sequence of the fiber sequence, we have a long exact sequence
    \begin{equation}\label{eq:Milnor-exact}
        \prod^{\graded}_{n \geq 0} \pi_{i+1}(M_n) \xrightarrow{\id - s} \prod^{\graded}_{n \geq 0} \pi_{i+1}(M_n) \to \pi_i(\grlim_{n \geq 0} M_n) \to \prod^{\graded}_{n \geq 0} \pi_i(M_n) \xrightarrow{\id - s} \prod^{\graded}_{n \geq 0} \pi_i(M_n)
    \end{equation}
    for any \(i \in \setZ\) in the abelian category of \(G\)-graded \(R\)-modules.
    After replacing \((M_n, f_{n+1})_{n \geq 0}\) with $(\pi_i(M_n),\pi_i(f_{n+1}))_{n \geq 0}$ and $(\pi_{i+1}(M_n),\pi_{i+1}(f_{n+1}))_{n \geq 0}$ and then reapplying \eqref{eq:Milnor-exact}, we obtain
    \begin{align*}
        &\Ker (\prod^{\graded}_{n \geq 0} \pi_{i}(M_n) \xrightarrow{\id - s} \prod^{\graded}_{n \geq 0} \pi_{i}(M_n)) = \pi_{0}(\grlim_{n \geq 0} \pi_{i}(M_n))\\
        &\Coker(\prod^{\graded}_{n \geq 0} \pi_{i+1}(M_n) \xrightarrow{\id - s} \prod^{\graded}_{n \geq 0} \pi_{i+1}(M_n))=\pi_{-1}(\grlim_{n \geq 0} \pi_{i+1}(M_n)).
    \end{align*}
    Therefore, combining \eqref{eq:Milnor-exact} for the original \((M_n, f_{n+1})_{n \geq 0}\), we obtain
    \begin{equation*}
        0 \to \pi_{-1}(\grlim_{n \geq 0} \pi_{i+1}(M_n)) \to \pi_i(\grlim_{n \geq 0} M_n) \to \pi_0(\grlim_{n \geq 0} \pi_i(M_n)) \to 0.
    \end{equation*}
    This is the exact sequence we want.
\end{proof}

\begin{corollary} \label{MittagLefflerDiscrete}
    Let \((M_n, f_{n+1} \colon M_{n+1} \to M_n)_{n \geq 0}\) be a \(\setZ_{\geq 0}\)-indexed diagram of discrete \(G\)-graded \(R\)-modules.
    If each \(f_n\) is surjective, then the limit \(\grlim_{n \geq 0} M_n\) in \(\mcalD_{\graded{G}}(R)\) is concentrated in degree \(0\) and computes the limit of \((M_n)_{n \geq 0}\) in the abelian category \(\Mod_{\graded{G}}(R)\) of discrete \(G\)-graded \(R\)-modules.
\end{corollary}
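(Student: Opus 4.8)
The plan is to read the statement off from the graded Milnor exact sequence of \Cref{MilnorExactSequence} applied to the tower \((M_n, f_{n+1})_{n \geq 0}\), combined with the gradedwise vanishing of \(\lim^1\) for surjective towers. Since \(\grlim_{n \geq 0} M_n\) exists and all the ingredients are controlled gradedwise, almost everything will reduce to the classical one-variable picture.

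First I would record that each \(M_n\) is discrete, hence concentrated in degree \(0\), so that \(\pi_0(M_n) = M_n\) and \(\pi_i(M_n) = 0\) for all \(i \neq 0\) by \Cref{PropertiesDerivedGradedModules}\Cref{ConnectivePartDgrR}. Substituting this into
\[
0 \to R^1\!\grlim_{n \geq 0} \pi_{i+1}(M_n) \to \pi_i(\grlim_{n \geq 0} M_n) \to R^0\!\grlim_{n \geq 0} \pi_i(M_n) \to 0
\]
and running over \(i \in \setZ\): for \(i \neq 0, -1\) both outer terms vanish, so \(\pi_i(\grlim_n M_n) = 0\); for \(i = 0\) the left term \(R^1\!\grlim_n \pi_1(M_n)\) vanishes and we get \(\pi_0(\grlim_n M_n) \cong R^0\!\grlim_n M_n\); and for \(i = -1\) the right term vanishes and we get \(\pi_{-1}(\grlim_n M_n) \cong R^1\!\grlim_n M_n\). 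Thus the only two things left to analyze are the identification of \(R^0\!\grlim_n M_n\) with the abelian limit and the vanishing \(R^1\!\grlim_n M_n = 0\).

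Both are gradedwise statements. Because graded limits are computed gradedwise (\Cref{PropertiesDerivedGradedModules}\Cref{LimitsColimitsDgrR} and \Cref{PropertiesDgrZ}\Cref{LimitsColimitsDgrZ}), for each \(g \in G\) the \(g\)-graded part of \(\grlim_n M_n\) is the homotopy limit \(\lim_n M_{n,g}\) of a tower of discrete abelian groups in \(\mcalD(\setZ)\), whose only possibly nonzero homotopy groups are \(\pi_0 = \lim_n M_{n,g}\) (the ordinary inverse limit) and \(\pi_{-1} = {\lim}^1_n M_{n,g}\). By the definition \(R^i\grlim(-) = \pi_{-i}(\grlim(-))\), the graded abelian group \(R^0\!\grlim_n M_n\) is assembled from the \(\lim_n M_{n,g}\), and \(\bigoplus_{g \in G} \lim_n M_{n,g}\) is precisely the inverse limit of \((M_n)_n\) in the abelian category \(\Mod_{\graded{G}}(R)\).

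The one remaining point—and essentially the only content—is the vanishing \(R^1\!\grlim_n M_n = 0\), which is the main (but classical) obstacle. Here I would use that a morphism of \(G\)-graded \(R\)-modules is surjective exactly when it is surjective in each graded degree, so the hypothesis that every \(f_n\) is surjective gives that each tower \((M_{n,g})_n\) has surjective transition maps and hence satisfies the Mittag--Leffler condition; therefore \({\lim}^1_n M_{n,g} = 0\) for every \(g\), and assembling over \(g\) yields \(R^1\!\grlim_n M_n = 0\). Combined with the homotopy-group computation above, this shows \(\pi_i(\grlim_n M_n) = 0\) for all \(i \neq 0\) and \(\pi_0(\grlim_n M_n) \cong \lim_n M_n\) in \(\Mod_{\graded{G}}(R)\), which is the assertion of the corollary.
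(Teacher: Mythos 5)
Your argument is correct and, for the first half, is essentially the paper's: both use the graded Milnor sequence of \Cref{MilnorExactSequence} to confine \(\grlim_n M_n\) to degrees \([-1,0]\) and then kill \(\pi_{-1}\) using surjectivity of the \(f_n\) (the paper phrases this as surjectivity of \(\id - s\) on \(\prod^{\graded}_n M_n\); you phrase it as degreewise Mittag--Leffler vanishing of \(\lim^1 M_{n,g}\) --- these are the same classical fact, since products and limits in \(\mcalD_{\graded{G}}(R)\) are computed gradedwise and a surjection of graded modules is surjective in each degree).

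The one place you genuinely diverge is the second assertion, that \(\grlim_n M_n\) \emph{computes the limit in} \(\Mod_{\graded{G}}(R)\). You get this by identifying \(\pi_0(\grlim_n M_n)\) with \(R^0\!\grlim_n M_n = \Ker\bigl(\id - s \colon \prod^{\graded}_n M_n \to \prod^{\graded}_n M_n\bigr) = \bigoplus_g \lim_n M_{n,g}\), which is the abelian-category limit with its canonical projections; since the identification is induced by the cone maps, this does establish the universal property. The paper instead verifies the universal property directly: given a compatible family \((p_n \colon N \to M_n)\) with \(N\) discrete, it uses the adjointness of the truncation functors to show \(\Map_{\mcalD_{\graded{G}}(R)}(N, \grlim_n M_n) \simeq \Hom_{\mcalC^{\heartsuit}}(N, \grlim_n M_n)\). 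Your route is shorter and buys the same conclusion; the paper's route makes explicit the comparison between mapping spaces in the derived category and Hom sets in the heart, which is the only point your write-up leaves implicit. No gap either way.
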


\begin{proof}
    The Milnor exact sequence (\Cref{MilnorExactSequence}) shows that \(\grlim_{n \geq 0}M_n\) is concentrated in (homological) degree \([-1, 0]\).
    On \(i = -1\), we have an isomorphism \(\pi_{-1}(\grlim_{n \geq 0}M_n) \cong R^1\grlim_{n \geq 0}M_n\) of discrete \(G\)-graded \(R\)-modules.
    Using the surjectivity of \(f_n\)'s, the morphism \(\id - s \colon \prod^{\graded}_{n \geq 0} M_n \to \prod^{\graded}_{n \geq 0} M_n\) in \Cref{MilnorExactSequence} becomes surjective.
    Then \eqref{eq:Milnor-exact} shows the vanishing of \(R^1\grlim_{n \geq 0}M_n\).
    This shows the discreteness of \(\grlim_{n \geq 0}M_n\).

    Take any diagram \((p_n \colon N \to M_n)_{n \geq 0}\) of discrete \(G\)-graded \(R\)-modules which is compatible with \(f_n\)'s.
    Sending this to \(\mcalD_{\graded{G}}(R)\), this diagram corresponds to the unique morphism \(N \to \grlim_{n \geq 0}M_n\) in \(\mcalD_{\graded{G}}(R)\).
    Since \(\grlim_{n \geq 0}M_n\) and \(N\) are concentrated in degree \(0\), the adjointness of the truncation functors on \(\mcalC \defeq \mcalD_{\graded{G}}(R)\) shows that
    \begin{align*}
        \Map_{\mcalC}(N, \grlim_{n \geq 0}M_n) & \simeq \Map_{\mcalC_{\geq 0}}(N, \tau_{\geq 0}(\grlim_{n \geq 0}M_n)) \\
        & \simeq \Map_{\mcalC^{\heartsuit}}(\tau_{\leq 0}(N), \grlim_{n \geq 0}M_n) \simeq \Map_{\mcalC^{\heartsuit}}(N, \grlim_{n \geq 0}M_n).
    \end{align*}
    Then the morphism \(N \to \grlim_{n \geq 0}M_n\) in \(\mcalD_{\graded{G}}(R)\) corresponds to the unique morphism \(N \to \grlim_{n \geq 0}M_n\) in the heart \(\mcalC^{\heartsuit} \simeq \Mod_{\graded{G}}(R)\) which is compatible with the given diagram \((p_n)\).
    This shows that \(\grlim_{n \geq 0}M_n\) satisfies the universality of the limit of \((M_n)_{n \geq 0}\) in \(\Mod_{\graded{G}}(R)\).
\end{proof}

\subsection{Derived category of graded modules and compact generators} \label{SectionDerivedCat}

This subsection compares the derived \(\infty\)-category \(\mcalD(\Mod_{\graded{G}}(R))\) of the abelian category of \(G\)-graded \(R\)-modules with the \(\infty\)-category \(\mcalD_{\graded{G}}(R)\) of derived \(G\)-graded \(R\)-modules defined in \Cref{DefDerivedGradedModules}.

\begin{definition} \label{DefDerivedGradedCat}
    Let \(\Mod_{\graded{G}}(R)\) be the abelian category of \(G\)-graded \(R\)-modules.
    Since this is a Grothendieck abelian category with a generator \(\bigoplus_{g \in G} R(g)\), we can define the \emph{derived \(\infty\)-category \(\mcalD(\Mod_{\graded{G}}(R))\) of \(\Mod_{\graded{G}}(R)\)} by \cite{lurie2017Higher}*{Definition 1.3.5.8}.
    This is a presentable stable \(\infty\)-category with a right complete and accessible \(t\)-structure by \cite{lurie2017Higher}*{Proposition 1.3.5.9 and Proposition 1.3.5.21}.

    Using the forgetful functor \(\Mod_{\graded{G}}(R) \to \Mod(R)\) to the abelian category of \(R\)-modules, we can define the forgetful functor
    \begin{equation*}
        \mcalD(\Mod_{\graded{G}}(R)) \to \mcalD(\Mod(R))
    \end{equation*}
    of stable \(\infty\)-categories.
\end{definition}

\begin{proposition} \label{DerivedGradedCatEquiv}
    There exists an (essentially unique) \(t\)-exact equivalence of stable \(\infty\)-categories
    \begin{equation*}
        \eta \colon \mcalD(\Mod_{\graded{G}}(R)) \xrightarrow{\simeq} \mcalD_{\graded{G}}(R),
    \end{equation*}
    which makes the following diagram commutative:
    \begin{center}
        \begin{tikzcd}
            \mcalD(\Mod_{\graded{G}}(R)) \arrow[d] \arrow[r, "\eta"] & \mcalD_{\graded{G}}(R) \arrow[d] \\
            \mcalD(\Mod(R)) \arrow[r, "\simeq"]                           & \mcalD(R)                          
        \end{tikzcd}
    \end{center}
    where the vertical functors are forgetful functors and the lower equivalence is given in \cite{lurie2017Higher}*{Proposition 7.1.1.15 and Remark 7.1.1.16}.
\end{proposition}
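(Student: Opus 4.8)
The plan is to construct $\eta$ through the universal property of the derived $\infty$-category of a Grothendieck abelian category and then to verify that it is an equivalence by testing it on a set of compact generators. Recall that $\Mod_{\graded{G}}(R)$ is the heart of $\mcalD_{\graded{G}}(R)$ by \Cref{PropertiesDerivedGradedModules}\Cref{ConnectivePartDgrR}, and that its $t$-structure is right complete, accessible, and compatible with filtered colimits, the last point because filtered colimits are computed gradedwise (\Cref{PropertiesDgrZ}\Cref{LimitsColimitsDgrZ} together with \Cref{PropertiesDerivedGradedModules}\Cref{LimitsColimitsDgrR}) and the standard $t$-structure on $\mcalD(\setZ)$ is compatible with filtered colimits. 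By the universal property of $\mcalD(\Mod_{\graded{G}}(R))$ (a colimit-preserving $t$-exact functor out of it into a presentable stable $\infty$-category carrying a right-complete, filtered-colimit-compatible $t$-structure is the same datum as a colimit-preserving exact functor on the hearts), applied to the identity $\Mod_{\graded{G}}(R) = \mcalD_{\graded{G}}^{\heartsuit}(R)$, we obtain an essentially unique colimit-preserving $t$-exact functor $\eta \colon \mcalD(\Mod_{\graded{G}}(R)) \to \mcalD_{\graded{G}}(R)$ restricting to the identity on hearts; the same property yields the asserted essential uniqueness.

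Next I would produce matching compact generators on both sides. For each $g \in G$ the graded-free module $R(g)$ lies in the heart, and writing $R(g) \simeq \setZ(g) \Lgrotimes_{\setZ} R$ and combining the free/forgetful adjunction of \Cref{PropertiesDerivedGradedModules}\Cref{LimitsColimitsDgrR} with the extension-by-zero/evaluation adjunction coming from $\mcalD_{\graded{G}}(\setZ) \simeq \prod_{g} \mcalD(\setZ)$, one computes a natural equivalence $\Map_{\mcalD_{\graded{G}}(R)}(R(g), M) \simeq M_{-g}$ for every $M$. Since taking the $(-g)$-graded part preserves all colimits and the family of such functors is jointly conservative over $g \in G$, the objects $\{R(g)\}_{g \in G}$ are compact generators of $\mcalD_{\graded{G}}(R)$ with discrete mapping spectra $\Map_{\mcalD_{\graded{G}}(R)}(R(g), R(h)) \simeq R_{h-g}$. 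On the derived-category side each $R(g)$ is a projective generator of $\Mod_{\graded{G}}(R)$ with $\Hom(R(g), -) \simeq (-)_{-g}$ exact and colimit-preserving, so its image in $\mcalD(\Mod_{\graded{G}}(R))$ is a compact generator, again with mapping spectra $R_{h-g}$.

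Now I would conclude. Since $\eta$ is $t$-exact and restricts to the identity on hearts, it sends each $R(g)$ to $R(g)$ and induces the identity $R_{h-g} \to R_{h-g}$ on mapping spectra between generators. A colimit-preserving exact functor between compactly generated stable $\infty$-categories that carries a set of compact generators to a set of compact generators and induces equivalences on the mapping spectra between them is an equivalence; hence $\eta$ is an equivalence. For the commuting square, both $\theta_R \circ \eta$ and the composite $\mcalD(\Mod_{\graded{G}}(R)) \to \mcalD(\Mod(R)) \xrightarrow{\simeq} \mcalD(R)$ are colimit-preserving $t$-exact functors that agree on hearts with the forgetful functor $\Mod_{\graded{G}}(R) \to \Mod(R)$ (using \Cref{PropertiesDerivedGradedModules}\Cref{ForgetfulFunctorDgrR}), so they coincide by the uniqueness clause of the universal property.

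The main obstacle is the coherent construction of $\eta$ rather than the numerics: the objectwise identification of mapping spectra $R_{h-g}$ is immediate, but upgrading it to an honest functor of $\infty$-categories, compatibly with compositions, is precisely the coherence packaged by the universal property of $\mcalD(\Mod_{\graded{G}}(R))$. Thus the genuine work lies in verifying the hypotheses of that universal property --- chiefly that the $t$-structure on $\mcalD_{\graded{G}}(R)$ is right complete and compatible with filtered colimits --- and in confirming the compactness of $R(g)$ in $\mcalD(\Mod_{\graded{G}}(R))$, which rests on $R(g)$ being a compact projective generator of the Grothendieck abelian category $\Mod_{\graded{G}}(R)$.
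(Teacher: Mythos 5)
Your route is genuinely different from the paper's. The paper builds $\eta$ by extending the equivalence of hearts through $\mcalD^-$ via \cite[Proposition 1.3.3.12]{lurie2017Higher}, checks the result on bounded objects, and then propagates the equivalence to right-bounded and finally unbounded objects using left and right completeness of the $t$-structures. You instead construct $\eta$ in one step from a universal property of $\mcalD(\Mod_{\graded{G}}(R))$ and then verify it is an equivalence by Morita-type reasoning on the compact projective generators $R(g)$, whose mapping spectra $R_{h-g}$ you compute on both sides. The generator computation is sound and does not create a circularity: the compactness of $R(g)$ in $\mcalD_{\graded{G}}(R)$ is proved directly in \Cref{CompactlyGeneratedGradedCat}(1) without appeal to \Cref{DerivedGradedCatEquiv}. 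Your approach buys a cleaner conceptual picture (both categories are modules over the same ``ring with many objects''), at the price of leaning on a stronger input in the construction step.

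That input is the one genuine soft spot. The universal property you invoke --- that a colimit-preserving $t$-exact functor out of $\mcalD(\Mod_{\graded{G}}(R))$ into a presentable stable $\infty$-category with a right-complete, filtered-colimit-compatible $t$-structure is the same as an exact colimit-preserving functor on hearts --- cannot be literally true as stated: $\mcalD(\mcalA)$ and its left completion $\widehat{\mcalD}(\mcalA)$ have the same heart and both satisfy your hypotheses, so such a universal property would force them to agree for every Grothendieck abelian category $\mcalA$, which fails in general. The statement that is actually available off the shelf is \cite[Theorem 1.3.3.8 / Proposition 1.3.3.12]{lurie2017Higher} for $\mcalD^-(\mcalA)$ when $\mcalA$ has enough projectives; to get from there to the unbounded category you must use that $\mcalD(\Mod_{\graded{G}}(R))$ is left complete (which holds here precisely because $\Mod_{\graded{G}}(R)$ has the compact projective generators $\{R(g)\}_{g\in G}$, together with right completeness) --- and at that point you are running essentially the same completion argument the paper uses. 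So either restrict the universal property to $\mcalD^-$ and add the completion step, or explicitly verify left completeness of $\mcalD(\Mod_{\graded{G}}(R))$ before invoking an unbounded universal property. With that repair the rest of your argument, including the uniqueness clause and the commutativity of the square, goes through.
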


\begin{proof}
    The abelian category \(\Mod_{\graded{G}}(R)\) has a class of projective objects given by direct sums of grade shifts of free \(R\)-modules, which is called graded-free \(R\)-modules.
    Since graded-free \(R\)-modules give enough projective objects of \(\Mod_{\graded{G}}(R)\), we can take the full subcategory \(\mcalD^-(\Mod_{\graded{G}}(R))\) of right bounded objects of \(\mcalD(\Mod_{\graded{G}}(R))\) by \cite{lurie2017Higher}*{Definition 1.3.2.7 and Proposition 1.3.5.24}.
    Moreover, \(\mcalD^-(\Mod_{\graded{G}}(R))\) has a left complete \(t\)-structure which is compatible with the embedding by \cite{lurie2017Higher}*{Proposition 1.3.2.19, Definition 1.3.5.16, and Proposition 1.3.3.16}.
    Then the equivalences of abelian categories
    \begin{center}
        \begin{tikzcd}
            \Mod_{\graded{G}}(R) \arrow[d] \arrow[r, "\simeq"] & \mcalD_{\graded{G}}^{\heartsuit}(R) \arrow[d] \\
            \Mod(R) \arrow[r, "\simeq"]                           & \mcalD^{\heartsuit}(R)                          
        \end{tikzcd}
    \end{center}
    in \Cref{PropertiesDerivedGradedModules}\Cref{ConnectivePartDgrR} extends in an essentially unique way to
    \begin{center}
        \begin{tikzcd}
            \mcalD^-(\Mod_{\graded{G}}(R)) \arrow[d] \arrow[r, "\eta"] & \mcalD_{\graded{G}}(R) \arrow[d] \\
            \mcalD^-(\Mod(R)) \arrow[r, "\eta'"]                           & \mcalD(R)                          
        \end{tikzcd}
    \end{center}
    such that the horizontal functors are right \(t\)-exact and the vertical functors are forgetful functors by \cite{lurie2017Higher}*{Proposition 1.3.3.12}.
    By \cite{lurie2017Higher}*{Proposition 7.1.1.15}, the lower horizontal functor is fully faithful and the essential image is \(\mcalD^-(R)\).

    Using the same argument in \cite{lurie2017Higher}*{Proposition 7.1.1.15}, the above diagram gives a commutative diagram
    \begin{center}
        \begin{tikzcd}
            \mcalD^b(\Mod_{\graded{G}}(R)) \arrow[d] \arrow[r, "\simeq"] & \mcalD_{\graded{G}}^b(R) \arrow[d] \\
            \mcalD^b(\Mod(R)) \arrow[r, "\simeq"]                           & \mcalD^b(R)                          
        \end{tikzcd}
    \end{center}
    of bounded objects in \(\infty\)-categories with \(t\)-structures.
    Since the \(t\)-structures on \(\mcalD^-(\Mod_{\graded{G}}(R))\) and \(\mcalD_{\graded{G}}^-(R)\) are left complete, the equivalence of bounded objects gives the equivalence of right bounded objects.
    Using the right completeness, we can conclude the desired equivalence.
\end{proof}

\begin{definition} \label{GradedPerfectComplex}
    We take the smallest stable subcategory \(\mcalD_{\graded{G}}^{\perf}(R)\) of \(\mcalD_{\graded{G}}(R)\) which contains \(R\) and is closed under retracts and grade shifting.
    If \(M \in \mcalD_{\graded{G}}(R)\) belongs to \(\mcalD_{\graded{G}}^{\perf}(R)\), we say that \(M\) is \emph{graded perfect}.
\end{definition}

\begin{proposition} \label{CompactlyGeneratedGradedCat}
    On \(\mcalD_{\graded{G}}(R)\) and \(\mcalD_{\graded{G}}^{\perf}(R)\), we have the following properties.
    \begin{enumerate}
        \item The presentable \(\infty\)-category \(\mcalD_{\graded{G}}(R)\) is (\(\omega\)-)compactly generated.
        \item An object \(M \in \mcalD_{\graded{G}}(R)\) is compact if and only if \(M\) is graded perfect.
        \item Any graded perfect object \(M \in \mcalD_{\graded{G}}(R)\) has the compact underlying \(R\)-module \(\theta(M)\), i.e., \(\theta(M)\) is compact in \(\mcalD(R)\), where \(\theta \colon \mcalD_{\graded{G}}(R) \to \mcalD(R)\) is the forgetful functor in \Cref{PropertiesDerivedGradedModules}\Cref{ForgetfulFunctorDgrR}.
        \item Along the categorical equivalence in \Cref{DerivedGradedCatEquiv}, the full subcategory \(\mcalD_{\graded{G}}^{\perf}(R)\) can be identified with the full subcategory of \(\mcalD(\Mod_{\graded{G}}(R))\) consisting of the image of bounded complex of graded finite projective \(R\)-modules.\footnote{A graded \(R\)-module is a graded finite projective \(R\)-module if it is a direct summand of a graded-free \(R\)-module of finite rank.}
    \end{enumerate}
    Especially, any object of \(\mcalD_{\graded{G}}(R)\) can be written as a (\(\omega\)-)filtered\footnote{In the sense of \cite{lurie2009Higher}*{Proposition 5.3.1.7}.} colimit of objects of \(\mcalD_{\graded{G}}^{\perf}(R)\).
\end{proposition}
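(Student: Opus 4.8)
The plan is to deduce all four statements from the single fact that the grade shifts \(\{R(g)\}_{g \in G}\) of the unit form a set of compact generators of \(\mcalD_{\graded{G}}(R)\). First I would note that each grade-shift \(M \mapsto M(g)\) is an autoequivalence of \(\mcalD_{\graded{G}}(R)\), so it preserves compactness and it suffices to understand \(R = R(0)\). For compactness, I observe that \(R\) is the image of the monoidal unit \(\mathbf{1}\) of \(\mcalD_{\graded{G}}(\setZ)\) under the base-change left adjoint \(-\Lgrotimes_{\setZ} R\) of \Cref{PropertiesDerivedGradedModules}\Cref{LimitsColimitsDgrR}. Applying the grade-shift autoequivalence and then this adjunction gives
\[
\Map_{\mcalD_{\graded{G}}(R)}(R(g), M) \simeq \Map_{\mcalD(\setZ)}(\setZ, M_{-g}),
\]
so that \(\pi_n \Map_{\mcalD_{\graded{G}}(R)}(R(g), M) \cong \pi_n(M_{-g})\). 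Since the forgetful functor to \(\mcalD_{\graded{G}}(\setZ)\) preserves colimits (\Cref{PropertiesDerivedGradedModules}\Cref{LimitsColimitsDgrR}), these are computed gradedwise (\Cref{PropertiesDgrZ}\Cref{LimitsColimitsDgrZ}), and \(\setZ \in \mcalD(\setZ)\) is compact, the functor \(\Map(R(g), -)\) commutes with filtered colimits; this gives compactness. The displayed isomorphism also shows that if all these homotopy groups vanish for every \(g\) and \(n\), then each graded part \(M_{-g}\) vanishes, so \(M \simeq 0\); hence \(\{R(g)\}_{g}\) generates and \(\mcalD_{\graded{G}}(R)\) is compactly generated, proving (1).

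For (2), the implication ``graded perfect \(\Rightarrow\) compact'' is immediate: the compact objects form a stable, retract-closed subcategory that contains \(R\) by (1) and is stable under the grade-shift autoequivalences, hence contains the smallest such subcategory \(\mcalD_{\graded{G}}^{\perf}(R)\) of \Cref{GradedPerfectComplex}. For the converse I would invoke the standard description of the compact objects of a compactly generated stable \(\infty\)-category as the smallest thick (stable, idempotent-complete) subcategory containing a set of compact generators. Applying this to \(S = \{R(g)\}_{g}\), whose thick closure is exactly \(\mcalD_{\graded{G}}^{\perf}(R)\) by definition, identifies the compact objects with the graded perfect objects.

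For (3), the forgetful functor \(\theta\) is exact and preserves colimits and retracts (\Cref{PropertiesDerivedGradedModules}\Cref{ForgetfulFunctorDgrR}); it sends \(R\) to the compact unit of \(\mcalD(R)\), and sends \(R(g)\) to \(\bigoplus_{h} R_{g+h} \cong R\), again compact. Hence \(\theta\) carries the thick closure \(\mcalD_{\graded{G}}^{\perf}(R)\) of \(\{R(g)\}\) into the thick subcategory of compact objects of \(\mcalD(R)\). For (4), I transport the generators along the \(t\)-exact equivalence \(\eta\) of \Cref{DerivedGradedCatEquiv}: it restricts to the identification of hearts, so \(R(g) \in \mcalD_{\graded{G}}(R)\) corresponds to the grade-shifted free module \(R(g)\) viewed in \(\mcalD(\Mod_{\graded{G}}(R))\). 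The thick closure of \(\{R(g)\}\) there consists of retracts of bounded complexes of finite graded-free modules, and a routine homological argument represents any such retract by a bounded complex of graded finite projective \(R\)-modules (projectives being the summands of frees); this yields the claimed description. Finally, the ``Especially'' clause is just the statement that a compactly generated \(\infty\)-category is the \(\Ind\)-completion of its compact objects, so by (1)--(2) every object is a filtered colimit of graded perfect objects.

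The main obstacle is the ``compact \(\Rightarrow\) graded perfect'' half of (2): it rests on the general \(\infty\)-categorical theorem that the compact objects of a compactly generated stable \(\infty\)-category coincide with the thick closure of any generating set of compact objects, together with the verification that this thick closure matches the concretely defined \(\mcalD_{\graded{G}}^{\perf}(R)\). The only other nonformal point is the homological identification in (4) of retracts of finite graded-free complexes with bounded complexes of graded finite projectives.
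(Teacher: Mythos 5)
Your proposal is correct and follows essentially the same route as the paper: both rest on the computation \(\Map_{\mcalD_{\graded{G}}(R)}(R(g), M) \simeq \Map_{\mcalD(\setZ)}(\setZ, M_{-g})\) to show the shifts \(R(g)\) are compact and jointly detect vanishing, and both deduce (3) and (4) by thick-subcategory bookkeeping. The only difference is packaging: where you cite the standard theorem identifying compact objects with the thick closure of a set of compact generators, the paper proves this instance by hand, constructing the fully faithful functor \(\Ind(\mcalD_{\graded{G}}^{\perf}(R)) \to \mcalD_{\graded{G}}(R)\), verifying its image is presentable and closed under colimits and extensions, and showing essential surjectivity by testing against \(R(g)[n]\) — the same conservativity computation you use.
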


\begin{proof}
    Our proof is based on \cite{lurie2017Higher}*{Proposition 7.2.4.2}.
    In the proof of (1), we will define a canonical equivalence \(\Ind(\mcalD_{\graded{G}}^{\perf}(R)) \xrightarrow{\simeq} \mcalD_{\graded{G}}(R)\) of \(\infty\)-categories. This gives the last assertion.

    (1): We first show that the shifted \(G\)-graded \(R\)-module \(R(g)\) is compact for any \(g \in G\): Take the left adjoint functor \(- \Lgrotimes_{\setZ} R \colon \mcalD_{\graded{G}}(\setZ) \to \mcalD_{\graded{G}}(R)\) given in the proof of \Cref{PropertiesDerivedGradedModules}\Cref{tStructureDgrR}. For any \(N \in \mcalD_{\graded{G}}(R)\), we have the following equivalences
    \begin{align}
        & \Map_{\mcalD_{\graded{G}}(R)}(R(g), N) \simeq \Map_{\mcalD_{\graded{G}}(R)}(R, N(-g)) \simeq \Map_{\mcalD_{\graded{G}}(\setZ)}(\setZ, N(-g)) \label{EquivCorepresentingGraded} \\
        & \simeq \Map_{\mcalD_{\graded{G}}(\setZ)}(\setZ, \tau_{\geq 0}(N(-g))) \simeq \prod_{h \in G} \Map_{\mcalD(\setZ)}(\setZ_h, \tau_{\geq 0}(N(-g))_h) \nonumber \\
        & \simeq \Map_{\mcalD(\setZ)}(\setZ, \tau_{\geq 0}(N_{-g})) \simeq \Omega^{\infty}\tau_{\geq 0}(N_{-g}) \nonumber
    \end{align}
    in \(\Ani\), where \(\Omega^{\infty}(-) \colon \mcalD(\setZ) \to \Ani\) is the functor of the underlying anima.
    Using this, the functor \(\mcalD_{\graded{G}}(R) \to \Ani\) corepresented by \(R(g)\) is equivalent to the composition of functors
    \begin{equation*}
        \mcalD_{\graded{G}}(R) \xrightarrow{\text{\Cref{PropertiesDerivedGradedModules}\Cref{LimitsColimitsDgrR}}} \mcalD_{\graded{G}}(\setZ) \xrightarrow{(-)_{-g}} \mcalD(\setZ) \xrightarrow{\tau_{\geq 0}} \mcalD_{\geq 0}(\setZ) \xrightarrow{\text{forgetful}} \Spectrum_{\geq 0} \xrightarrow{\Omega^{\infty}} \Ani
    \end{equation*}
    and all functors commute with filtered colimits by \Cref{PropertiesDgrZ}\Cref{ConnectivePartDgrZ} and \cite{lurie2017Higher}*{Proposition 1.4.3.9}.
    Therefore, \(R(g)\) is a compact object of \(\mcalD_{\graded{G}}(R)\).

    Next, we will show that the full subcategory \(\mcalD_{\graded{G}}^{\omega}(R)\) of (\(\omega\)-)compact objects of \(\mcalD_{\graded{G}}(R)\) is stable and closed under retracts and grade shifting: As in the above arguments, we can check that \(\mcalD_{\graded{G}}^{\omega}(R)\) is stable and closed under grade shifting. Moreover, we can check the closed property on retracts by using, for example, a characterization of compact objects written in \cite{lurie2017Higher}*{Proposition 1.4.4.1 (3)}.
    Consequently, the full subcategory \(\mcalD_{\graded{G}}^{\omega}(R)\) contains \(\mcalD_{\graded{G}}^{\perf}(R)\).

    The fully faithful functor \(\mcalD_{\graded{G}}^{\perf}(R) \hookrightarrow \mcalD_{\graded{G}}(R)\) induces a functor
    \begin{equation} \label{FunctorFromIndPerf}
        F \colon \Ind(\mcalD_{\graded{G}}^{\perf}(R)) \hookrightarrow \mcalD_{\graded{G}}(R),
    \end{equation}
    which is (\(\omega\)-)continuous and fully faithful by \cite{lurie2009Higher}*{Proposition 5.3.5.10 and Proposition 5.3.5.11}. Take the essential image \(\Image(F) \subseteq \mcalD_{\graded{G}}(R)\) of \(F\)

    \begin{claim} \label{IndCatCompactPerf}
        An object \(M \in \Ind(\mcalD_{\graded{G}}^{\perf}(R)) \simeq \Image(F)\) is compact if and only if \(M\) belongs to the essential image of the Yoneda embedding \(\mcalD_{\graded{G}}^{\perf}(R) \hookrightarrow \Ind(\mcalD_{\graded{G}}^{\perf}(R))\).
    \end{claim}
    \begin{claimproof}
        Any object of \(\mcalD_{\graded{G}}^{\perf}(R)\) is compact in \(\Ind(\mcalD_{\graded{G}}^{\perf}(R))\) by \cite{lurie2009Higher}*{Proposition 5.3.5.5}.
        Conversely, take any compact object \(M\) of \(\Image(F) \subseteq \mcalD_{\graded{G}}(R)\).
        Write \(M = \colim_{i \in I} M_i\) for \(\omega\)-filtered partially ordered set\footnote{See \cite{lurie2009Higher}*{Proposition 5.3.1.18}.} \(I\) and \(M_i \in \mcalD_{\graded{G}}^{\perf}(R)\).
        Since \(M\) is compact, the identity morphism on \(M\) factors through some \(M_i\) in \(\mcalD_{\graded{G}}(R)\).
        Since \(\mcalD_{\graded{G}}^{\perf}(R)\) is closed under retracts, the chosen \(M\) is also perfect.
    \end{claimproof}

    We will show that this functor \(F\) is essentially surjective.
    It will be checked that the full subcategory \(\Image(F)\) is presentable and is closed under small colimits and extensions.
    Since \(\mcalD_{\graded{G}}^{\perf}(R)\) is stable by the definition, then \(\Ind(\mcalD_{\graded{G}}^{\perf}(R)) \simeq \Image(F)\) is a stable full subcategory of \(\mcalD_{\graded{G}}(R)\) by \cite{lurie2017Higher}*{Proposition 1.1.3.6} and then it is closed under extensions.

    We will show that \(\Ind(\mcalD_{\graded{G}}^{\perf}(R))\) is presentable: Since any small coproduct can be written as a (\(\varpi\)-)filtered colimit of finite coproducts, \(\Ind(\mcalD_{\graded{G}}^{\perf}(R))\) admits small coproducts. Using \cite{lurie2017Higher}*{Proposition 1.4.4.1 (1)}, \(\Ind(\mcalD_{\graded{G}}^{\perf}(R))\) admits small colimits.
    Under the equivalence condition in \Cref{IndCatCompactPerf}, we can use \cite{lurie2009Higher}*{Corollary 5.3.4.15} to say that \(\mcalD_{\graded{G}}^{\perf}(R)\) admits (resp., is closed under) (\((\omega)\)-)small colimits in \(\Ind(\mcalD_{\graded{G}}^{\perf}(R))\).
    This implies that \(\Image(F)\) is presentable by \cite{lurie2009Higher}*{Theorem 5.5.1.1}.

    Next, it is checked that \(\Image(F) \subseteq \mcalD_{\graded{G}}(R)\) is closed under small colimits.
    Since the restriction \(\restr{F}{\Image(F)^{\omega}}\) is the same as the inclusion \(\mcalD_{\graded{G}}^{\perf}(R) \hookrightarrow \mcalD_{\graded{G}}(R)\) from the stable \(\infty\)-category \(\mcalD_{\graded{G}}^{\perf}(R)\) by \Cref{IndCatCompactPerf}, it is right exact, i.e., preserves finite colimits.
    The functor \(F\) is already (\(\omega\)-)continuous and thus \cite{lurie2009Higher}*{Proposition 5.5.1.9} shows that \(F\) preserves small colimits, which means \(\Image(F)\) is closed under small colimits.

    Using these properties on \(\Image(F)\), there is a \(t\)-structure \((\mcalD_{\geq 0}, \mcalD_{\leq 0})\) on \(\mcalD_{\graded{G}}(R)\) such that \(\mcalD_{\geq 0}\) is \(\Image(F)\).
    Take any object \(M\) of \(\mcalD_{\leq 1}\). This satisfies that \(\Map_{\mcalD_{\graded{G}}(R)}(M', M) \simeq 0\) for any \(M' \in \Image(F)\).
    Especially, we can take \(R(g)[n]\) as \(M'\) for all \(g \in G\) and \(n \in \setZ\).
    The equivalences
    \begin{equation*}
        0 \simeq \Map_{\mcalD_{\graded{G}}(R)}(R(g)[n], M) \simeq \Map_{\mcalD_{\graded{G}}(\setZ)}(\setZ[n], M(-g)) \simeq \Map_{\mcalD(\setZ)}(\setZ[n], M_{-g}) \simeq \pi_n(M_{-g})
    \end{equation*}
    hold as in \eqref{EquivCorepresentingGraded}.
    Therefore, \(M\) is equivalent to \(0\) in \(\mcalD_{\graded{G}}(R)\) and thus \(\Image(F) = \mcalD_{\graded{G}}(R)\).

    (2): Because of the equivalence \(\Ind(\mcalD_{\graded{G}}^{\perf}(R)) \simeq \Image(F) = \mcalD_{\graded{G}}(R)\), \Cref{IndCatCompactPerf} shows the desired result.

    (3): Recall that the full subcategory \(\mcalD^{\perf}(R)\) of perfect objects of \(\mcalD(R)\) is the smallest stable subcategory of \(\mcalD(R)\) which contains \(R\) and is closed under retracts, and it is the same as the full subcategory of compact objects of \(\mcalD(R)\) (\cite{lurie2017Higher}*{Definition 7.2.4.1 and Proposition 7.2.4.2}).
    Take the inverse image \(\theta^{-1}(\mcalD^{\perf}(R))\) in \(\mcalD_{\graded{G}}(R)\).
    This \(\theta^{-1}(\mcalD^{\perf}(R))\) contains \(R\) and is closed under retracts and grade shifting.
    Since the forgetful functor \(\theta\) is exact, we can show that \(\theta^{-1}(\mcalD^{\perf}(R))\) is stable and then it contains \(\mcalD^{\perf}_{\graded{G}}(R)\).
    This implies the desired conclusion.

    (4): Let \(\mcalC\) be the full subcategory of \(\mcalD(\Mod_{\graded{G}}(R))\) consisting of the image of bounded complex of graded finite projective \(R\)-modules.
    By the definition of \(\mcalD_{\graded{G}}^{\perf}(R)\), it contains \(\mcalC\).
    So it suffices to show that \(\mcalC\) is a stable \(\infty\)-category.
    This follows from that the cofiber of a morphism of \(\mcalC\) in \(\mcalD(\Mod_{\graded{G}}(R))\) is represented by the usual mapping cone which is also a bounded complex of graded finite projective \(R\)-modules as non-graded case, e.g., \citeSta{066R}.
\end{proof}

\section{Derived gradedwise complete objects} \label{SectionDerivedGradedwiseComp}

This section contains one of the main parts of this paper.
We define the \emph{derived gradedwise completeness} and \emph{derived gradedwise completion functor} for derived graded modules over a \(G\)-graded ring \(R\) with respect to a finitely generated homogeneous ideal \(I\) of \(R\).
Also, we study their basic properties such as topological Nakayama lemma and their relation with the (non-derived) gradedwise completeness.

\subsection{Derived gradedwise completeness}

We first define derived gradedwise completeness, similarly to derived completeness for non-graded objects.

\begin{notation}\label{NotationDerivedQuot}
    In this subsection, we fix a \(G\)-graded ring \(R\) and a finitely generated homogeneous ideal \(I\) of \(R\).
    Let \(M\) be an object of \(\mcalD_{\graded{G}}(R)\).
    \begin{itemize}
        \item For a sequence of homogeneous elements \(f_1, \dots, f_r\) of \(R\), we define the \emph{derived quotient of \(M\) by \(f_1, \dots, f_r\)} by
    \begin{equation*}
        M/^L (f_1, \dots, f_r) \defeq M \Lgrotimes_{R[X_1, \dots, X_r]} R[X_1, \dots, X_r]/(X_1, \dots, X_r) \in \mcalD_{\graded{G}}(R),
    \end{equation*}
    where \(R[X_1, \dots, X_r]\) is the polynomial ring with \(r\) variables \(X_i\) which has the \(G\)-graded structure by \(\deg(X_i) = \deg(f_i)\) and the \(G\)-graded \(R[X_1, \dots, X_r]\)-module structure on \(M\) is given by $X_i m \defeq f_i m$ for $m \in M$.
    In particular, we have
    \begin{align}\label{derived-quotient}
            M \Lgrotimes_R R/^L(f_1^n, \dots, f_r^n) 
            &\cong M \Lgrotimes_R (R \Lgrotimes_{R[\underline{X}]} R[\underline{X}]/(\underline{X})) \\
            &\cong M \Lgrotimes_{R[\underline{X}]} R[\underline{X}]/(\underline{X}) = M/^L(f_1^n, \dots, f_r^n), \nonumber
    \end{align}
    in \(\mcalD_{\graded{G}}(R)\), where $\underline{X} \defeq (X_1,\ldots,X_r)$.
    We note that the \(R[\underline{X}]\)-module structure on \(M\) factors through the \(R\)-algebra homomorphism \(R[\underline{X}] \to R\) defined by \(X_i \mapsto f_i\) and the \(R\)-module structure on \(M\).

    If \(r = 1\), taking the base change \(M \Lgrotimes_{R[X]} -\) of the fiber sequence 
    \begin{equation*}
        R[X](-\deg(X)) \xrightarrow{\times X} R[X] \to R[X]/(X)
    \end{equation*}
    in \(\mcalD_{\graded{G}}(R[X])\) gives the fiber sequence
    \begin{equation} \label{DerivedQuotCofiber}
        M(-\deg(f)) \xrightarrow{\times f} M \to M/^L f
    \end{equation}
    in \(\mcalD_{\graded{G}}(R)\).
    \item For any homogeneous element \(f \in R\), we denote by \(T(M, f)\) the limit
    \begin{equation*}
        T(M, f) \defeq \grlim(\cdots \xrightarrow{f} M(-2\deg(f)) \xrightarrow{f} M(-\deg(f)) \xrightarrow{f} M)
    \end{equation*}
    in \(\mcalD_{\graded{G}}(R)\). This is functorial on \(M\).

    Using a morphism \(M \xrightarrow{\times f^n} M\), we have a fiber sequence
    \begin{equation*}
        T(M, f) \to M \to \grlim_{n \geq 0} M/^L f^n
    \end{equation*}
    in \(\mcalD_{\graded{G}}(R)\). 
\end{itemize}
\end{notation}

\begin{remark} \label{AnotherTMf}
    There is another representation of \(T(M, f)\): Using \(\grmapspt\) in \Cref{PropertiesDerivedGradedModules}\Cref{ClosedMonoidalDgrR}, we have
    \begin{align*}
        \grmapspt_R(R[1/f], M) & \cong \grmapspt_R(\grcolim(R \xrightarrow{\times f} R \xrightarrow{\times f} \cdots), M) \\
        & \cong \grlim(\grmapspt_R(R, M) \xleftarrow{\times f} \grmapspt_R(R, M) \xleftarrow{\times f} \cdots) \cong T(M, f)
    \end{align*}
    in \(\mcalD_{\graded{G}}(R)\) by using some general facts on closed symmetric monoidal structure and the representation \(R[1/f] \cong \grcolim(R \xrightarrow{\times f} R \xrightarrow{\times f} \cdots)\) in \(\mcalD_{\graded{G}}(R)\).
\end{remark}

\begin{definition} \label{PrincipalDerivedGrCompleteness}
    In the setting of \cref{NotationDerivedQuot}, we say that \(M\) is \emph{derived gradedwise \(I\)-complete} if for any homogeneous element \(f \in I\), the limit \(T(M, f)\) is zero in \(\mcalD_{\graded{G}}(R)\).

    We denote by \(\mcalD_{\graded{G}}^{\comp{I}}(R) \subseteq \mcalD_{\graded{G}}(R)\) the full subcategory spanned by derived gradedwise \(I\)-complete objects.

    For any homogeneous element \(g \in R\), the \emph{derived gradedwise \(g\)-completion \(\dgrcomp{g}{M}\) of \(M\)} is defined by 
    \begin{equation*}
        \dgrcomp{g}{M}  \defeq \grlim_{n \geq 0} M/^L g^n.
    \end{equation*}
    We note that we have the canonical morphism \(M \to \dgrcomp{g}{M}\) in \(\mcalD_{\graded{G}}(R)\) and the fiber sequence
    \begin{equation*}
        T(M, g) \to M \to  \dgrcomp{g}{M}.
    \end{equation*}
\end{definition}

\subsection{Derived gradedwise completion}

We next study the derived gradedwise completion functor with respect to finitely generated homogeneous ideals.
First, we define the derived gradedwise completion functor for finitely many homogeneous elements.

\begin{definition} \label{DerivedGradedwiseComp-gene}
    In the setting of \cref{NotationDerivedQuot}, for homogeneous elements \(f_1, \ldots, f_r\), we define
    \begin{equation*}
        \dgrcomp{f_1, \dots, f_r}{M} \defeq 
        \dgrcomp{f_r} \circ \dgrcomp{f_{r-1}} \circ \cdots \circ \dgrcomp{f_1}{M} 
        \in \mcalD_{\graded{G}}(R).
    \end{equation*}
\end{definition}

A priori, this definition depends on the choice of homogeneous generators of the ideal, but we can show the following universal property, which in particular implies that the construction is independent of that choice.

\begin{lemma}\label{ind-generator}
In the setting of \cref{NotationDerivedQuot}, let \(f_1,\ldots,f_r\) be homogeneous elements and set \(I=(f_1,\ldots,f_r)\).
Then:
\begin{enumerate}
    \item \(\dgrcomp{f_1,\ldots,f_r}{M}\) is derived gradedwise \(I\)-complete.
    \item The canonical morphism \(M \to \dgrcomp{f_1,\ldots,f_r}{M}\) is the universal morphism from \(M\) to a derived gradedwise \(I\)-complete object, that is, for any derived gradedwise \(I\)-complete object \(L\) in \(\mcalD_{\graded{G}}(R)\), the natural map
    \begin{equation*}
        \Map_{\mcalD_{\graded{G}}(R)}(\dgrcomp{f_1,\ldots,f_r}{M}, L) \xrightarrow{\ \simeq\ } \Map_{\mcalD_{\graded{G}}(R)}(M, L)
    \end{equation*}
    is an equivalence, where the map is induced by composition with \(M \to \dgrcomp{f_1,\ldots,f_r}{M}\).
\end{enumerate}
In particular, the inclusion functor \(\mcalD_{\graded{G}}^{\comp{I}}(R) \hookrightarrow \mcalD_{\graded{G}}(R)\) has a left adjoint \(\dgrcomp{f_1,\ldots,f_r}{-}\).
\end{lemma}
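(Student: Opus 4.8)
The plan is to reduce everything to the single-element case and then assemble the general statement by induction on $r$, using the internal-hom description $T(M,f) \simeq \grmapspt_R(R[1/f], M)$ from \Cref{AnotherTMf}. Write $\mcalC_f \subseteq \mcalD_{\graded{G}}(R)$ for the full subcategory of derived gradedwise $(f)$-complete objects, i.e.\ those $L$ with $T(L,f) \simeq 0$. Since $\grmapspt_R(R[1/f], -)$ is the internal hom out of a fixed object (\Cref{PropertiesDerivedGradedModules}), it is exact, preserves all limits, and commutes with grade shifts; hence $\mcalC_f$ is closed under limits, shifts, grade shifts, and extensions.

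First I would treat a single homogeneous element $f$. For completeness I would show $\dgrcomp{f}{M} = \grlim_{n} M/^L f^n$ lies in $\mcalC_f$; as $\mcalC_f$ is closed under limits this reduces to $T(M/^L f^n, f) \simeq 0$, which holds because the fiber sequence $M(-n\deg f) \xrightarrow{f^n} M \to M/^L f^n$ shows that a power of $f$ annihilates every homotopy group of $M/^L f^n$, so the tower computing $T(M/^L f^n, f)$ is pro-zero and the graded Milnor sequence (\Cref{MilnorExactSequence}) forces its limit to vanish. For the universal property I would apply $\Map(-, L)$ to the fiber sequence $T(M,f) \to M \to \dgrcomp{f}{M}$ of \Cref{PrincipalDerivedGrCompleteness}; it then suffices that $\grmapspt_R(T(M,f), L) \simeq 0$ for $L \in \mcalC_f$. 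This holds because $f$ acts invertibly on $T(M,f) \simeq \grmapspt_R(R[1/f], M)$ (as it does on $R[1/f]$), whence $T(M,f) \simeq T(M,f) \Lgrotimes_R R[1/f]$, and then by the tensor--hom adjunction $\grmapspt_R(T(M,f), L) \simeq \grmapspt_R(T(M,f), \grmapspt_R(R[1/f], L)) \simeq \grmapspt_R(T(M,f), T(L,f)) \simeq 0$. This exhibits $\dgrcomp{f}{-}$ as left adjoint to $\mcalC_f \hookrightarrow \mcalD_{\graded{G}}(R)$.

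Next I would run the induction on $r$. The universal property in (2) becomes purely formal once the single-element case is in hand: for $L$ derived gradedwise $I$-complete one has $L \in \mcalC_{f_i}$ for every $i$, so peeling off the outermost completions one at a time gives $\Map(\dgrcomp{f_1,\ldots,f_r}{M}, L) \simeq \Map(\dgrcomp{f_1,\ldots,f_{r-1}}{M}, L) \simeq \cdots \simeq \Map(M, L)$. For (1) I would first show, again by induction, that $\dgrcomp{f_1,\ldots,f_r}{M}$ is derived gradedwise $(f_i)$-complete for each $i$: it is $(f_r)$-complete by the single-element case, and each further application of $\dgrcomp{f_j}{-} = \grlim_n (-)/^L f_j^n$ preserves $\mcalC_{f_i}$, because $\mcalC_{f_i}$ is closed under grade shifts, cofibers, and limits, and $(-)/^L f_j^n$ is built from exactly these operations.

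The main obstacle is the final reduction: passing from completeness with respect to each generator $f_i$ to completeness with respect to every homogeneous $f \in I$. I would prove $\bigcap_{i} \mcalC_{f_i} = \mcalD_{\graded{G}}^{\comp{I}}(R)$ by identifying both sides as right-orthogonals of localizing tensor-ideals: $\bigcap_i \mcalC_{f_i}$ is orthogonal to the tensor-ideal generated by $\{R[1/f_i]\}_i$, while $\mcalD_{\graded{G}}^{\comp{I}}(R)$ is orthogonal to the one generated by $\{R[1/f] : f \in I \text{ homogeneous}\}$, and these two localizing tensor-ideals coincide. The nontrivial inclusion is that $R[1/f]$ for homogeneous $f \in I$ lies in the tensor-ideal generated by the $R[1/f_i]$; since $f \in I$ forces $D(f) \subseteq \bigcup_i D(f_i)$, the homogeneous Čech complex for the cover $D(f) = \bigcup_i D(f f_i)$ exhibits $R[1/f]$ as a finite limit of localizations $R[1/(f f_{i_1} \cdots f_{i_k})]$, each a module over some $R[1/f_i]$ and hence in the tensor-ideal. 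This argument is formally identical to the non-graded case, the only new point being that all localizations and Čech terms remain $G$-graded because the $f_i$ are homogeneous. Granting this, $\dgrcomp{f_1,\ldots,f_r}{M} \in \bigcap_i \mcalC_{f_i} = \mcalD_{\graded{G}}^{\comp{I}}(R)$, which is (1); together with (2) this exhibits $\dgrcomp{f_1,\ldots,f_r}{-}$ as the asserted left adjoint.
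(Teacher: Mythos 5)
Your proposal is correct, but the route you take for the crux of part (1) is genuinely different from the paper's. The paper proves directly, by induction on $r$, that $T(\dgrcomp{f_1,\ldots,f_r}{M}, h)=0$ for \emph{every} homogeneous $h\in I$: it writes $h=f+af_r$ with $f\in(f_1,\ldots,f_{r-1})$, uses $T(N,h)\cong T(N,h^n)$, and replaces $h^n$ by the binomial tail $f'=\sum_{i\ge 1}\binom{n}{i}f^i(af_r)^{n-i}\equiv h^n \pmod{f_r^n}$, which lies in $(f_1,\ldots,f_{r-1})$, so that the vanishing follows from the inductive hypothesis applied to $N=\dgrcomp{f_1,\ldots,f_{r-1}}{M}$. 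You instead only establish completeness with respect to each generator $f_i$ separately (which is easy, since each $\mcalC_{f_i}$ is closed under limits, shifts and cofibers, and $\dgrcomp{f_j}{-}$ is built from these), and then upgrade to all homogeneous $f\in I$ by identifying $\bigcap_i\mcalC_{f_i}$ with the right-orthogonal of the tensor ideal generated by the $R[1/f_i]$ and showing, via the graded \v{C}ech complex for $D(f)=\bigcup_i D(ff_i)$, that $R[1/f]$ lies in that ideal. This is the standard argument from the non-graded theory (cf.\ the Stacks Project's treatment of derived completeness), and it transplants to $\mcalD_{\graded{G}}(R)$ precisely because $f$ and the $f_i$ are homogeneous, so all localizations stay graded; it is more conceptual and makes the independence of generators transparent, at the cost of importing descent for basic open covers, whereas the paper's binomial manipulation is elementary and self-contained. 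Your part (2) follows the paper's scheme (peel off one completion at a time), and your $r=1$ case is in fact slightly more complete than the paper's: you verify the vanishing of the full mapping spectrum $\grmapspt_R(T(M,f),L)$ via the $f$-invertibility of $T(M,f)\simeq\grmapspt_R(R[1/f],M)$ and the tensor--hom adjunction, rather than only noting that the composite $T(M,f)\to M\to L$ is null.
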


\begin{proof}
(1) We proceed by induction on \(r\).  

For \(r=1\), take a homogeneous element \(f_1a \in (f_1)\). Then
\begin{equation*}
    T(\dgrcomp{f_1}{M},f_1a) \simeq \grlim_{n} T(M/^L f_1^n, f_1a) \simeq \grlim_{n} T(M/^L f_1^n,(f_1a)^n)=0,
\end{equation*}
as required.

Suppose \(r \geq 2\). Let \(h \in I\) be homogeneous and set \(N \defeq \dgrcomp{f_1,\ldots,f_{r-1}}{M}\).  
By the induction hypothesis, \(N\) is derived gradedwise \((f_1,\ldots,f_{r-1})\)-complete.  
Write \(h=f+af_r\) with homogeneous elements \(f \in (f_1,\ldots,f_{r-1})\) and \(a \in R\). 
Because of the isomorphism
\begin{equation*}
    T(\dgrcomp{f_r}{N},h)\simeq \grlim_{m \geq 0} T(N/^L f_r^m,h),
\end{equation*}
it suffices to show \(T(N/^L f_r^n, h)=0\) for all \(n>0\).

Fix \(n\). We have isomorphisms
\begin{equation*}
    T(N/^L f_r^n,h) \cong T(N/^L f_r^n,h^n) \cong T(N/^L f_r^n,f')
\end{equation*}
in \(\mcalD_{\graded{G}}(R)\), where
\begin{equation*}
    f' \defeq \sum_{i=1}^{n} \binom{n}{i} f^i(af_r)^{n-i} \equiv (f+af_r)^{n}=h^{n} \pmod{(f_r^n)}.
\end{equation*}
Note that \(f'\) is a homogeneous element of  \((f_1,\ldots,f_{r-1})\).
Using the fiber sequence
\begin{equation*}
    N(-n\deg(f_r)) \xrightarrow{\times f_r^n} N \to N/^L f_r^n,
\end{equation*}
we obtain a fiber sequence
\begin{equation*}
    T(N,f')(-n\deg(f_r)) \xrightarrow{\times f_r^n} T(N,f') \to T(N/^L f_r^n,f').
\end{equation*}
Since \(N\) is derived gradedwise \((f_1,\ldots,f_{r-1})\)-complete, the first two terms vanish, hence so does the third.  
Thus \(T(N/^L f_r^n,h)=0\) holds and this completes the induction.

(2) Again we argue by induction on \(r\).  

For \(r=1\), consider a morphism \(\alpha \colon M \to L\) with \(L\) derived gradedwise \((f_1)\)-complete.  
We have a commutative square
\begin{equation*}
\begin{tikzcd}
T(M,f_1) \ar[r] \ar[d] & M \ar[d,"\alpha"] \\
T(L,f_1) \ar[r] & L
\end{tikzcd}
\end{equation*}
in \(\mcalD_{\graded{G}}(R)\).  
Since \(T(L,f_1)=0\), the composite \(T(M,f_1)\to M\xrightarrow{\alpha} L\) is zero.  
Hence by the universal property of cofibers, we obtain an equivalence
\[
\Map_{\mcalD_{\graded{G}}(R)}(\dgrcomp{f_1}{M}, L) \simeq \Map_{\mcalD_{\graded{G}}(R)}(M,L)
\] 
induced by the canonical morphism $M \to \dgrcomp{f_1}{M}$.
This proves the universal property in the case \(r=1\).

Suppose \(r \geq 2\).  
Let \(L\) be derived gradedwise \(I\)-complete. Then in particular, \(L\) is derived gradedwise \((f_1,\ldots,f_{r-1})\)-complete.  
By the case \(r=1\), we obtain an equivalence
\[
\Map_{\mcalD_{\graded{G}}(R)}(\dgrcomp{f_r}{N}, L) \simeq \Map_{\mcalD_{\graded{G}}(R)}(N,L),
\]
where \(N=\dgrcomp{f_1,\ldots,f_{r-1}}{M}\).  
By the induction hypothesis applied to \(N\), we further obtain
\[
\Map_{\mcalD_{\graded{G}}(R)}(N,L) = \Map_{\mcalD_{\graded{G}}(R)}(\dgrcomp{f_1, \dots, f_{r-1}}{M}, L) \simeq \Map_{\mcalD_{\graded{G}}(R)}(M,L).
\]
Composing the two equivalences yields the desired universal property for \(r\).  
\end{proof}

\begin{definition} \label{DerivedGradedwiseComp}
In the setting of \cref{NotationDerivedQuot}, for a homogeneous ideal \(I \subset R\), the \emph{derived gradedwise \(I\)-completion of \(M\)} is defined to be the value at \(M\) of a left adjoint to the inclusion functor
\[
\mcalD_{\graded{G}}^{\comp{I}}(R) \hookrightarrow \mcalD_{\graded{G}}(R).
\]
We denote such a functor by \(\dgrcomp{I}{M}\).  
By \cref{ind-generator}, for any choice of homogeneous generators \(f_1,\ldots,f_r\) of \(I\), 
the object \(\dgrcomp{f_1,\ldots,f_r}{M}\) is a derived gradedwise \(I\)-completion of \(M\).  

\end{definition}

Whether an object is derived gradedwise complete can be checked on its homotopy groups:

\begin{lemma} \label{EquivDerivedGradedwiseCompletenessHomology}
    In the setting of \Cref{NotationDerivedQuot}, the following are equivalent.
    \begin{enumerate}
        \item The given \(M \in \mcalD_{\graded{G}}(R)\) is derived gradedwise \(I\)-complete.
        \item Any homotopy group \(\pi_i(M) \in \Mod_{\graded{G}}(R)\) of \(M\) is derived gradedwise \(I\)-complete for any \(i \in \setZ\) as an object of \(\mcalD_{\graded{G}}(R)\).
    \end{enumerate}
\end{lemma}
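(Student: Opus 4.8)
The plan is to reduce the statement to a single homogeneous element and then exploit the graded Milnor exact sequence of \Cref{MilnorExactSequence}. By \Cref{PrincipalDerivedGrCompleteness}, the object $M$ is derived gradedwise $I$-complete if and only if $T(M,f)=0$ for every homogeneous $f \in I$, and likewise $\pi_i(M)$ is derived gradedwise $I$-complete if and only if $T(\pi_i(M),f)=0$ for every homogeneous $f \in I$. So I would fix an arbitrary homogeneous element $f \in I$ and analyze $T(M,f)$ in terms of the discrete objects $T(\pi_i(M),f)$, uniformly in $f$.

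First I would set $M_n \defeq M(-n\deg f)$, so that $T(M,f) \cong \grlim_n M_n$ for the tower whose transition maps are multiplication by $f$. Applying \Cref{MilnorExactSequence} to this tower yields, for each $i \in \setZ$, a short exact sequence
\begin{equation*}
    0 \to R^1\!\grlim_n \pi_{i+1}(M_n) \to \pi_i(T(M,f)) \to R^0\!\grlim_n \pi_i(M_n) \to 0
\end{equation*}
of discrete $G$-graded $R$-modules. The key identification is that $\pi_j(M_n) = \pi_j(M)(-n\deg f)$, so that the tower $(\pi_j(M_n))_n$ with its $f$-multiplication maps is exactly the tower defining $T(\pi_j(M),f)$. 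Since $\pi_j(M)$ is discrete, \Cref{MilnorExactSequence} applied to this second tower shows that $T(\pi_j(M),f)$ is concentrated in homological degrees $[-1,0]$, with $\pi_0(T(\pi_j(M),f)) = R^0\grlim_n \pi_j(M_n)$ and $\pi_{-1}(T(\pi_j(M),f)) = R^1\grlim_n \pi_j(M_n)$. Substituting, the displayed sequence becomes
\begin{equation*}
    0 \to \pi_{-1}(T(\pi_{i+1}(M),f)) \to \pi_i(T(M,f)) \to \pi_0(T(\pi_i(M),f)) \to 0. \tag{$\ast$}
\end{equation*}

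With $(\ast)$ in hand both implications are immediate. For the direction (2)$\Rightarrow$(1): if every $\pi_j(M)$ is derived gradedwise $I$-complete then $T(\pi_j(M),f)=0$, so both outer terms vanish and $\pi_i(T(M,f))=0$ for all $i$, whence $T(M,f)=0$; as $f$ was arbitrary, $M$ is derived gradedwise $I$-complete. For (1)$\Rightarrow$(2): if $T(M,f)=0$ then the middle term of $(\ast)$ vanishes for every $i$, forcing $\pi_0(T(\pi_i(M),f))=0$ (from the sequence at index $i$) and $\pi_{-1}(T(\pi_i(M),f))=0$ (from the sequence at index $i-1$, where the leftmost term is $\pi_{-1}(T(\pi_i(M),f))$); since $T(\pi_i(M),f)$ lives in degrees $[-1,0]$, this gives $T(\pi_i(M),f)=0$, and as $f$ was arbitrary each $\pi_i(M)$ is derived gradedwise $I$-complete.

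The only real content is the second step: recognizing that the derived functors $R^0\grlim$ and $R^1\grlim$ of the $f$-multiplication tower of a discrete graded module compute precisely the two (and only two) nonzero homotopy groups of $T$ of that module. This is purely a matter of reading \Cref{MilnorExactSequence} twice---once for $M$ and once for each discrete $\pi_j(M)$---so I do not anticipate a serious obstacle; care is needed only with the degree shift $(-n\deg f)$ when matching the two towers, and with the fact that the criterion must be verified for every homogeneous $f \in I$ rather than for a fixed generating set.
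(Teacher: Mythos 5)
Your proposal is correct and follows essentially the same route as the paper: fix a homogeneous $f\in I$, apply the graded Milnor exact sequence to the multiplication-by-$f$ tower to obtain the short exact sequence $0\to\pi_{-1}(T(\pi_{i+1}(M),f))\to\pi_i(T(M,f))\to\pi_0(T(\pi_i(M),f))\to 0$, and use the fact that $T$ of a discrete graded module is concentrated in degrees $[-1,0]$ to conclude both implications. Your care with the shift $(-n\deg f)$ and with checking all homogeneous $f\in I$ matches the paper's argument exactly.
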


\begin{proof}
    We follow the proof of \cite{lurie2018Spectral}*{Theorem 7.3.4.1}.
    Take any homogeneous element \(f \in I\).
    Set \(\setZ_{\geq 0}\)-indexed diagrams
    \begin{align*}
        D(M) & \defeq (\cdots \xrightarrow{\times f} M(-2\deg(f)) \xrightarrow{\times f} M(-\deg(f)) \xrightarrow{\times f} M) \quad \text{and} \\
        D_i(M) & \defeq (\cdots \xrightarrow{\times f} \pi_i(M)(-2\deg(f)) \xrightarrow{\times f} \pi_i(M)(-\deg(f)) \xrightarrow{\times f} \pi_i(M))
    \end{align*}
    in \(\mcalD_{\graded{G}}(R)\) and \(\Mod_{G-gr}(R)\) whose limit (in \(\mcalD_{\graded{G}}(R)\)) is \(T(M, f)\) and \(T(\pi_i(M), f)\) respectively.
    Applying the Milnor exact sequence (\Cref{MilnorExactSequence}) for \(D\), we have a short exact sequence
    \begin{equation} \label{MilnorExactCompleteness}
        0 \to R^1\!\grlim_{n \geq 0}D_{i+1}(M) \to \pi_i(T(M, f)) \to R^0\!\grlim_{n \geq 0}D_i(M) \to 0
    \end{equation}
    of discrete \(G\)-graded \(R\)-modules.
    Because of \(R^i\grlim(-) = \pi_{-i}(\grlim(-))\), this exact sequence is
    \begin{equation*}
        0 \to \pi_{-1}(T(\pi_{i+1}(M), f)) \to \pi_i(T(M, f)) \to \pi_0(T(\pi_i(M), f)) \to 0.
    \end{equation*}
    Therefore, \(M\) is derived gradedwise \(I\)-complete if and only if \(\pi_k(T(\pi_i(M), f))\) is zero for \(k = 0, -1\) for all \(i \in \setZ\).
    So it suffices to show that \(T(M, f)\) is concentrated in (homological) degree \([-1, 0]\) if \(M\) is discrete.
    This also follows from the above Milnor exact sequence: Indeed, if \(M\) satisfies \(\pi_i(M) = 0\) for \(i \neq 0\), the diagram \(D_i(M)\) is zero for \(i \neq 0\) and thus the exact sequence \eqref{MilnorExactCompleteness} shows the vanishing of \(\pi_i(T(M, f))\) for \(i \neq 0, -1\).
\end{proof}

\begin{lemma} \label{PrincipalDerivedGradedwiseCompProp}
    In the setting of \cref{NotationDerivedQuot}, the following properties are satisfied:
    \begin{enumerate}
        \item \label{DerivedGrcompLimits} The full subcategory \(\mcalD_{\graded{G}}^{\comp{I}}(R)\) is stable under limits. In particular, this full subcategory is a stable \(\infty\)-category.
        \item \label{DerivedGrcompPreservesLimits} Taking the derived gradedwise \(I\)-completion functor \(\dgrcomp{I}{-} \colon \mcalD_{\graded{G}}(R) \to \mcalD_{\graded{G}}^{\comp{I}}(R)\) preserves limits.
        \item \label{DerivedGrcompComplete} The derived gradedwise \(I\)-completion \(\dgrcomp{I}{M}\) is derived gradedwise \(I\)-complete.
        \item \label{DerivedGrcompUniv} The canonical morphism \(M \to \dgrcomp{I}{M}\) is the universal morphism from \(M\) to a derived gradedwise \(I\)-complete object. In particular, the inclusion functor \(\mcalD_{\graded{G}}^{\comp{I}}(R) \hookrightarrow \mcalD_{\graded{G}}(R)\) has a left adjoint \(\dgrcomp{I}{-}\).
        \item \label{DerivedGrcompQuotLimit} The derived gradedwise \(I\)-completion \(\dgrcomp{I}{M}\) can be written as the limit
        \begin{equation*}
            \dgrcomp{I}{M} \cong \grlim_{n \geq 0} (M/^L (f_1^{n}, \ldots, f_r^{n}))
        \end{equation*}
        where \(M/^L (f_1^n, \dots, f_r^n)\) is the derived quotient defined in \Cref{NotationDerivedQuot}.
        \item \label{DerivedGrcompQuot} If $M$ is derived gradedwise $I$-complete, then so is $M/^L(g_1,\ldots,g_l)$ for any homogeneous elements $g_1,\ldots,g_l \in R$.
        \item \label{DerivedGrcompPCompletion} In the case of \(I = (p)\) for a prime number \(p\), the derived gradedwise \(p\)-completion \(\dgrcomp{p}{M}\) is the same as the composition \(G^{ds} \xrightarrow{M} \mcalD(\setZ) \xrightarrow{\dcomp{p}{-}} \mcalD(\setZ)\), where \(\dcomp{p}{-}\) is the usual derived \(p\)-completion on \(\mcalD(\setZ)\).
        \item \label{BoundedTorsionDerivedCompletion} In the case of \(I = (f)\), if \(M\) is concentrated in degree \(0\) and has bounded \(f^{\infty}\)-torsion, then the canonical morphism
        \begin{equation*}
            \dgrcomp{f}{M} \to \grcomp{f}{M}
        \end{equation*}
        is an isomorphism in \(\mcalD_{\graded{G}}(R)\).
        \item \label{DerivedGrcompAdicCompletion} The derived \(I\)-completion \(\dcomp{I}{M}\) of the underlying \(R\)-module of \(M\) and the one of \(\dgrcomp{I}{M}\) are the same.
        \item \label{DerivedGrcompRetract} Take any morphism \(s \colon M \to N\) in \(\mcalD_{\graded{G}}(R)\). If \(N\) is derived gradedwise \(I\)-complete and \(s\) admits a retraction in \(\mcalD_{\graded{G}}(\setZ)\), then \(M\) is derived gradedwise \(I\)-complete.
        \item \label{DerivedGrcompGrcomp} Assume that \(M\) is concentrated in degree \(0\) and is \(I\)-adically gradedwise complete, i.e., the canonical morphism
\[
M \xrightarrow{\cong}  \bigoplus_{g \in G} \lim_{n \geq 0} (M_g/(I^n \cap M_g))
\]
is an isomorphism of discrete \(G\)-graded \(R\)-modules (\cite{ishizuka2025Graded}*{Construction 3.3}).
        Then \(M\) is derived gradedwise \(I\)-complete.
        \item \label{DerivedGrcompAbelian} The full subcategory \(\Mod_{\graded{G}}^{\comp{I}}(R)\) of \(\Mod_{\graded{G}}(R)\) spanned by derived gradedwise \(I\)-complete discrete \(G\)-graded \(R\)-modules is an abelian category.
    \end{enumerate}
\end{lemma}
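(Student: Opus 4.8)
The plan is to realize $\Mod_{\graded{G}}^{\comp{I}}(R)$ as a full subcategory of the abelian category $\Mod_{\graded{G}}(R) \simeq \mcalD_{\graded{G}}^{\heartsuit}(R)$ (\Cref{PropertiesDerivedGradedModules}\Cref{ConnectivePartDgrR}) that contains the zero object and is closed under finite direct sums, kernels, and cokernels, all computed in $\Mod_{\graded{G}}(R)$. Once this is established, abelianness is formal: a full additive subcategory of an abelian category closed under kernels and cokernels is itself abelian, with exact inclusion. Indeed, for a morphism $f$ in such a subcategory, both $\Ker f$ and $\Coker f$ lie in it, hence so do $\Im f = \Ker(N \to \Coker f)$ and $\operatorname{coim} f = \Coker(\Ker f \to M)$, and the canonical isomorphism $\operatorname{coim} f \xrightarrow{\simeq} \Im f$ is inherited from $\Mod_{\graded{G}}(R)$ since the subcategory is full.

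The zero module and finite direct sums are immediate: a finite direct sum is in particular a finite product, hence a limit, so it is derived gradedwise $I$-complete by \Cref{PrincipalDerivedGradedwiseCompProp}\Cref{DerivedGrcompLimits}. Together with the inherited abelian-group structure on the Hom-sets (the subcategory being full), this makes $\Mod_{\graded{G}}^{\comp{I}}(R)$ an additive subcategory of $\Mod_{\graded{G}}(R)$.

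For closure under kernels and cokernels, given a morphism $f \colon M \to N$ of discrete derived gradedwise $I$-complete modules, I would form the fiber $F \defeq \mathrm{fib}(f)$ in $\mcalD_{\graded{G}}(R)$. As a finite limit of derived gradedwise $I$-complete objects, $F$ is itself derived gradedwise $I$-complete by \Cref{DerivedGrcompLimits}. Since $M$ and $N$ are concentrated in degree $0$, the long exact sequence of homotopy groups attached to the fiber sequence $F \to M \xrightarrow{f} N$ reduces to $0 \to \pi_0(F) \to M \xrightarrow{f} N \to \pi_{-1}(F) \to 0$, yielding $\pi_0(F) \cong \Ker f$, $\pi_{-1}(F) \cong \Coker f$, and $\pi_i(F) = 0$ for all other $i$. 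By the homotopy-group criterion for derived gradedwise completeness (\Cref{EquivDerivedGradedwiseCompletenessHomology}), every $\pi_i(F)$ is derived gradedwise $I$-complete; in particular $\Ker f$ and $\Coker f$ are. Combined with the previous paragraph, this shows that $\Mod_{\graded{G}}^{\comp{I}}(R)$ is closed under the three required operations, completing the argument.

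The only genuine subtlety is the bookkeeping in the final step: one should observe that a single fiber simultaneously exhibits the kernel (in degree $0$) and the cokernel (in degree $-1$) as its homotopy groups, so that one application of \Cref{EquivDerivedGradedwiseCompletenessHomology} to the complete object $F$ delivers completeness of both at once. No analytic input beyond the already-established closure under limits and the characterization of completeness via homotopy groups is needed; the remainder is the formal fact about abelian subcategories recalled in the first paragraph.
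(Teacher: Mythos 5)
Your proposal addresses only item~\Cref{DerivedGrcompAbelian} of the twelve assertions in \Cref{PrincipalDerivedGradedwiseCompProp}; items \Cref{DerivedGrcompLimits}--\Cref{DerivedGrcompGrcomp} are not touched at all, and several of them require genuine arguments (e.g.\ the limit formula \(\dgrcomp{I}{M} \cong \grlim_n M/^L(f_1^n,\dots,f_r^n)\) in \Cref{DerivedGrcompQuotLimit}, the comparison with the non-derived gradedwise completion under bounded torsion in \Cref{BoundedTorsionDerivedCompletion}, and the compatibility \(\dcomp{I}{M} \simeq \dcomp{I}{\dgrcomp{I}{M}}\) in \Cref{DerivedGrcompAdicCompletion}, which the paper needs later for \Cref{NakayamaIsom} and \Cref{ComonadGradedModules}). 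Moreover, your argument for \Cref{DerivedGrcompAbelian} invokes \Cref{DerivedGrcompLimits} of the very lemma being proved; that item is easy (the completeness of \(M\) is the vanishing of the limits \(T(M,f)\), and \(T(-,f)\) commutes with limits), but it must be established first rather than assumed. As written, the proposal is therefore a proof of one twelfth of the statement.

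For the one item you do prove, your argument is correct and in fact somewhat more self-contained than the paper's. The paper reduces \Cref{DerivedGrcompAbelian} to a ``two out of three'' property for the fiber sequences \(T(M_1,f) \to T(M_2,f) \to T(M_3,f)\) attached to a short exact sequence, and leaves the deduction of abelianness from that property implicit. You instead take the fiber \(F = \mathrm{fib}(f)\) of a morphism of discrete complete modules, observe that \(F\) is complete as a finite limit of complete objects, read off \(\Ker f = \pi_0(F)\) and \(\Coker f = \pi_{-1}(F)\) from the long exact sequence, and conclude by the homotopy-group criterion \Cref{EquivDerivedGradedwiseCompletenessHomology}; closure under kernels, cokernels, zero, and finite sums then gives an abelian subcategory by the standard formal argument. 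This route has the advantage of making the passage from stability of the subcategory to closure under kernels and cokernels completely explicit, but it does not relieve you of proving the remaining eleven items.
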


\begin{proof}
    \Cref{DerivedGrcompLimits}: The derived gradedwise \(I\)-completeness can be detected by the vanishing of the limit \(T(M, f)\), which commutes with limits.

    \Cref{DerivedGrcompPreservesLimits}: We may assume that \(r = 1\) and \(I = (g)\). This follows from the definition of \(\dgrcomp{g}{M}\) as the cofiber of \(T(M, g) \to M\) in \(\mcalD_{\graded{G}}(R)\).

    \Cref{DerivedGrcompComplete} and \Cref{DerivedGrcompUniv} follow from \cref{ind-generator}. 

    \Cref{DerivedGrcompQuotLimit}: We will only prove the case that \(r = 2\) because the general case is similar. Since taking the derived gradedwise \(f_i\)-completion preserves limits by \Cref{DerivedGrcompPreservesLimits}, we have
    \begin{align*}
        \dgrcomp{(f_1, f_2)}{M} & \cong \grlim_{n \geq 0} \dgrcomp{f_2}{M/^L f_1^n} \cong \grlim_{n \geq 0} \grlim_{m \geq 0} (M/^L f_1^n)/^L f_2^m \cong \grlim_{n \geq 0} M/^L (f_1^n, f_2^n)
    \end{align*}
    where the second isomorphism follows from that \(M/^L f_1^n\) is the cofiber of \(M \xrightarrow{\times f_1^n} M\) and this commutes with limits.

    \Cref{DerivedGrcompQuot}: By induction on $l$, we may assume $l=1$ and set $g \defeq g_1$.
    For $f \in R$, we have
    \begin{equation*}
        \dgrcomp{f}{M/^L(g)} \simeq \grlim_{n \geq 0} M/^L(g,f^n) 
        \simeq \parenlr{\grlim_{n \geq 0} M/^L(f^n)}/^L g
        \simeq \dgrcomp{f}{M}/^L g.
    \end{equation*}
    Hence, if $M$ is derived gradedwise $I$-complete, then so is $M/^L(g)$, as desired.

    \Cref{DerivedGrcompPCompletion}: First note that \(\dgrcomp{p}{M} = \grlim_{n \geq 0} M/^L p^n\) can be written as the limit \(\grlim_{n \geq 0} (M \Lgrotimes_{\setZ} \setZ/p^n\setZ) \in \mcalD_{\graded{G}}(R)\) which inherits the graded \(R\)-module structure by the one on \(M\).
    Since all limits in \(\mcalD_{\graded{G}}(R)\) are computed gradedwise, the limit gives the derived \(p\)-completion of each graded \(M_g \in \mcalD(\setZ)\) of \(M\) and this is the composition \(G^{ds} \xrightarrow{M} \mcalD(\setZ) \xrightarrow{\dcomp{p}{-}} \mcalD(\setZ)\).

    \Cref{BoundedTorsionDerivedCompletion}: If \(M\) has bounded \(f^\infty\)-torsion, then the pro-systems \(\{M/^L f^n\}_{n \geq 1}\) and \(\{M/f^nM\}_{n \geq 1}\) are pro-isomorphic along the canonical morphism \(M/^L f^n \to M/f^nM\) by \Cref{BoundedProIsom}.
    Therefore, taking the limits in \(\mcalD_{\graded{G}}(R)\), we have an isomorphism \(\dgrcomp{f}{M} \xrightarrow{\cong} \grlim_{n \geq 0}M/f^nM\) in \(\mcalD_{\graded{G}}(R)\).
    Then \Cref{MittagLefflerDiscrete} shows that the target is the same as the \(f\)-adic gradedwise completion of \(M\) defined in \cite{ishizuka2025Graded}.
    Therefore, we have the desired isomorphism.

    \Cref{DerivedGrcompAdicCompletion}: Since the derived \(I\)-completion of the underlying \(R\)-module of \(\dgrcomp{I}{M}\) is given by
    \begin{equation*}
        \dcomp{I}{\dgrcomp{I}{M}} = \grlim_{n \geq 0} (\dgrcomp{I}{M} \Lgrotimes_{R} R/^L(f_1^n, \dots, f_r^n))
    \end{equation*}
    in \(\mcalD(R)\).
    By the isomorphism \eqref{eq:derived-quotient}, it suffices to show that the canonical morphism
    \begin{equation*}
        M/^L(f_1^n, \dots, f_r^n) \to \dgrcomp{I}{M}/^L(f_1^n, \dots, f_r^n)
    \end{equation*}
    is an isomorphism in \(\mcalD_{\graded{G}}(R)\) for all \(n \geq 1\).
    Using the fiber sequence \eqref{DerivedQuotCofiber}, taking the derived quotient commutes with limits.
    Therefore, the above morphism is the same as
    \begin{equation*}
        M/^L(f_1^n, \dots, f_r^n) \to \dgrcomp{I}{M/^L(f_1^n, \dots, f_r^n)}
    \end{equation*}
    for all \(n \geq 1\).
    Since \(f_1, \dots, f_r\) are homogeneous generators of \(I\) in \(R\), the derived quotient \(M/^L(f_1^n, \dots, f_r^n)\) is already derived gradedwise \(I\)-complete as in the proof of \Cref{ind-generator}(1).
    Then the above morphism must be an isomorphism by \Cref{PrincipalDerivedGradedwiseCompProp}\Cref{DerivedGrcompUniv}.
    This finishes the proof of \Cref{DerivedGrcompAdicCompletion}.

    \Cref{DerivedGrcompRetract}: Take any homogeneous element \(f \in I\). Since \(\mcalD_{\graded{G}}(R) \to \mcalD_{\graded{G}}(\setZ)\) preserves limits, it suffices to show that \(T(M, f)\) is zero in \(\mcalD_{\graded{G}}(\setZ)\).
    By assumption, we have a retraction \(r \colon N \to M\) in \(\mcalD_{\graded{G}}(\setZ)\) of \(s\) and this induces a retraction \(T(r, f) \colon T(N, f) \to T(M, f)\) of \(T(s, f) \colon T(M, f) \to T(N, f)\) in \(\mcalD_{\graded{G}}(\setZ)\).
    Since \(N\) is derived gradedwise \(I\)-complete, we have \(T(N, f) = 0\) and thus \(T(M, f) = 0\).

    \Cref{DerivedGrcompGrcomp}: We may assume that \(I = (f)\) is principal.
    Since \(M\) is \((f)\)-adically gradedwise complete, by \Cref{MittagLefflerDiscrete}, the canonical morphism \(M \to  \lim_{n \geq 1} M/f^nM \) is an isomorphism, where the limit is taken in \(\mcalD_{\graded{G}}(R)\).
    The same argument as in \Cref{ind-generator} shows that \(T(M, g) \cong \grlim_{n \geq 1}T(M/f^nM, g) = 0\) for all homogeneous elements \(g \in (f)\) and this is the derived gradedwise \((f)\)-completeness of \(M\).

    \Cref{DerivedGrcompAbelian}: It is enough to check the two out of three property on the derived gradedwise \(I\)-complete property. Take any short exact sequence \(0 \to M_1 \to M_2 \to M_3 \to 0\) in \(\Mod_{\graded{G}}(R)\).
    Sending this exact sequence to \(\mcalD_{\graded{G}}(\setZ)\) and taking each \(g\)-graded part for \(g \in G\), this gives a fiber sequence \(M_{1, g} \to M_{2, g} \to M_{3, g}\) of \(\mcalD(\setZ)\).
    Using \Cref{PropertiesDgrZ}\Cref{LimitsColimitsDgrZ} and \Cref{PropertiesDerivedGradedModules}\Cref{LimitsColimitsDgrR}, a fiber sequence \(M_1 \to M_2 \to M_3\) arises in \(\mcalD_{\graded{G}}(R)\).
    Then we need to show that the fiber sequence \(T(M_1, f) \to T(M_2, f) \to T(M_3, f)\) has the two out of three property regarding the vanishing in \(\mcalD_{\graded{G}}(R)\) for any homogeneous element \(f \in I\) but this is a usual statement.
\end{proof}

\subsection{Nakayama type results and other properties}

As in the derived Nakayama lemma (e.g., \citeSta{0G1U} and \citeSta{0H83}), we have the following graded version for derived gradedwise complete objects.

\begin{proposition}\label{graded-derived-nakayama}
In the setting of \Cref{NotationDerivedQuot}, assume that \(M\) is derived gradedwise \(I\)-complete.
Let $f_1,\ldots,f_r$ be homogeneous generators of $I$, and let $m \in \Z$.
\begin{enumerate}
    \item If $\pi_i(M/^L(f_1,\ldots,f_r))=0$ for $i<m$, then $\pi_i(M)=0$ for $i<m$.
    \item If \(\pi_i(M \Lgrotimes_R R/I) = 0\) for \(i < m\), then \(\pi_i(M) = 0\) for \(i < m\).
\end{enumerate}
\end{proposition}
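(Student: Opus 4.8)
The plan is to establish (1) by induction on the number $r$ of generators, collapsing to the single-element case, and then to deduce (2) from (1) by comparing the Koszul-type quotient $M/^L(f_1,\ldots,f_r)$ with the honest tensor $M\Lgrotimes_R R/I$. Throughout I use that each homotopy group of a derived gradedwise complete object is itself complete (\Cref{EquivDerivedGradedwiseCompletenessHomology}), which lets me argue one degree at a time and so never need $M$ to be bounded below.

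For the inductive step of (1), set $N\defeq M/^L(f_1,\ldots,f_{r-1})$. By \Cref{PrincipalDerivedGradedwiseCompProp}\Cref{DerivedGrcompQuot} the object $N$ is again derived gradedwise $I$-complete, hence derived gradedwise $(f_r)$-complete since $f_r\in I$ (\Cref{PrincipalDerivedGrCompleteness}). As $M/^L(f_1,\ldots,f_r)\simeq N/^L f_r$, the case $r=1$ applied to $(N,f_r)$ gives $\pi_i(N)=0$ for $i<m$; because $M$ is derived gradedwise $(f_1,\ldots,f_{r-1})$-complete (as $(f_1,\ldots,f_{r-1})\subseteq I$), the inductive hypothesis then forces $\pi_i(M)=0$ for $i<m$. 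It thus remains to treat $r=1$, say $I=(f)$. Here the fiber sequence \eqref{DerivedQuotCofiber} yields the short exact sequences
\begin{equation*}
0 \to \pi_i(M)/f\pi_i(M) \to \pi_i(M/^L f) \to \pi_{i-1}(M)[f] \to 0
\end{equation*}
of discrete $G$-graded $R$-modules, so the hypothesis $\pi_i(M/^L f)=0$ for $i<m$ forces multiplication by $f$ to be surjective on $\pi_i(M)$ for each $i<m$. By \Cref{EquivDerivedGradedwiseCompletenessHomology} every $\pi_i(M)$ is a discrete derived gradedwise $(f)$-complete module, so (1) reduces to the graded Nakayama statement: \emph{a discrete $G$-graded derived gradedwise $(f)$-complete module $N$ on which $f$ acts surjectively vanishes}. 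For this, $T(N,f)=0$; since $f$ is surjective the tower $(\cdots\xrightarrow{f}N(-\deg f)\xrightarrow{f}N)$ has surjective transition maps, so by \Cref{MittagLefflerDiscrete} its limit $T(N,f)$ is the ordinary graded inverse limit concentrated in degree $0$, which therefore vanishes. Lifting any homogeneous $x\in N$ through the surjective tower produces an element of this inverse limit mapping to $x$, whence $x=0$ and $N=0$. Applying this to each $\pi_i(M)$, $i<m$, completes (1).

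For (2) I claim that, for an arbitrary $M\in\mcalD_{\graded{G}}(R)$,
\begin{equation*}
\pi_i(M\Lgrotimes_R R/I)=0 \ (i<m)\ \Longrightarrow\ \pi_i(M/^L(f_1,\ldots,f_r))=0 \ (i<m),
\end{equation*}
after which (1) applies verbatim. By \eqref{derived-quotient} we have $M/^L(f_1,\ldots,f_r)\simeq M\Lgrotimes_R R/^L(f_1,\ldots,f_r)$, and the truncation $R/^L(f_1,\ldots,f_r)\to\pi_0=R/I$ has fiber $F\defeq\tau_{\geq 1}(R/^L(f_1,\ldots,f_r))\in\mcalD_{\graded{G},\geq 1}(R)$, whose homotopy groups are the graded Koszul homology modules and are therefore annihilated by $I$, i.e. are $R/I$-modules. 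Tensoring the fiber sequence $F\to R/^L(f_1,\ldots,f_r)\to R/I$ with $M$ reduces the claim to showing $M\Lgrotimes_R F\in\mcalD_{\graded{G},\geq m+1}(R)$. Since $F$ is bounded, its finite Postnikov tower has graded pieces $\pi_j(F)[j]$ with $j\geq 1$; using $M\Lgrotimes_R\pi_j(F)\simeq(M\Lgrotimes_R R/I)\Lgrotimes_{R/I}\pi_j(F)$, the hypothesis $M\Lgrotimes_R R/I\in\mcalD_{\graded{G},\geq m}(R/I)$, and the fact that $-\Lgrotimes_{R/I}-$ carries $\mcalD_{\graded{G},\geq m}(R/I)\times\mcalD_{\graded{G},\geq 0}(R/I)$ into $\mcalD_{\graded{G},\geq m}(R/I)$ (checked through the conservative $t$-exact symmetric monoidal forgetful functor of \Cref{PropertiesDerivedGradedModules}\Cref{ForgetfulFunctorDgrR}), each piece lies in $\mcalD_{\graded{G},\geq m+1}(R)$. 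As the connective part is closed under extensions (\Cref{PropertiesDerivedGradedModules}\Cref{ConnectivePartDgrR}), $M\Lgrotimes_R F\in\mcalD_{\graded{G},\geq m+1}(R)$, and the fiber sequence gives the claim.

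The delicate point is precisely the comparison in (2): $R/I$ and the Koszul quotient differ by the $I$-torsion Koszul homology sitting in strictly positive degrees, and it is this positivity---combined with those homology groups being $R/I$-modules, so that tensoring them with $M$ is governed by $M\Lgrotimes_R R/I$---that makes the two connectivity conditions agree below $m$. I anticipate no trouble from $M$ being unbounded below, since (1) is run one homotopy group at a time via \Cref{EquivDerivedGradedwiseCompletenessHomology} and (2) only manipulates the bounded auxiliary object $F$; in particular no minimal-degree argument or boundedness assumption on $M$ is required.
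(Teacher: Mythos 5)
Your proof is correct. Part (1) is essentially the paper's argument: induction on $r$ reducing to the principal case, where the key point is that each $\pi_i(M)$ is a discrete derived gradedwise $(f)$-complete module (\Cref{EquivDerivedGradedwiseCompletenessHomology}) on which $f$ acts surjectively, hence vanishes because $T(\pi_i(M),f)=0$ while the Mittag--Leffler argument (\Cref{MittagLefflerDiscrete}) makes $T(\pi_i(M),f)\to\pi_i(M)$ surjective. You streamline the $r=1$ step by reading off surjectivity of $f$ on every $\pi_i(M)$, $i<m$, directly from the short exact sequence, whereas the paper first proves $\pi_i(M)=0$ for $i<m-1$ via the Milnor sequence for $M\simeq\grlim_n M/^Lf^n$ and then only has to kill the single remaining homotopy group; both are valid and neither needs boundedness of $M$. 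Part (2) is where you genuinely diverge: the paper passes to $\mcalD(R)$ and invokes \Cref{to-nakayama} together with a non-graded lemma from the Stacks Project (connectivity of $M\otimes^L_R R/I$ forces connectivity of $M\otimes^L_R K$ for $K$ connective, bounded and cohomologically $I^\infty$-torsion), whereas you stay inside $\mcalD_{\graded{G}}(R)$ and run the d\'evissage yourself: the fiber of $R/^L(f_1,\ldots,f_r)\to R/I$ is $1$-connective and bounded with homotopy groups that are $R/I$-modules, so its finite Postnikov filtration reduces everything to tensoring $M\Lgrotimes_R R/I$ over $R/I$ with discrete modules. Your version is self-contained and isolates exactly which property of the Koszul quotient is used ($I$-annihilation of the higher homotopy, which is stronger than the $I^\infty$-torsion the paper's citation requires); the paper's version is shorter on the page at the cost of an external reference. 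The one assertion you leave unjustified is that the higher homotopy groups of $R/^L(f_1,\ldots,f_r)$ are killed by $I$; this is standard but deserves a line --- since $X_i$ acts as zero on the discrete module $R[X_1,\ldots,X_r]/(X_1,\ldots,X_r)$, the element $f_i$ acts nullhomotopically on $R\otimes^L_{R[X_1,\ldots,X_r]}R[X_1,\ldots,X_r]/(X_1,\ldots,X_r)$ and hence annihilates its homotopy.
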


\begin{proof}
By shifting $M$ by $[m]$, we may assume $m=0$.

We prove (1) by induction on $r$.  
For $r=1$, set $f \defeq f_1$.  
Since $M$ is derived gradedwise $(f)$-complete, we have 
\begin{equation*}
    M \cong \grlim_{n \geq 0} M/^L f^n,
\end{equation*}
by \Cref{PrincipalDerivedGradedwiseCompProp}\Cref{DerivedGrcompUniv} and \Cref{DerivedGrcompQuotLimit} and the Milnor exact sequence (\Cref{MilnorExactSequence}) gives a short exact sequence
\begin{equation*}
    0 \to R^1\!\grlim_{n \geq 0} \pi_{i+1}(M/^L f^n) \to \pi_i(M) \to R^0\!\grlim_{n \geq 0} \pi_i(M/^L f^n) \to 0
\end{equation*}
of discrete \(G\)-graded \(R\)-modules.
Since each $M/^L f^n$ is connective, it follows that \(M\) is \((-1)\)-connective, i.e. \(\pi_i(M)=0\) for \(i<-1\).
Thus it remains to show that \(\pi_{-1}(M)=0\).

The fiber sequence
\[
M \xrightarrow{\times f} M \to M/^L f
\]
together with the connectivity of $M/^L f$ implies that multiplication by $f$ induces a surjective map 
\[
\times f \colon \pi_{-1}(M) \to \pi_{-1}(M).
\]
By \Cref{MittagLefflerDiscrete}, the object \(T(\pi_{-1}(M), f)\) is concentrated in degree $0$, and the canonical morphism
\[
T(\pi_{-1}(M), f) \to \pi_{-1}(M)
\]
is surjective.  
On the other hand, by \Cref{EquivDerivedGradedwiseCompletenessHomology}, the group \(\pi_{-1}(M)\) is derived gradedwise $(f)$-complete, hence \(T(\pi_{-1}(M), f)=0\).  
Consequently, \(\pi_{-1}(M)=0\), as required.

Now assume $r \geq 2$.  
Since \(M/^L(f_1,\ldots,f_{r-1})\) is derived gradedwise $(f_r)$-complete by \cref{PrincipalDerivedGradedwiseCompProp}\Cref{DerivedGrcompQuot} and  
\[
(M/^L(f_1,\ldots,f_{r-1}))/^L(f_r) \simeq M/^L(f_1,\ldots,f_r)
\]
 is connective by assumption, it follows that $M/^L(f_1,\ldots,f_{r-1})$ is connective by the case $r=1$.
Applying the induction hypothesis again, we conclude that $M$ is connective, proving (1).

We prove (2).
Assume \(\pi_i(M \Lgrotimes_R R/I)=0\) for \(i<0\).  
By the symmetric monoidal property and the $t$-exactness established in \Cref{PropertiesDerivedGradedModules}\Cref{ForgetfulFunctorDgrR}, the object \(M \otimes^L_R R/I\) is connective in \(\mcalD(R)\).  
Moreover, by \citeSta{0H82}, the derived tensor product
\begin{equation*}
    M \otimes^L_R R/^L(f_1,\dots,f_r)
\end{equation*}
is connective in \(\mcalD(R)\), since \(R/^L(f_1,\dots,f_r)\) is connective, bounded, and cohomologically \(I^\infty\)-torsion by \cref{to-nakayama}.  
Using the isomorphism \eqref{derived-quotient} and the $t$-exactness again, we deduce that \(M/^L(f_1,\dots,f_r)\) is connective in \(\mcalD_{\graded{G}}(R)\) (not only in \(\mcalD(R)\)).
Therefore, the connectivity of $M$ follows from part (1).
\end{proof}

\begin{corollary} \label{NakayamaIsom}
    In the setting of \Cref{NotationDerivedQuot}, take a morphism \(\varphi \colon M \to N\) of \(\mcalD_{\graded{G}}^{\comp{I}}(R)\) such that the derived \(I\)-completion
    \begin{equation*}
        \dcomp{I}{\varphi} \colon \dcomp{I}{M} \to \dcomp{I}{N}
    \end{equation*}
    is an isomorphism in \(\mcalD(R)\).
    Then \(\varphi\) itself is an isomorphism in \(\mcalD_{\graded{G}}(R)\).
    In other words, the functor
    \begin{equation*}
        \mcalD_{\graded{G}}^{\comp{I}}(R) \to \mcalD^{\comp{I}}(R); \quad M \mapsto \dcomp{I}{M}
    \end{equation*}
    is conservative, which will be written as \(\mcalF^I\) in the sequel (\Cref{ConstComonad}).
\end{corollary}

\begin{proof}
    Take the fiber \(F\) of \(\varphi\) in \(\mcalD_{\graded{G}}(R)\).
    The derived gradedwise \(I\)-completeness of \(M\) and \(N\) ensure that so is \(F\).
    By \Cref{graded-derived-nakayama}, it suffices to show that \(F/^L(f_1, \dots, f_r)\) vanishes in \(\mcalD_{\graded{G}}(R)\).
    In \(\mcalD(R)\), we have a fiber sequence
    \begin{equation*}
        \dcomp{I}{M} \xrightarrow{\dcomp{I}{\varphi}} \dcomp{I}{N} \to \dcomp{I}{F}.
    \end{equation*}
    Since \(\dcomp{I}{\varphi}\) is an isomorphism, the cofiber \(\dcomp{I}{F}\) vanishes in \(\mcalD(R)\).
    Taking the derived quotient,
    \begin{equation*}
        0 = \dcomp{I}{F}/^L(f_1, \dots, f_r) \cong F/^L(f_1, \dots, f_r)
    \end{equation*}
    holds in \(\mcalD(R)\).
    Since the right hand side is the image of \(F/^L(f_1, \dots, f_r)\) in \(\mcalD_{\graded{G}}(R)\), the conservativity in \Cref{PropertiesDerivedGradedModules}\Cref{ForgetfulFunctorDgrR} shows the vanishing of \(F/^L(f_1, \dots, f_r)\) in \(\mcalD_{\graded{G}}(R)\).
\end{proof}

We can equip the \(\infty\)-category \(\mcalD_{\graded{G}}^{\comp{I}}(R)\) of derived gradedwise \(I\)-complete objects with a symmetric monoidal structure by taking the derived gradedwise completion of the derived tensor product.

\begin{proposition} \label{SymmetricMonoidalCompletion}
    In the setting of \Cref{NotationDerivedQuot}, there exists an essentially unique symmetric monoidal structure \(- \cLgrotimes -\) on \(\mcalD_{\graded{G}}^{\comp{I}}(R)\) such that the derived gradedwise \(I\)-completion functor \(\dgrcomp{I}{-} \colon \mcalD_{\graded{G}}(R) \to \mcalD_{\graded{G}}^{\comp{I}}(R)\) is symmetric monoidal.
\end{proposition}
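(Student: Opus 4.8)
The plan is to exhibit $\mcalD_{\graded{G}}^{\comp{I}}(R)$ as a symmetric monoidal localization of $\mcalD_{\graded{G}}(R)$ and then to invoke the general machinery of \cite[Proposition 2.2.1.9]{lurie2017Higher}. By \Cref{PropertiesDerivedGradedModules}\Cref{PresentabilityDgrR} and \Cref{PropertiesDerivedGradedModules}\Cref{ClosedMonoidalDgrR}, the $\infty$-category $\mcalD_{\graded{G}}(R)$ is presentable and closed symmetric monoidal, and by \Cref{DerivedGradedwiseComp} together with \Cref{PrincipalDerivedGradedwiseCompProp}\Cref{DerivedGrcompUniv} the inclusion $\mcalD_{\graded{G}}^{\comp{I}}(R) \hookrightarrow \mcalD_{\graded{G}}(R)$ is reflective with left adjoint $\dgrcomp{I}{-}$. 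Composing the inclusion with this adjoint gives a localization functor $L$ whose local objects are exactly the derived gradedwise $I$-complete ones. According to \cite[Proposition 2.2.1.9]{lurie2017Higher}, to produce an essentially unique symmetric monoidal structure $- \cLgrotimes -$ on $\mcalD_{\graded{G}}^{\comp{I}}(R)$ for which $\dgrcomp{I}{-}$ is symmetric monoidal, it suffices to check that $L$ is compatible with the monoidal structure, i.e. that for every $L$-equivalence $s$ and every object $P$ the morphism $s \Lgrotimes_R \id_P$ is again an $L$-equivalence.

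The key reduction is the following stability property: for any $P \in \mcalD_{\graded{G}}(R)$ and any derived gradedwise $I$-complete object $L'$, the graded mapping spectrum $\grmapspt_R(P, L')$ is again derived gradedwise $I$-complete. Granting this, fix an $L$-equivalence $s \colon M \to N$ and an object $P$. A morphism is an $L$-equivalence if and only if it is sent to an equivalence by $\Map_{\mcalD_{\graded{G}}(R)}(-, L')$ for every complete $L'$. Using the tensor--hom adjunction of \Cref{PropertiesDerivedGradedModules}\Cref{ClosedMonoidalDgrR}, which is natural in the first variable, we have
\[
\Map_{\mcalD_{\graded{G}}(R)}(M \Lgrotimes_R P, L') \simeq \Map_{\mcalD_{\graded{G}}(R)}(M, \grmapspt_R(P, L')).
\]
Since $\grmapspt_R(P, L')$ is complete by the stability property and $s$ is an $L$-equivalence, the induced map on the right-hand mapping spaces is an equivalence, hence so is the one on the left. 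Therefore $s \Lgrotimes_R \id_P$ is an $L$-equivalence, which is exactly the required compatibility.

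The stability property itself is established via the internal-hom description of $T(-, f)$ recorded in \Cref{AnotherTMf}. For a homogeneous element $f \in I$, applying \Cref{AnotherTMf} to $\grmapspt_R(P, L')$, then the internal-hom adjunction together with the symmetry of $\Lgrotimes_R$, and finally \Cref{AnotherTMf} to $L'$ gives
\[
T(\grmapspt_R(P, L'), f) \cong \grmapspt_R(R[1/f] \Lgrotimes_R P, L') \cong \grmapspt_R(P, \grmapspt_R(R[1/f], L')) \cong \grmapspt_R(P, T(L', f)).
\]
As $L'$ is derived gradedwise $I$-complete, $T(L', f) = 0$, and hence $T(\grmapspt_R(P, L'), f) = 0$ for every homogeneous $f \in I$; by \Cref{PrincipalDerivedGradedwiseCompProp} this means $\grmapspt_R(P, L')$ is derived gradedwise $I$-complete.

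Finally, \cite[Proposition 2.2.1.9]{lurie2017Higher} yields the essentially unique symmetric monoidal structure $- \cLgrotimes -$ on $\mcalD_{\graded{G}}^{\comp{I}}(R)$ together with the symmetric monoidal enhancement of $\dgrcomp{I}{-}$; by construction $M \cLgrotimes N \simeq \dgrcomp{I}{(M \Lgrotimes_R N)}$. The only real content lies in the compatibility of the localization with $\Lgrotimes_R$, and all of that content is concentrated in the stability of $\grmapspt_R(P, L')$ under completeness, which is the one genuinely computational step; it collapses cleanly to the vanishing $T(L', f) = 0$ through the adjunction identities above. Everything else — that $\mcalD_{\graded{G}}(R)$ is closed symmetric monoidal and that $\dgrcomp{I}{-}$ realizes a reflective localization — has already been established earlier in the paper.
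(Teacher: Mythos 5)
Your proposal is correct, and it reaches the conclusion by a genuinely different route from the paper. The paper defines \(M \cLgrotimes N \defeq \dgrcomp{I}{(M \Lgrotimes_R N)}\) directly and verifies the required compatibility (that \(M \Lgrotimes_R N \to \dgrcomp{I}{M} \Lgrotimes_R N\) becomes an equivalence after completion) by passing through the forgetful functor to \(\mcalD(R)\): it uses \Cref{PrincipalDerivedGradedwiseCompProp}\Cref{DerivedGrcompAdicCompletion} to identify the underlying derived \(I\)-completions, invokes the known symmetric monoidality of \(\dcomp{I}{-}\) on \(\mcalD(R)\), and then lifts the conclusion back to the graded category via the Nakayama-type conservativity statement \Cref{NakayamaIsom}. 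You instead stay entirely inside \(\mcalD_{\graded{G}}(R)\): you verify the hypothesis of \cite[Proposition 2.2.1.9]{lurie2017Higher} by showing that \(\grmapspt_R(P, L')\) is derived gradedwise \(I\)-complete whenever \(L'\) is, which collapses via \Cref{AnotherTMf} and the tensor--hom adjunction to the vanishing \(T(L', f) = 0\). Both arguments are sound. Your version is more self-contained (it needs neither the comparison with the non-graded completion nor the graded Nakayama lemma, only the closed monoidal structure from \Cref{PropertiesDerivedGradedModules}\Cref{ClosedMonoidalDgrR} and the internal-hom description of \(T(-,f)\)) and is the standard ``complete objects form a \(\otimes\)-compatible localization'' argument transported to the graded setting; the paper's version has the minor advantage of reusing machinery it needs anyway (\Cref{NakayamaIsom}) and of making the relation to the non-graded completed tensor product explicit, which it exploits later. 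One small caveat: your argument leans on \Cref{AnotherTMf}, which the paper records only as a remark with a sketched justification, so if you wanted your proof to be fully rigorous you would need to flesh out that identification (it follows from \(\grmapspt_R(-, M)\) carrying the colimit \(R[1/f] \cong \grcolim(R \xrightarrow{\times f} R \xrightarrow{\times f} \cdots)\) to a limit, plus \(\grmapspt_R(R(g), M) \cong M(-g)\)).
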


\begin{proof}
    Define a bifunctor
    \begin{equation*}
        - \cLgrotimes - \colon \mcalD_{\graded{G}}^{\comp{I}}(R) \times \mcalD_{\graded{G}}^{\comp{I}}(R) \to \mcalD_{\graded{G}}^{\comp{I}}(R); \quad (M, N) \mapsto \dgrcomp{I}{(M \Lgrotimes_R N)}.
    \end{equation*}
    This gives a symmetric monoidal structure on \(\mcalD_{\graded{G}}^{\comp{I}}(R)\).
    For any \(M, N \in \mcalD_{\graded{G}}(R)\), we have a morphism
    \begin{equation*}
        \dcomp{I}{M \cLgrotimes_R N} \to \dcomp{I}{\dgrcomp{I}{M} \cLgrotimes_R N}
    \end{equation*}
    in \(\mcalD^{\comp{I}}(R)\) using the forgetful functor \(\mcalD_{\graded{G}}(R)\) and the derived \(I\)-completion functor on it.
    By \Cref{PropertiesDerivedGradedModules}\Cref{ForgetfulFunctorDgrR} and \Cref{PrincipalDerivedGradedwiseCompProp}\Cref{DerivedGrcompAdicCompletion}, this morphism can be written as
    \begin{equation*}
        \dcomp{I}{M \Lgrotimes_R N} \to \dcomp{I}{(\dgrcomp{I}{M} \Lgrotimes_R N)}
    \end{equation*}
    and this is an isomorphism because of the symmetric monoidal property of \(\dcomp{I}{-} \colon \mcalD(R) \to \mcalD^{\comp{I}}(R)\).
    Then \Cref{NakayamaIsom} implies that the canonical morphism
    \begin{equation*}
        M \cLgrotimes_R N \to \dgrcomp{I}{M} \cLgrotimes_R N
    \end{equation*}
    is an isomorphism in \(\mcalD_{\graded{G}}^{\comp{I}}(R)\). This is what we wanted.
\end{proof}

For complexes of discrete \(G\)-graded \(R\)-modules with a condition on each term, we have the following lemma relating the derived gradedwise \(I\)-completion and the gradedwise \(I\)-completion defined in \cite{ishizuka2025Graded}.

\begin{lemma} \label{DerivedGrcompComplex}
    Take a \(G\)-graded ring \(R\) and a finitely generated homogeneous ideal \(I\) of \(R\).
    Let \(M^{\bullet}\) be a bounded complex \(M^{\bullet} = (\cdots \to M^{n-1} \to M^n \to \cdots)\) of discrete \(G\)-graded \(R\)-modules \(M^n\), i.e., an object of \(\Ch^b(\Mod_{\graded{G}}(R))\).
    Assume that the canonical morphism
    \begin{equation*}
        \dgrcomp{I}{M^n} \xrightarrow{\cong} \grcomp{I}{M^n}
    \end{equation*}
    is an isomorphism in \(\mcalD_{\graded{G}}(R)\).
    Then on the image \(M\) of \(M^{\bullet}\) to \(\mcalD_{\graded{G}}(R)\) along the equivalence \(\mcalD(\Mod_{\graded{G}}(R)) \xrightarrow{\simeq} \mcalD_{\graded{G}}(R)\) in \Cref{DerivedGradedCatEquiv}, the canonical morphism
    \begin{equation*}
        \dgrcomp{I}{M} \xrightarrow{\cong} \grcomp{I}{M^{\bullet}} \defeq (\cdots \to \grcomp{I}{M^{n-1}} \to \grcomp{I}{M^n} \to \cdots)
    \end{equation*}
    is an isomorphism in \(\mcalD_{\graded{G}}(R)\).
\end{lemma}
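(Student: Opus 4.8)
The plan is to realize the bounded complex as an iterated extension of its terms via the stupid (brutal) filtration, reduce to the single-term case, and invoke the hypothesis there. Write $M^{\bullet}$ as concentrated in cohomological degrees $[a,b]$ and argue by induction on the number $b-a+1$ of nonzero terms, the base case $a=b$ being precisely the hypothesis (shifted).

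First I would pin down the comparison morphism. The classical termwise completion maps $M^n \to \grcomp{I}{M^n}$ assemble into a morphism of complexes $M^{\bullet} \to \grcomp{I}{M^{\bullet}}$, hence a morphism $M \to \grcomp{I}{M^{\bullet}}$ in $\mcalD_{\graded{G}}(R)$ (using the equivalence of \Cref{DerivedGradedCatEquiv}). By hypothesis each $\grcomp{I}{M^n}\cong\dgrcomp{I}{M^n}$ is derived gradedwise $I$-complete (\Cref{PrincipalDerivedGradedwiseCompProp}\Cref{DerivedGrcompComplete}), and since $\mcalD^{\comp{I}}_{\graded{G}}(R)$ is stable under limits, hence a stable subcategory closed under extensions and shifts (\Cref{PrincipalDerivedGradedwiseCompProp}\Cref{DerivedGrcompLimits}), the bounded complex $\grcomp{I}{M^{\bullet}}$—being an iterated extension of the shifts $\grcomp{I}{M^n}[-n]$ along its own stupid filtration—is itself derived gradedwise $I$-complete. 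The universal property of the completion (\Cref{PrincipalDerivedGradedwiseCompProp}\Cref{DerivedGrcompUniv}, equivalently \Cref{ind-generator}(2)) then factors this morphism uniquely through the canonical comparison
\[
\dgrcomp{I}{M} \longrightarrow \grcomp{I}{M^{\bullet}}.
\]

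Next I would run the induction. For the lowest degree $a$, the stupid truncation gives a short exact sequence of complexes
\[
0 \to \sigma^{\geq a+1} M^{\bullet} \to M^{\bullet} \to M^{a}[-a] \to 0,
\]
which is split in each cohomological degree, with $\sigma^{\geq a+1}M^{\bullet}$ having one fewer nonzero term. The derived gradedwise completion $\dgrcomp{I}{-}$ is exact, being a left adjoint between stable $\infty$-categories, so it carries the cofiber sequence induced by the above to a cofiber sequence; and because the additive functor $\grcomp{I}{-}$ preserves degreewise-split exactness, the termwise-completed complexes likewise form a cofiber sequence in $\mcalD_{\graded{G}}(R)$. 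Naturality of the termwise completion together with the universal factorization makes the three comparison maps into a morphism of cofiber sequences. The comparison for $M^{a}[-a]$ is the $[-a]$-shift of the hypothesis (as $\dgrcomp{I}{-}$ commutes with shifts), hence an isomorphism, while the comparison for $\sigma^{\geq a+1}M^{\bullet}$ is an isomorphism by the inductive hypothesis; the two-out-of-three property in the stable $\infty$-category $\mcalD_{\graded{G}}(R)$ then forces the comparison for $M^{\bullet}$ to be an isomorphism, completing the induction.

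The main obstacle I anticipate is the bookkeeping ensuring that the comparison is a genuine morphism of cofiber sequences: one must verify that the single recipe ``termwise completion followed by the universal factorization'' is compatible with the stupid-truncation inclusion and projection, i.e. that the squares built from $\dgrcomp{I}{-}$ applied to these chain maps and from $\grcomp{I}{-}$ applied to the same maps commute in $\mcalD_{\graded{G}}(R)$. I would handle this cleanly by regarding both $M^{\bullet}\mapsto\dgrcomp{I}{M}$ and $M^{\bullet}\mapsto\grcomp{I}{M^{\bullet}}$ as functors $\Ch^{b}(\Mod_{\graded{G}}(R))\to\mcalD^{\comp{I}}_{\graded{G}}(R)$ and the comparison as a natural transformation between them, after which the required compatibility is formal; the remaining verifications (additivity of $\grcomp{I}{-}$, exactness of $\dgrcomp{I}{-}$, and closure of $\mcalD^{\comp{I}}_{\graded{G}}(R)$ under the relevant extensions) are routine given the cited results.
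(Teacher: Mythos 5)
Your proposal is correct and follows essentially the same route as the paper: induction on the length of the complex via the stupid truncation, with the base case given by the hypothesis, a comparison of the two resulting fiber sequences, and the two-out-of-three property in the stable $\infty$-category. The only cosmetic difference is in justifying that $\grcomp{I}{M^{\bullet}}$ is derived gradedwise $I$-complete (you use closure under extensions along the stupid filtration; the paper uses that completeness is detected on homotopy groups together with \Cref{PrincipalDerivedGradedwiseCompProp}\Cref{DerivedGrcompAbelian}), and both arguments work.
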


\begin{proof}
    First, we notice that the canonical morphism \(\dgrcomp{I}{M} \to \grcomp{I}{M^{\bullet}}\) exists:
    Indeed, since the category of derived gradedwise \(I\)-complete modules is abelian by \Cref{PrincipalDerivedGradedwiseCompProp}\Cref{DerivedGrcompAbelian} and all \(I\)-adically gradedwise complete discrete \(G\)-graded modules are derived gradedwise \(I\)-complete by \Cref{PrincipalDerivedGradedwiseCompProp}\Cref{DerivedGrcompGrcomp}, any homology groups of \(\grcomp{I}{M^{\bullet}}\) are derived gradedwise \(I\)-complete and then the canonical morphism \(M^{\bullet} \to \grcomp{I}{M^{\bullet}}\) uniquely factors through \(\dgrcomp{I}{M}\) in \(\mcalD_{\graded{G}}(R)\) by \Cref{ind-generator}.

    Since \(M^{\bullet}\) is bounded, we may assume that \(M^{\bullet}\) is concentrated in cohomological degree \([0, a]\).
    We will proceed with the proof by induction on the length of the complex \(M^{\bullet}\).
    If \(a = 0\), the complex \(M^{\bullet} = M\) is the same as a discrete \(G\)-graded \(R\)-module and the result is just the assumption.

    We take the stupid truncation of \(M^{\bullet}\), i.e., we have a short exact sequence
    \begin{equation*}
        0 \to \sigma^{\geq 1}M^{\bullet} \to M^{\bullet} \to M^0[0] \to 0
    \end{equation*}
    of chain complexes of discrete \(G\)-graded \(R\)-modules, where the morphism \(\sigma^{\geq 1}M^{\bullet} \to M^{\bullet}\) is given by
    \begin{center}
        \begin{tikzcd}
            \sigma^{\geq 1}M^{\bullet} \defeq (0 \arrow[d] \arrow[r] & M^1 \arrow[r] \arrow[d, "\id"] & M^2 \arrow[r] \arrow[d, "\id"] & \cdots \arrow[r] & M^a \arrow[d, "\id"] \arrow[r] & 0) \arrow[d, "\id"] \\
            M^{\bullet} = (M^0 \arrow[r]                             & M^1 \arrow[r]                  & M^2 \arrow[r]                  & \cdots \arrow[r] & M^a \arrow[r]                  & 0).                
        \end{tikzcd}
    \end{center}
    We can take a sequence
    \begin{equation*}
        \sigma^{\geq 1}M^{\bullet} \to M \to M^0[0]
    \end{equation*}
    in \(\mcalD_{\graded{G}}(R)\).
    This is a fiber sequence: By \Cref{PropertiesDerivedGradedModules}\Cref{LimitsColimitsDgrR}, we may assume that \(R = \setZ\). Using \Cref{PropertiesDgrZ}\Cref{LimitsColimitsDgrZ}, it suffices to show that the sequence \(\sigma^{\geq 1}M_g^{\bullet} \to M_g \to M^0_g[0]\) of chain complex of abelian groups becomes a distinguished triangle of \(D(\setZ)\) for all \(g \in G\).
    Since this sequence also comes from the stupid truncation of \(M_g \in \Ch^b(\setZ)\), the short exact sequence \(0 \to \sigma^{\geq 1}M_g^{\bullet} \to M_g \to M^0_g \to 0\) exists in \(\Ch^b(\setZ)\) and this gives the desired distinguished triangle on \(D(\setZ)\).

    The same argument works for the complex \(\grcomp{I}{M^{\bullet}}\) canonically and thus we have a commutative diagram
    \begin{center}
        \begin{tikzcd}
            \sigma^{\geq 1}M^{\bullet} \arrow[d] \arrow[r]   & M \arrow[d] \arrow[r]   & {M^0} \arrow[d]   \\
            \sigma^{\geq 1}\grcomp{I}{M^{\bullet}} \arrow[r] & \grcomp{I}{M^{\bullet}} \arrow[r] & {\grcomp{I}{M^0}}
        \end{tikzcd}
    \end{center}
    in \(\mcalD_{\graded{G}}(R)\) whose horizontal sequences are fiber sequences and vertical morphisms are the canonical morphism given in the first paragraph.
    By construction, we know \(\sigma^{\geq 1}\grcomp{I}{M^{\bullet}} = \grcomp{I}{\sigma^{\geq 1}M^{\bullet}}\).
    Then we have a commutative diagram
    \begin{center}
        \begin{tikzcd}
            \dgrcomp{I}{(\sigma^{\geq 1}M^{\bullet})} \arrow[d, "\cong"] \arrow[r]   & \dgrcomp{I}{M} \arrow[d] \arrow[r]   & \dgrcomp{I}{M^0} \arrow[d, "\cong"]   \\
            \sigma^{\geq 1}\grcomp{I}{M^{\bullet}} \arrow[r] & \grcomp{I}{M^{\bullet}} \arrow[r] & {\grcomp{I}{M^0}}
        \end{tikzcd}
    \end{center}
    in \(\mcalD_{\graded{G}}(R)\), where the isomorphism follows from the induction hypothesis and our assumption on each \(M^n\).
    Therefore, the above commutative diagram shows that the middle morphism \(\dgrcomp{I}{M} \to \grcomp{I}{M^{\bullet}}\) is an isomorphism.
\end{proof}

Finally, we give the following lemma on the completion of graded modules.

\begin{lemma} \label{CompletionGradedModuleSplitting}
    Let \(R\) be a \(G\)-graded ring and let \(I\) be a finitely generated homogeneous ideal of \(R\).
    Take an object \(M\) of \(\mcalD_{\graded{G}}^{\comp{I}}(R)\).
    Then, there exists a commutative diagram
    \begin{center}
        \begin{tikzcd}
            M \arrow[rr] \arrow[rrd, "p_g"]            &  & \dcomp{I}{M} \arrow[d] \\
            M_g \arrow[u, "\iota_g"] \arrow[rr, "\id"] &  & M_g                   
        \end{tikzcd}
    \end{center}
    in \(\mcalD(\setZ)\) and this diagram is functorial in \(M\), where \(\iota_g \colon M_g \to M\) is the canonical inclusion for each \(g \in G\).
\end{lemma}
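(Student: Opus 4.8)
The plan is to build the two maps that are not yet named in the diagram—the right-hand vertical arrow $\dcomp{I}{M}\to M_g$ and the projection $p_g$—out of the tower defining the completion, and then to read off the two commutativity constraints from the fact that the defining quotient maps are homogeneous. Fix homogeneous generators $f_1,\dots,f_r$ of $I$ and set $C_n \defeq M/^L(f_1^n,\dots,f_r^n)\in\mcalD_{\graded{G}}(R)$, with its homogeneous transition maps $C_{n+1}\to C_n$. Two limit computations drive everything. First, since $M$ is derived gradedwise $I$-complete, \Cref{ind-generator} together with \Cref{PrincipalDerivedGradedwiseCompProp}\Cref{DerivedGrcompQuotLimit} gives $M\simeq\dgrcomp{I}{M}\simeq\grlim_n C_n$; as graded limits are computed gradedwise (\Cref{PropertiesDerivedGradedModules}\Cref{LimitsColimitsDgrR} and \Cref{PropertiesDgrZ}\Cref{LimitsColimitsDgrZ}), on the $g$-part this reads $M_g\simeq\lim_n (C_n)_g$ in $\mcalD(\setZ)$, the equivalence being induced by the completion maps $M_g\to (C_n)_g$. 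Second, the underlying object of the ordinary derived $I$-completion is $\dcomp{I}{M}\simeq\lim_n \theta(C_n)\simeq\lim_n\bigoplus_{h} (C_n)_h$ in $\mcalD(\setZ)$: here $\theta(C_n)\simeq\theta(M)/^L(f_1^n,\dots,f_r^n)$ because the forgetful functor $\theta$ is symmetric monoidal and preserves colimits (\Cref{PropertiesDerivedGradedModules}\Cref{ForgetfulFunctorDgrR}), and the further forgetful functor $\mcalD(R)\to\mcalD(\setZ)$ preserves limits, so the limit formula survives in $\mcalD(\setZ)$.

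With these in hand I would define the vertical map as the limit $q_g\defeq\lim_n(\mathrm{pr}_g)\colon \dcomp{I}{M}\simeq\lim_n\bigoplus_h (C_n)_h\to\lim_n (C_n)_g\simeq M_g$ of the coproduct projections $\mathrm{pr}_g\colon\bigoplus_h (C_n)_h\to (C_n)_g$ in the additive $\infty$-category $\mcalD(\setZ)$; this is legitimate because homogeneity of the transition maps makes $\{\mathrm{pr}_g\}_n$ a morphism of towers. Taking $p_g$ and $\iota_g$ to be the coproduct projection and inclusion for the underlying object $\theta(M)=\bigoplus_h M_h$, the lower triangle $p_g\circ\iota_g=\id_{M_g}$ is immediate. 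For the remaining triangle I would trace $\theta(M)\to\dcomp{I}{M}\xrightarrow{q_g}M_g$: the completion map is induced by the homogeneous quotients $M\to C_n$, so its image under $\theta$ is the direct sum $\bigoplus_h(M_h\to (C_n)_h)$, which commutes with $\mathrm{pr}_g$; passing to the limit over $n$ identifies the composite with $\theta(M)\xrightarrow{p_g}M_g\to\lim_n (C_n)_g$, whose second arrow is precisely the gradedwise completion map on $M_g$. By the first paragraph this arrow is an equivalence realizing the identification $M_g\simeq\lim_n (C_n)_g$, so the whole composite is $p_g$, as required.

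Functoriality in $M$ is then automatic, since the tower $C_n$, the completion maps, the coproduct projection and inclusion, and the limit defining $q_g$ are all natural in $M$. The one point that I expect to require care is the bookkeeping in the second triangle: because $\theta$ does not preserve infinite limits (\Cref{NonIsomGradedLimit}), the object $\dcomp{I}{M}$ genuinely differs from the underlying object of $\dgrcomp{I}{M}$, and it is exactly the homogeneity of the quotient maps $M\to C_n$ that lets the projection $\mathrm{pr}_g$ persist to the limit and recover the naive projection $p_g$.
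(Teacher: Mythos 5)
Your proposal is correct and follows essentially the same route as the paper: both construct the vertical map $\dcomp{I}{M}\to M_g$ as the limit over $n$ of the degree-$g$ projections on the tower of derived quotients $M/^L(f_1^n,\dots,f_r^n)$, use the homogeneity of the transition maps to make these projections a map of towers, and then invoke derived gradedwise completeness of $M$ (so that $M_g\simeq\lim_n (M/^L(f_1^n,\dots,f_r^n))_g$) to identify the two composites with $\id_{M_g}$ and $p_g$. Your write-up is somewhat more explicit than the paper's about the identification $\dcomp{I}{M}\simeq\lim_n\theta(M/^L(f_1^n,\dots,f_r^n))$ via the symmetric monoidality of $\theta$, but the argument is the same.
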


\begin{proof}
    We will construct a morphism \(\dcomp{I}{M} \to M_g\).
    For the derived quotient \(M/^L I^n\) in \(\mcalD_{\graded{G}}(R)\), we can take the projection \(p_{n, g} \colon M/^L I^n  \to (M/^L I^n )_g\) in \(\mcalD(\setZ)\).
    By the construction, the composition
    \begin{equation*}
        (M/^L I^n )_g \xrightarrow{\iota_{n, g}} M/^L I^n  \xrightarrow{p_{n, g}} (M/^L I^n )_g
    \end{equation*}
    is the identity morphism in \(\mcalD(\setZ)\), where \(\iota_{n, g}\) is the canonical inclusion.
    Taking the limit over \(n>0\), we obtain morphisms
    \begin{equation*}
        M_g = \lim_{n > 0} (M/^L I^n)_g \to \dcomp{I}{M} \to M_g
    \end{equation*}
    whose composition is the identity morphism since \(\mcalD(R) \to \mcalD(\setZ)\) preserves limits and limits in \(\mcalD_{\graded{G}}(\setZ)\) are computed degreewise together with \(M \in \mcalD_{\graded{G}}^{\comp{I}}(R)\).
    Moreover, the first morphism \(M_g \to \dcomp{I}{M}\) canonically factors through \(M = \grlim_{n > 0} M/^L I^n M\) and the composition \(M \to \dcomp{I}{M} \to M_g\) is just the canonical projection \(p_g \colon M \to M_g\).
\end{proof}

\section{Retraction of morphisms} \label{SectionRetraction}

This section first treats several general results on adjunctions between \(\infty\)-categories and some consequences for mapping spaces.
In particular, we will prove that mapping spaces of modules over a monad can be realized as certain pullbacks of mapping spaces in the underlying \(\infty\)-category (\Cref{PullbackModuleMorphismRemark}).
We believe that this is a known result, but we could not find a suitable reference.
In the end, we will introduce the notion of \emph{morphism retraction}, which gives a retraction of a natural morphism between mapping spaces induced from a functor (\Cref{DefRetractionMappingSpace}).
This implies that morphisms of derived graded \(R\)-modules can be identified with morphisms of the underlying \(R\)-modules that admit decompositions compatible with the grading (\Cref{MappingSpaceDgrPullback}).

\subsection{General results on adjunctions and mapping spaces}

In this subsection, we prove several general results on adjunctions between \(\infty\)-categories which will be used in the subsequent sections.
First, we give the following ``trapezoid'' lemma on pullback diagrams, which will be one of the main ingredients in the proofs of the subsequent sections.

\begin{lemma}[Trapezoid~lemma] \label{TrapezoidLemma}
    Let \(\mcalC\) be an \(\infty\)-category which admits all pullbacks.
    Let \(I\) be a small set. For each \(i \in I\), take objects \(X\), \(Y\), and \(Z_i\) of \(\mcalC\), together with morphisms \(s \colon X \to Y\) and \(t_i \colon Z_i \to Y\) in \(\mcalC\).
    Suppose that we are given a commutative diagram
    \begin{equation} \label{DefiningGradedPart}
        \begin{tikzcd}
        & W_i \arrow[d, "v_i"] \arrow[r, "u_i"] \arrow[ldd, "\id_{W_i}"'] & Z_i \arrow[d, "t_i"'] \arrow[rdd, "\id_{Z_i}"] &     \\
                            & X \arrow[r, "s"] \arrow[ld, "\widetilde{v_i}"]                  & Y \arrow[rd, "\widetilde{t_i}"']               &     \\
        W_i \arrow[rrr, "u_i"] &    &     & Z_i
        \end{tikzcd}
    \end{equation}
    in \(\mcalC\) for each \(i \in I\).
    Assume the morphism \(s\) admits a retraction and either \(\mcalC\) is compactly generated stable or \(I = \{*\}\).
    Then the commutative diagram
        \begin{equation} \label{CoproductPullbackDiagram}
            \begin{tikzcd}
                W \defeq \bigoplus_{i \in I} W_i \arrow[d, "v"'] \arrow[rr, "u \defeq \bigoplus_{i \in I} u_i"] &  & Z \defeq \bigoplus_{i \in I} Z_i \arrow[d, "t"] \\
                X \arrow[rr, "s"]                                                                               &  & Y                                              
            \end{tikzcd}
        \end{equation}
        is a pullback diagram in \(\mcalC\) and the upper horizontal morphism \(u\) splits, where \(v\) and \(t\) are morphisms induced from \(v_i\) and \(t_i\), respectively, and \(u\) is the coproduct of \(u_i\).
\end{lemma}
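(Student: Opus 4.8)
The plan is to recognize that each trapezoid \eqref{DefiningGradedPart} is nothing more than the assertion that the morphism $u_i$ is a \emph{retract of $s$ in the arrow $\infty$-category} $\Fun(\Delta^1,\mcalC)$. Reading off the commutativities: the pair $(v_i,t_i)$ is a morphism $F_i\colon u_i\to s$ (its square commutes by the inner relation $s v_i\simeq t_i u_i$), the pair $(\widetilde v_i,\widetilde t_i)$ is a morphism $G_i\colon s\to u_i$ (its square commutes by the lower relation $u_i\widetilde v_i\simeq\widetilde t_i s$), and the two identity edges $\widetilde v_i v_i\simeq\id_{W_i}$, $\widetilde t_i t_i\simeq\id_{Z_i}$ say exactly that $G_i\circ F_i\simeq\id_{u_i}$. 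Taking coproducts, $u=\bigoplus_i u_i$ becomes a retract of $\bigoplus_i s$ in $\Fun(\Delta^1,\mcalC)$. The whole statement is then to be extracted from properties stable under retracts, and this reframing is the organizing idea.

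First I would dispose of the splitting of $u$, which needs neither stability nor compact generation and so covers the singleton case of that assertion. For an object of $\Fun(\Delta^1,\mcalC)$ the property ``admits a retraction'' is closed under retracts; since $s$ admits a retraction $\sigma$ by hypothesis, so does each $u_i$. Explicitly, setting $\rho_i\defeq\widetilde v_i\circ\sigma\circ t_i$ and using the inner relation, then $\sigma s\simeq\id_X$, then $\widetilde v_i v_i\simeq\id_{W_i}$, one computes $\rho_i u_i=\widetilde v_i\sigma t_i u_i\simeq\widetilde v_i\sigma s v_i\simeq\widetilde v_i v_i\simeq\id_{W_i}$. Hence $\rho\defeq\bigoplus_i\rho_i$ is a retraction of $u=\bigoplus_i u_i$.

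For the pullback I would pass to the stable criterion: in a stable $\infty$-category the square \eqref{CoproductPullbackDiagram} is cartesian if and only if the comparison morphism $\Phi\colon \bigoplus_i W_i\to X\times_Y\bigl(\bigoplus_i Z_i\bigr)$, induced by $v=(v_i)$ and $u=\bigoplus_i u_i$ together with the assembled homotopy from the inner relation, is an equivalence; equivalently, if and only if its total fibre vanishes. The retraction $\sigma$ already produces a candidate one-sided inverse $\rho\circ\operatorname{pr}_Z$, where $\operatorname{pr}_Z$ is the projection of the fibre product, and the previous paragraph gives $(\rho\circ\operatorname{pr}_Z)\circ\Phi\simeq\rho\circ u\simeq\id$, so $\Phi$ is \emph{always} a split monomorphism. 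The real content is that $\Phi$ is moreover an equivalence. This is where the hypothesis that $\mcalC$ is compactly generated stable enters: an equivalence is detected by the mapping spectra $\Map(G,-)$ out of a set of compact generators $G$, and for such $G$ the functor $\Map(G,-)$ carries $\bigoplus_i W_i$ to $\bigoplus_i\Map(G,W_i)$ while preserving the finite limit $X\times_Y(\bigoplus_i Z_i)$. One is thereby reduced to an identical statement about coproducts of mapping spectra, which can be analysed on homotopy groups, using the retract data $(\rho_i,\sigma)$ on cofibres to supply the homotopies witnessing essential surjectivity; equivalently, one checks that the summand inclusions $\operatorname{cofib}(u_i)\to\operatorname{cofib}(s)$ coming from the retracts assemble into an equivalence.

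The main obstacle is precisely this last step. Unlike fibres, the functor $X\times_Y(-)$ does \emph{not} commute with infinite coproducts, so one cannot argue one summand at a time, and indeed the individual squares $\Sigma_i$ need not themselves be cartesian; it is only the assembled $\Phi$ that is an equivalence. Controlling this is exactly what compact generation is for, and it is the genuine coherence of the retract diagrams, not merely the existence of the retractions, that must be used. In the degenerate case $I=\{*\}$ there is no coproduct to commute past: $\Phi$ is a split monomorphism between two presentations of a single fibre product, and a direct total-fibre computation promotes it to an equivalence, so the compact-generation hypothesis is not needed there.
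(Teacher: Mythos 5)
Your first half is correct and agrees with the paper's opening move: the composite $\widetilde{v_i}\circ\widetilde{s}\circ t_i$ is a retraction of $u_i$ (by the lower trapezoid and the two triangles), so $u=\bigoplus_i u_i$ splits; and in the stable case cartesianness of \eqref{CoproductPullbackDiagram} is indeed equivalent to the induced map $\bigoplus_i\operatorname{cofib}(u_i)\to\operatorname{cofib}(s)$ being an equivalence. The gap is that you never establish this last equivalence. The sentence ``one checks that the summand inclusions $\operatorname{cofib}(u_i)\to\operatorname{cofib}(s)$ coming from the retracts assemble into an equivalence'' \emph{is} the entire content of the lemma, and neither compact generation nor ``the genuine coherence of the retract diagrams'' produces it: the hypotheses exhibit each $u_i$ only as a retract of $s$ in $\Fun(\Delta^1,\mcalC)$, hence each $\operatorname{cofib}(u_i)$ only as a retract of $\operatorname{cofib}(s)$, and a family of retracts of an object need be neither pairwise orthogonal nor jointly exhaustive.

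In fact this step cannot be completed from the hypotheses as stated, so the obstacle you flag is not merely technical. Take $\mcalC=\mcalD(\setZ)$, $I=\{1,2\}$, $X=W_1=W_2=0$, $Y=Z_1=Z_2=\setZ$, $t_i=\widetilde{t_i}=\id_{\setZ}$, and all remaining maps the zero maps through the zero object: every cell of \eqref{DefiningGradedPart} commutes (all the relevant mapping spaces are contractible or the homotopies are constant) and $s\colon 0\to\setZ$ admits a retraction, yet $X\times_Y(Z_1\oplus Z_2)=\operatorname{fib}(\setZ\oplus\setZ\xrightarrow{(\id,\id)}\setZ)\simeq\setZ\neq 0=W$. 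Even for $I=\{*\}$, taking $Y=\setZ\oplus\setZ$, $Z=\setZ$, $t$ the first inclusion, $\widetilde{t}$ the first projection and $X=W=0$ gives $X\times_Y Z\simeq\setZ[-1]\neq 0$. What is needed, and what holds in every application in the paper (where the $t_i$ and $\widetilde{t_i}$ come from a direct-sum decomposition of $Y$ compatible with $s$), is an extra orthogonality/completeness hypothesis, e.g.\ $\widetilde{t_i}\circ t_j\simeq 0$ for $i\neq j$ together with the requirement that the maps $(v_i,t_i)$ induce an equivalence $\bigoplus_i\operatorname{cofib}(u_i)\to\operatorname{cofib}(s)$; note that the paper's own existence argument tacitly invokes exactly this when it replaces $\widetilde{t_i}\circ t\circ\varphi'$ by $\proj\circ\varphi'$ in \eqref{AZiMorphism}. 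So your proposal stalls at the genuinely problematic point, and repairing it requires strengthening the hypotheses and then using the strengthened data, not a further appeal to compact generation.
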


\begin{proof}
    In the case \(I=\{*\}\), the proof for the general case works for any \(\infty\)-category \(\mcalC\) admitting pullbacks, since in that case we do not need to use coproducts, and hence compact objects, in the argument below.
    So we assume $\mathcal{C}$ is compactly generated stable.

    Before proving the universal property, we first note that each morphism \(u_i\) admits a retraction \(\widetilde{u_i}\) in \(\mcalC\): This is just because the composition
    \begin{equation*}
        \widetilde{u_i} \colon Z_i \xrightarrow{t_i} Y \xrightarrow{\widetilde{s}} X \xrightarrow{\widetilde{v_i}} W_i
    \end{equation*}
    gives a retraction of \(u_i\) by the commutativity of \eqref{DefiningGradedPart}.
    In particular, the upper horizontal morphism \(u\) in \eqref{CoproductPullbackDiagram} also admits a retraction \(\widetilde{u} \defeq \bigoplus_{i \in I} \widetilde{u_i}\).

    We will show that the commutative diagram \eqref{CoproductPullbackDiagram} is a pullback diagram in \(\mcalC\).
    Since \(\mcalC\) is compactly generated, every object of \(\mcalC\) can be written as a small colimit of compact objects (cf. \cite{lurie2009Higher}*{Theorem 5.5.1.1}).
    Therefore, it suffices to show that any commutative diagram
    \begin{equation}\label{diag-A}
        \begin{tikzcd}
            A \arrow[d, "\varphi"] \arrow[r, "\varphi'"] & Z \arrow[d, "t"] \\
            X \arrow[r, "s"]                             & Y               
        \end{tikzcd}
    \end{equation}
    in $\mcalC$ uniquely factors through $W$ for any compact object \(A \in \mcalC\).

    We first show the uniqueness.
    Suppose that there exists a morphism
    \[
    \psi \colon A \xrightarrow{\psi} W
    \]
    in $\mcalC$ with equivalences
    \[
    (A \xrightarrow{\psi} W \xrightarrow{u} Z) \simeq (A \xrightarrow{\varphi'} Z)
    \quad\text{and}\quad
    (A \xrightarrow{\psi} W \xrightarrow{v} X) \simeq (A \xrightarrow{\varphi} X).
    \]
    By the right triangle in the commutative diagram \eqref{DiagramMapC} below, the morphism \(\psi\) is uniquely determined by its components
    \[
    (A \xrightarrow{\psi_i} W_i)
    \defeq 
    (A \xrightarrow{\psi} W \xrightarrow{\proj} W_i)
    \]
    for $i \in I$.
    By \eqref{DefiningGradedPart}, we have
    \begin{equation*}
        (A \xrightarrow{\psi} W \xrightarrow{\proj} W_i) \simeq (A \xrightarrow{\psi} W \xrightarrow{v} X \xrightarrow{\widetilde{v_i}} W_i) \simeq (A \xrightarrow{\varphi} X \xrightarrow{\widetilde{v_i}} W_i).
    \end{equation*}
    Hence each $\psi_i$ is uniquely determined by $\varphi$, and therefore their coproduct $\psi$ itself is unique.

    We next prove the existence.
    Note that the stability of \(\mcalC\) implies that any (small) coproduct in \(\mcalC\) is a filtered colimit of finite (co)products.
    Hence the compactness of \(A\) yields a commutative diagram
    \begin{equation} \label{DiagramMapC}
        \begin{tikzcd}
            {\Map_{\mcalC}(A, Z_j)} \arrow[rrr, "\widetilde{u_i} \circ -"] &                                                                                                                                                              &  & {\Map_{\mcalC}(A, W_j)}                                                                          &                                                    \\
            {\Map_{\mcalC}(A, Z)} \arrow[u, "\proj \circ -"]               & {\bigoplus_{i \in I} \Map_{\mcalC}(A, Z_i)} \arrow[l, "\simeq"] \arrow[lu, "\mathrm{projection}"'] \arrow[rr, "\bigoplus_{i \in I} \widetilde{u_i} \circ -"] &  & {\bigoplus_{i \in I} \Map_{\mcalC}(A, W_i)} \arrow[u, "\mathrm{projection}"] \arrow[r, "\simeq"] & {\Map_{\mcalC}(A, W)} \arrow[lu, "\proj \circ -"']
        \end{tikzcd}
    \end{equation}
    in \(\Ani\), where the vertical morphisms are the precomposition by \(\proj \colon Z \to Z_j\) and \(\proj \colon W \to W_j\) for each \(j \in I\).
    Take \(\varphi' \in \Map_{\mcalC}(A, Z)\), and let \((\varphi'_i)_{i \in I}\) be the corresponding element of \(\bigoplus_{i \in I} \Map_{\mcalC}(A, Z_i)\).
    This admits an equivalence
    \begin{equation} \label{AZiMorphism}
        (A \xrightarrow{\varphi'_i} Z_i)_{i \in I} \simeq (A \xrightarrow{\varphi'} Z \xrightarrow{\proj} Z_i)_{i \in I} \simeq (A \xrightarrow{\varphi} X \xrightarrow{\widetilde{v_i}} W_i \xrightarrow{u_i} Z_i)_{i \in I}
    \end{equation}
    for each \(i \in I\) by \eqref{DefiningGradedPart}.
    Applying the middle horizontal morphism in \eqref{DiagramMapC} to \((\varphi'_i)_{i \in I}\), we obtain an element
    \begin{equation*}
        (\psi_i)_{i \in I} \defeq (\widetilde{u_i} \circ \varphi'_i)_{i \in I} \in \bigoplus_{i \in I} \Map_{\mcalC}(A, W_i).
    \end{equation*}
    Take the element
    \begin{equation*}
        \psi \colon A \to W
    \end{equation*}
    corresponding to \((\psi_i)_{i \in I}\), namely, \(\proj \circ \psi \simeq \psi_i\) holds.
    Because of \eqref{AZiMorphism}, \(\widetilde{u_i} \circ u_i \simeq \id_{W_i}\), and the commutative diagram \eqref{DiagramMapC} above, the equivalence
    \begin{equation} \label{AWiMorphism}
        (A \xrightarrow{\psi} W \xrightarrow{\proj} W_i) \simeq (A \xrightarrow{\psi_i} W_i) = (A \xrightarrow{\varphi'_i} Z_i \xrightarrow{\widetilde{u_i}} W_i) \overset{\eqref{AZiMorphism}}{\simeq} (A \xrightarrow{\varphi} X \xrightarrow{\widetilde{v_i}} W_i)
    \end{equation}
    holds in \(\Map_{\mcalC}(A, W_i)\) for each \(i \in I\).
    


    We verify that $\psi$ makes the required diagrams commutative.
    For each $i \in I$, we compute
    \begin{align*}
        & (A \xrightarrow{\psi} W \xrightarrow{u} Z \xrightarrow{\proj} Z_i) \simeq (A \xrightarrow{\psi} W \xrightarrow{\proj} W_i \xrightarrow{u_i} Z_i) \overset{\eqref{AWiMorphism}}{\simeq} (A \xrightarrow{\varphi} X \xrightarrow{\widetilde{v_i}} W_i \xrightarrow{u_i} Z_i) \\
        & \overset{\eqref{DefiningGradedPart}}{\simeq} (A \xrightarrow{\varphi} X \xrightarrow{s} Y \xrightarrow{\widetilde{t_i}} Z_i) \overset{\eqref{diag-A}}{\simeq} (A \xrightarrow{\varphi'} Z \xrightarrow{\proj} Z_i).
    \end{align*}
    Since this holds for every $i \in I$, the left triangle in \eqref{DiagramMapC} ensures the equivalence
    \begin{equation*}
        (A \xrightarrow{\psi} W \xrightarrow{u} Z) \simeq (A \xrightarrow{\varphi'} Z)
    \end{equation*}
    in \(\Map_{\mcalC}(A, Z)\).

    Next, we will compare
    \begin{equation*}
        (A \xrightarrow{\psi} W \xrightarrow{v} X \xrightarrow{s} Y) \quad \text{and} \quad (A \xrightarrow{\varphi} X \xrightarrow{s} Y).
    \end{equation*}
    Because of \eqref{DefiningGradedPart}, they are
    \begin{equation*}
        (A \xrightarrow{\psi} W \xrightarrow{u} Z \xrightarrow{t} Y) \quad \text{and} \quad (A \xrightarrow{\varphi'} Z \xrightarrow{t} Y)
    \end{equation*}
    respectively.
    We have already proven that these morphisms are the same.
    Using the fact that \(s\) admits a retraction \(r\), we can conclude that \(v \circ \psi\) and \(\varphi\) are equivalent to each other and this is the last commutativity we wanted.
    Thus, the diagram \eqref{CoproductPullbackDiagram} is a pullback diagram.
\end{proof}

In the case of stable \(\infty\)-categories, we can show the dual version of \Cref{TrapezoidLemma}.

\begin{corollary} \label{DualTrapezoidLemma}
    Let \(\mcalC\) be a stable \(\infty\)-category.
    Take a commutative diagram
    \begin{equation*}
        \begin{tikzcd}
                   & W \arrow[d, "v"] \arrow[r, "u"] \arrow[ldd, "\id_W"'] & Z \arrow[d, "t"'] \arrow[rdd, "\id_Z"] &   \\
                   & X \arrow[r, "r"] \arrow[ld, "\widetilde{v}"]          & Y \arrow[rd, "\widetilde{t}"']         &   \\
                    W \arrow[rrr, "u"] &                                                       &                                        & Z
        \end{tikzcd}
    \end{equation*}
    in \(\mcalC\).
    If the morphism \(r\) admits a section, then both the upper square and the lower square are pushout diagrams in \(\mcalC\).
\end{corollary}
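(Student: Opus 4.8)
The plan is to reduce everything to pullbacks. In a stable $\infty$-category a commutative square is a pushout if and only if it is a pullback (\cite[Proposition 1.1.3.4]{lurie2017Higher}), so it suffices to prove that the upper square (with maps $u,v,t,r$) and the lower square (with maps $r,\widetilde v,\widetilde t,u$) are pullbacks. Rather than treating the two symmetrically, I would establish one of them via \Cref{TrapezoidLemma} and deduce the other by pasting: the hypotheses are asymmetric, since we are handed a \emph{section} of $r$ while the relations $\widetilde v\circ v=\id_W$ and $\widetilde t\circ t=\id_Z$ only produce \emph{retractions} of $v$ and $t$, so exactly one of the two squares is directly accessible to the trapezoid lemma.

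First I would show that the lower square is a pushout. The displayed diagram is literally an instance of \eqref{DefiningGradedPart} with $I=\{*\}$ and $s$ renamed $r$. Passing to $\mcalC^{op}$---which is again stable and in particular admits pullbacks---and reversing all arrows, this diagram becomes another instance of \eqref{DefiningGradedPart} in $\mcalC^{op}$ under the relabeling $W'=Z$, $Z'=W$, $X'=Y$, $Y'=X$ with $u'=u$, $v'=\widetilde t$, $t'=\widetilde v$, $s'=r$, $\widetilde{v}'=t$, $\widetilde{t}'=v$. One checks that the four commutativities required by \eqref{DefiningGradedPart} (the top square, the two triangles, and the bottom trapezoid) correspond exactly to the relations $rv=tu$, $\widetilde v v=\id_W$, $\widetilde t t=\id_Z$, and $u\widetilde v=\widetilde t r$ read in $\mcalC^{op}$. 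Because $r$ admits a section in $\mcalC$, the morphism $s'=r$ admits a retraction in $\mcalC^{op}$, so \Cref{TrapezoidLemma} in the case $I=\{*\}$ (which holds for any $\infty$-category admitting pullbacks) shows that the top square of the relabeled diagram is a pullback in $\mcalC^{op}$. Reversing arrows back, this shows that the lower square is a pushout in $\mcalC$ (the transposition of its two legs is immaterial for being bicartesian), hence also a pullback in $\mcalC$ by stability.

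Next I would obtain the upper square by a pasting argument. Stacking the upper square on top of the lower square along their common edge $X\xrightarrow{r}Y$ produces a vertical composite whose left and right vertical maps are $\widetilde v\circ v=\id_W$ and $\widetilde t\circ t=\id_Z$; hence the outer rectangle has identity vertical edges and is trivially a pullback. The pasting law for pullbacks (the pullback form of \cite[Lemma 4.4.2.1]{lurie2009Higher}, obtained by passing to the opposite category) states that, once the bottom square is a pullback, the top square is a pullback if and only if the outer rectangle is. Since the bottom (lower) square and the outer rectangle are both pullbacks, the top (upper) square is a pullback, and therefore a pushout by stability.

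The only genuinely delicate step is the bookkeeping of the dualization in the second paragraph: one must check that reversing the arrows of the given diagram reproduces all the hypotheses of \Cref{TrapezoidLemma} in $\mcalC^{op}$---crucially that ``$r$ has a section'' becomes ``$s'$ has a retraction''---and one must orient the pasting law so that the square already under control (the lower one) is the square whose being a pullback is assumed. Everything beyond this is formal, resting only on the interchange of pushouts and pullbacks in the stable setting.
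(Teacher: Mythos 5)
Your proposal is correct and follows essentially the same route as the paper's own proof: dualize to $\mcalC^{\opposite}$, apply \Cref{TrapezoidLemma} (the $I=\{*\}$ case) to conclude the lower square is a pushout, upgrade it to a pullback by stability, and then deduce the upper square from the pasting law using that the outer rectangle has identity vertical edges. Your extra care with the dualization bookkeeping is the only difference, and it is consistent with what the paper leaves implicit.
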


\begin{proof}
    Take the opposite diagram of the above diagram in the opposite \(\infty\)-category \(\mcalC^{\opposite}\), which is also stable.
    Then Trapezoid lemma (\Cref{TrapezoidLemma}) for \(\mcalC^{\opposite}\) shows that the opposite diagram of the lower square is a pullback diagram in \(\mcalC^{\opposite}\).
    This implies that the lower square is a pushout diagram in \(\mcalC\) and thus it is also a pullback diagram in \(\mcalC\) because \(\mcalC\) is stable.
    Since the outer square is obviously a pullback diagram, the upper square is also a pullback diagram in \(\mcalC\), which says that it is a pushout diagram in \(\mcalC\).
\end{proof}

From now on, let us consider an adjunction between \(\infty\)-categories and study morphisms of modules over the associated monad.

\begin{construction} \label{MorphismsAdjoint}
    Let \(\mcalG \colon \mcalD \to \mcalC\) be a functor between \(\infty\)-categories, and suppose that it admits a left adjoint \(\mcalF \colon \mcalC \to \mcalD\).
    Take the unit and counit of the adjunction \((\mcalF, \mcalG)\):
    \begin{equation*}
        \varepsilon \colon \mcalF \circ \mcalG \to \id_{\mcalD}, \quad \eta \colon \id_{\mcalC} \to \mcalG \circ \mcalF.
    \end{equation*}
    For any \(M \in \mcalD\), we define morphisms
    \begin{equation*}
        a_M \defeq \varepsilon(M) \colon \mcalF\mcalG(M) \to M
    \end{equation*}
    in \(\mcalD\) and
    \begin{equation*}
        e_M \defeq \eta(\mcalG(M)) \colon \mcalG(M) \to \mcalG\mcalF\mcalG(M)
    \end{equation*} 
    in \(\mcalC\).
    These morphisms fit into the following commutative diagrams:
    \begin{equation} \label{DiagramMonadAssociativeUnital}
        \begin{tikzcd}
            \mcalF\mcalG(\mcalF\mcalG(M)) \arrow[rr, "a_{\mcalF\mcalG(M)}"] \arrow[d, "\mcalF\mcalG(a_M)"] &  & \mcalF\mcalG(M) \arrow[d, "a_M"] &  & \mcalF(\mcalG(M)) \arrow[rd, "\id_{\mcalF(\mcalG(M))}"'] \arrow[r, "\mcalF(e_M)"] & \mcalF(\mcalG\mcalF\mcalG(M)) \arrow[d, "a_{\mcalF\mcalG(M)}"] \\
            \mcalF\mcalG(M) \arrow[rr, "a_M"]                                &  & M                      &  &                                                          & \mcalF(\mcalG(M))                           
        \end{tikzcd}
    \end{equation}
    in \(\mcalD\) and
    \begin{equation} \label{DiagramMonadUnit}
        \begin{tikzcd}
            \mcalG\mcalF\mcalG(M) \arrow[rr, "e_{\mcalF\mcalG(M)}"] \arrow[d, "\mcalG(a_M)"] &  & \mcalG\mcalF\mcalG\mcalF\mcalG(M) \arrow[d, "\mcalG\mcalF\mcalG(a_M)"] &  & \mcalG(M) \arrow[r, "e_M"] \arrow[rd, "\id_{\mcalG(M)}"'] & \mcalG\mcalF\mcalG(M) \arrow[d, "\mcalG(a_M)"] \\
            \mcalG(M) \arrow[rr, "e_M"]                             &  & \mcalG\mcalF\mcalG(M)                         &  &                                                 & \mcalG(M)                      
        \end{tikzcd}
    \end{equation}
    in \(\mcalC\), where the square diagrams follow from applying \(\varepsilon\) and \(\eta\) for \(a_M\) and \(\mcalG(a_M)\) and the triangle diagrams follow from the unit and counit axiom of the adjunction \((\mcalF, \mcalG)\).

    By the adjunction \(\mcalF \dashv \mcalG\), together with its unit and counit, any objects \(M\) and \(N\) of \(\mcalD\) admit a natural equivalence
    \begin{equation} \label{EquivAdjoint}
        \Map_{\mcalD}(\mcalF\mcalG(M), N) \xrightarrow{\mcalG(-) \circ e_M} \Map_{\mcalC}(\mcalG(M), \mcalG(N))
    \end{equation}
    in \(\Ani\).
\end{construction}

\begin{lemma} \label{PushoutDiagramModulesMonads}
    Let \(\mcalG \colon \mcalD \to \mcalC\) be a conservative functor between presentable stable \(\infty\)-categories which admits a left adjoint \(\mcalF \colon \mcalC \to \mcalD\).
    Let \(M \in \mcalD\).
    Then the squares in \eqref{DiagramMonadAssociativeUnital} and \eqref{DiagramMonadUnit} are pushout diagrams in \(\mcalD\) and \(\mcalC\), respectively.
\end{lemma}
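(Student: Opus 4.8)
The plan is to recognise each of the two squares as the central square of a (dual) trapezoid of the form \eqref{DefiningGradedPart} and then to invoke \Cref{DualTrapezoidLemma}, which is tailor-made to upgrade such split data to a pushout in a stable $\infty$-category. The crucial observation is that the right-hand triangles of \eqref{DiagramMonadAssociativeUnital} and \eqref{DiagramMonadUnit}, which encode the unit and counit axioms of the adjunction $(\mcalF,\mcalG)$ from \Cref{MorphismsAdjoint}, already furnish the ``$r$ admits a section'' hypothesis of \Cref{DualTrapezoidLemma}. Indeed the triangle in \eqref{DiagramMonadUnit} gives $\mcalG(a_M)\circ e_M\simeq\id_{\mcalG(M)}$, so $e_M$ is a section of $\mcalG(a_M)$, and the triangle in \eqref{DiagramMonadAssociativeUnital} gives $a_{\mcalF\mcalG(M)}\circ\mcalF(e_M)\simeq\id_{\mcalF\mcalG(M)}$, so $\mcalF(e_M)$ is a section of $a_{\mcalF\mcalG(M)}$.

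First I would treat the square in \eqref{DiagramMonadUnit}, which lives in the presentable stable $\infty$-category $\mcalC$. I would orient it so that the split epimorphism $\mcalG(a_M)$ plays the role of the edge $r$ of the central square of \eqref{DefiningGradedPart}, and I would build the two auxiliary retractions $\widetilde{v},\widetilde{t}$ out of the structure maps of the monad $\mcalG\mcalF$, namely the multiplication $\mcalG(\varepsilon\mcalF)$ and the map $\mcalG(a_M)$ itself. The four coherences needed to fill in \eqref{DefiningGradedPart}---the two retraction identities and the two commutativities---then reduce to the unit and associativity laws of the monad $\mcalG\mcalF$, the naturality of $\eta$ applied to $a_M$, and the already-established squares of \eqref{DiagramMonadUnit}. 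Having assembled the trapezoid and having a section of $r$, \Cref{DualTrapezoidLemma} produces the pushout in $\mcalC$.

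For the square in \eqref{DiagramMonadAssociativeUnital}, which lives in $\mcalD$, I would run the dual argument with the roles of the two functors exchanged: the comonad $\mcalF\mcalG$ supplies the comultiplication $\mcalF(\eta\mcalG)$ and the counit $\varepsilon$, which play the parts of the retractions and the section, and a second application of \Cref{DualTrapezoidLemma} (now in the stable category $\mcalD$) gives the pushout there. As a cross-check and a possible shortcut, I would exploit that $\mcalG$ is a right adjoint between stable $\infty$-categories, hence exact, so it preserves pushouts; being conservative, it also reflects them (a square is a pushout iff its total cofiber vanishes, and $\mcalG$ carries total cofibers to total cofibers while detecting zero). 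Thus one may, if convenient, deduce the $\mcalD$-statement from its $\mcalG$-image in $\mcalC$.

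The hard part will be the purely combinatorial bookkeeping of the trapezoid \eqref{DefiningGradedPart}: one must pick, for each square, the orientation in which the split map sits on the edge $r$ and the two retracted maps $v,t$ sit on the adjacent edges pointing the correct way, and then verify the identities (R1)--(R4) of \eqref{DefiningGradedPart} \emph{coherently}, i.e.\ as equivalences of morphisms in the $\infty$-categorical sense, from the unit, counit, associativity and naturality data of \Cref{MorphismsAdjoint}. Producing the auxiliary maps $\widetilde{v},\widetilde{t}$ so that they are simultaneously genuine retractions \emph{and} compatible with the lower square of \eqref{DefiningGradedPart}---rather than merely existing---is the delicate point, and it is precisely here that the stability of $\mcalC$ and $\mcalD$ and the conservativity of $\mcalG$ enter.
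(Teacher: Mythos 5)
Your overall strategy---assemble the data into a dual trapezoid, apply \Cref{DualTrapezoidLemma} in \(\mcalC\), and descend the \(\mcalD\)-statement through the conservative exact functor \(\mcalG\)---is exactly the paper's, and your ``cross-check'' for the square in \eqref{DiagramMonadAssociativeUnital} is the argument the paper actually uses. But two steps of the plan as written would fail. First, the orientation: you put \(\mcalG(a_M)\) in the middle slot \(r\) of the trapezoid, but with that choice the square of \eqref{DiagramMonadUnit} can be neither the upper nor the lower square of \Cref{DualTrapezoidLemma}. Its two edges adjacent to \(\mcalG(a_M)\), namely \(e_{\mcalF\mcalG(M)}\) and \(e_M\), point \emph{out of} the row containing \(\mcalG(a_M)\); so for the upper square the legs \(v,t\) point the wrong way, and for the lower square one would need a map \(t\) with \(e_M\circ t\simeq\id\), i.e.\ \(e_M\) would have to be a split epimorphism, whereas the triangle identity only makes it a split monomorphism. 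The correct trapezoid (the paper's) takes \(u=\mcalG(a_M)\) as the outer edge, the units \(v=e_{\mcalF\mcalG(M)}\), \(t=e_M\) as the descending legs with retractions \(\widetilde{v}=\mcalG(a_{\mcalF\mcalG(M)})\), \(\widetilde{t}=\mcalG(a_M)\) (exactly the maps you name), and hence \(r=\mcalG\mcalF\mcalG(a_M)\) in the middle; the section it needs is \(\mcalG\mcalF(e_M)\), obtained by applying \(\mcalG\mcalF\) to \(\mcalG(a_M)\circ e_M\simeq\id\), not \(e_M\) itself. With this orientation the upper square is \eqref{DiagramMonadUnit} and the lower square is (the transpose of) \(\mcalG\) applied to \eqref{DiagramMonadAssociativeUnital}, so a single application of \Cref{DualTrapezoidLemma} does both jobs.

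Second, your primary route for the square in \eqref{DiagramMonadAssociativeUnital}---running the dual trapezoid argument directly in \(\mcalD\) with the comonad \(\mcalF\mcalG\)---does not go through. Whichever pair of parallel edges of that square you designate as the \(u\)--\(r\) pair, one of the remaining legs is a copy of \(a_M\colon \mcalF\mcalG(M)\to M\), and \(a_M\) admits neither a section nor a retraction in \(\mcalD\) in general; only its image \(\mcalG(a_M)\) splits, and only in \(\mcalC\). So that square cannot be completed to a dual trapezoid inside \(\mcalD\). The fallback you mention is therefore not optional: one must first establish that the \(\mcalG\)-image of the square is a pushout in \(\mcalC\) (it is the lower square of the trapezoid above) and then use that \(\mcalG\), being an exact conservative functor between stable \(\infty\)-categories, reflects pushouts.
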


\begin{proof}
    We have a commutative diagram
    \begin{center}
        \begin{tikzcd}
            & \mcalG\mcalF\mcalG(M) \arrow[d, "e_{\mcalF\mcalG(M)}"] \arrow[rr, "\mcalG(a_M)"] \arrow[ldd, "\id_{\mcalG\mcalF\mcalG(M)}"'] &  & \mcalG(M) \arrow[d, "e_M"'] \arrow[rdd, "\id_{\mcalG(M)}"] &      \\
            & \mcalG\mcalF\mcalG\mcalF\mcalG(M) \arrow[rr, "\mcalG\mcalF\mcalG(a_M)"] \arrow[ld, "\mcalG(a_{\mcalF\mcalG(M)})"]                          &  & \mcalG\mcalF\mcalG(M) \arrow[rd, "\mcalG(a_M)"']                     &      \\
            \mcalG\mcalF\mcalG(M) \arrow[rrrr, "\mcalG(a_M)"] &   &  &   & \mcalG(M)
        \end{tikzcd}
    \end{center}
    in \(\mcalC\) by \eqref{DiagramMonadAssociativeUnital} and \eqref{DiagramMonadUnit}.

    Since \(\mcalG\mcalF\mcalG(a_M)\) admits a section \(\mcalG\mcalF(e_M)\) by \eqref{DiagramMonadUnit}, this commutative diagram fits into the assumption of the dual trapezoid lemma (\Cref{DualTrapezoidLemma}).
    This shows that the squares of the above diagram are pushout diagrams in \(\mcalC\), which implies that the square diagram in \eqref{DiagramMonadUnit} is a pushout diagram in \(\mcalC\).
    On the other hand, the square in \eqref{DiagramMonadAssociativeUnital} becomes a pullback diagram in \(\mcalC\) after applying \(\mcalG\).
    Since \(\mcalG\) is conservative, we conclude that the square in \eqref{DiagramMonadAssociativeUnital} is a pushout diagram in \(\mcalD\).
\end{proof}

\begin{lemma} \label{CommutativeAction}
    Let \(f \colon M \to N\) be a morphism of \(\mcalD\).
    Then it admits a natural commutative diagram
    \begin{center}
        \begin{tikzcd}
            \mcalF\mcalG(M) \arrow[d, "a_M"] \arrow[r, "\mcalF\mcalG(f)"] & \mcalF\mcalG(N) \arrow[d, "a_N"] \\
            M \arrow[r, "f"]                          & N                     
        \end{tikzcd}
    \end{center}
    in \(\mcalD\).
\end{lemma}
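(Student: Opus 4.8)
The plan is to recognize the asserted square as nothing more than the naturality of the counit \(\varepsilon \colon \mcalF \circ \mcalG \to \id_{\mcalD}\) of the adjunction \((\mcalF, \mcalG)\), as set up in \Cref{MorphismsAdjoint}. Recall that \(a_X = \varepsilon(X)\) is by definition the \(X\)-component of this counit for every \(X \in \mcalD\), so the whole content of the lemma is that \(\varepsilon\) is a genuine natural transformation of functors \(\mcalD \to \mcalD\).

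In the \(\infty\)-categorical setting, such a natural transformation is precisely a morphism in the functor category \(\Fun(\mcalD, \mcalD)\) from \(\mcalF \circ \mcalG\) to \(\id_{\mcalD}\), that is, a functor \(\mcalD \times \Delta^1 \to \mcalD\) restricting to \(\mcalF \circ \mcalG\) and \(\id_{\mcalD}\) on the two endpoints of \(\Delta^1\). I would unwind this datum: a morphism \(f \colon M \to N\) in \(\mcalD\) is the same as a functor \(\Delta^1 \to \mcalD\), and restricting the counit along \(f \times \id_{\Delta^1} \colon \Delta^1 \times \Delta^1 \to \mcalD \times \Delta^1\) produces a functor \(\Delta^1 \times \Delta^1 \to \mcalD\), i.e.\ a commutative square in \(\mcalD\). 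By construction its vertical edges are the components \(\varepsilon(M) = a_M\) and \(\varepsilon(N) = a_N\), while its horizontal edges are \((\mcalF \circ \mcalG)(f) = \mcalF\mcalG(f)\) on top and \(\id_{\mcalD}(f) = f\) on the bottom. This is exactly the square appearing in the statement, and the fact that it is assembled functorially from the single piece of data \(\varepsilon\) is what makes it natural in \(f\).

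I expect no genuine obstacle here: the entire argument is the definition of a natural transformation together with the fact that \(\varepsilon\) is one. In contrast to the earlier results of this section (\Cref{TrapezoidLemma}, \Cref{DualTrapezoidLemma}, \Cref{PushoutDiagramModulesMonads}), no hypotheses on \(\mcalD\), \(\mcalC\), \(\mcalF\), or \(\mcalG\)---such as stability, presentability, or conservativity---are needed; the lemma holds for any adjunction between \(\infty\)-categories. The only point requiring care is purely bookkeeping, namely identifying the restriction of \(\varepsilon\) along \(f\) with the displayed square, and this is immediate once the components are matched as above.
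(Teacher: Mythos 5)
Your proposal is correct and coincides with the paper's proof, which consists of the single observation that the square is the naturality square of the counit \(\varepsilon \colon \mcalF \circ \mcalG \to \id_{\mcalD}\); your additional unwinding of what a natural transformation between functors of \(\infty\)-categories amounts to is accurate but not needed beyond that one observation.
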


\begin{proof}
    This is just the naturality of the counit \(\varepsilon \colon \mcalF \circ \mcalG \to \id_{\mcalD}\).
\end{proof}

\begin{proposition} \label{PullbackModuleMorphismProp}
    Let \(\mcalG \colon \mcalD \to \mcalC\) be a conservative functor between presentable stable \(\infty\)-categories which admits a left adjoint \(\mcalF \colon \mcalC \to \mcalD\).
    Take objects \(M, N \in \mcalD\).
    By \Cref{CommutativeAction}, a commutative diagram
    \begin{equation} \label{PullbackModuleMorphism}
        \begin{tikzcd}
            {\Map_{\mcalD}(M, N)} \arrow[d, "\mcalG(-)"'] \arrow[rr, "\mcalG(-)"] &  & {\Map_{\mcalC}(\mcalG(M), \mcalG(N))} \arrow[d, "- \circ \mcalG(a_M)"] \\
            {\Map_{\mcalC}(\mcalG(M), \mcalG(N))} \arrow[rr, "\mcalG(a_N) \circ \mcalG\mcalF(-)"]        &  & {\Map_{\mcalC}(\mcalG\mcalF\mcalG(M), \mcalG(N))}                          
        \end{tikzcd}
    \end{equation}
    is given in \(\Ani\).
    Then the above diagram is a pullback diagram in \(\Ani\).
\end{proposition}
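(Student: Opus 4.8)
The plan is to reduce the assertion to the pushout square of \eqref{DiagramMonadAssociativeUnital}, which is available by \Cref{PushoutDiagramModulesMonads}. That square exhibits $M$ as the pushout
\[
M \simeq \mcalF\mcalG(M) \coprod_{\mcalF\mcalG\mcalF\mcalG(M)} \mcalF\mcalG(M),
\]
where the two structure maps $\mcalF\mcalG\mcalF\mcalG(M) \rightrightarrows \mcalF\mcalG(M)$ are $a_{\mcalF\mcalG(M)}$ and $\mcalF\mcalG(a_M)$, and both legs of the cocone are $a_M$. Applying the contravariant functor $\Map_{\mcalD}(-, N)$, which carries colimits to limits, turns this pushout into a pullback square in $\Ani$ whose vertices are $\Map_{\mcalD}(M,N)$, two copies of $\Map_{\mcalD}(\mcalF\mcalG(M),N)$, and $\Map_{\mcalD}(\mcalF\mcalG\mcalF\mcalG(M),N)$, where the two projections out of $\Map_{\mcalD}(M,N)$ are both precomposition with $a_M$, and the two maps into $\Map_{\mcalD}(\mcalF\mcalG\mcalF\mcalG(M),N)$ are precomposition with $a_{\mcalF\mcalG(M)}$ and with $\mcalF\mcalG(a_M)$.

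Next I would transport this pullback along the natural equivalences \eqref{EquivAdjoint}, which identify $\Map_{\mcalD}(\mcalF\mcalG(M),N) \simeq \Map_{\mcalC}(\mcalG(M),\mcalG(N))$ and $\Map_{\mcalD}(\mcalF\mcalG\mcalF\mcalG(M),N) \simeq \Map_{\mcalC}(\mcalG\mcalF\mcalG(M),\mcalG(N))$. The remaining content is to verify that the transported structure maps agree with those of \eqref{PullbackModuleMorphism}. For the projections out of $\Map_{\mcalD}(M,N)$, the composite of precomposition by $a_M$ with \eqref{EquivAdjoint} sends $f$ to $\mcalG(f)\circ\mcalG(a_M)\circ e_M = \mcalG(f)$, the last equality being the unit triangle identity recorded in \eqref{DiagramMonadUnit}; hence both projections become $\mcalG(-)$. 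For the two maps into the lower-right corner, write $\tilde g = a_N \circ \mcalF(g)$ for the morphism adjoint to $g \in \Map_{\mcalC}(\mcalG(M),\mcalG(N))$. Then precomposition with $a_{\mcalF\mcalG(M)}$ transports to $g \mapsto \mcalG(\tilde g) = \mcalG(a_N)\circ\mcalG\mcalF(g)$, using the triangle identity $\mcalG(a_{\mcalF\mcalG(M)})\circ e_{\mcalF\mcalG(M)} = \id$, while precomposition with $\mcalF\mcalG(a_M)$ transports to $g \mapsto g\circ\mcalG(a_M)$, using the naturality square of the unit $\eta$ recorded in \eqref{DiagramMonadUnit}, namely $\mcalG\mcalF\mcalG(a_M)\circ e_{\mcalF\mcalG(M)} = e_M \circ \mcalG(a_M)$.

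Comparing with \eqref{PullbackModuleMorphism}, these two transported maps are precisely the maps $\mcalG(a_N)\circ\mcalF\mcalG(-)$ and $- \circ \mcalG(a_M)$ appearing there, with the only difference that the two intermediate vertices play interchanged roles. Since a pullback square is unchanged by transposing its two middle vertices, it follows that \eqref{PullbackModuleMorphism} is a pullback square in $\Ani$. I expect the only genuine obstacle to be the bookkeeping in this final identification: one must match each precomposition map to the correct leg and invoke the appropriate triangle identity or naturality square from \eqref{DiagramMonadUnit}, since a careless pairing would either produce the wrong map or suggest a spurious degeneracy of the square. Note that no appeal to monadicity or to a bar resolution of $M$ is needed, because the crucial pushout square is already supplied by \Cref{PushoutDiagramModulesMonads}; the conservativity of $\mcalG$ enters only through that cited result.
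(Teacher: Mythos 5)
Your proposal is correct and follows essentially the same route as the paper: apply $\Map_{\mcalD}(-,N)$ to the pushout square of \eqref{DiagramMonadAssociativeUnital} supplied by \Cref{PushoutDiagramModulesMonads}, then transport the resulting pullback along the adjunction equivalences \eqref{EquivAdjoint}, matching each edge via the triangle identities and the naturality square in \eqref{DiagramMonadUnit}. The paper carries out the same edge-by-edge bookkeeping you flag as the remaining work, via the explicit diagrams \eqref{CommutativeMapLeftUpper}--\eqref{CommutativeMapLowerHalf2}.
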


\begin{proof}
    Applying \(\Map_{\mcalD}(-, N)\) to the pushout diagram \eqref{DiagramMonadAssociativeUnital} proved in \Cref{PushoutDiagramModulesMonads}, we have a pullback diagram
    \begin{equation} \label{PullbackFromPushout}
        \begin{tikzcd}
            {\Map_{\mcalD}(M, N)} \arrow[d, "- \circ a_M"'] \arrow[rr, "- \circ a_M"] &  & {\Map_{\mcalD}(\mcalF\mcalG(M), N)} \arrow[d, "- \circ \mcalF\mcalG(a_M)"] \\
            {\Map_{\mcalD}(\mcalF\mcalG(M), N)} \arrow[rr, "- \circ a_{\mcalF\mcalG(M)}"]                 &  & {\Map_{\mcalD}(\mcalF\mcalG\mcalF\mcalG(M), N)}                           
        \end{tikzcd}
    \end{equation}
    in \(\Ani\).
    It therefore suffices to show that the above diagram is equivalent to \eqref{PullbackModuleMorphism}.
    We will identify each edge of the above diagram with that of \eqref{PullbackModuleMorphism}.

    (Left and upper edges): We have a commutative diagram
    \begin{equation}
        \begin{tikzcd}
            {\Map_{\mcalC}(\mcalG(M), \mcalG(N))} \arrow[dd, "- \circ \mcalG(a_M)"] \arrow[ddd, "\id"', bend right=90] &  & {\Map_{\mcalD}(M, N)} \arrow[dd, "- \circ a_M"] \arrow[ll, "\mcalG"']                                               \\
                                                                                                                        &  &                                                                                                                     \\
            {\Map_{\mcalC}(\mcalG\mcalF\mcalG(M), \mcalG(N))} \arrow[d, "- \circ e_M"]                                   &  & {\Map_{\mcalD}(\mcalF\mcalG(M), N)} \arrow[lld, "\mcalG(-) \circ e_M"] \arrow[lld, "\simeq"'] \arrow[ll, "\mcalG"'] \\
            {\Map_{\mcalC}(\mcalG(M), \mcalG(N))}  &  &                 
        \end{tikzcd}
    \end{equation}
    in \(\Ani\), where the lower equivalence \(\mcalG(-) \circ e_M\) follows from \eqref{EquivAdjoint} and the left identity morphism is \eqref{DiagramMonadUnit}.    
    So we have a commutative diagram
    \begin{equation} \label{CommutativeMapLeftUpper}
        \begin{tikzcd}
                                        &  & {\Map_{\mcalD}(M, N)} \arrow[lld, "\mcalG(-)"'] \arrow[d, "- \circ a_M"]          \\
            {\Map_{\mcalC}(\mcalG(M), \mcalG(N))} &  & {\Map_{\mcalD}(\mcalF\mcalG(M), N)} \arrow[ll, "\mcalG(-) \circ e_M"] \arrow[ll, "\simeq"']
        \end{tikzcd}
    \end{equation}
    in \(\Ani\).

    (Right edge): Next, consider the commutative diagram
    \begin{equation*}
        \begin{tikzcd}
            {\Map_{\mcalD}(\mcalF\mcalG(M), N)} \arrow[rr, "- \circ \mcalF\mcalG(a_M)"] \arrow[d, "\mcalG"] \arrow[ddd, "\mcalG(-) \circ e_M"', bend right=90] \arrow[ddd, "\simeq", bend right=90] &  & {\Map_{\mcalD}(\mcalF\mcalG\mcalF\mcalG(M), N)} \arrow[ddd, "\mcalG(-) \circ e_{\mcalF\mcalG(M)}", bend left=90] \arrow[ddd, "\simeq"', bend left=90] \arrow[d, "\mcalG"'] \\
            {\Map_{\mcalC}(\mcalG\mcalF\mcalG(M), \mcalG(N))} \arrow[rr, "- \circ \mcalG\mcalF\mcalG(a_M)"] \arrow[dd, "- \circ e_M"]                                                               &  & {\Map_{\mcalC}(\mcalG\mcalF\mcalG\mcalF\mcalG(M), \mcalG(N))} \arrow[dd, "- \circ e_{\mcalF\mcalG(M)}"']                                                                   \\
            &  &    \\
            {\Map_{\mcalC}(\mcalG(M), \mcalG(N))} \arrow[rr, "- \circ \mcalG(a_M)"']   &  & {\Map_{\mcalC}(\mcalG\mcalF\mcalG(M), \mcalG(N))} 
        \end{tikzcd}
    \end{equation*}
    in \(\Ani\), where the equivalences at the left and right sides follow from \eqref{EquivAdjoint} and the center square is \eqref{DiagramMonadUnit}.
    Therefore, we have a commutative diagram
    \begin{equation} \label{CommutativeMapRight}
        \begin{tikzcd}
            {\Map_{\mcalC}(\mcalG(M), \mcalG(N))} \arrow[d, "- \circ \mcalG(a_M)"'] &  & {\Map_{\mcalD}(\mcalF\mcalG(M), N)} \arrow[ll, "\mcalG(-) \circ e_M"] \arrow[d, "- \circ \mcalF\mcalG(a_M)"] \arrow[ll, "\simeq"'] \\
            {\Map_{\mcalC}(\mcalG\mcalF\mcalG(M), \mcalG(N))}                            &  & {\Map_{\mcalD}(\mcalF\mcalG\mcalF\mcalG(M), N)} \arrow[ll, "\mcalG(-) \circ e_{\mcalF\mcalG(M)}"] \arrow[ll, "\simeq"']                     
        \end{tikzcd}
    \end{equation}
    in \(\Ani\).

    (Lower edge): First, we have a commutative diagram
    \begin{equation*}
\begin{tikzcd}
{\Map_{\mcalD}(\mcalF\mcalG(M), N)} \arrow[ddd, "\mcalG"', bend right=90]               &                                                                                                  & {\Map_{\mcalD}(\mcalF\mcalG(M), N)} \arrow[lddd, "\mcalG(-) \circ e_M", bend left=49] \arrow[lddd, "\simeq"', bend left=49] \arrow[ldd, "\mcalG", bend left] \arrow[lldd, "\mcalF\mcalG"] \arrow[lld, "- \circ a_{\mcalF\mcalG(M)}"', shift left] \arrow[ll, "\id"'] \\
{\Map_{\mcalD}(\mcalF\mcalG\mcalF\mcalG(M), N)} \arrow[u, "- \circ \mcalF(e_M)"']       &                                                                                                  &                                                                                                                                                                                                                                                                      \\
{\Map_{\mcalD}(\mcalF\mcalG\mcalF\mcalG(M), \mcalF\mcalG(N))} \arrow[u, "a_N \circ -"'] & {\Map_{\mcalC}(\mcalG\mcalF\mcalG(M), \mcalG(N))} \arrow[d, "- \circ e_M"'] \arrow[l, "\mcalF"'] &                                                                                                                                                                                                                                                                      \\
{\Map_{\mcalC}(\mcalG\mcalF\mcalG(M), \mcalG(N))}                                       & {\Map_{\mcalC}(\mcalG(M), \mcalG(N))} \arrow[l, "\mcalG(a_N) \circ \mcalG\mcalF(-)"]             &                                                                                                                                                                                                                                                                     
\end{tikzcd}
    \end{equation*}
    in \(\Ani\), where the right equivalence is \eqref{EquivAdjoint}, the middle triangle follows from \Cref{CommutativeAction}, and the upper triangle follows from \eqref{DiagramMonadAssociativeUnital}.
    So we get a commutative diagram
    \begin{equation} \label{CommutativeMapLowerHalf1}
        \begin{tikzcd}
            {\Map_{\mcalC}(\mcalG(M), \mcalG(N))} \arrow[rr, "\mcalG(a_N) \circ \mcalG\mcalF(-)"]                                            &  & {\Map_{\mcalC}(\mcalG\mcalF\mcalG(M), \mcalG(N))} \\
            {\Map_{\mcalD}(\mcalF\mcalG(M), N)} \arrow[u, "\mcalG(-) \circ e_M"] \arrow[rru, "\mcalG"'] \arrow[u, "\simeq"'] &  &                              
        \end{tikzcd}
    \end{equation}
    in \(\Ani\).

    Moreover, the commutative diagram
    \begin{equation*}
\begin{tikzcd}
{\Map_{\mcalC}(\mcalG\mcalF\mcalG(M), \mcalG(N))} \arrow[rd, "- \circ \mcalG(a_{\mcalF\mcalG(M)})"] \arrow[rrdd, "\id"', bend right=49] &                                                                                                          & {\Map_{\mcalD}(\mcalF\mcalG(M), N)} \arrow[d, "- \circ a_{\mcalF\mcalG(M)}"] \arrow[ll, "\mcalG"']                                         \\
                                                                                                                                        & {\Map_{\mcalC}(\mcalG\mcalF\mcalG\mcalF\mcalG(M), \mcalG(N))} \arrow[rd, "- \circ e_{\mcalF\mcalG(M)}"'] & {\Map_{\mcalD}(\mcalF\mcalG\mcalF\mcalG(M), N)} \arrow[d, "\mcalG(-) \circ e_{\mcalF\mcalG(M)}"] \arrow[d, "\simeq"'] \arrow[l, "\mcalG"'] \\
                                                                                                                                        &                                                                                                          & {\Map_{\mcalC}(\mcalG\mcalF\mcalG(M), \mcalG(N))}                                                                                         
\end{tikzcd}
    \end{equation*}
    holds in \(\Ani\), where the right equivalence is \eqref{EquivAdjoint} and the left identity morphism is \eqref{DiagramMonadUnit}.
    Thus we have a commutative diagram
    \begin{equation} \label{CommutativeMapLowerHalf2}
        \begin{tikzcd}
            &  & {\Map_{\mcalC}(\mcalG\mcalF\mcalG(M), \mcalG(N))}                                                              \\
            {\Map_{\mcalD}(\mcalF\mcalG(M), N)} \arrow[rr, "- \circ a_{\mcalF\mcalG(M)}"'] \arrow[rru, "\mcalG(-)"] &  & {\Map_{\mcalD}(\mcalF\mcalG\mcalF\mcalG(M), N)} \arrow[u, "\mcalG(-) \circ e_{\mcalF\mcalG(M)}"'] \arrow[u, "\simeq"]
        \end{tikzcd}
    \end{equation}
    in \(\Ani\).

    (Combining): By combining \eqref{CommutativeMapLeftUpper}, \eqref{CommutativeMapRight}, \eqref{CommutativeMapLowerHalf1}, and \eqref{CommutativeMapLowerHalf2}, we obtain the desired equivalence of diagrams between the pullback diagram \eqref{PullbackFromPushout} and the diagram \eqref{PullbackModuleMorphism}.
\end{proof}

\begin{proposition} \label{DefineModuleStructure}
    Let \(\mcalG \colon \mcalD \to \mcalC\) be a functor between presentable stable \(\infty\)-categories which admits a left adjoint \(\mcalF \colon \mcalC \to \mcalD\).
    Give an object \(M \in \mcalC\), a morphism \(a \colon \mcalG\mcalF(M) \to M\), and commutative diagrams
    \begin{equation*}
        \begin{tikzcd}
            \mcalG\mcalF\mcalG\mcalF(M) \arrow[rrrr, "\mcalG(a_{\mcalF(M)}) = \mcalG(\varepsilon(\mcalF(M)))"] \arrow[d, "\mcalG\mcalF(a)"] &  &  &  & \mcalG\mcalF(M) \arrow[d, "a"] &  & \mcalG\mcalF(M) \arrow[r, "a"]                        & M \\
            \mcalG\mcalF(M) \arrow[rrrr, "a"]                                                       &  &  &  & M                    &  & M \arrow[u, "\eta(M)"] \arrow[ru, "\id_M"'] &  
        \end{tikzcd}
    \end{equation*}
    in \(\mcalC\).
    Then there exists an essentially unique object \(\widetilde{M} \in \mcalD\) such that \(\mcalG(\widetilde{M}) \simeq M\), and such that the action morphism
    \[
    a_{\widetilde{M}} \colon \mcalF\mcalG(\widetilde{M}) \to \widetilde{M}
    \]
    of \Cref{MorphismsAdjoint} corresponds to the given morphism \(a \colon \mcalG\mcalF(M) \to M\) via the functor \(\mcalG\) and the equivalence \(\mcalG(\widetilde{M}) \simeq M\).
\end{proposition}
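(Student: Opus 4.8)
The plan is to construct $\widetilde{M}$ explicitly as a reflexive coequalizer of free objects and then to recognize, after applying $\mcalG$, a \emph{split} coequalizer, which forces both the underlying object and the action to be the prescribed ones. Read the two commutative diagrams in the statement as asserting that $(M,a)$ is a module over the monad $\mcalG\mcalF$ on $\mcalC$. In $\mcalD$ I would form the two canonical ``free'' maps $\mcalF(a),\, a_{\mcalF(M)}=\varepsilon(\mcalF(M))\colon \mcalF\mcalG\mcalF(M)\rightrightarrows \mcalF(M)$ (the low-degree face maps of the bar resolution, available from \Cref{MorphismsAdjoint}) and set $\widetilde{M}\defeq \mathrm{coeq}(\mcalF(a),\,a_{\mcalF(M)})$, with canonical map $q\colon \mcalF(M)\to\widetilde{M}$. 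This coequalizer is a finite colimit, hence exists since $\mcalD$ is presentable.

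The key step is to identify $\mcalG(\widetilde{M})$. Applying $\mcalG$ and using \Cref{MorphismsAdjoint}, the two maps become $\mcalG\mcalF(a)$ and $\mcalG(a_{\mcalF(M)})\colon \mcalG\mcalF\mcalG\mcalF(M)\rightrightarrows \mcalG\mcalF(M)$. I claim the augmented diagram $\mcalG\mcalF\mcalG\mcalF(M)\rightrightarrows \mcalG\mcalF(M)\xrightarrow{a}M$ is a split coequalizer in $\mcalC$: the associativity square of the statement gives $a\circ\mcalG\mcalF(a)\simeq a\circ\mcalG(a_{\mcalF(M)})$, the unit triangle gives $a\circ\eta(M)\simeq \id_M$, and the remaining splitting identities (using $\eta(M)$ and $\eta(\mcalG\mcalF(M))$ as the two sections) follow from the monad unit law and the naturality of $\eta$. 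Since $\mcalG$ is a right adjoint between stable $\infty$-categories it preserves finite limits, hence is exact and preserves this finite colimit; alternatively one invokes that split coequalizers are absolute. Either way $\mcalG(\widetilde{M})\simeq \mathrm{coeq}(\mcalG\mcalF(a),\mcalG(a_{\mcalF(M)}))\simeq M$.

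Next I would match the action. Under $\mcalG(\widetilde{M})\simeq M$ the map $\mcalG(q)$ is precisely the coequalizing augmentation $a$, so $q$ is adjoint to $\mcalG(q)\circ\eta(M)\simeq a\circ\eta(M)\simeq\id_M$; but $\varepsilon(\widetilde{M})=a_{\widetilde{M}}$ is also adjoint to $\id_{\mcalG(\widetilde{M})}\simeq\id_M$, whence $q\simeq\varepsilon(\widetilde{M})$ and therefore $a_{\widetilde{M}}=\mcalG(\varepsilon(\widetilde{M}))\simeq a$. This exhibits $(\mcalG(\widetilde{M}),a_{\widetilde{M}})\simeq (M,a)$, i.e. $\widetilde{M}$ has exactly the required property, giving existence. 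For essential uniqueness I would use the universal property of the coequalizer: for every $N$ one has $\Map_{\mcalD}(\widetilde{M},N)\simeq \mathrm{eq}\!\left(\Map_{\mcalC}(M,\mcalG(N))\rightrightarrows \Map_{\mcalC}(\mcalG\mcalF(M),\mcalG(N))\right)$, the space of action-compatible maps $M\to\mcalG(N)$; comparing this with the description of module-morphism spaces (in the spirit of \Cref{PullbackModuleMorphismProp}) identifies $\widetilde{M}$ as corepresenting maps of $\mcalG\mcalF$-modules $(M,a)\to(\mcalG(N),a_{N})$, so any object carrying the stated data is canonically equivalent to $\widetilde{M}$.

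The hard part will be the homotopy-coherent bookkeeping. Verifying the split-coequalizer identities and the equivalence $q\simeq\varepsilon(\widetilde{M})$ must be done coherently, threading the given associativity and unit diagrams together with the naturality of $\varepsilon$ (\Cref{CommutativeAction}) and of $\eta$, rather than as strict equalities. Equally delicate is the uniqueness clause: one must confirm that the coequalizer's mapping-space universal property genuinely computes the full space of $\mcalG\mcalF$-module maps into $(\mcalG(N),a_{N})$ --- that is, that the relevant totalization collapses to the displayed equalizer --- which is exactly where the splitting supplied by $\eta$ is used, and which is the step most in need of care.
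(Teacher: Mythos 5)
Your construction of $\widetilde{M}$ is where the argument breaks. In a stable $\infty$-category the coequalizer of a parallel pair $f,g\colon A\rightrightarrows B$, taken as the (finite) colimit over the walking parallel pair, is the cofiber of $f-g$, and the classical split-coequalizer lemma does \emph{not} apply to it. Concretely, take $\mcalC=\mcalD=\mcalD(\setZ)$ with $\mcalF=\mcalG=\id$: the unit triangle forces $a=\id_M$, all of your splitting identities hold on the nose, yet
\[
\mathrm{coeq}(\id_M,\id_M)\;=\;\mathrm{cofib}(\id_M-\id_M)\;=\;M\oplus M[1]\;\neq\;M,
\]
so the step ``$\mcalG(\widetilde{M})\simeq \mathrm{coeq}(\mcalG\mcalF(a),\mcalG(a_{\mcalF(M)}))\simeq M$'' fails. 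The absoluteness of split diagrams in the $\infty$-categorical setting is a statement about geometric realizations of \emph{split simplicial objects}, not about $1$-truncated coequalizers; and if you instead take $\widetilde{M}$ to be the realization of the full bar construction, you need $(M,a)$ to carry a fully coherent $\mcalG\mcalF$-module structure, which is exactly the data the proposition does \emph{not} assume (it only supplies the two lowest-degree diagrams). So either reading of ``reflexive coequalizer'' leaves a genuine gap, and the uniqueness argument inherits it since it rests on the same universal property.

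The paper avoids both problems by using a different colimit: it forms the pushout of the \emph{span} $\mcalF(M)\xleftarrow{\mcalF(a)}\mcalF\mcalG\mcalF(M)\xrightarrow{a_{\mcalF(M)}}\mcalF(M)$ in $\mcalD$ (in the identity-monad test case this correctly returns $M$), and then shows via the dual trapezoid lemma (\Cref{DualTrapezoidLemma}) --- using the sections $e_{\mcalF(M)}$ and $\eta(M)$ supplied by the unit diagrams --- that the given associativity square is itself a pushout in $\mcalC$. Since $\mcalG$ is exact between stable $\infty$-categories it preserves the defining pushout, and comparing the two pushouts of the same span yields $\mcalG(\widetilde{M})\simeq M$ together with the required identification of the action. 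If you want to rescue your outline, you would need to replace the coequalizer by this span pushout (or otherwise prove the associativity square is a pushout), at which point you are essentially reproducing the paper's argument.
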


\begin{proof}
    Apply \(\eta\) to the given morphism \(a \colon \mcalG\mcalF(M) \to M\) and the right commutative diagram of \eqref{DiagramMonadUnit} in \Cref{MorphismsAdjoint}, we have a commutative diagram
    \begin{center}
        \begin{tikzcd}
            \mcalG\mcalF(M) \arrow[rrrrd, "e_{\mcalF(M)} = \eta(\mcalG\mcalF(M))"'] \arrow[rrrrrrrrd, "\id_{\mcalG\mcalF(M)}"] \arrow[ddd, "a"] &  &  &  &                                                                                &  &  &  &                      \\
                                                                                                            &  &  &  & \mcalG\mcalF\mcalG\mcalF(M) \arrow[rrrr, "\mcalG(a_{\mcalF(M)}) = \mcalG(\varepsilon(\mcalF(M)))"'] \arrow[d, "\mcalG\mcalF(a)"] &  &  &  & \mcalG\mcalF(M) \arrow[d, "a"] \\
                                                                                                            &  &  &  & \mcalG\mcalF(M) \arrow[rrrr, "a"]                                                        &  &  &  & M                    \\
            M \arrow[rrrru, "\eta(M)"] \arrow[rrrrrrrru, "\id_M"']                                           &  &  &  &                                                                                &  &  &  &                     
        \end{tikzcd}
    \end{center}
    in \(\mcalC\).
    Use dual trapezoid lemma (\Cref{DualTrapezoidLemma}) for the given commutative diagram, then we can show that the right square is a pushout diagram in \(\mcalC\) because of the stability of \(\mcalC\).

    On the other hand, we can take the pushout diagram
    \begin{center}
        \begin{tikzcd}
            \mcalF\mcalG\mcalF(M) \arrow[rr, "a_{\mcalF(M)}"] \arrow[d, "\mcalF(a)"] &  & \mcalF(M) \arrow[d] \\
            \mcalF(M) \arrow[rr]                                 &  & \widetilde{M} 
        \end{tikzcd}
    \end{center}
    in \(\mcalD\).
    Applying the functor \(\mcalG\) to this pushout diagram, it also gives a pushout diagram in \(\mcalC\) and the upper and left morphisms correspond to those in the above right pushout diagram in \(\mcalC\).
    Therefore, by the universal property of pushout diagrams, we have an essentially unique equivalence \(\mcalG(\widetilde{M}) \simeq M\) in \(\mcalC\) which makes the above diagram the left diagram of \eqref{DiagramMonadAssociativeUnital} for \(\widetilde{M} \in \mcalD\).
    This completes the proof.
\end{proof}

\begin{definition} \label{DefMonad}
    Let \(\mcalC\) be an \(\infty\)-category.
    The \(\infty\)-category of endofunctors \(\End(\mcalC)\) of \(\mcalC\) is defined by \(\Fun(\mcalC, \mcalC)\) equipped with the composition as the monoidal structure.
    An \emph{monad} on \(\mcalC\) is an algebra object in the monoidal \(\infty\)-category \(\End(\mcalC)\), namely, an object of \(\Alg(\End(\mcalC))\).

    A monad \(\mcalT\) consists of an endofunctor \(\mcalT \colon \mcalC \to \mcalC\) together with two natural transformations: the multiplication \(\mu_{\mcalT} \colon \mcalT \circ \mcalT \to \mcalT\) and the unit \(e_{\mcalT} \colon \id_{\mcalC} \to \mcalT\) satisfying the associativity and unital conditions with infinitely many homotopies.
    In particular, we have a commutative diagram
    \begin{equation*}
        \begin{tikzcd}
            \mcalT \circ \mcalT \arrow[r, "\mu_{\mcalT}"]               & \mcalT \\
            \mcalT \arrow[u, "\mcalT(e_{\mcalT})"] \arrow[ru, "\id_{\mcalT}"'] &  
        \end{tikzcd}
    \end{equation*}
    in \(\Fun(\mcalC, \mcalC)\).
\end{definition}

\begin{definition} \label{DefModuleOverMonad}
    Let \(\mcalC\) be an \(\infty\)-category and let \(\mcalT\) be a monad on \(\mcalC\).
    For the monoidal \(\infty\)-category \(\End(\mcalC)\), we have a left-tensored \(\infty\)-category \(\mcalC\) over \(\End(\mcalC)\) given by the evaluation functor \(\End(\mcalC) \times \mcalC \to \mcalC\).
    Then, we define the \emph{\(\infty\)-category \(\Mod_{\mcalT}(\mcalC)\) of (left) modules over \(\mcalT\) in \(\mcalC\)} and call its objects \emph{\(\mcalT\)-modules}.
    It admits the forgetful functor, which gives the \emph{underlying object} of a \(\mcalT\)-module,
    \begin{equation*}
        \mcalG \colon \Mod_{\mcalT}(\mcalC) \to \mcalC
    \end{equation*}
    and this preserves small limits, admits a left adjoint \(\mcalF \colon \mcalC \to \Mod_{\mcalT}(\mcalC)\), and the composition \(\mcalG \circ \mcalF \colon \mcalC \to \mcalC\) is equivalent to the (underlying) functor of the monad \(\mcalT\) (\cite{lurie2017Higher}*{Corollary 4.2.3.3 and Corollary 4.2.4.8}).

    On the other hand, when we have an adjunction \((\mcalF, \mcalG)\) between presentable stable \(\infty\)-categories \(\mcalC\) and \(\mcalD\), the composition \(\mcalT \defeq \mcalG \circ \mcalF \colon \mcalC \to \mcalC\) admits a natural structure of a monad on \(\mcalC\) (\cite{lurie2017Higher}*{Proposition 4.7.3.3}).
\end{definition}

In this setting, we can apply \Cref{PullbackModuleMorphismProp} and we get the following corollary.

\begin{corollary} \label{PullbackModuleMorphismRemark}
    Let \(\mcalT\) be a monad on a presentable stable \(\infty\)-category \(\mcalC\) and let \(\mcalG \colon \Mod_{\mcalT}(\mcalC) \to \mcalC\) be the forgetful functor, which is conservative by, for example, \cite{lurie2017Higher}*{Theorem 4.7.3.5}.
    Then we can apply \Cref{PullbackModuleMorphismProp} to the adjunction \((\mcalF, \mcalG)\) in \Cref{DefModuleOverMonad}.
    We obtain a pullback diagram
    \begin{center}
        \begin{tikzcd}
            {\Map_{\Mod_{\mcalT}(\mcalC)}(M, N)} \arrow[d, "\mathrm{forget}"'] \arrow[rr, "\mathrm{forget}"] &  & {\Map_{\mcalC}(M, N)} \arrow[d, "- \circ a_M"] \\
            {\Map_{\mcalC}(M, N)} \arrow[rr, "a_N \circ \mcalT(-)"]                                        &  & {\Map_{\mcalC}(\mcalT(M), N)}                      
        \end{tikzcd}
    \end{center}
    in \(\Ani\) for any \(\mcalT\)-modules \(M, N \in \Mod_{\mcalT}(\mcalC)\), where we identify the underlying objects of \(M\) and \(N\) with those in \(\mcalC\).
\end{corollary}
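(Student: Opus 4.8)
The plan is to obtain the corollary as an immediate instance of \Cref{PullbackModuleMorphismProp}, applied to the free--forgetful adjunction \((\mcalF,\mcalG)\) associated with the monad \(\mcalT\) in \Cref{DefModuleOverMonad}. Thus the work splits into verifying the hypotheses of \Cref{PullbackModuleMorphismProp} for this adjunction and then matching the abstract square \eqref{PullbackModuleMorphism} with the one in the statement.

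For the hypotheses: the base \(\mcalC\) is presentable and stable by assumption, the forgetful functor \(\mcalG\colon\Mod_{\mcalT}(\mcalC)\to\mcalC\) is conservative by the Barr--Beck theorem \cite[Theorem~4.7.3.5]{lurie2017Higher}, and it admits the free left adjoint \(\mcalF\) together with an equivalence \(\mcalG\circ\mcalF\simeq\mcalT\) of endofunctors by \Cref{DefModuleOverMonad}. The one point that still needs justification is that \(\Mod_{\mcalT}(\mcalC)\) is itself presentable and stable, as required by \Cref{PullbackModuleMorphismProp}. Here I would argue that \(\mcalG\) creates small limits and is conservative, so that---\(\mcalT\) being accessible---\(\Mod_{\mcalT}(\mcalC)\) is presentable, and that, since \(\mcalG\) reflects and creates the finite limits and colimits of the stable category \(\mcalC\), the zero object, the coincidence of finite products with coproducts, and the identification of fiber sequences with cofiber sequences are all detected by \(\mcalG\); hence \(\Mod_{\mcalT}(\mcalC)\) is stable as well. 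I expect this to be the only genuinely delicate step, because it is where one must know that the monad in play (the completion monad \(\mcalT^I\) in the intended application) is accessible and exact, so that its module category lands among presentable stable \(\infty\)-categories.

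For the translation: I would identify the underlying objects \(\mcalG(M)\simeq M\) and \(\mcalG(N)\simeq N\), and use \(\mcalG\circ\mcalF\simeq\mcalT\) to rewrite \(\mcalG\mcalF\mcalG(M)\simeq\mcalT(M)\). By \Cref{MorphismsAdjoint} the map \(a_M=\varepsilon(M)\colon\mcalF\mcalG(M)\to M\) is the counit, so its image \(\mcalG(a_M)\colon\mcalT(M)\to M\) is exactly the \(\mcalT\)-action giving the module structure on \(M\), which is what the statement denotes \(a_M\); likewise \(\mcalG(a_N)\) is the action of \(N\). Under these identifications the two maps labelled \(\mcalG(-)\) become the two copies of \(\mathrm{forget}\), the right-hand vertical map \(-\circ\mcalG(a_M)\) becomes \(-\circ a_M\), and the lower horizontal map becomes \(a_N\circ\mcalT(-)\). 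Thus \eqref{PullbackModuleMorphism} is precisely the displayed square, and \Cref{PullbackModuleMorphismProp} asserts that it is a pullback in \(\Ani\), which is the claim.
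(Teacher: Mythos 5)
Your proposal is correct and follows exactly the route the paper takes: the corollary is obtained by specializing \Cref{PullbackModuleMorphismProp} to the free--forgetful adjunction of \Cref{DefModuleOverMonad} and translating the square via $\mcalG\mcalF\mcalG(M)\simeq\mcalT(M)$ and $\mcalG(a_M)=$ the action map. Your additional verification that $\Mod_{\mcalT}(\mcalC)$ is presentable and stable (needed to meet the hypotheses of the proposition, and quietly elided in the paper) is a sensible and correct supplement rather than a divergence.
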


Informally speaking, a morphism \(M \to N\) in \(\mcalC\) for \(\mcalT\)-modules \(M, N\) lifts to a morphism of \(\mcalT\)-modules if and only if it admits a commutative diagram
\begin{center}
    \begin{tikzcd}
        \mcalT(M) \arrow[d, "a_M"] \arrow[r, "\mcalT(f)"] & \mcalT(N) \arrow[d, "a_N"] \\
        M \arrow[r, "f"]                        & N                     
    \end{tikzcd}
\end{center}
in \(\mcalC\).

Also, \Cref{DefineModuleStructure} informally states that a \(\mcalT\)-module structure on an object \(M \in \mcalC\) is equivalent to a morphism \(a \colon \mcalT(M) \to M\) in \(\mcalC\) satisfying the associativity and unital conditions expressed by the commutative diagram in \Cref{DefineModuleStructure}.

\begin{remark}
    Dually, we can consider a comonad \(\mcalT \defeq \mcalF \circ \mcalG\) on \(\mcalD\) and the \(\infty\)-category \(\coMod_{\mcalT}(\mcalD)\) of comodules over \(\mcalT\) in \(\mcalD\).
    Then we can also apply \Cref{MorphismsAdjoint}, \Cref{PullbackModuleMorphismProp} and \Cref{DefineModuleStructure} to the right adjoint functor \((\coMod_{\mcalT}(\mcalD))^{\opposite} \to \mcalD^{\opposite}\), which gives the dual versions of them for comodules.
\end{remark}

\subsection{Retraction of graded morphisms}

In this subsection, we introduce the notion of morphism retractions and prove the existence of them for the forgetful functors from graded modules to modules.

\begin{definition} \label{DefRetractionMappingSpace}
    Let \(\iota \colon \mcalC \to \mcalD\) be a symmetric monoidal functor of symmetric monoidal presentable stable \(\infty\)-categories.
    A \emph{morphism retraction \(\rho\) of \(\iota\)} consists of the following data:
    \begin{itemize}
        \item A system of retractions
        \begin{equation} \label{RetractionMappingSpace}
            r = \{r_{M, N} \colon \Map_{\mcalD}(\iota(M), \iota(N)) \to \Map_{\mcalC}(M, N)\}_{(M, N) \in \mcalC \times \mcalC}
        \end{equation}
        of the morphisms \(\{\iota_{M, N} \colon \Map_{\mcalC}(M, N) \to \Map_{\mcalD}(\iota(M), \iota(N))\}_{(M, N) \in \mcalC \times \mcalC}\) in \(\Ani\).
        \item The following commutative diagrams in \(\Ani\) for all \(M\), \(M'\), \(L\), \(L'\), \(N\), and \(N'\) in \(\mcalC\):
        \begin{equation} \label{DiagramRetractionMappingSpace}
            \begin{tikzcd}
                {\Map_{\mcalD}(\iota(M), \iota(N)) \times \Map_{\mcalC}(N, L)} \arrow[rr, "{(\phi, \psi) \mapsto \iota(\psi) \circ \phi}"] \arrow[d, "{r_{M, N} \times \id}"] &  & {\Map_{\mcalD}(\iota(M), \iota(L))} \arrow[d, "{r_{M, L}}"] \\
                {\Map_{\mcalC}(M, N) \times \Map_{\mcalC}(N, L)} \arrow[rr, "{(\varphi, \psi) \mapsto \psi \circ \varphi}"]                                                      &  & {\Map_{\mcalC}(M, L),}                                        
            \end{tikzcd}
        \end{equation}
        \begin{equation} \label{DiagramRetractionMappingSpaceConverse}
            \begin{tikzcd}
                {\Map_{\mcalC}(L, M) \times \Map_{\mcalD}(\iota(M), \iota(N))} \arrow[rr, "{(\psi, \phi) \mapsto \phi \circ \iota(\psi)}"] \arrow[d, "{\id \times r_{M, N}}"] &  & {\Map_{\mcalD}(\iota(L), \iota(N))} \arrow[d, "{r_{L, N}}"] \\
                {\Map_{\mcalC}(L, M) \times \Map_{\mcalC}(M, N)} \arrow[rr, "{(\psi, \varphi) \mapsto \varphi \circ \psi}"]                                                      &  & {\Map_{\mcalC}(L, N),}                                        
            \end{tikzcd}
        \end{equation}
        and
        \begin{equation} \label{DiagramRetractionMappingSpaceTensor}
            \begin{tikzcd}
                {\Map_{\mcalD}(\iota(M), \iota(N)) \times \Map_{\mcalC}(M', N')} \arrow[rr, "{(\phi, \psi) \mapsto \phi \otimes \iota(\psi)}"] \arrow[d, "{r_{M, N} \times \id}"] &  & {\Map_{\mcalD}(\iota(M) \otimes^{\mcalD} \iota(M'), \iota(N) \otimes^{\mcalD} \iota(N'))} \arrow[d, "{r_{M \otimes^{\mcalC} M', N \otimes^{\mcalC} N'}}"] \\
                {\Map_{\mcalC}(M, N) \times \Map_{\mcalC}(M', N')} \arrow[rr, "{(\varphi, \psi) \mapsto \varphi \otimes \psi}"]                                                      &  & {\Map_{\mcalC}(M \otimes^{\mcalC} M', N \otimes^{\mcalC} N')}   
            \end{tikzcd}
        \end{equation}
        where \(\otimes^{\mcalC}\) and \(\otimes^{\mcalD}\) are the tensor products of \(\mcalC\) and \(\mcalD\), respectively.
    \end{itemize}
\end{definition}

\begin{proposition} \label{RetractionMappingSpaceFun}
    Let \(\iota \colon \mcalC \to \mcalD\) be a symmetric monoidal functor of symmetric monoidal presentable stable \(\infty\)-categories with a morphism retraction \(r\).
    Let \(I\) be an \(\infty\)-category.
    Take the functor \(\iota^I \colon \mcalC^I \to \mcalD^I\) of the \(\infty\)-categories of functors from \(I\) induced from \(\iota\).
    Then the functor \(\iota^I\) admits a morphism retraction \(\rho^I\), obtained by applying \(\rho\) componentwise.
\end{proposition}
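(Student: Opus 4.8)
The plan is to reinterpret the morphism retraction $r$ of $\iota$ as a genuine natural transformation between mapping-space functors and then transport it to functor categories through the end formula for mapping spaces. First I would record the ambient structures: taking $I$ small and equipping $\mcalC^I = \Fun(I, \mcalC)$ and $\mcalD^I = \Fun(I, \mcalD)$ with the \emph{pointwise} symmetric monoidal structure $(F \otimes G)(i) \simeq F(i) \otimes^{\mcalC} G(i)$, both are again symmetric monoidal presentable stable $\infty$-categories (colimits and the tensor product being computed objectwise), and $\iota^I$ is symmetric monoidal since $\iota$ is. The key input is the computation of mapping spaces in a functor $\infty$-category as an end: for $F, G \in \mcalC^I$ one has
\begin{equation*}
    \Map_{\mcalC^I}(F, G) \simeq \lim_{(i \to j) \in \mathrm{Tw}(I)} \Map_{\mcalC}(F(i), G(j)),
\end{equation*}
and likewise for $\mcalD^I$, where the transition map attached to a morphism $(i \to j) \to (i' \to j')$ of the twisted arrow category $\mathrm{Tw}(I)$ (i.e.\ arrows $i' \to i$ and $j \to j'$) is precomposition with $F(i' \to i)$ and postcomposition with $G(j \to j')$.

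Next I would construct $\rho^I$. The point is that the two compatibility diagrams \eqref{DiagramRetractionMappingSpace} and \eqref{DiagramRetractionMappingSpaceConverse} express precisely the naturality of the family $\{r_{M,N}\}$ in the second and first variables, so together they upgrade $r$ to a natural transformation $r \colon \Map_{\mcalD} \circ (\iota^{\opposite} \times \iota) \Rightarrow \Map_{\mcalC}$ of functors $\mcalC^{\opposite} \times \mcalC \to \Ani$ retracting the transformation induced by $\iota$. Whiskering this natural transformation along $F^{\opposite} \times G$ and restricting to $\mathrm{Tw}(I)$ yields, for each pair $(F,G)$, a morphism of $\mathrm{Tw}(I)$-diagrams: the square
\begin{equation*}
    \begin{tikzcd}
        \Map_{\mcalD}(\iota F(i), \iota G(j)) \arrow[r, "r"] \arrow[d] & \Map_{\mcalC}(F(i), G(j)) \arrow[d] \\
        \Map_{\mcalD}(\iota F(i'), \iota G(j')) \arrow[r, "r"] & \Map_{\mcalC}(F(i'), G(j'))
    \end{tikzcd}
\end{equation*}
commutes for every morphism of $\mathrm{Tw}(I)$, because \eqref{DiagramRetractionMappingSpace} handles the postcomposition with $\iota G(j \to j')$ and \eqref{DiagramRetractionMappingSpaceConverse} the precomposition with $\iota F(i' \to i)$. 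I then set $\rho^I_{F, G} \coloneq \lim_{\mathrm{Tw}(I)} r_{F(-), G(-)}$, a map $\Map_{\mcalD^I}(\iota^I F, \iota^I G) \to \Map_{\mcalC^I}(F, G)$, natural in $(F,G)$ by functoriality of the whole construction.

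Finally I would verify that $\rho^I$ is a morphism retraction. The retraction property is immediate: since $r_{F(i), G(j)} \circ \iota_{F(i), G(j)} \simeq \id$ on each object of $\mathrm{Tw}(I)$ and the limit of the constant identity transformation is the identity, $\rho^I_{F, G} \circ \iota^I_{F, G} \simeq \id$. The three diagrams \eqref{DiagramRetractionMappingSpace}, \eqref{DiagramRetractionMappingSpaceConverse}, and \eqref{DiagramRetractionMappingSpaceTensor} for $\rho^I$ then follow by applying $\lim_{\mathrm{Tw}(I)}$ to the corresponding diagrams for $r$: composition and the pointwise tensor product in $\mcalC^I$ and $\mcalD^I$ are computed through the end formula and objectwise tensoring, so each diagram for $\rho^I$ is a limit of commuting diagrams for $r$ evaluated at the relevant objects, hence commutes. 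The main obstacle is the single genuinely $\infty$-categorical point: making rigorous that $\{r_{M,N}\}$ together with \eqref{DiagramRetractionMappingSpace}--\eqref{DiagramRetractionMappingSpaceConverse} assembles into a \emph{coherent} natural transformation of $\Ani$-valued functors (not merely a pointwise family with commuting squares), and that whiskering and restriction to $\mathrm{Tw}(I)$ preserve this coherence; once $r$ is viewed this way, every remaining step is a formal consequence of the functoriality of limits.
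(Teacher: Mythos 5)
Your proposal is correct and takes essentially the same route as the paper: both proofs construct $\rho^I$ by applying $r$ componentwise and use the two composition-compatibility diagrams \eqref{DiagramRetractionMappingSpace} and \eqref{DiagramRetractionMappingSpaceConverse} to check that the resulting family is natural in $I$ (the paper checks the naturality square for each $i \to j$ by identifying both composites with $r_{F(i),G(j)}(\phi_j \circ \iota(f_{i\to j})) \simeq r_{F(i),G(j)}(\iota(g_{i\to j}) \circ \phi_i)$, which is exactly your $\mathrm{Tw}(I)$-diagram computation). Your end-formula packaging over the twisted arrow category is a more careful formalization of the paper's informal ``component-wise'' argument, and the coherence issue you flag at the end is equally present, and left implicit, in the paper's own proof.
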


\begin{proof}
    Take functors \(F\) and \(G\) in \(\mcalC^I\).
    We define the morphism retraction on the mapping spaces in \(\mcalC^I\) and \(\mcalD^I\) componentwise as follows: For any \(\phi = (\phi_i)_{i \in I} \colon \iota(F) \to \iota(G)\) in \(\mcalD^I\) and \(i \to j\) in \(I\), the retraction
    \begin{equation*}
        r_{F(i), G(i)} \colon \Map_{\mcalD}(\iota(F(i)), \iota(G(i))) \to \Map_{\mcalC}(F(i), G(i))
    \end{equation*}
    sends \(\phi_i\) to \(\psi_i \defeq r_{F(i), G(i)}(\phi_i) \colon F(i) \to G(i)\).
    We claim that \(\psi = (\psi_i)_{i \in I}\) defines a morphism \(F \to G\) in \(\mcalC^I\). It suffices to show that, for any morphism \(i \to j\) in \(I\) and the given morphisms \(f_{i \to j} \colon F(i) \to F(j)\) and \(g_{i \to j} \colon G(i) \to G(j)\), we have a commutative diagram
    \begin{center}
        \begin{tikzcd}
            F(i) \arrow[r, "f_{i \to j}"] \arrow[d, "{r_{F(i), G(i)}(\phi_i)}"'] & F(j) \arrow[d, "{r_{F(j), G(j)}(\phi_j)}"] \\
            G(i) \arrow[r, "g_{i \to j}"]                                        & G(j)                                      
        \end{tikzcd}
    \end{center}
    in \(\mcalC\).
    Using the commutative diagrams (\ref{DiagramRetractionMappingSpace}) and (\ref{DiagramRetractionMappingSpaceConverse}) in \Cref{DefRetractionMappingSpace}, we can see that the above diagram commutes since both compositions are homotopically equivalent to \(r_{F(i), G(j)}(\phi_j \circ \iota(f_{i \to j})) \simeq r_{F(i), G(j)}(\iota(g_{i \to j}) \circ \phi_i)\).

    The desired commutative diagrams (\ref{DiagramRetractionMappingSpace}) and (\ref{DiagramRetractionMappingSpaceConverse}) for \(r^I\) follow from those for \(r\) by applying it component-wise.
    If \(\iota\) is a symmetric monoidal functor between symmetric monoidal \(\infty\)-categories, then the commutative diagram (\ref{DiagramRetractionMappingSpaceTensor}) for \(r^I\) also follows from that same reason.
\end{proof}

To prove the existence of morphism retractions for the forgetful functors from derived graded modules to derived modules, we prepare the following statement which reduces the mapping spaces of derived graded \(R\)-modules to those of derived \(\setZ\)-modules.

\begin{corollary} \label{FiberMappingSpaceDgr}
    Let \(R\) be a \(G\)-graded ring and let \(M\) and \(N\) be objects of \(\mcalD_{\graded{G}}(R)\).
    Then there exists a pullback diagram
    \begin{equation} \label{FiberMappingSpaceDgrSeq}
        \begin{tikzcd}
            {\Map_{\mcalD_{\graded{G}}(R)}(M, N)} \arrow[d, "\mathrm{forget}"'] \arrow[rr, "\mathrm{forget}"] &  & {\Map_{\mcalD_{\graded{G}}(\setZ)}(M, N)} \arrow[d, "- \circ a_M"] \\
            {\Map_{\mcalD_{\graded{G}}(\setZ)}(M, N)} \arrow[rr, "a_N \circ (- \Lgrotimes_{\setZ} R)"]     &  & {\Map_{\mcalD_{\graded{G}}(\setZ)}(M \Lgrotimes_{\setZ} R, N)}    
        \end{tikzcd}
    \end{equation}
    in \(\Ani\), which is functorial on \(M\) and \(N\).
\end{corollary}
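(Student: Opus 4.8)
The plan is to recognize the square \eqref{FiberMappingSpaceDgrSeq} as a direct instance of \Cref{PullbackModuleMorphismRemark} (equivalently \Cref{PullbackModuleMorphismProp}) applied to the base-change/forgetful adjunction. By \Cref{DefDerivedGradedModules} we have $\mcalD_{\graded{G}}(R) = \Mod_R(\mcalD_{\graded{G}}(\setZ))$, and under the standard identification of modules over the $\setE_\infty$-algebra $R$ with modules over the monad $\mcalT \defeq - \Lgrotimes_{\setZ} R$ on $\mcalD_{\graded{G}}(\setZ)$ recalled in \Cref{DefModuleOverMonad}, the forgetful functor $\mcalG \colon \mcalD_{\graded{G}}(R) \to \mcalD_{\graded{G}}(\setZ)$ becomes the forgetful functor $\mcalG \colon \Mod_{\mcalT}(\mcalD_{\graded{G}}(\setZ)) \to \mcalD_{\graded{G}}(\setZ)$. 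Its left adjoint is base change $\mcalF = - \Lgrotimes_{\setZ} R$ by \Cref{PropertiesDerivedGradedModules}\Cref{LimitsColimitsDgrR}, so the associated monad $\mcalG \circ \mcalF$ is indeed $\mcalT$, and for a $\mcalT$-module $M$ the action map $a_M$ is the structure morphism $M \Lgrotimes_{\setZ} R \to M$.

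First I would record that the hypotheses of \Cref{PullbackModuleMorphismRemark} are met: $\mcalD_{\graded{G}}(\setZ)$ is a presentable stable $\infty$-category by \Cref{PropertiesDgrZ}\Cref{PresentabilityDgrZ}, and the forgetful functor $\mcalG$ is conservative by \Cref{PropertiesDerivedGradedModules}\Cref{LimitsColimitsDgrR} (or by \cite[Theorem 4.7.3.5]{lurie2017Higher}). Then \Cref{PullbackModuleMorphismRemark} yields a pullback square in $\Ani$ whose corners are $\Map_{\mcalD_{\graded{G}}(R)}(M,N)$, two copies of $\Map_{\mcalD_{\graded{G}}(\setZ)}(M,N)$, and $\Map_{\mcalD_{\graded{G}}(\setZ)}(\mcalT(M), N)$. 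Substituting $\mcalT(M) = M \Lgrotimes_{\setZ} R$, the right vertical arrow is precomposition with $a_M \colon M \Lgrotimes_{\setZ} R \to M$ and the bottom arrow is $a_N \circ (- \Lgrotimes_{\setZ} R)$, which is precisely the diagram \eqref{FiberMappingSpaceDgrSeq}.

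Finally, functoriality in $M$ and $N$ is automatic: the entire square constructed in \Cref{PullbackModuleMorphismProp} is assembled from the unit and counit of the adjunction $(\mcalF, \mcalG)$ together with the action maps $a_M, a_N$, all of which are natural in their arguments, so the pullback is natural in both $M$ and $N$. There is no genuine obstacle in this corollary, since all the real work is carried out in \Cref{PullbackModuleMorphismProp}. The only point requiring care is the bookkeeping identification---of the underlying objects of $M, N \in \mcalD_{\graded{G}}(R)$ with their images in $\mcalD_{\graded{G}}(\setZ)$, and of the monad $\mcalG\mcalF$ with base change $- \Lgrotimes_{\setZ} R$---but both hold by the very construction of $\Mod_R(\mcalD_{\graded{G}}(\setZ))$.
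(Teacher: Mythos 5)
Your proposal is correct and matches the paper's argument exactly: the paper also proves this corollary by citing \Cref{PullbackModuleMorphismRemark} (built on \Cref{PullbackModuleMorphismProp}) for the base-change/forgetful adjunction between \(\mcalD_{\graded{G}}(R)\) and \(\mcalD_{\graded{G}}(\setZ)\). Your additional bookkeeping (presentability, stability, conservativity, identification of the monad with \(- \Lgrotimes_{\setZ} R\)) is exactly the verification the paper leaves implicit.
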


\begin{proof}
    This is \Cref{PullbackModuleMorphismRemark} based on \Cref{PullbackModuleMorphismProp}.
\end{proof}



We are now ready to prove the existence of morphism retractions.

\begin{theorem} \label{ExistenceRetracts}
    Let \(R\) be a \(G\)-graded ring.
    Write the forgetful functor \(\theta \colon \mcalD_{\graded{G}}(R) \to \mcalD(R)\), which is given in \Cref{PropertiesDerivedGradedModules}\Cref{ForgetfulFunctorDgrR}.
    Then \(\theta\) admits a morphism retraction; that is, the morphism
    \begin{equation*}
        \theta_{M, N} = \theta_{M, N}^R \colon \Map_{\mcalD_{\graded{G}}(R)}(M, N) \to \Map_{\mcalD(R)}(\theta(M), \theta(N))
    \end{equation*}
    in \(\Ani\) induced by \(\theta\) admits a retraction
    \begin{equation} \label{RetractionMappingSpaceGraded}
        r = r_{M, N} = r_{M, N}^{R} \colon \Map_{\mcalD(R)}(\theta(M), \theta(N)) \to \Map_{\mcalD_{\graded{G}}(R)}(M, N)
    \end{equation}
    satisfying the commutative diagrams (\ref{DiagramRetractionMappingSpace}), (\ref{DiagramRetractionMappingSpaceConverse}), and (\ref{DiagramRetractionMappingSpaceTensor}).
\end{theorem}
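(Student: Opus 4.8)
The plan is to construct the retraction first over the base ring \(\setZ\), where \(\mcalD_{\graded{G}}(\setZ)\) splits as the product \(\prod_{g \in G} \mcalD(\setZ)\), and then to transport it to an arbitrary \(G\)-graded ring \(R\) through the monadic pullback descriptions of mapping spaces. Both \(\mcalD_{\graded{G}}(R)\) and \(\mcalD(R)\) are symmetric monoidal presentable stable \(\infty\)-categories and \(\theta\) is symmetric monoidal (\Cref{PropertiesDerivedGradedModules}\Cref{ForgetfulFunctorDgrR}), so the framework of \Cref{DefRetractionMappingSpace} applies.

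For \(R = \setZ\) write \(\theta^{\setZ}\) for the forgetful functor \(M \mapsto \bigoplus_g M_g\). Using the equivalence \(\Map_{\mcalD_{\graded{G}}(\setZ)}(M,N) \simeq \prod_g \Map_{\mcalD(\setZ)}(M_g, N_g)\) of (\ref{MappingSpaceDecomp}) and the universal property of the coproduct, I would define \(r^{\setZ}_{M,N}\) as the composite
\[
\Map_{\mcalD(\setZ)}\Bigl(\bigoplus_g M_g,\ \bigoplus_h N_h\Bigr) \xrightarrow{\ \simeq\ } \prod_g \Map_{\mcalD(\setZ)}\Bigl(M_g,\ \bigoplus_h N_h\Bigr) \xrightarrow{\ \prod_g (p_g)_*\ } \prod_g \Map_{\mcalD(\setZ)}(M_g, N_g) \simeq \Map_{\mcalD_{\graded{G}}(\setZ)}(M,N),
\]
where \(p_g \colon \bigoplus_h N_h \to N_g\) is the projection given by \(\id_{N_g}\) together with the zero maps \(N_h \to N_g\) for \(h \ne g\). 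Since \(\theta^{\setZ}\) sends a graded morphism \((\psi_g)_g\) to \(\bigoplus_g \psi_g\), whose \(g\)-th coproduct component is \(\iota_g \circ \psi_g\), the coherent identities \(p_g \circ \iota_g \simeq \id_{N_g}\) give \(r^{\setZ} \circ \theta^{\setZ} \simeq \id\), so this is a system of retractions. The composition diagrams (\ref{DiagramRetractionMappingSpace}) and (\ref{DiagramRetractionMappingSpaceConverse}) then reduce, for a graded morphism \(\psi\), to the identities \(p_g \circ \theta^{\setZ}(\psi) \simeq \psi_g \circ p_g\) and \(\theta^{\setZ}(\psi) \circ \iota_g \simeq \iota_g \circ \psi_g\), both of which hold coherently by the universal property of \(\bigoplus\).

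The main obstacle is the tensor diagram (\ref{DiagramRetractionMappingSpaceTensor}). Here one must compare the diagonal projections on \(\theta^{\setZ}(M \Lgrotimes_{\setZ} M')\) with external tensor products of projections on \(\theta^{\setZ}(M)\) and \(\theta^{\setZ}(M')\), using the Day-convolution formula \((M \Lgrotimes_{\setZ} M')_g \simeq \bigoplus_{s+t=g} M_s \otimes^L_{\setZ} M'_t\) and the monoidality equivalence \(\theta^{\setZ}(M \Lgrotimes_{\setZ} M') \simeq \theta^{\setZ}(M) \otimes^L_{\setZ} \theta^{\setZ}(M')\) of \Cref{PropertiesDgrZ}\Cref{ForgetfulFunctorDgrZ}. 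At the level of components this is the computation that the \((s,t) \to (u,v)\) entry of \(\phi \otimes \theta^{\setZ}(\psi)\) equals \(\phi_{s,u} \otimes \psi_t\) when \(v = t\) and vanishes otherwise, so that after projecting onto the summands with \(u+v = g\) only the diagonal \(u = s,\ v = t\) survives and one recovers \((r^{\setZ}(\phi) \otimes \psi)_g\). The real work is to promote this bookkeeping to a coherent equivalence of anima; I would do this by expressing both the projections and the Day-convolution decomposition purely through universal properties of coproducts, so that all homotopies are forced and naturality in \(M, M', N, N'\) is automatic.

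Finally I would deduce the general case from \(R = \setZ\). Because \(\theta\) is symmetric monoidal with \(\theta(M \Lgrotimes_{\setZ} R) \simeq \theta(M) \otimes^L_{\setZ} R\), it intertwines the monad \(- \Lgrotimes_{\setZ} R\) on \(\mcalD_{\graded{G}}(\setZ)\) with the monad \(- \otimes^L_{\setZ} R\) on \(\mcalD(\setZ)\); hence, under the pullback descriptions of \Cref{FiberMappingSpaceDgr} and \Cref{PullbackModuleMorphismRemark}, the forgetful map \(\theta^R_{M,N}\) is exactly the map of pullback cospans induced by \(\theta^{\setZ}\). Diagram (\ref{DiagramRetractionMappingSpaceConverse}) applied to the action \(a_M \colon M \Lgrotimes_{\setZ} R \to M\) shows that \(r^{\setZ}\) commutes with the structure map \(- \circ a_M\), while the combination of (\ref{DiagramRetractionMappingSpace}) and (\ref{DiagramRetractionMappingSpaceTensor}) shows it commutes with \(a_N \circ (- \Lgrotimes_{\setZ} R)\). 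Thus \(r^{\setZ}\) forms a backward map of the two cospans, and functoriality of pullbacks produces the desired retraction \(r^R_{M,N}\) of \(\theta^R_{M,N}\). The three compatibility diagrams for \(r^R\) then descend from those for \(r^{\setZ}\): composition and tensor in \(\mcalD(R)\) and \(\mcalD_{\graded{G}}(R)\) are computed through the same pullback squares, and \(r^R\) is induced from \(r^{\setZ}\), so the \(R\)-level diagrams are obtained by applying the pullback to the commuting \(\setZ\)-level diagrams. This closing step is routine naturality.
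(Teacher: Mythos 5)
Your proposal is correct and follows essentially the same route as the paper: construct the retraction over $\setZ$ componentwise via the projections $p_{N,g} \circ - \circ \iota_{M,g}$ on the product decomposition of mapping spaces, verify the three compatibility diagrams there (with the Day-convolution bookkeeping for the tensor diagram), and then bootstrap to general $R$ by showing $r^{\setZ}$ is compatible with the two legs of the pullback cospan from \Cref{FiberMappingSpaceDgr} so that the universal property of the pullback produces $r^R$. The only cosmetic difference is which of the already-established $\setZ$-level diagrams you cite to check compatibility with $a_N \circ (- \Lgrotimes_{\setZ} R)$, and your accounting there is, if anything, slightly more explicit than the paper's.
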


\begin{proof}
    We proceed with the proof in several steps.
    Throughout this proof, we will write the morphisms
    \begin{equation*}
        \iota_{M, g} \colon M_g \to M \quad \text{and} \quad p_{M, g} \colon M \to M_g
    \end{equation*}
    of \(\mcalD(\setZ)\) for the inclusion and the projection of the degree \(g\)-part of a \(G\)-graded object \(M\) in \(\mcalD_{\graded{G}}(R)\) for each \(g \in G\).
    It has the canonical equivalence \(p_{M, g} \circ \iota_{M, g} \simeq \id_{M_g}\).

    (Step 1): We first assume that \(R\) is just \(\setZ\) with the trivial grading.

    (Step 1--1): We will construct the retraction (\ref{RetractionMappingSpaceGraded}) in this case.
    Because of \(\theta(M) = \bigoplus_{g \in G} M_g\) in \(\mcalD(\setZ)\), we have a canonical equivalence
    \begin{equation*}
        \Map_{\mcalD(\setZ)}(\theta(M), \theta(N)) \xrightarrow{\prod_{g \in G} (- \circ \iota_{M, g})} \prod_{g \in G} \Map_{\mcalD(\setZ)}(M_g, \theta(N)).
    \end{equation*}
    Taking the product of morphisms
    \begin{equation*}
        \Map_{\mcalD(\setZ)}(M_g, \theta(N)) \xrightarrow{p_{N, g} \circ -} \Map_{\mcalD(\setZ)}(M_g, N_g),
    \end{equation*}
    we obtain a morphism
    \begin{equation*}
        r_{M, N} = r_{M, N}^{\setZ} \colon \Map_{\mcalD(\setZ)}(\theta(M), \theta(N)) \to \Map_{\mcalD_{\graded{G}}(\setZ)}(M, N)
    \end{equation*}
    in \(\Ani\) since the mapping space \(\Map_{\mcalD_{\graded{G}}(\setZ)}(M, N)\) can be written as the product \(\prod_{g \in G} \Map_{\mcalD(\setZ)}(M_g, N_g)\).
    Set
    \begin{equation*}
        (r_{M, N})_g \colon \Map_{\mcalD(\setZ)}(\theta(M), \theta(N)) \to \Map_{\mcalD(\setZ)}(M_g, N_g),
    \end{equation*}
    which is given by the composition \((p_{N, g} \circ - \circ \iota_{M, g})\).
    
    On the other hand, the canonical morphism \(\theta_{M, N} \colon \Map_{\mcalD_{\graded{G}}(\setZ)}(M, N) \to \Map_{\mcalD(\setZ)}(\theta(M), \theta(N))\) is defined as the coproduct of morphisms
    \begin{align*}
        \Map_{\mcalD_{\graded{G}}(\setZ)}(M, N) & \simeq \prod_{g \in G} \Map_{\mcalD(\setZ)}(M_g, N_g) \\
        & \xrightarrow{\prod_{g \in G} ( \iota_{N, g} \circ -)} \prod_{g \in G} \Map_{\mcalD(\setZ)}(M_g, \theta(N)) \xrightarrow{\simeq} \Map_{\mcalD(\setZ)}(\theta(M), \theta(N)).
    \end{align*}
    The composition \(r_{M, N} \circ \theta_{M, N}\) is the same as the product of
    \begin{equation*}
        \Map_{\mcalD(\setZ)}(M_g, N_g) \xrightarrow{\iota_{N, g} \circ -} \Map_{\mcalD(\setZ)}(M_g, \theta(N)) \xrightarrow{p_{N, g} \circ -} \Map_{\mcalD(\setZ)}(M_g, N_g)
    \end{equation*}
    for each \(g \in G\), which is canonically homotopic to the identity morphism on \(\Map_{\mcalD(\setZ)}(M_g, N_g)\) because of the equivalence \(p_{N, g} \circ \iota_{N, g} \simeq \id_{N_g}\).
    Thus \(r_{M, N}\) is a retraction of \(\theta_{M, N}\).

    (Step 1--2): We will show that the commutative diagram (\ref{DiagramRetractionMappingSpace}) exists.
    Until the end of the proof of (Step 1), we do not distinguish \(M\) and \(\theta(M)\) for simplicity.
    Consider the composition
    \begin{align*}
        & \Map_{\mcalD_{\graded{G}}(\setZ)}(N, L) \simeq \prod_{h \in G} \Map_{\mcalD(\setZ)}(N_h, L_h) \xrightarrow{\prod_{h \in G} (\iota_{L, h} \circ -)} \prod_{h \in G} \Map_{\mcalD(\setZ)} (N_h, L) \\
        & \xrightarrow{\prod_{h \in G} (p_{L, g} \circ -)} \prod_{h \in G} \Map_{\mcalD(\setZ)}(N_h, L_g).
    \end{align*}
    Because of \(p_{L, g} \circ \iota_{L, h} \simeq \id_{L_g}\) if \(g = h\) and \(0\) otherwise, it factors through the canonical morphism
    \begin{equation*}
        \Map_{\mcalD(\setZ)}(N_g, L_g) \xrightarrow{\simeq} \Map_{\mcalD(\setZ)}(N_g, L_g) \times \prod_{h \neq g} \{0\} \to \prod_{h \in G} \Map_{\mcalD(\setZ)}(N_h, L_g).
    \end{equation*}
    So we have a commutative diagram
    \begin{equation} \label{Step1-2DiagramForgetful}
        \begin{tikzcd}
            {\Map_{\mcalD_{\graded{G}}(\setZ)}(N, L)} \arrow[r, "\mathrm{forget}"] \arrow[dd, "\simeq"', bend right=49]                  & {\Map_{\mcalD(\setZ)}(N, L)} \arrow[d, "\simeq"] \arrow[d, "{\prod_{h \in G} (- \circ \iota_{N, h})}"'] \arrow[rr, "{p_{L, g} \circ -}"] &  & {\Map_{\mcalD(\setZ)}(N, L_g)} \arrow[d, "\simeq"] \arrow[d, "{\prod_{h \in G} (- \circ \iota_{N, h})}"'] \\
             & {\prod_{h \in G} \Map_{\mcalD(\setZ)}(N_h, L)} \arrow[rr, "{\prod_{h \in G} (p_{L, g} \circ -)}"]                                        &  & {\prod_{h \in G} \Map_{\mcalD(\setZ)}(N_h, L_g)}                                                          \\
            {\prod_{h \in G} \Map_{\mcalD(\setZ)}(N_h, L_h)} \arrow[ru, "{\prod_{h \in G} (\iota_{L, h} \circ -)}"] \arrow[rrr, "\proj"] &                                                                                                                                          &  & {\Map_{\mcalD(\setZ)}(N_g, L_g)} \arrow[uu, "{- \circ p_{N, g}}"', bend right=90] \arrow[u]          
        \end{tikzcd}
    \end{equation}
    in \(\Ani\) for each \(g \in G\).
    Using this diagram \eqref{Step1-2DiagramForgetful} and the construction of \(r_{M, N}\), we have a commutative diagram
    \begin{equation} \label{Step1-2DiagramRetraction}
        \scriptsize
        \begin{tikzcd}[row sep=1cm, column sep= 2cm]
            {\Map_{\mcalD(\setZ)}(M, N) \times \Map_{\mcalD_{\graded{G}}(\setZ)}(N, L)} \arrow[r, "\id \times \mathrm{forget}"] \arrow[d, "\id \times \proj"] \arrow[dddd, "{r_{M, N} \times \id}"', bend right=90]              & {\Map_{\mcalD(\setZ)}(M, N) \times \Map_{\mcalD(\setZ)}(N, L)} \arrow[d, "{\id \times (p_{L, g} \circ -)}"]         \\
            {\Map_{\mcalD(\setZ)}(M, N) \times \Map_{\mcalD(\setZ)}(N_g, L_g)} \arrow[r, "{\id \times (- \circ p_{N, g})}"] \arrow[d, "{(p_{N, g} \circ - ) \times \id}"] \arrow[dd, "{(r_{M, N})_g \times \id}"', bend right=83] & {\Map_{\mcalD(\setZ)}(M, N) \times \Map_{\mcalD(\setZ)}(N, L_g)} \arrow[ldd, "{(- \circ \iota_{M, g}) \times \id}", bend left=10] \\
            {\Map_{\mcalD(\setZ)}(M, N_g) \times \Map_{\mcalD(\setZ)}(N_g, L_g)} \arrow[d, "{(- \circ \iota_{M, g}) \times \id}"]                                                                                                &                                                                                                                     \\
            {\Map_{\mcalD(\setZ)}(M_g, N_g) \times \Map_{\mcalD(\setZ)}(N_g, L_g)}                                                                                                                                               &                                                                                                                     \\
            {\Map_{\mcalD_{\graded{G}}(\setZ)}(M, N) \times \Map_{\mcalD_{\graded{G}}(\setZ)}(N, L)} \arrow[u, "\proj \times \proj"']                                                                                            &                                                                                                                    
        \end{tikzcd}
    \end{equation}
    in \(\Ani\) for each \(g \in G\).
    On the other hand, the construction of the retraction \(r_{M, N}\) in (Step 1--1) gives a commutative diagram
    \begin{equation} \label{Step1-2DiagramComposition}
        \begin{tikzcd}
            {\Map_{\mcalD(\setZ)}(M, N) \times \Map_{\mcalD(\setZ)}(N, L)} \arrow[rr, "- \circ -"] \arrow[d, "{\id \times (p_{L, g} \circ -)}"] &  & {\Map_{\mcalD(\setZ)}(M, L)} \arrow[r, "{r_{M, L}}"] \arrow[d, "{p_{L, g} \circ -}"] & {\Map_{\mcalD_{\graded{G}}(\setZ)}(M, L)} \arrow[d, "\proj"] \\
            {\Map_{\mcalD(\setZ)}(M, N) \times \Map_{\mcalD(\setZ)}(N, L_g)} \arrow[rr, "- \circ -"]                               &  & {\Map_{\mcalD(\setZ)}(M, L_g)} \arrow[r, "{- \circ \iota_{M, g}}"]                   & {\Map_{\mcalD(\setZ)}(M_g, L_g)}                            
        \end{tikzcd}
    \end{equation}
    in \(\Ani\) for each \(g \in G\).
    Combining the diagrams \eqref{Step1-2DiagramRetraction} and \eqref{Step1-2DiagramComposition}, we get a commutative diagram
    \begin{equation*}
        \begin{tikzcd}[column sep=small]
            {\Map_{\mcalD(\setZ)}(M, N) \times \Map_{\mcalD_{\graded{G}}(\setZ)}(N, L)} \arrow[d, "\id \times \mathrm{forget}"] \arrow[dddd, "{r_{M, N} \times \id}"', bend right=90] &                                                                                                    \\
            {\Map_{\mcalD(\setZ)}(M, N) \times \Map_{\mcalD(\setZ)}(N, L)} \arrow[d, "{\id \times (p_{L, g} \circ -)}"'] \arrow[r, "- \circ -"]                            & {\Map_{\mcalD(\setZ)}(M, L)} \arrow[ddd, "{r_{M, L}}", bend left=90] \arrow[d, "{p_{L, g} \circ -}"'] \\
            {\Map_{\mcalD(\setZ)}(M, N) \times \Map_{\mcalD(\setZ)}(N, L_g)} \arrow[r, "- \circ -"] \arrow[d, "{(- \circ \iota_{M, g}) \times \id}"']                      & {\Map_{\mcalD(\setZ)}(M, L_g)} \arrow[d, "{- \circ \iota_{M, g}}"']                                \\
            {\Map_{\mcalD(\setZ)}(M_g, N_g) \times \Map_{\mcalD(\setZ)}(N_g, L_g)} \arrow[r, "- \circ -"]                                                                  & {\Map_{\mcalD(\setZ)}(M_g, L_g)}                                                                   \\
            {\Map_{\mcalD_{\graded{G}}(\setZ)}(M, N) \times \Map_{\mcalD_{\graded{G}}(\setZ)}(N, L)} \arrow[u, "\proj \times \proj"'] & {\Map_{\mcalD_{\graded{G}}(\setZ)}(M, L)} \arrow[u, "\proj"]                                      
        \end{tikzcd}
    \end{equation*}
    in \(\Ani\) for each \(g \in G\).
    Since the composition of morphisms in \(\mcalD_{\graded{G}}(\setZ)\) is given by the product of those in each degree, taking the product of the lower horizontal morphisms over all \(g \in G\), we obtain the desired commutative diagram (\ref{DiagramRetractionMappingSpace}) in \(\Ani\).


    (Step 1--3): The existence of the commutative diagram (\ref{DiagramRetractionMappingSpaceConverse}) follows from the same argument of the existence of (\ref{DiagramRetractionMappingSpace}) in (Step 1--2).

    (Step 1--4): We will give the commutative diagram (\ref{DiagramRetractionMappingSpaceTensor}).
    In the proof, we do not distinguish between \(- \otimes^L -\) and \(- \Lgrotimes -\) for simplicity.
    As in the left square of \eqref{Step1-2DiagramForgetful}, we have a commutative diagram
    \begin{equation} \label{Step1-4DiagramForgetful}
    \scriptsize
        \begin{tikzcd}[column sep=3cm]
            {\Map_{\mcalD(\setZ)}(M, N) \times \Map_{\mcalD_{\graded{G}}(\setZ)}(M', N')} \arrow[rdd, "\id \times \mathrm{forget}", bend left=49] \arrow[d, "\simeq"]                &                                                                                                                                                                                                                                                     \\
            {\prod_{h \in G} (\Map_{\mcalD(\setZ)}(M, N) \times \Map_{\mcalD(\setZ)}(M'_h, N'_h))} \arrow[d, "{\prod_{h \in G} (\id \times (\iota_{N', t} \circ -))}"']              &                                                                                                                                                                                                                                                     \\
            {\prod_{h \in G} (\Map_{\mcalD(\setZ)}(M, N) \times \Map_{\mcalD(\setZ)}(M'_h, N'))} \arrow[d, "\prod_{h \in G} - \otimes^L -"']                                         & {\Map_{\mcalD(\setZ)}(M, N) \times \Map_{\mcalD(\setZ)}(M', N')} \arrow[d, "- \otimes^L -"] \arrow[l, "\simeq"'] \arrow[l, "{\prod_{h \in G}(\id \times (- \circ \iota_{M', h}))}"']                                                                \\
            {\prod_{h \in G} \Map_{\mcalD(\setZ)}(M \otimes^L M'_h, N \otimes^L N')} \arrow[d, "{\prod_{h \in G} (p_{N \otimes^L N', g} \circ -)}"']                                 & {\Map_{\mcalD(\setZ)}(M \otimes^L N, M' \otimes^L N')} \arrow[l, "{\prod_{h \in G} (- \circ (\id_M \otimes \iota_{M', h}))}"'] \arrow[l, "\simeq"] \arrow[d, "{p_{N \otimes^L N', g} \circ -}"] \arrow[dd, "{r_{M \otimes^L M', g}}", bend left=90] \\
            {\prod_{h \in G} \Map_{\mcalD(\setZ)}(M \otimes^L M'_h, (N \otimes^L N')_g)} \arrow[d, "{\prod_{h \in G} (- \circ (\iota_{M, g-h} \otimes \id))}"'] & {\Map_{\mcalD(\setZ)}(M \otimes^L M', (N \otimes^L N')_g)} \arrow[l, "{\prod_{h \in G} (- \circ (\id_M \otimes \iota_{M', h}))}"'] \arrow[l, "\simeq"] \arrow[d, "{- \circ \iota_{M \otimes^L M', g}}"]                        \\
            {\prod_{h \in G} \Map_{\mcalD(\setZ)}(M_{g-h} \otimes^L M'_h, (N \otimes^L N')_g)}                                                                  & {\Map_{\mcalD(\setZ)}(\bigoplus_{h \in G} (M_{g-h} \otimes^L M'_{h}), (N \otimes^L N')_g)} \arrow[l, "{\prod_{h \in G} (- \circ \iota_{M \otimes^L M', g-h, h})}"'] \arrow[l, "\simeq"]                                       
        \end{tikzcd}
    \end{equation}
    in \(\Ani\) for each \(g \in G\), where the morphism
    \begin{equation*}
        \iota_{M \otimes^L M', g-h, h} \colon M_{g-h} \otimes^L M'_h \to \bigoplus_{i+j = g}(M_i \otimes^L M'_j)
    \end{equation*}
    is the canonical inclusion.
    Using the commutative diagram
    \begin{center}
        \begin{tikzcd}
            N \otimes^L N'_h \arrow[d, "{p_{N, g-h} \otimes \id}"'] \arrow[rr, "{\id \otimes \iota_{N', h}}"] &  & N \otimes^L N' \arrow[d, "{p_{N \otimes^L N', g}}"] \\
            N_{g-h} \otimes^L N'_h \arrow[rr, "{\iota_{N \otimes^L N', g-h, g}}"]                             &  & (N \otimes^L N')_g                                 
        \end{tikzcd}
    \end{center}
    in \(\mcalD(\setZ)\), the left vertical morphisms in the bottom four in \eqref{Step1-4DiagramForgetful} can be factorized as the following composition:
    \begin{equation} \label{Step1-4DiagramTensor}
    \scriptsize
        \begin{tikzcd}[column sep=1.5cm]
            {\prod_{h \in G} (\Map_{\mcalD(\setZ)}(M, N) \times \Map_{\mcalD(\setZ)}(M'_h, N'_h))} \arrow[rr, "{\prod_{h \in G} (\id \times (\iota_{N', h} \circ -))}"] \arrow[d, "\prod_{h \in G} (- \otimes -)"']                           &  & {\prod_{h \in G} (\Map_{\mcalD(\setZ)}(M, N) \times \Map_{\mcalD(\setZ)}(M'_h, N'))} \arrow[d, "\prod_{h \in G} (- \otimes -)"]                    \\
            {\prod_{h \in G} \Map_{\mcalD(\setZ)}(M \otimes^L M'_h, N \otimes^L N'_h)} \arrow[rr, "{\prod_{h \in G} ((\id \otimes \iota_{N', h}) \circ -)}"] \arrow[d, "{\prod_{h \in G} ((p_{N, g-h} \otimes \id) \circ -)}"']               &  & {\prod_{h \in G} \Map_{\mcalD(\setZ)}(M \otimes^L M'_h, N \otimes^L N')} \arrow[d, "{\prod_{h \in G} (p_{N \otimes^L N', g} \circ -)}"]            \\
            {\prod_{h \in G} \Map_{\mcalD(\setZ)}(M \otimes^L M'_h, N_{g-h} \otimes^L N'_h)} \arrow[rr, "{\prod_{h \in G} (\iota_{N \otimes^L N', g-h, g} \circ -)}"] \arrow[d, "{\prod_{h \in G} (- \circ (\iota_{M, g-h} \otimes \id))}"'] &  & {\prod_{h \in G} \Map_{\mcalD(\setZ)}(M \otimes^L M'_h, (N \otimes^L N')_g)} \arrow[d, "{\prod_{h \in G} (- \circ (\iota_{M, g-h} \otimes \id))}"] \\
            {\prod_{h \in G} \Map_{\mcalD(\setZ)}(M_{g-h} \otimes^L M'_h, N_{g-h} \otimes^L N'_h)} \arrow[rr, "{\prod_{h \in G} (\iota_{N \otimes^L N', g-h, g} \circ -)}"]                                                                  &  & {\prod_{h \in G} \Map_{\mcalD(\setZ)}(M_{g-h} \otimes^L M'_h, (N \otimes^L N')_g)}                                                                
        \end{tikzcd}
    \end{equation}
    in \(\Ani\) for each \(g \in G\).
 Due to \((r_{M, N})_{g-h}(-) = p_{N, g-h} \circ - \circ \iota_{M, g-h}\), on the composition of the left vertical morphisms in \eqref{Step1-4DiagramTensor}, we get a commutative diagram
    \begin{equation} \label{Step1-4DiagramRetraction}
    \scriptsize
        \begin{tikzcd}[row sep=1cm, column sep=2cm]
            {\prod_{h \in G} (\Map_{\mcalD(\setZ)}(M, N) \times \Map_{\mcalD(\setZ)}(M'_h, N'_h))} \arrow[r, "\prod_{h \in G} (- \otimes -)"] \arrow[d, "{\prod_{h \in G} (r_{M, N, g-h} \circ \id)}"] \arrow[dddd, "{r_{M, N} \times \id}", bend right=75] & {\prod_{h \in G} \Map_{\mcalD(\setZ)}(M \otimes^L M'_h, N \otimes^L N'_h)} \arrow[d, "{\prod_{h \in G} ((p_{N, g-h} \otimes \id) \circ - \circ (\iota_{M, g-h} \otimes \id))}"'] \\
            {\prod_{h \in G} (\Map_{\mcalD(\setZ)}(M_{g-h}, N_{g-h}) \times \Map_{\mcalD(\setZ)}(M'_h, N'_h))} \arrow[r, "\prod_{h \in G} (- \otimes -)"]                                                                                                                                & {\prod_{h \in G} \Map_{\mcalD(\setZ)}(M_{g-h} \otimes^L M'_h, N_{g-h} \otimes^L N'_h)} \arrow[d, "{\prod_{h \in G} (\iota_{N \otimes^L N', g-h, h} \circ -)}"']                 \\
            & {\prod_{h \in G} \Map_{\mcalD(\setZ)}(M_{g-h} \otimes^L M'_h, (N \otimes^L N')_g)}                                                                                               \\
             & {\Map_{\mcalD(\setZ)}((M \otimes^L M')_g, (N \otimes^L N')_g)} \arrow[u, "{\prod_{h \in G} (- \circ \iota_{M \otimes^L M', g-h, h})}"] \arrow[u, "\simeq"']                      \\
            {\Map_{\mcalD_{\graded{G}}(\setZ)}(M, N) \times \Map_{\mcalD_{\graded{G}}(\setZ)}(M', N')} \arrow[r, "- \otimes -"] \arrow[uuu, "\proj"]                                                                                                                                     & {\Map_{\mcalD_{\graded{G}}(\setZ)}(M \otimes^L M', N \otimes^L N')} \arrow[u, "\proj"]  
        \end{tikzcd}
    \end{equation}
    in \(\Ani\) for each \(g \in G\), where the lower square is obtained by the definition of the symmetric monoidal structure on \(\mcalD_{\graded{G}}(\setZ)\).
    Comparing the diagrams \eqref{Step1-4DiagramForgetful}, \eqref{Step1-4DiagramTensor}, and \eqref{Step1-4DiagramRetraction}, we obtain the desired commutative diagram (\ref{DiagramRetractionMappingSpaceTensor}) in \(\Ani\).

    (Step 2): We will prove the general case.

    (Step 2--1): We will construct the morphism (\ref{RetractionMappingSpaceGraded}) in general.
    Using the pullback diagram (\ref{FiberMappingSpaceDgrSeq}) given in \Cref{FiberMappingSpaceDgr} for \(M\) and \(N\) in \(\mcalD_{\graded{G}}(R)\), we can get a diagram in \(\Ani\);
    \begin{equation} \label{DiagramRetractionMappingSpaceFiber}
        \begin{tikzcd}
            {\Map_{\mcalD(R)}(\theta(M), \theta(N))} \arrow[d, "\mathrm{forget}"'] \arrow[rr, "\mathrm{forget}"]                                            &  & {\Map_{\mcalD(\setZ)}(\theta(M), \theta(N))} \arrow[d, "- \circ a_{\theta(M)}"] \arrow[dd, "{r_{M, N}}", bend left=90]       \\
            {\Map_{\mcalD(\setZ)}(\theta(M), \theta(N))} \arrow[rr, "a_{\theta(N)} \circ (- \otimes^L_{\setZ} R)"] \arrow[dd, "{r_{M, N}}"', bend right=80] &  & {\Map_{\mcalD(\setZ)}(\theta(M) \otimes^L_{\setZ} R, \theta(N))} \arrow[dd, "{r_{M \Lgrotimes_{\setZ} R, N}}", bend left=80] \\
            {\Map_{\mcalD_{\graded{G}}(R)}(M, N)} \arrow[d, "\mathrm{forget}"'] \arrow[rr, "\mathrm{forget}"]                                               &  & {\Map_{\mcalD_{\graded{G}}(\setZ)}(M, N)} \arrow[d, "- \circ a_M"]                                                           \\
            {\Map_{\mcalD_{\graded{G}}(\setZ)}(M, N)} \arrow[rr, "a_N \circ (- \Lgrotimes_{\setZ} R)"']                                                     &  & {\Map_{\mcalD_{\graded{G}}(\setZ)}(M \Lgrotimes_{\setZ} R, N)}                                                              
        \end{tikzcd}
    \end{equation}
    where the squares on top and bottom are pullback diagrams and the right bend arrows are the retractions constructed in (Step 1).
    Once we can show that all squares are commutative, then the existence of a morphism \(r_{M, N}^R\) follows from the universal property of pullback diagrams.
    Using the commutative diagrams (\ref{DiagramRetractionMappingSpaceConverse}) and (\ref{DiagramRetractionMappingSpaceTensor}), we can see that the retractions \(r_{M, N}\) and \(r_{M \grotimes_{\setZ} R, N}\) make the above squares commutative.
    Then we obtain a unique morphism
    \[
    r_{M, N}^R \colon \Map_{\mcalD(R)}(\theta(M), \theta(N)) \to \Map_{\mcalD_{\graded{G}}(R)}(M, N),
    \]
    as claimed above.

    (Step 2--2): We will show that \(r_{M, N}^R\) is a retraction of the morphism \(\theta_{M, N}^R \colon \Map_{\mcalD_{\graded{G}}(R)}(M, N) \to \Map_{\mcalD(R)}(\theta(M), \theta(N))\) given by \(\theta\).
    This follows from the construction of \(r_{M, N}^R\) via \eqref{DiagramRetractionMappingSpaceFiber} in Step 2--1, since \(r_{M, N}^{\setZ} \circ \theta_{M, N}^{\setZ}\) and \(r_{M \Lgrotimes_{\setZ} R, N}^{\setZ} \circ \theta_{M \Lgrotimes_{\setZ} R, N}^{\setZ}\) are equivalent to the identity by Step 1--1.

    (Step 2--3): The existence of the commutative diagrams (\ref{DiagramRetractionMappingSpace}), (\ref{DiagramRetractionMappingSpaceConverse}), and (\ref{DiagramRetractionMappingSpaceTensor}) in general follows from taking the pullback of the corresponding diagrams along the diagram (\ref{DiagramRetractionMappingSpaceFiber}).
\end{proof}

Consequently, we have the following proposition about the mapping spaces in \(\mcalD_{\graded{G}}(R)\).

\begin{proposition} \label{MappingSpaceDgrPullback}
    Let \(R\) be a \(G\)-graded ring and let \(M\) and \(N\) be objects of \(\mcalD_{\graded{G}}(R)\).
    Then, the canonical morphism
    \begin{equation*}
        \Map_{\mcalD_{\graded{G}}(R)}(M, N) \to \Map_{\mcalD(R)}(M, N) \times_{\Map_{\mcalD(\setZ)}(M, N)} \Map_{\mcalD_{\graded{G}}(\setZ)}(M, N)
    \end{equation*}
    in \(\Ani\) induced by the forgetful functors is an equivalence.
\end{proposition}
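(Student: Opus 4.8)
The plan is to exhibit the square in the statement as a pullback by layering the two ``monadic'' presentations of mapping spaces that we already have. First I would observe that the two forgetful functors out of $\mcalD_{\graded{G}}(R)$ — forgetting the $R$-module structure to land in $\mcalD_{\graded{G}}(\setZ)$, and forgetting the grading to land in $\mcalD(R)$ — both become, after one further forgetful step, the single functor $\mcalD_{\graded{G}}(R)\to\mcalD(\setZ)$. This commutativity furnishes the canonical comparison map $\Phi$ from $\Map_{\mcalD_{\graded{G}}(R)}(M,N)$ to the pullback, and the whole task is to show that $\Phi$ is an equivalence in $\Ani$.

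Next I would recall the two presentations. On the one hand, \Cref{FiberMappingSpaceDgr} writes $\Map_{\mcalD_{\graded{G}}(R)}(M,N)$ as a pullback of two copies of $\Map_{\mcalD_{\graded{G}}(\setZ)}(M,N)$ over $\Map_{\mcalD_{\graded{G}}(\setZ)}(M\Lgrotimes_{\setZ}R,N)$, the two legs being precomposition with the action $a_M$ and postcomposition with $a_N$ after $-\Lgrotimes_{\setZ}R$. On the other hand, applying \Cref{PullbackModuleMorphismRemark} to the monadic adjunction $\mcalD(R)=\Mod_R(\mcalD(\setZ))$ writes $\Map_{\mcalD(R)}(M,N)$ as the analogous pullback over $\Map_{\mcalD(\setZ)}(M\otimes^L_{\setZ}R,N)$. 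The crucial compatibility is that the forgetful functor $\theta\colon\mcalD_{\graded{G}}(\setZ)\to\mcalD(\setZ)$ of \Cref{PropertiesDerivedGradedModules}\Cref{ForgetfulFunctorDgrR} is symmetric monoidal with $\theta(R)=R$, so it intertwines the two monads $-\Lgrotimes_{\setZ}R$ and $-\otimes^L_{\setZ}R$ and induces a map from the graded pullback square to the ungraded one. Together with the retractions from \Cref{ExistenceRetracts} in the reverse direction, these assemble into a commutative prism of mapping spaces of exactly the shape of \eqref{DiagramRetractionMappingSpaceFiber}, whose top and bottom faces are the two pullback squares above.

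The key device is then the morphism retraction of \Cref{ExistenceRetracts}: the forget-grading maps occurring as the vertical edges of the target square carry retractions, both over $R$ and over $\setZ$, compatible with composition and with the monoidal structure through the diagrams \eqref{DiagramRetractionMappingSpace}, \eqref{DiagramRetractionMappingSpaceConverse}, and \eqref{DiagramRetractionMappingSpaceTensor}. I would feed this retraction, together with the prism above, into the trapezoid lemma \Cref{TrapezoidLemma} (or its dual \Cref{DualTrapezoidLemma}): the hypothesis that the relevant edge admits a retraction is precisely what \Cref{ExistenceRetracts} supplies, and the conclusion is that the outer square — which is the square in the statement — is a pullback. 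Equivalently, one can build the inverse of $\Phi$ by hand, sending a pair in the target together with its witnessing path in $\Map_{\mcalD(\setZ)}(M,N)$ to its image under $r^{R}_{M,N}$, and using the compatibility diagrams of the retraction to verify that this is a two-sided inverse.

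I expect the main obstacle to be the homotopy coherence in this last step, and it is genuinely where the retraction is indispensable. The forget-grading maps on mapping spaces are only \emph{split} monomorphisms, not monomorphisms, since the mapping spaces of $\mcalD(\setZ)$ carry higher homotopy; consequently $\Phi$ cannot be shown to be an equivalence by any naive comparison of fibers or path spaces, and the extra splitting data coming from the degree-$0$ unit of $R$ (which underlies the retraction of \Cref{ExistenceRetracts}) is exactly what rigidifies the comparison. The delicate part of the write-up will be checking that the retraction's compatibility squares make every face of the prism commute coherently, so that the trapezoid lemma applies on the nose.
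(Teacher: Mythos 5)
Your proposal follows the paper's proof in all essentials: the same prism of mapping spaces whose front and back faces are the two monadic pullback squares from \Cref{PullbackModuleMorphismRemark} (equivalently \Cref{FiberMappingSpaceDgr}), the same use of the morphism retraction from \Cref{ExistenceRetracts}, and the same appeal to the trapezoid lemma, with the target square of the statement appearing as the remaining face of the prism.

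The one place where your write-up as stated would not go through is the final assembly. The trapezoid lemma cannot be applied \emph{directly} to the square in the statement: whichever way you orient it, either the bottom edge is the forget-$R$-structure map $\Map_{\mcalD(R)}(M,N)\to\Map_{\mcalD(\setZ)}(M,N)$, which admits no retraction, or the two vertical legs are the forget-$R$-structure maps, which likewise admit no retractions (a $\setZ$-linear map does not canonically upgrade to an $R$-linear one) --- and \Cref{ExistenceRetracts} only retracts the forget-\emph{grading} maps. What the paper actually does is apply the trapezoid lemma to the connecting face
\(\Map_{\mcalD_{\graded{G}}(\setZ)}(M,N)\xrightarrow{-\circ a_M}\Map_{\mcalD_{\graded{G}}(\setZ)}(M\Lgrotimes_{\setZ}R,N)\)
sitting over its ungraded analogue: there the edge that must split is $-\circ a_{\theta(M)}$, whose retraction comes from the unit section of $a_{\theta(M)}$ (the $\setE_\infty$-structure of $R$), while the retractions $r^{\setZ}_{M,N}$ and $r^{\setZ}_{M\Lgrotimes_{\setZ}R,N}$ of \Cref{ExistenceRetracts} serve as the trapezoid's vertical retractions $\widetilde{v},\widetilde{t}$. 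Once that face is a pullback, the statement's square follows by the two-out-of-three (pasting/cancellation) property for pullbacks against the front and back faces. So you have the right ingredients, but you have swapped the roles of the two kinds of splittings and omitted the pasting step that actually delivers the conclusion; with those two points repaired your argument coincides with the paper's.
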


\begin{proof}
    Consider the commutative diagram
    \begin{equation} \label{DiagramMappingSpacePullback}
        \begin{tikzcd}
            {\Map_{\mcalD_{\graded{G}}(R)}(M, N)} \arrow[rddd, "{\theta_{M, N}^R}"] \arrow[r, "\mathrm{forget}"] \arrow[dd, "\mathrm{forget}"'] & {\Map_{\mcalD_{\graded{G}}(\setZ)}(M, N)} \arrow[rddd, "{\theta_{M, N}^{\setZ}}"] \arrow[dd, "a_N \circ (- \Lgrotimes_{\setZ} R)"'] &  \\
             &  &    \\
            {\Map_{\mcalD_{\graded{G}}(\setZ)}(M, N)} \arrow[r, "- \circ a_M"'] \arrow[rddd, "{\theta_{M, N}^{\setZ}}"']                         & {\Map_{\mcalD_{\graded{G}}(\setZ)}(M \Lgrotimes_{\setZ} R, N)} \arrow[rddd, "{\theta_{M \Lgrotimes_{\setZ} R, N}^{\setZ}}"']        &                                                                                                        \\
             & {\Map_{\mcalD(R)}(\theta(M), \theta(N))} \arrow[r, "\mathrm{forget}"] \arrow[dd, "\mathrm{forget}"]                                 & {\Map_{\mcalD(\setZ)}(\theta(M), \theta(N))} \arrow[dd, "a_{\theta(N)} \circ (- \otimes^L_{\setZ} R)"] \\
             &     &    \\
            & {\Map_{\mcalD(\setZ)}(\theta(M), \theta(N))} \arrow[r, "- \circ a_{\theta(M)}"]  & {\Map_{\mcalD(\setZ)}(\theta(M \Lgrotimes_{\setZ} R), \theta(N))}  
        \end{tikzcd}
    \end{equation}
    in \(\Ani\), because the left adjoints to the forgetful functors for the graded structure and for the \(R\)-module structure are compatible with each other. Moreover, the front and back squares are pullback diagrams by \Cref{PullbackModuleMorphismRemark}.

    On the bottom surface, we have a commutative diagram
    \begin{equation*}
        \scriptsize
        \begin{tikzcd}[row sep=1cm, column sep=small]
            & {\Map_{\mcalD_{\graded{G}}(\setZ)}(M, N)} \arrow[rr, "- \circ a_M"] \arrow[d, "{\theta_{M, N}^{\setZ}}"] \arrow[ldd, "\id"'] &  & {\Map_{\mcalD_{\graded{G}}(\setZ)}(M \Lgrotimes_{\setZ} R, N)} \arrow[d, "{\theta_{M \Lgrotimes_{\setZ} R, N}^{\setZ}}"'] \arrow[rdd, "\id"] &                                                                \\
            & {\Map_{\mcalD(\setZ)}(\theta(M), \theta(N))} \arrow[rr, "- \circ a_{\theta(M)}"] \arrow[ld, "{r_{M, N}^{\setZ}}"]            &  & {\Map_{\mcalD(\setZ)}(\theta(M \Lgrotimes_{\setZ} R), \theta(N))} \arrow[rd, "{r_{M \Lgrotimes_{\setZ} R, N}^{\setZ}}"']                    &                                                                \\
            {\Map_{\mcalD_{\graded{G}}(\setZ)}(M, N)} \arrow[rrrr, "- \circ a_M"] &                                                                                                                              &  &                                                                                                                                              & {\Map_{\mcalD_{\graded{G}}(\setZ)}(M \Lgrotimes_{\setZ} R, N)}
        \end{tikzcd}
    \end{equation*}
    in \(\Ani\) by using the retraction \(r_{M, N}^{\setZ}\) given in \Cref{ExistenceRetracts}.
    Since \(a_{\theta(M)}\) admits a section, the dual trapezoid lemma (\Cref{DualTrapezoidLemma}) implies that the central square is a pullback diagram.

    Combining these three pullback diagrams, the top surface of the diagram \eqref{DiagramMappingSpacePullback} is also a pullback diagram, which gives the desired equivalence.
    \qedhere
\end{proof}

Also, using the morphism retraction, we can show that the conservative functor \(\dcomp{I}{-}\) (\Cref{NakayamaIsom}) induces a conservative functor on pro-categories.

\begin{lemma} \label{ConservativeProForgetful}
    The functor
    \begin{align*}
        \Pro(\dcomp{I}{-}) \colon \Pro(\mcalD_{\graded{G}}^{\comp{I}}(R)) & \to \Pro(\mcalD^{\comp{I}}(R)) \\
        \{M_\lambda\}_{\lambda \in \Lambda} & \mapsto \{\dcomp{I}{M_\lambda}\}_{\lambda \in \Lambda}
    \end{align*}
    induced by \(\dcomp{I}{-}\) is conservative.
\end{lemma}

\begin{proof}
    It suffices to show that an object \(\{M_\lambda, t_{\mu,\lambda}\}_{\lambda \in \Lambda}\) of \(\Pro(\mcalD_{\graded{G}}^{\comp{I}}(R))\) is zero whenever \(\{\dcomp{I}{M_\lambda}\}_{\lambda \in \Lambda}\) is zero in \(\Pro(\mcalD^{\comp{I}}(R))\).
    Take any index \(\lambda \in \Lambda\), then there exists an index \(\mu \in \Lambda\) such that the induced morphism \(\dcomp{I}{t_{\mu, \lambda}} \colon \dcomp{I}{M_{\mu}} \to \dcomp{I}{M_{\lambda}}\) in \(\mcalD^{\comp{I}}(R)\) is zero.
    
    Using the universality of derived quotients and the derived gradedwise \(I\)-completeness, we have a natural morphism
    \begin{align*}
        \Map_{\mcalD_{\graded{G}}(R)}(M_{\mu}, M_{\lambda}) & \xrightarrow{\simeq} \lim_{n > 0} \Map_{\mcalD_{\graded{G}}(R)}(M_{\mu}, M_{\lambda}/^L I^n) \simeq \lim_{n > 0} \Map_{\mcalD_{\graded{G}}(R)}(M_{\mu}/^L I^n, M_{\lambda}/^L I^n) \\
        & \to \lim_{n > 0} \Map_{\mcalD(R)}(M_{\mu}/^L I^n, M_{\lambda}/^L I^n) \simeq \Map_{\mcalD(R)}(\dcomp{I}{M_{\mu}}, \dcomp{I}{M_{\lambda}})
    \end{align*}
    of mapping spaces, which sends the morphism \(t_{\mu, \lambda}\) to the morphism \(\dcomp{I}{t_{\mu, \lambda}}\).
    By our assumption, \(\dcomp{I}{t_{\mu, \lambda}}\) is zero in the mapping space \(\Map_{\mcalD(R)}(\dcomp{I}{M_{\mu}}, \dcomp{I}{M_{\lambda}})\).
    Using the morphism retraction given in \Cref{ExistenceRetracts}, the above morphism of mapping spaces admits a retraction.
    Thus the morphism \(t_{\mu, \lambda}\) is also zero in the mapping space \(\Map_{\mcalD_{\graded{G}}(R)}(M_{\mu}, M_{\lambda})\).
    This shows that the object \(\{M_\lambda, t_{\mu, \lambda}\}_{\lambda \in \Lambda}\) is zero in \(\Pro(\mcalD_{\graded{G}}^{\comp{I}}(R))\).
\end{proof}

\section{Graded modules and formal comodules} \label{SectionBBL}

In this section, we will show the second main result of this paper, which says that the \(\infty\)-category of derived gradedwise \(I\)-complete \(G\)-graded modules over a \(G\)-graded ring \(R\) is equivalent to the \(\infty\)-category of derived \(I\)-formal comodules over a comonad constructed from the coalgebra \(R[G]\).

\subsection{Preparation to the Barr--Beck--Lurie theorem}
Before proving this, we first review the notion of comonadicity and the Barr--Beck--Lurie theorem.

\begin{theorem}[{cf. \cite{lurie2017Higher}*{Proposition 4.7.3.3 and Theorem 4.7.3.5}\footnote{We will use its comodule version. See, for example, \cite{banerjee2017Galois}*{Proposition 2.2 and Theorem 2.3}.}}] \label{BarrBeckLurie}
    Let \(\mcalF \colon \mcalC \to \mcalD\) be a functor of \(\infty\)-categories admitting a right adjoint \(\mcalG \colon \mcalD \to \mcalC\).
    Then the composition \(\mcalT \defeq \mcalF \circ \mcalG\) becomes a comonad on \(\mcalD\) and the left coaction \(\mcalF \to \mcalT \circ \mcalF\) of \(\mcalT\) on \(\mcalF\) is just the unit \(\id_{\mcalC} \to \mcalG \circ \mcalF\) of the adjunction \((\mcalF, \mcalG)\) composed with \(\mcalF\).
    This gives rise to a comparison functor
    \begin{equation*}
        \mcalF' \colon \mcalC \to \coMod_{\mcalT}(\mcalD)
    \end{equation*}
    of \(\infty\)-categories whose composition with the forgetful functor \(\coMod_{\mcalT}(\mcalD) \to \mcalD\) is equivalent to \(\mcalF\), where \(\coMod_{\mcalT}(\mcalD)\) is the \(\infty\)-category of comodules over \(\mcalT\) in \(\mcalD\).
    If \(\mcalF'\) is an equivalence of \(\infty\)-categories, then we say that the functor \(\mcalF\) is \emph{comonadic}.

    On the comonadicity, Barr--Beck--Lurie theorem states that the following two conditions are equivalent:
    \begin{enumerate}
        \item The functor \(\mcalF\) is comonadic.
        \item The functor \(\mcalF\) is conservative and for any \(\mcalF\)-split\footnote{Here, an \(\mcalF\)-split cosimplicial object is a cosimplicial object \(X^{\bullet}\) on \(\mcalC\) such that the composition \(\mcalF(X^{\bullet}) \colon \Delta \xrightarrow{X^{\bullet}} \mcalC \xrightarrow{\mcalF} \mcalD\) is a split cosimplicial object in \(\mcalD\).} cosimplicial object \(X^{\bullet} \colon \Delta \to \mcalC\), the limit \(\lim_{\Delta} X^{\bullet}\) exists in \(\mcalC\) and is preserved by \(\mcalF\).
    \end{enumerate}
\end{theorem}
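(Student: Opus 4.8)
The plan is to obtain the comonadic Barr--Beck--Lurie theorem stated here as the formal dual of the monadic version \cite[Theorem 4.7.3.5]{lurie2017Higher}, following the dualization indicated in the footnote reference to \cite{banerjee2017Galois}. First I would set up the dictionary between the two settings by passing to opposite $\infty$-categories. An adjunction $(\mcalF, \mcalG)$ with left adjoint $\mcalF \colon \mcalC \to \mcalD$ induces, after taking opposites, an adjunction whose \emph{left} adjoint is $\mcalG^{\opposite} \colon \mcalD^{\opposite} \to \mcalC^{\opposite}$ and whose \emph{right} adjoint is $\mcalF^{\opposite} \colon \mcalC^{\opposite} \to \mcalD^{\opposite}$. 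Under this passage the comonad $\mcalT = \mcalF \circ \mcalG$ on $\mcalD$ corresponds to the monad $\mcalT^{\opposite} = \mcalF^{\opposite} \circ \mcalG^{\opposite}$ on $\mcalD^{\opposite}$, and by the very definition of comodules one has a canonical identification $\coMod_{\mcalT}(\mcalD) \simeq \left( \Mod_{\mcalT^{\opposite}}(\mcalD^{\opposite}) \right)^{\opposite}$, under which the comparison functor $\mcalF'$ of the statement becomes the opposite of the monadic comparison functor attached to the adjunction $(\mcalG^{\opposite}, \mcalF^{\opposite})$.

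With this dictionary in place, I would apply the monadic theorem to the right adjoint $\mcalF^{\opposite}$. That theorem characterizes monadicity of $\mcalF^{\opposite}$ by two conditions: conservativity, and the existence and $\mcalF^{\opposite}$-preservation of colimits of $\mcalF^{\opposite}$-split \emph{simplicial} objects in $\mcalC^{\opposite}$. Each condition translates verbatim under the opposite: conservativity of $\mcalF^{\opposite}$ is equivalent to that of $\mcalF$; an $\mcalF^{\opposite}$-split simplicial object in $\mcalC^{\opposite}$ is exactly an $\mcalF$-split \emph{cosimplicial} object in $\mcalC$; and a colimit in $\mcalC^{\opposite}$ is a limit in $\mcalC$, preserved by $\mcalF^{\opposite}$ precisely when the corresponding limit is preserved by $\mcalF$. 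Thus the two conditions of the monadic theorem for $\mcalF^{\opposite}$ match, term by term, the conditions $(1)$ and $(2)$ recorded in the statement, and the monadic conclusion (the comparison functor is an equivalence) dualizes to the asserted comonadicity of $\mcalF$.

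Since the genuine mathematical content already resides in \cite[Theorem 4.7.3.5]{lurie2017Higher}, the only work here is bookkeeping, and the step I expect to require the most care is checking that the definition of $\coMod_{\mcalT}(\mcalD)$ adopted in this paper agrees with $\left( \Mod_{\mcalT^{\opposite}}(\mcalD^{\opposite}) \right)^{\opposite}$ together with the matching of the two comparison functors; one must stay attentive to variance, since the forgetful functor $\coMod_{\mcalT}(\mcalD) \to \mcalD$ and its monadic counterpart are interchanged by the opposite. For readers preferring a self-contained argument, I would alternatively sketch the direct proof: the candidate inverse to $\mcalF'$ sends a $\mcalT$-comodule to the limit of its canonical cobar cosimplicial resolution, this cosimplicial object is $\mcalF$-split by construction, $\mcalF$ preserves its limit by hypothesis, and conservativity of $\mcalF$ then forces the unit and counit of the resulting adjunction to be equivalences.
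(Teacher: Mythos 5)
Your proposal is correct and coincides with the paper's intended justification: the paper gives no proof of \Cref{BarrBeckLurie}, citing Lurie's monadic theorem together with its comodule dualization (via the reference to Banerjee), which is exactly the opposite-category bookkeeping you carry out. Your identification of the adjunction, monad/comonad, comparison functors, and the term-by-term translation of the two conditions under passage to opposites is the standard and correct way to obtain the comonadic statement from \cite[Theorem 4.7.3.5]{lurie2017Higher}.
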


\begin{construction} \label{ConstComonad}
    Let \(R\) be a \(G\)-graded ring and let \(I\) be a finitely generated homogeneous ideal of \(R\).
    Take the colimit preserving functor
    \begin{equation} \label{ForgetfulCompletionFunctorGradedModules}
        \mcalF^I \colon \mcalD_{\graded{G}}^{\comp{I}}(R) \to \mcalD^{\comp{I}}(R) \quad M \mapsto \dcomp{I}{M}
    \end{equation}
    of presentable \(\infty\)-categories.
    Because of the adjoint functor theorem for presentable \(\infty\)-categories, \(\mcalF^I\) admits \emph{a} right adjoint functor \(\mcalG^I \colon \mcalD^{\comp{I}}(R) \to \mcalD_{\graded{G}}^{\comp{I}}(R)\). Here, we do \emph{not} fix some distinguished right adjoint \(\mcalG^I\) of \(\mcalF^I\) while the space of such right adjoints is contractible. See also \citeKero{02F4} and \cite{mazel-gee2016Quillen}*{Remark 2.3}.
    Later, we will take an explicit right adjoint in \Cref{FactorizationMG} and \Cref{ComonadGradedModules}.

    Applying the first part of the theorem above (\Cref{BarrBeckLurie}) to the adjoint pair \((\mcalF^I, \mcalG^I)\), we have a comonad \(\mcalT^I \defeq \mcalF^I \circ \mcalG^I\) on \(\mcalD^{\comp{I}}(R)\) and the comparison functor
    \begin{equation} \label{ComparisonFunctorGradedModules}
        (\mcalF^I)' \colon \mcalD_{\graded{G}}^{\comp{I}}(R) \to \coMod_{\mcalT^I}(\mcalD^{\comp{I}}(R))
    \end{equation}
    of \(\infty\)-categories whose composition with the forgetful functor \((\mcalF^I)'' \colon \coMod_{\mcalT^I}(\mcalD^{\comp{I}}(R)) \to \mcalD^{\comp{I}}(R)\) is equivalent to \(\mcalF^I\).
\end{construction}

We construct a candidate of a right adjoint functor of the forgetful functor \(\mcalF^0 \colon \mcalD_{\graded{G}}(R) \to \mcalD(R)\) in what follows.

\begin{definition} \label{DefMG}
    Let \(R\) be a \(G\)-graded ring.
    Let \(R[G]\) be the coalgebra over \(R\) with the \(\setZ[G]\)-coaction \(\rho_R \colon R \to R[G]\) defined in \Cref{def-coaction}.
    We define a functor \(M \mapsto M[G]\) by the composition
    \begin{equation} \label{DefMGEquation}
        \mcalD(R) \xrightarrow{- \otimes^L_R R[G]} \mcalD(R[G]) \xrightarrow{\rho_{R, *}} \mcalD(R); \quad M \mapsto M \otimes^L_R R[G] \mapsto \rho_{R, *}(M \otimes^L_R R[G]) \eqdef M[G],
    \end{equation}
    where the \(- \otimes^L_R R[G]\) is the derived base change along the canonical morphism \(R \to R[G]\) sending \(r \mapsto r t^0\) (not \(\rho_R\)) and \(\rho_{R, *}\) is the restriction of scalars along \(\rho_R\).

    Take the canonical inclusion and projection \(R \cdot t^g \hookrightarrow R[G] \twoheadrightarrow R \cdot t^g\) of \(R\)-modules for each \(g \in G\).
    Forgetting the \(R\)-module structure on \(M[G]\), we have morphisms
    \begin{equation*}
        M \cdot t^g \defeq M \otimes^L_R (R \cdot t^g) \to M[G] = M \otimes^L_R R[G] \to M \cdot t^g
    \end{equation*}
    in \(\mcalD(\setZ)\) whose composition is also the identity morphism and this induces a \(G\)-graded structure
    \begin{equation*}
        M[G] \cong \bigoplus_{g \in G} M \cdot t^g
    \end{equation*}
    in \(\mcalD_{\graded{G}}(\setZ)\). Especially, it defines a functor
    \begin{equation*}
        \mcalD(R) \to \mcalD_{\graded{G}}(\setZ); \quad M \mapsto \bigoplus_{g \in G} M \cdot t^g.
    \end{equation*}
\end{definition}

\begin{remark} \label{RemarkTrivialGradingMG}
    If \(R\) has a trivial \(G\)-grading, i.e., \(R = R_0\), then the \(\setZ[G]\)-coaction \(\rho_R\) on \(R\) is the canonical morphism \(R \to R[G]\) sending \(r\) to \(rt^0\).
    Therefore, the functor \(M \mapsto M[G]\) defined in \Cref{DefMG} is the same as the functor
    \begin{equation*}
        \mcalD(R) \to \mcalD(R); \quad M \mapsto M \otimes^L_R R[G],
    \end{equation*}
    which is the tensor product with the coalgebra \(R[G]\) over \(R\) while the general case is not.
\end{remark}

\begin{proposition} \label{FactorizationMG}
    Let \(R\) be a \(G\)-graded ring and let \(I\) be a finitely generated homogeneous ideal of \(R\).
    Then the forgetful functor\footnote{The symbol \((-)^0\) indicates the adjunctions for non-completed objects, i.e., \(0\)-complete objects.} \(\mcalF^0 \colon \mcalD_{\graded{G}}(R) \to \mcalD(R)\) admits a right adjoint functor \(\mcalG^0 \colon \mcalD(R) \to \mcalD_{\graded{G}}(R)\) such that the functor defined in \Cref{DefMG} admits a factorization
    \begin{center}
        \begin{tikzcd}
            &  & \mcalD(R).                                                                 \\
            \mcalD(R) \arrow[rru, "{M \mapsto M[G]}"] \arrow[rr, "\mcalG^0"] \arrow[rrd, "M \mapsto \bigoplus_{g \in G} M \cdot t^g"'] &  & \mcalD_{\graded{G}}(R) \arrow[u, "\mcalF^0"'] \arrow[d, "\mathrm{forget}"] \\
            &  & \mcalD_{\graded{G}}(\setZ)                                                
        \end{tikzcd}
    \end{center}
    In what follows, we say this functor \(\mcalG^0\) is \emph{the} right adjoint functor of \(\mcalF^0\) and we will write \(\mcalG^0(M)\) as also \(M[G]\) for \(M \in \mcalD(R)\).
\end{proposition}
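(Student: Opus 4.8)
The plan is to obtain the right adjoint abstractly first and then identify it with an explicit functor lifting $M \mapsto M[G]$, reading off the two factorizations from that construction. Since $\mcalF^0 = \theta_R$ is a colimit-preserving functor between presentable $\infty$-categories by \Cref{PropertiesDerivedGradedModules}\Cref{PresentabilityDgrR} and \Cref{ForgetfulFunctorDgrR}, the adjoint functor theorem (\cite[Corollary 5.5.2.9]{lurie2009Higher}) already guarantees a right adjoint $\mcalG^0$, unique up to contractible choice. The real content is therefore to produce a \emph{candidate} functor with the two asserted factorizations and to verify it is right adjoint to $\theta_R$; by uniqueness of adjoints this candidate is then the desired $\mcalG^0$.

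For the candidate I would start from the functor $M \mapsto M[G] = \rho_{R,*}(M \otimes_R^L R[G])$ of \Cref{DefMG}, whose underlying $\setZ$-graded object is $\bigoplus_{g \in G} M \cdot t^g$ in $\mcalD_{\graded{G}}(\setZ)$. The decisive observation is that, in this decomposition, a homogeneous $r \in R_h$ acts through $\rho_R$ as the composite $M\cdot t^g \xrightarrow{\times r} M \cdot t^{g+h}$, so the $R$-action raises degree by $\deg(r)$ and is compatible with the grading. I would promote this to a coherent $R$-module structure in $\mcalD_{\graded{G}}(\setZ)$: since $R$ is an algebra object of $\mcalD_{\graded{G}}(\setZ)$ and $\theta_{\setZ}\colon \mcalD_{\graded{G}}(\setZ) \to \mcalD(\setZ)$ is conservative and symmetric monoidal (\Cref{PropertiesDgrZ}\Cref{ForgetfulFunctorDgrZ}), the action map $R \otimes M[G] \to M[G]$ is the $\theta_{\setZ}$-image of a degreewise map in $\mcalD_{\graded{G}}(\setZ)$, and this defines an object $\widetilde\mcalG(M) \in \Mod_R(\mcalD_{\graded{G}}(\setZ)) = \mcalD_{\graded{G}}(R)$, functorially in $M$. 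By construction $\theta_R\widetilde\mcalG(M) \simeq M[G]$ and the image of $\widetilde\mcalG(M)$ under the forgetful functor to $\mcalD_{\graded{G}}(\setZ)$ is $\bigoplus_g M\cdot t^g$, which are exactly the two factorizations claimed.

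To show $\widetilde\mcalG$ is right adjoint to $\theta_R$, I would establish a natural equivalence $\Map_{\mcalD_{\graded{G}}(R)}(N, \widetilde\mcalG(M)) \simeq \Map_{\mcalD(R)}(\theta_R N, M)$ using the fiber-product description of graded mapping spaces (\Cref{MappingSpaceDgrPullback}), which writes the left-hand side as $\Map_{\mcalD(R)}(\theta_R N, M[G]) \times_{\Map_{\mcalD(\setZ)}(N,\, \bigoplus_g M\cdot t^g)} \Map_{\mcalD_{\graded{G}}(\setZ)}(N, \bigoplus_g M\cdot t^g)$. The graded-$\setZ$ factor reduces, via $\mcalD_{\graded{G}}(\setZ) \simeq \prod_g \mcalD(\setZ)$ (\Cref{PropertiesDgrZ}\Cref{PresentabilityDgrZ}) and $M\cdot t^g \simeq M$ as $\setZ$-modules, to $\prod_g \Map_{\mcalD(\setZ)}(N_g, M) \simeq \Map_{\mcalD(\setZ)}(\bigoplus_g N_g, M)$, i.e. to the diagonal adjunction identifying the right adjoint of $\theta_{\setZ} = \bigoplus$ with the constant functor $N \mapsto (N)_{g \in G}$; the $\mcalD(R)$ factor is handled by the extension/restriction adjunction along $\rho_R$.

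The main obstacle, and the genuinely nontrivial point, is to check that this pullback collapses to $\Map_{\mcalD(R)}(\theta_R N, M)$: the compatibility of the two maps into the common corner must use that the $t^0$-component of $\rho_R$ recovers the original $R$-action, which is precisely where the nontrivial grading of $R$, i.e. the distinction between $\rho_R$ and $r \mapsto rt^0$ highlighted in \Cref{RemarkTrivialGradingMG}, has to be tracked carefully. I expect the coherent construction of $\widetilde\mcalG$ and this final fiber-product identification to be the two hardest steps, both of which the machinery developed in \Cref{SectionRetraction} (the pullback description of mapping spaces and the morphism retractions) is designed to support.
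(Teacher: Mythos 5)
Your overall strategy (produce an explicit candidate lifting $M \mapsto M[G]$ and verify it is right adjoint to the forgetful functor) is the right shape, but the central step is not justified and this is a genuine gap. You claim that because $\theta_{\setZ} \colon \mcalD_{\graded{G}}(\setZ) \to \mcalD(\setZ)$ is conservative and symmetric monoidal, the action map $R \otimes M[G] \to M[G]$ "is the $\theta_{\setZ}$-image of a degreewise map" and that this "defines an object $\widetilde{\mcalG}(M) \in \Mod_R(\mcalD_{\graded{G}}(\setZ))$, functorially in $M$." Neither implication holds. Conservativity says that a morphism of $\mcalD_{\graded{G}}(\setZ)$ is an equivalence if its image is; it gives no way to lift morphisms, homotopies, or coherence data from $\mcalD(\setZ)$ back to $\mcalD_{\graded{G}}(\setZ)$. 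An object of $\Mod_R(\mcalD_{\graded{G}}(\setZ))$ is not a single action map satisfying two commutative squares but an infinite hierarchy of coherences, and even the weaker statement you would need --- that the two associativity/unitality squares commute \emph{in} $\mcalD_{\graded{G}}(\setZ)$ rather than merely after applying $\theta_{\setZ}$ --- does not follow from commutativity downstairs. (The paper's \Cref{DefineModuleStructure} is the tool that converts a one-categorical action datum into a genuine module object, but you would still have to construct the degreewise action map and its two squares inside $\mcalD_{\graded{G}}(\setZ)$, and you would additionally need functoriality in $M$, which that proposition does not address.) The subsequent adjunction check is also circular as written: \Cref{MappingSpaceDgrPullback} computes $\Map_{\mcalD_{\graded{G}}(R)}(N, \widetilde{\mcalG}(M))$ only once $\widetilde{\mcalG}(M)$ is already known to be an object of $\mcalD_{\graded{G}}(R)$, and you yourself flag that the collapse of the resulting fiber product to $\Map_{\mcalD(R)}(\theta_R N, M)$ is the hard point without carrying it out.

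The paper avoids all of this by working one categorical level down. It first defines $\mcalG_0 \colon \Mod(R) \to \Mod_{\graded{G}}(R)$ on discrete modules, where the grading $M[G] = \bigoplus_g M \cdot t^g$ and the computation $r \cdot (mt^g) = (rm)t^{g+h}$ for $r \in R_h$ make $M[G]$ a graded $R$-module on the nose, and verifies the adjunction $(\mcalF_0, \mcalG_0)$ by the explicit formula $n_g \mapsto f(n_g)t^g$. It then passes to chain complexes, uses flatness of $R \to R[G]$ to see both functors are exact, checks that the pair is a Quillen adjunction for the model structures of \cite[Proposition 1.3.5.3]{lurie2017Higher}, and invokes \cite[Theorem 2.1]{mazel-gee2016Quillen} to derive an adjunction of underlying $\infty$-categories; the two factorizations are then read off from the corresponding strict factorization at the level of $\Ch(\Mod(R))$. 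If you want to salvage your approach, you should either follow this strict-model route or replace your lifting step by an actual construction of $\widetilde{\mcalG}(M)$ as an $R$-module object (for instance via \Cref{DefineModuleStructure} applied to the free--forgetful adjunction $\mcalD_{\graded{G}}(\setZ) \rightleftarrows \mcalD_{\graded{G}}(R)$, together with a separate argument for functoriality in $M$).
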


\begin{proof}
    Take the abelian categories \(\Mod(R)\) and \(\Mod_{\graded{G}}(R)\) of (discrete) \(R\)-modules and \(G\)-graded \(R\)-modules, respectively.
    Note that since the canonical morphism \(R \to R[G]\) is flat, for every \(M \in \Mod(R)\), the object \(M[G] \in \mcalD(R)\) is concentrated in degree \(0\).
    Defining the \(G\)-grading on \(M[G]\) via the decomposition \(M[G] \cong \bigoplus_{g \in G} M \cdot t^g\), the \(R\)-module structure on \(M[G]\) induced by \(\rho_R\) makes \(M[G]\) into a discrete \(G\)-graded \(R\)-module: Actually, for \(r \in R_h\) and \(m t^g \in M \cdot t^g\), we have
    \begin{equation*}
        r \cdot (m t^g) = \rho_R(r)(m t^g) = (r t^h)(m t^g) = (r m) t^{g+h} \in M \cdot t^{g+h}.
    \end{equation*}
    Therefore, the functor \eqref{DefMGEquation} restricts to an exact functor
    \begin{equation*}
        \mcalG_0 \colon \Mod(R) \to \Mod_{\graded{G}}(R); \quad M \mapsto M[G]
    \end{equation*}
    of abelian categories whose composition with the forgetful functor \(\Mod_{\graded{G}}(R) \to \Mod(R)\) agrees with the original functor \(M \mapsto M[G]\).

    Defining the functor \(\mcalF_0 \colon \Mod_{\graded{G}}(R) \to \Mod(R)\) as the forgetful functor, which is a restriction of the forgetful functor \(\mcalF^0 \colon \mcalD_{\graded{G}}(R) \to \mcalD(R)\), we can show that the pair \((\mcalF_0, \mcalG_0)\) is an adjoint pair of abelian categories between \(\Mod_{\graded{G}}(R)\) and \(\Mod(R)\): Take any morphism \(f \colon N \to M\) in \(\Mod(R)\) for \(N \in \Mod_{\graded{G}}(R)\) and \(M \in \Mod(R)\).
    Then \(f\) uniquely lifts to the \(G\)-graded morphism
    \begin{equation*}
        \widetilde{f} \colon N \to M[G]; \quad n_g \mapsto f(n_g)t^g
    \end{equation*}
    for \(n_g \in N_g\). This defines a morphism in \(\Mod_{\graded{G}}(R)\) whose composition with \(M[G] \to M; t \mapsto 1\) is equal to \(f\).

    This adjunction \((\mcalF_0, \mcalG_0)\) of abelian categories induces an adjunction
    \begin{equation*}
        \Ch(\mcalF_0) \colon \Ch(\Mod_{\graded{G}}(R)) \rightleftarrows \Ch(\Mod(R)) \colon \Ch(\mcalG_0)
    \end{equation*}
    between their categories of chain complexes \(\Ch(\Mod(R))\) and \(\Ch(\Mod_{\graded{G}}(R))\).
    Because of the flatness of \(R \to R[G]\), these functors are exact functors.

    Equipped with the model structure\footnote{Note that this is not a simplicial model structure. See \cite{lurie2017Higher}*{Warning 1.3.5.4}.} on these category of chain complexes defined in \cite{lurie2017Higher}*{Proposition 1.3.5.3}, we can check that the pair \((\Ch(\mcalF_0), \Ch(\mcalG_0))\) is a Quillen adjunction, i.e., \(\Ch(\mcalF_0)\) preserves cofibrations, which is a componentwise injection, and trivial cofibrations, which is a quasi-isomorphism and a componentwise injection.
    The \(\infty\)-categories \(\mcalD(R)\) and \(\mcalD_{\graded{G}}(R)\) are equivalent to the underlying \(\infty\)-categories of these model categories \(\Ch(\Mod(R))\) and \(\Ch(\Mod_{\graded{G}}(R))\), respectively (\cite{lurie2017Higher}*{Proposition 1.3.5.14}).
    So the Quillen adjunction induces an adjunction
    \begin{equation*}
        L\Ch(\mcalF_0) \colon \mcalD_{\graded{G}}(R) \rightleftarrows \mcalD(R) \colon R\Ch(\mcalG_0)
    \end{equation*}
    of \(\infty\)-categories by \cite{mazel-gee2016Quillen}*{Theorem 2.1} such that \(L\Ch(\mcalF_0)\) and \(R\Ch(\mcalG_0)\) are left and right derived functors of \(\Ch(\mcalF_0)\) and \(\Ch(\mcalG_0)\), respectively.
    
    Since the forgetful functor \(\Ch(\mcalF_0)\) is exact, we have identifications of functors \(L\Ch(\mcalF_0) \xrightarrow{\simeq} \mcalF^0\).
    We will show that \emph{the} right adjoint functor
    \begin{equation*}
        \mcalG^0 \defeq R\Ch(\mcalG_0) \colon \mcalD(R) \to \mcalD_{\graded{G}}(R)
    \end{equation*}
    of \(\mcalF^0\) satisfies the desired factorization.
    The functor \(\Ch(\mcalG_0)\) is also exact and admits a commutative diagram
    \begin{center}
        \begin{tikzcd}
             &  & \Ch(\Mod(R))  \\
            \Ch(\Mod(R)) \arrow[rr, "\Ch(\mcalG_0)"] \arrow[rru, "{M^{\bullet} \mapsto M^{\bullet}[G]}"] \arrow[rrd, "M^{\bullet} \mapsto \bigoplus_{g \in G} M^{\bullet} t^g"'] &  & \Ch(\Mod_{\graded{G}}(R)) \arrow[u, "\Ch(\mcalF_0)"'] \arrow[d, "\mathrm{forget}"] \\
             &  & \Ch(\Mod_{\graded{G}}(\setZ)) 
        \end{tikzcd}
    \end{center}
    such that all functors are exact.
    So the right derived functor \(R\Ch(\mcalG_0)\) sends \(M \in \mcalD(R)\) to an object \(R\Ch(\mcalG_0)(M) \in \mcalD_{\graded{G}}(R)\) whose underlying \(R\)-module is isomorphic to \(M[G]\) in \Cref{DefMG} and whose underlying graded \(\setZ\)-module is given by the decomposition \(\bigoplus_{g \in G} M \cdot t^g\).
    This shows the desired factorization.
\end{proof}

\begin{corollary} \label{ComonadGradedModules}
    In the setting of \Cref{FactorizationMG} above, we can take a right adjoint functor \(\mcalG^I \colon \mcalD^{\comp{I}}(R) \to \mcalD_{\graded{G}}^{\comp{I}}(R)\) admitting a commutative diagram
    \begin{equation} \label{CommutativeDiagramRightAdjoints}
        \begin{tikzcd}
            \mcalD^{\comp{I}}(R) \arrow[rr, "\mcalG^I"] \arrow[d, "\mathrm{incl}"] &  & \mcalD_{\graded{G}}^{\comp{I}}(R) \arrow[d, "\mathrm{incl}"] \\
            \mcalD(R) \arrow[rr, "\mcalG^0"]                                       &  & \mcalD_{\graded{G}}(R)                                      
        \end{tikzcd}
    \end{equation}
    of functors, where the vertical arrows are the canonical inclusions.
    As in \(\mcalG^0\), we will say this functor \(\mcalG^I\) is \emph{the} right adjoint functor of \(\mcalF^I\) and write \(\mcalG^I(M)\) as also \(M[G]\) for \(M \in \mcalD^{\comp{I}}(R)\).
    In particular, the comonad \(\mcalT^I \colon \mcalD^{\comp{I}}(R) \to \mcalD^{\comp{I}}(R)\) of the adjunction \((\mcalF^I, \mcalG^I)\) is equivalent to the functor
    \begin{equation} \label{ComonadRepresentation}
        M \mapsto M\abracket{G} \defeq \dcomp{I}{M[G]} \cong M \widehat{\otimes}^L_R R[G]
    \end{equation}
    for any \(M \in \mcalD^{\comp{I}}(R)\).
\end{corollary}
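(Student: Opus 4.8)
The plan is to prove everything formally from the adjunction $(\mcalF^0,\mcalG^0)$ of \Cref{FactorizationMG} together with the completion adjunctions, isolating the one genuinely nonformal input as a preliminary claim: for every $N \in \mcalD^{\comp{I}}(R)$, the object $\mcalG^0(N) = N[G]$ already lies in $\mcalD_{\graded{G}}^{\comp{I}}(R)$. Once this is established, I would simply corestrict $\mcalG^0$ to the complete subcategories to obtain $\mcalG^I$, check it is right adjoint to $\mcalF^I$ by chaining adjunctions, and read off the comonad by unwinding the composite.

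To prove the claim, recall from \Cref{FactorizationMG} that as a graded $\setZ$-object $N[G] \cong \bigoplus_{g \in G} N \cdot t^g$, and that for a homogeneous $f \in I$ of degree $d := \deg(f)$ the $\rho_R$-twisted action sends $N \cdot t^{g}$ into $N \cdot t^{g+d}$ via $n t^g \mapsto (fn) t^{g+d}$. Hence, for each fixed $g \in G$, the tower computing the $g$-graded part of $T(N[G], f)$ (see \Cref{NotationDerivedQuot}) becomes, under the identifications $N \cdot t^{g-kd} \cong N$, the constant tower $\cdots \xrightarrow{f} N \xrightarrow{f} N \xrightarrow{f} N$ with every map multiplication by $f$; its limit is the non-graded object $T(N, f)$. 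Since $N$ is derived $I$-complete and $f \in I$, we have $T(N, f) = 0$. Because limits in $\mcalD_{\graded{G}}(\setZ)$, and therefore in $\mcalD_{\graded{G}}(R)$, are computed gradedwise (\Cref{PropertiesDgrZ}\Cref{LimitsColimitsDgrZ}, \Cref{PropertiesDerivedGradedModules}\Cref{LimitsColimitsDgrR}), it follows that $T(N[G], f) = 0$ for every homogeneous $f \in I$, which by \Cref{PrincipalDerivedGrCompleteness} is exactly the derived gradedwise $I$-completeness of $N[G]$.

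Granting the claim, I would define $\mcalG^I$ to be the corestriction of $\mcalG^0$ along the inclusion $\mcalD_{\graded{G}}^{\comp{I}}(R) \hookrightarrow \mcalD_{\graded{G}}(R)$, so that \eqref{CommutativeDiagramRightAdjoints} commutes by construction. That $\mcalG^I$ is right adjoint to $\mcalF^I$ follows from the natural equivalences
\[
\Map_{\mcalD^{\comp{I}}(R)}(\mcalF^I(M), N) \simeq \Map_{\mcalD(R)}(\theta(M), N) \simeq \Map_{\mcalD_{\graded{G}}(R)}(M, N[G]) \simeq \Map_{\mcalD_{\graded{G}}^{\comp{I}}(R)}(M, \mcalG^I(N)),
\]
valid for $M \in \mcalD_{\graded{G}}^{\comp{I}}(R)$, $N \in \mcalD^{\comp{I}}(R)$: the first uses $\mcalF^I(M) = \dcomp{I}{\theta(M)}$, the adjunction $\dcomp{I}{-} \dashv \mathrm{incl}$ and the completeness of $N$; the second is $(\mcalF^0, \mcalG^0)$; and the last holds because both $M$ and $N[G]$ lie in the full subcategory $\mcalD_{\graded{G}}^{\comp{I}}(R)$ by the claim. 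As right adjoints are essentially unique, this is \emph{the} right adjoint of $\mcalF^I$. For the comonad I would unwind, for $M \in \mcalD^{\comp{I}}(R)$,
\[
\mcalT^I(M) = \mcalF^I(\mcalG^I(M)) = \dcomp{I}{\theta(\mcalG^I(M))} = \dcomp{I}{M[G]},
\]
the last equality using $\theta \circ \mcalG^I = \mcalF^0 \circ \mcalG^0$ and the factorization of \Cref{FactorizationMG}; the identification $\dcomp{I}{M[G]} \cong M \widehat{\otimes}^L_R R[G]$ is then the definition of the completed tensor product in $\mcalD^{\comp{I}}(R)$.

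I expect the preliminary claim to be the only place requiring real work, everything else being formal adjunction bookkeeping. The point to handle carefully is the $\rho_R$-twist in the $R$-module structure on $N[G]$: it is precisely this twist that makes multiplication by a homogeneous $f$ shift the grading by $\deg(f)$, so that each graded tower collapses to the non-graded $f$-tower on $N$ and completeness propagates gradedwise. No other obstacle is anticipated.
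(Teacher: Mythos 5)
Your proposal is correct, but it reaches the one nonformal fact --- that \(\mcalG^0(N)=N[G]\) is derived gradedwise \(I\)-complete whenever \(N\in\mcalD^{\comp{I}}(R)\) --- by a genuinely different route from the paper. You verify it directly: the \(\rho_R\)-twist makes multiplication by a homogeneous \(f\in I\) of degree \(d\) carry \(N\cdot t^{g}\) into \(N\cdot t^{g+d}\), so each graded part of the tower computing \(T(N[G],f)\) collapses, under the identifications \(N\cdot t^{h}\cong N\), to the plain \(f\)-tower on \(N\), whose limit \(T(N,f)\) vanishes by derived \(I\)-completeness of \(N\); since limits in \(\mcalD_{\graded{G}}(R)\) are computed gradedwise, \(T(N[G],f)=0\). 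The paper instead starts from the abstract right adjoint \(\mcalG^{pre}\) of \(\mcalF^I\) supplied by the adjoint functor theorem (\Cref{ConstComonad}) and identifies \(\mcalG^{pre}(M)\cong\mcalG^0(M)\) by a Yoneda-style chain of equivalences whose crucial input is \(\dcomp{I}{\dgrcomp{I}{N}}\cong\dcomp{I}{N}\) (\Cref{PrincipalDerivedGradedwiseCompProp}\Cref{DerivedGrcompAdicCompletion}); completeness of \(\mcalG^0(M)\) then falls out a posteriori because \(\mcalG^{pre}\) lands in the complete subcategory. Your remaining steps --- corestricting \(\mcalG^0\), chaining the completion adjunction with \((\mcalF^0,\mcalG^0)\) and full faithfulness to get \(\mcalF^I\dashv\mcalG^I\), and unwinding \(\mcalT^I(M)=\dcomp{I}{M[G]}\) --- agree with the paper's up to reorganization. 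What your approach buys is independence from \Cref{PrincipalDerivedGradedwiseCompProp}\Cref{DerivedGrcompAdicCompletion} together with a concrete explanation of why completeness propagates through the twisted coinduction; what the paper's buys is avoiding the explicit tower computation by delegating the work to that earlier lemma and to the uniqueness of right adjoints.
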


\begin{proof}
    Because of \(\mcalT^I = \mcalF^I \circ \mcalG^I\), it suffices to show the statement on the right adjoint \(\mcalG^I \colon \mcalD^{\comp{I}}(R) \to \mcalD_{\graded{G}}^{\comp{I}}(R)\).

    Take any objects \(N \in \mcalD_{\graded{G}}(R)\) and \(M \in \mcalD^{\comp{I}}(R)\), and take a right adjoint \(\mcalG^{pre}\) of \(\mcalF^I\). Note that \(\mcalG^{pre}(M)\) is derived gradedwise \(I\)-complete.
    Using the adjunctions \(\mcalG^0 \dashv \mcalF^0\) (\Cref{FactorizationMG}), \(\dgrcomp{I}{-} \dashv \mathrm{incl}\) (\Cref{DerivedGradedwiseComp}), and \(\dcomp{I}{-} \dashv \mathrm{incl}\), we have functorial equivalences
    \begin{align*}
        & \Map_{\mcalD_{\graded{G}}(R)}(N, \mcalG^{pre}(M)) \xleftarrow{\simeq} \Map_{\mcalD_{\graded{G}}^{\comp{I}}(R)}(\dgrcomp{I}{N}, \mcalG^{pre}(M)) \xrightarrow{\simeq} \Map_{\mcalD^{\comp{I}}(R)}(\mcalF^I(\dgrcomp{I}{N}), M) \\
        & \xrightarrow{\simeq} \Map_{\mcalD^{\comp{I}}(R)}(\mcalF^I(N), M) \xrightarrow{\simeq} \Map_{\mcalD(R)}(N, M) \xrightarrow{\simeq} \Map_{\mcalD_{\graded{G}}(R)}(N, \mcalG^0(M)),
    \end{align*}
    where the fourth equivalence follows from \Cref{PrincipalDerivedGradedwiseCompProp}\Cref{DerivedGrcompAdicCompletion}.
    This shows there exists an isomorphism \(\mcalG^{pre}(M) \cong \mcalG^0(M)\) in \(\mcalD_{\graded{G}}(R)\) and especially \(\mcalG^0(M)\) is derived gradedwise \(I\)-complete.

    Therefore, we have a well-defined functor
    \begin{equation*}
        \mcalG^I \colon \mcalD^{\comp{I}}(R) \to \mcalD_{\graded{G}}^{\comp{I}}(R); \quad M \mapsto \mcalG^0(M)
    \end{equation*}
    such that the diagram \eqref{CommutativeDiagramRightAdjoints} commutes.
    Moreover, the above functorial equivalences makes the pair \((\mcalF^I, \mcalG^I)\) an adjoint pair.
\end{proof}

\begin{remark} \label{RemarkTrivialGradingComonad}
    If the \(G\)-grading on \(R\) is trivial, i.e., \(R = R_0\), then the comonad \(\mcalT^I\) on \(\mcalD^{\comp{I}}(R)\) defined in \eqref{ComonadRepresentation} is equivalent to the functor
    \begin{equation*}
        \mcalD^{\comp{I}}(R) \to \mcalD^{\comp{I}}(R); \quad M \mapsto M \widehat{\otimes}^L_R R\abracket{G}
    \end{equation*}
    given by the derived completed tensor product with the derived \(I\)-complete coalgebra \(R\abracket{G}\) over \(R\).
    This follows from \Cref{RemarkTrivialGradingMG} and \Cref{ComonadGradedModules}.
\end{remark}

\begin{construction}
    Using the above adjoint pairs, we have the following a commutative diagram of adjunctions
    \begin{equation} \label{CommutativeDiagramAdjunctions}
        \begin{tikzcd}[row sep=huge, column sep=huge, ampersand replacement=\&]
            \mcalD_{\graded{G}}^{\comp{I}}(R)
                \arrow[r, shift left=1.5ex, "\mcalF^I"]
                \arrow[d, shift left=1.5ex, "\mathrm{incl}"]
                \arrow[r, phantom, "\bot"] 
                \arrow[d, phantom, "\dashv"]
            \&
            \mcalD^{\comp{I}}(R)
                \arrow[l, shift left=1.5ex, "\mcalG^I"]
                \arrow[d, shift left=1.5ex, "\mathrm{incl}"]
                \arrow[d, phantom, "\dashv"]
            \\
            \mcalD_{\graded{G}}(R)
                \arrow[u, shift left=1.5ex, "\dgrcomp{I}{-}"]
                \arrow[r, shift left=1.5ex, "\mcalF^0"]
                \arrow[r, phantom, "\bot"] 
            \&
            \mcalD(R).
                \arrow[u, shift left=1.5ex, "\dcomp{I}{-}"]
                \arrow[l, shift left=1.5ex, "\mcalG^0"]
        \end{tikzcd}
    \end{equation}
    By \Cref{PrincipalDerivedGradedwiseCompProp}\Cref{DerivedGrcompUniv}, we have identifications of functors
    \begin{equation*}
        \id_{\mcalD_{\graded{G}}^{\comp{I}}(R)} \xrightarrow{\simeq} \dgrcomp{I}{-} \circ \mathrm{incl} \quad \text{and} \quad \id_{\mcalD^{\comp{I}}(R)} \xrightarrow{\simeq} \dcomp{I}{-} \circ \mathrm{incl}.
    \end{equation*}
    Because of the commutative diagram \eqref{CommutativeDiagramRightAdjoints}, we have an identification
    \begin{equation*}
        \mcalG^I \xrightarrow{\simeq} \mcalG^0 \circ \mathrm{incl}
    \end{equation*}
    between functors \(\mcalD^{\comp{I}}(R) \to \mcalD_{\graded{G}}^{\comp{I}}(R)\).
    Also, using the commutativity above, we can show the following identification of functors:
    \begin{equation} \label{FunctioralIdentification}
        \mcalG^I \circ \mcalF^I \xrightarrow{\simeq} \mcalG^0 \circ \mathrm{incl} \circ \mcalF^I \xrightarrow{\simeq} \mcalG^0 \circ \mathrm{incl} \circ \dcomp{I}{-} \circ \mcalF^0 \circ \mathrm{incl}
    \end{equation}
    between functors \(\mcalD_{\graded{G}}^{\comp{I}}(R) \to \mcalD_{\graded{G}}^{\comp{I}}(R)\).
\end{construction}

\begin{definition} \label{ComonadStructure}
    By \Cref{ComonadGradedModules}, the comonad \(\mcalT^I = \mcalF^I \circ \mcalG^I\) on \(\mcalD^{\comp{I}}(R)\) has the following structure morphisms:
    \begin{equation*}
        \Delta \colon \mcalT^I = \mcalF^I \circ \mcalG^I = \mcalF^I \circ \id_{\mcalD_{\graded{G}}^{\comp{I}}(R)} \circ \mcalG^I \to \mcalF^I \circ \mcalG^I \circ \mcalF^I \circ \mcalG^I = \mcalT^I \circ \mcalT^I,
    \end{equation*}
    where the arrow is induced from the unit of the adjunction \((\mcalF^I, \mcalG^I)\), and
    \begin{equation*}
        e \colon \mcalT^I = \mcalF^I \circ \mcalG^I \to \id_{\mcalD^{\comp{I}}(R)},
    \end{equation*}
    where the arrow is induced from the counit of the adjunction \((\mcalF^I, \mcalG^I)\).
    The former (resp., latter) morphism \(\Delta\) (resp., \(e\)) is called the \emph{comultiplication} (\emph{counit}) of the comonad \(\mcalT^I\), which is the dual notion of the multiplication (unit) of a monad (\Cref{DefMonad}).
    Especially, we have a commutative diagram
    \begin{center}
        \begin{tikzcd}
            \mcalT^I \arrow[r, "\Delta"] \arrow[rd, "\id"'] & \mcalT^I \circ \mcalT^I \arrow[d, "e \circ \id"] \\
                                                            & \mcalT^I                                        
        \end{tikzcd}
    \end{center}
    of functors.
\end{definition}

\begin{construction} \label{ConstMorphismBeta}
    Take an object \(M \in \mcalD^{\comp{I}}(R)\).
    Applying the functor \(\mcalG^I\), we have an object \(M[G]\) of \(\mcalD_{\graded{G}}^{\comp{I}}(R)\) by \Cref{ComonadGradedModules}.
    Set the derived \(I\)-completion morphism
    \begin{equation*}
        \beta \colon M[G] \to M\abracket{G} = \dcomp{I}{M[G]} = \mcalT^I(M)
    \end{equation*}
    in \(\mcalD(R)\).
    Also, applying \Cref{CompletionGradedModuleSplitting} for \(M[G] \in \mcalD_{\graded{G}}^{\comp{I}}(R)\) and its graded \(g\)-part \(M \cdot t^g\) (\Cref{ComonadGradedModules}), we have canonical morphisms
    \begin{equation*}
        \beta_g \colon M \cdot t^g \to M[G] \xrightarrow{\beta} M\abracket{G} \quad \text{and} \quad \proj \colon M\abracket{G} \to M \cdot t^g
    \end{equation*}
    in \(\mcalD(\setZ)\) for each \(g \in G\) such that whose composition \(\proj \circ \beta_g\) is the identity morphism on \(M \cdot t^g\).
\end{construction}

We define the \(\infty\)-category of derived \(I\)-formal comodules over \(R[G]\) in \(\mcalD^{\comp{I}}(R)\) as follows.

\begin{definition} \label{DefComoduleGradedModules}
    Following \Cref{ComonadGradedModules}, we will write \(\coMod_{\mcalT^I}(\mcalD^{\comp{I}}(R))\) as \(\coMod_G^{\comp{I}}(R)\) and call it the \emph{\(\infty\)-category of derived \(I\)-formal comodules over \(R[G]\)}. See \Cref{RemarkComoduleNotation} below for the notation.

    Using the dual of \Cref{DefModuleOverMonad}, we have a right adjoint \((\mcalG^I)''\) of the forgetful functor \((\mcalF^I)'' \colon \coMod_G^{\comp{I}}(R) \to \mcalD^{\comp{I}}(R)\) and the underlying functor of the comonad \(\mcalT^I\) is equivalent to the composition \((\mcalF^I)'' \circ (\mcalG^I)''\).
    By \Cref{MorphismsAdjoint}, any object \(\widetilde{M}\) of \(\coMod_G^{\comp{I}}(R)\) admits an underlying object \(M \defeq (\mcalF^I)''(\widetilde{M})\) of \(\mcalD^{\comp{I}}(R)\) and a morphism
    \begin{equation*}
        \rho_M \colon M \to (\mcalF^I)''((\mcalG^I)''(\widetilde{M})) \xrightarrow{\simeq} \mcalT^I(M) \overset{\eqref{ComonadRepresentation}}{=} M\abracket{G}
    \end{equation*}
    in \(\mcalD^{\comp{I}}(R)\) and a commutative diagram representing the coaction axioms of \(R[G]\)-comodules (with infinitely many higher homotopies).
    For example, \eqref{DiagramMonadAssociativeUnital} and \eqref{DiagramMonadUnit} tell us that commutative diagrams
    \begin{equation} \label{DefComoduleGradedModulesDiagram}
        \begin{tikzcd}
            M \arrow[r, "\rho_M"] \arrow[d, "\rho_M"'] & \mcalT^I(M) \arrow[d, "\mcalT^I(\rho_M)"] &  & M \arrow[r, "\rho_M"] \arrow[rd, "\id_M"'] & \mcalT^I(M) \arrow[d, "e_M \defeq e(M)"] \\
            \mcalT^I(M) \arrow[r, "\Delta(M)"]         & \mcalT^I \circ \mcalT^I(M)                &  &                                            & M                                       
        \end{tikzcd}
    \end{equation}
    in \(\mcalD^{\comp{I}}(R)\) are given as datum, which represent the coassociativity and counit axioms, respectively.
    For simplicity, we will write such object \(\widetilde{M}\) as \((M, \rho_M)\) and \(\rho_M\) is called an \emph{\(I\)-formal coaction} of \(M\).
\end{definition}

\begin{remark} \label{RemarkComoduleNotation}
    The notation \(\coMod_G^{\comp{I}}(R)\) does \emph{not} mean the \(\infty\)-category of comodules over the derived \(I\)-complete coalgebra \(R\abracket{G}\) in \(\mcalD^{\comp{I}}(R)\).
    Actually, the defining comonad \(\mcalT^I\) is just
    \begin{equation*}
        M \mapsto \dcomp{I}(\rho_{R, *}(M \otimes^L_R R\abracket{G}))
    \end{equation*}
    and not \(M \to M \widehat{\otimes}^L_R R\abracket{G}\).

    On the other hand, when \(R\) has the trivial \(G\)-grading, this notation \(\coMod_G^{\comp{I}}(R)\) coincides with the \(\infty\)-category of derived \(I\)-complete comodules over the coalgebra \(R\abracket{G}\) in \(\mcalD^{\comp{I}}(R)\) by \Cref{RemarkTrivialGradingComonad}.
\end{remark}

If a derived \(I\)-formal comodule \((M, \rho_M)\) over \(R[G]\) comes from a derived gradedwise \(I\)-complete graded module, the \(I\)-formal coaction \(\rho_M\) is induced from a graded morphism as follows.

\begin{remark} \label{RemarkComoduleGradedModules}
    Take an object \(M_{\graded}\) of \(\mcalD_{\graded{G}}^{\comp{I}}(R)\).
    Then the comparison functor \eqref{ComparisonFunctorGradedModules} sends \(M_{\graded}\) to the object \((\dcomp{I}{M_{\graded}}, \rho_{\dcomp{I}{M_{\graded}}})\) of \(\coMod_G^{\comp{I}}(R)\).
    Since the coaction morphism \(\mcalF^I \to \mcalT^I \circ \mcalF^I\) on \(\mcalF^I\) is induced from the counit \(\id_{\mcalD^{\comp{I}}(R)} \to \mcalF^I \circ \mcalG^I\), this \(I\)-formal coaction \(\rho_{\dcomp{I}{M_{\graded}}} \colon \dcomp{I}{M_{\graded}} \to \dcomp{I}{M_{\graded}}\abracket{G}\) is induced from applying \(\mcalF^I\) for the unit morphism
    \begin{equation*}
        \eta_{M_{\graded}} \colon M_{\graded} \to \mcalG^I(\mcalF^I(M_{\graded})) = \dcomp{I}{M_{\graded}}[G]
    \end{equation*}
    of the adjunction \((\mcalF^I, \mcalG^I)\).
    Especially, \(\eta_{M_{\graded}}\) is a morphism in \(\mcalD_{\graded{G}}^{\comp{I}}(R)\) and thus \(\rho_{\dcomp{I}{M_{\graded}}}\) is the derived \(I\)-completion of a graded morphism \(\eta_{M_{\graded}}\).
\end{remark}

\subsection{Graded part of derived formal comodules}

We want to take a ``graded part'' of an object in \(\coMod_G^{\comp{I}}(R)\).
To do this, we first give general lemmas about graded modules by using trapezoid lemma (\Cref{TrapezoidLemma}).

\begin{corollary} \label{GradedSplitPullback}
    Let \(f_i \colon M_i \to N_i\) be a morphism in a presentable stable \(\infty\)-category \(\mcalC\) for each \(i \in I\) indexed by a small set \(I\).
    Assume that the coproduct \(f \defeq \bigoplus_{i \in I} f_i \colon M \defeq \bigoplus_{i \in I} M_i \to \bigoplus_{i \in I} N_i \eqdef N\) admits a retraction \(r \colon N \to M\) in \(\mcalC\).
    Then, each morphism \(f_i \colon M_i \to N_i\) admits a retraction \(r_i \colon N_i \to M_i\) in \(\mcalC\) and the canonical commutative diagram
    \begin{center}
        \begin{tikzcd}
            M_i \arrow[d] \arrow[r, "f_i"] & N_i \arrow[d] \\
            M \arrow[r, "f"]               & N            
        \end{tikzcd}
    \end{center}
    is a pullback diagram in \(\mcalC\) for any \(i \in I\).
\end{corollary}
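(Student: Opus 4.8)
The plan is to read this off from the single-object case of the trapezoid lemma (\Cref{TrapezoidLemma}), after first producing the fiberwise retractions $r_i$ directly. Write $\iota_{M,i}\colon M_i \to M$ and $p_{M,i}\colon M \to M_i$ for the coproduct inclusion and projection, and likewise $\iota_{N,i}, p_{N,i}$ for $N$; these satisfy $p_{M,i}\circ \iota_{M,i}\simeq \id_{M_i}$ and $p_{N,i}\circ\iota_{N,i}\simeq \id_{N_i}$. Since $f=\bigoplus_{i}f_i$ is the coproduct map, it is characterized by $f\circ\iota_{M,i}\simeq \iota_{N,i}\circ f_i$, and comparing restrictions to each summand also yields $p_{N,i}\circ f \simeq f_i\circ p_{M,i}$.

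First I would construct $r_i \defeq p_{M,i}\circ r\circ \iota_{N,i}\colon N_i \to M_i$. Using $\iota_{N,i}\circ f_i\simeq f\circ\iota_{M,i}$, the hypothesis $r\circ f\simeq \id_M$, and $p_{M,i}\circ\iota_{M,i}\simeq\id_{M_i}$, one gets
\[
r_i\circ f_i \simeq p_{M,i}\circ r\circ f\circ \iota_{M,i} \simeq p_{M,i}\circ \iota_{M,i}\simeq \id_{M_i},
\]
so $r_i$ is the desired retraction of $f_i$. For the pullback claim I would then fix $i$ and instantiate the $I=\{*\}$ case of \Cref{TrapezoidLemma} --- which the lemma records as valid in any $\infty$-category admitting pullbacks, with no compact-generation hypothesis --- via the identifications $W=M_i$, $Z=N_i$, $X=M$, $Y=N$, $u=f_i$, $s=f$, $v=\iota_{M,i}$, $t=\iota_{N,i}$, together with retraction data $\widetilde{v} = p_{M,i}$, $\widetilde{t} = p_{N,i}$, $\widetilde{s} = r$. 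Under these identifications the commutativity of the defining diagram \eqref{DefiningGradedPart} is exactly the four relations listed above, and the standing requirement that $s=f$ admit a retraction is supplied by $\widetilde{s} = r$; the lemma's conclusion is then that the square with corners $M_i, N_i, M, N$, horizontal maps $f_i, f$, and vertical maps $\iota_{M,i}, \iota_{N,i}$ is a pullback.

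The hard part will not be the four identities, which are formal consequences of the (co)product structure and $r\circ f\simeq\id_M$, but rather upgrading them to a genuinely coherent instance of the trapezoid diagram \eqref{DefiningGradedPart} in $\mcalC$ --- that is, choosing the witnessing homotopies compatibly so that \Cref{TrapezoidLemma} may be invoked as a black box. I would therefore spend most of the effort making the diagram \eqref{DefiningGradedPart} precise for the chosen data; once it is assembled, the single-object trapezoid lemma applies directly and yields the pullback square for every $i\in I$.
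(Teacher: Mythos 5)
Your proposal is correct and follows essentially the same route as the paper: both instantiate the single-object case of the trapezoid lemma (\Cref{TrapezoidLemma}) with $W=M_i$, $Z=N_i$, $X=M$, $Y=N$ and the coproduct inclusions/projections as the (co)retraction data, and both define $r_i$ as $p_{M,i}\circ r\circ \iota_{N,i}$ and verify $r_i\circ f_i\simeq\id_{M_i}$ from $r\circ f\simeq\id_M$. The only difference is the order of the two steps, which is immaterial.
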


\begin{proof}
    Because of the splitting property of \(f \colon M \to N\), \(M_i \to M\), and \(N_i \to N\), the commutative diagram in the statement can be extended to a commutative diagram \eqref{DefiningGradedPart} in trapezoid lemma (\Cref{TrapezoidLemma}) for each \(i \in I\).
    Thus, we can apply trapezoid lemma to obtain the pullback diagram.

    We will show that each \(f_i\) admits a retraction \(r_i\) for any \(i \in I\).
    Take the composition
    \begin{equation*}
        N_i \to N \xrightarrow{r} M \to M_i
    \end{equation*}
    in \(\mcalC\) and write it by \(r_i \colon N_i \to M_i\) for any \(i \in I\).
    The commutative diagram
    \begin{center}
        \begin{tikzcd}
            M_i \arrow[r, "f_i"] \arrow[rd] \arrow[rrrr, "\id_{M_i}", bend left=49] & N_i \arrow[r] \arrow[rrr, "r_i", bend left]                               & N \arrow[r, "r"] & M \arrow[r, "\proj"'] & M_i \\
            & M \arrow[ru, "f"] \arrow[rru, "\id_M"'] \arrow[rrru, "\proj", bend right] &                  &                       &    
        \end{tikzcd}
    \end{center}
    in \(\mcalC\) shows that \(r_i \circ f_i \simeq \id_{M_i}\) holds for any \(i \in I\).
    \qedhere

\end{proof}

In the following lemma, we informally say that a graded summand of a graded \(R\)-module in \(\mcalD(R)\) inherits a structure of graded \(R\)-module.

\begin{lemma} \label{SummandGradedModule}
    Let \(R\) be a \(G\)-graded ring and let \(N\) be an object of \(\mcalD_{\graded{G}}(R)\).
    Take an object \(M\) of \(\mcalD(R)\) equipped with a decomposition \(M \cong \bigoplus_{g \in G} M_g\) in \(\mcalD(\setZ)\).
    Take a morphism \(s \colon M \to N\) in \(\mcalD(R)\) whose underlying morphism in \(\mcalD(\setZ)\) comes from the coproduct of morphisms \(s_g \colon M_g \to N_g\) in \(\mcalD(\setZ)\).\footnote{More precisely, we take a morphism \(s \colon M \to N\) of the fiber product of \(\infty\)-categories \(\mcalD(R) \times_{\mcalD(\setZ)} \mcalD_{\graded{G}}(\setZ)\) whose target \(N\) comes from the universal morphism from \(\mcalD_{\graded{G}}(R)\) to the fiber product.}
    If \(s\) splits in \(\mcalD(\setZ)\), then we have a pullback diagram
    \begin{equation} \label{PullbackSummandGradedModule}
        \begin{tikzcd}
            M \arrow[d, "\eta_M"] \arrow[r, "s"] & N \arrow[d, "\eta_N"] \\
            {M[G]} \arrow[r, "{s[G]}"]           & {N[G]}               
        \end{tikzcd}
    \end{equation}
    in \(\mcalD(\setZ)\), where \(\eta_M\) and \(\eta_N\) are morphisms in \(\mcalD_{\graded{G}}(\setZ)\) and \(\mcalD_{\graded{G}}(R)\) considered in \Cref{RemarkComoduleGradedModules} respectively, and \(s[G]\) is the morphism \(\mcalG^0(s)\) of \(\mcalD_{\graded{G}}(R)\) for the morphism \(s\) in \(\mcalD(R)\).
    In particular, this pullback diagram induces a unique structure of \(\mcalD_{\graded{G}}(R)\) on \(M\) such that the above diagram makes a pullback diagram in \(\mcalD_{\graded{G}}(R)\).
\end{lemma}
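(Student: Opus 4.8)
The plan is to exhibit \eqref{PullbackSummandGradedModule} as an instance of the trapezoid lemma (\Cref{TrapezoidLemma}) in $\mcalD(\setZ)$, and then to lift the pullback to $\mcalD_{\graded{G}}(R)$ through the fully faithful embedding into the fiber product $\mathcal{E} \defeq \mcalD(R) \times_{\mcalD(\setZ)} \mcalD_{\graded{G}}(\setZ)$ that appears in the footnote of the statement.

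First I would set up the trapezoid data in $\mcalD(\setZ)$ indexed by $g \in G$: put $W_g \defeq M_g$, $Z_g \defeq N_g$, $X \defeq M[G]$, $Y \defeq N[G]$, let $u_g \defeq s_g$, let $v_g$ and $t_g$ be the degree-$g$ components $M_g \xrightarrow{\iota_{M,g}} \theta(M) \xrightarrow{\cdot t^g} \theta(M) \cdot t^g$ of $\eta_M$ and the analogous components of $\eta_N$, and let $\widetilde{v_g}, \widetilde{t_g}$ be the retractions obtained by projecting $M[G]$ (resp.\ $N[G]$) to its degree-$g$ summand, untwisting by $t^{-g}$, and projecting to $M_g$ (resp.\ $N_g$). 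The triangle identities $\widetilde{v_g}\circ v_g \simeq \id$ and $\widetilde{t_g}\circ t_g \simeq \id$ are immediate from $p_{M,g}\circ\iota_{M,g}\simeq\id$, while the middle square $s[G]\circ v_g \simeq t_g\circ s_g$ is exactly the compatibility $\theta(s)\circ\iota_{M,g}\simeq\iota_{N,g}\circ s_g$ recorded by $s$ being a morphism of the fiber product. Since $s[G]=\mcalG^0(s)$ admits the retraction $\bigoplus_{g} r\cdot t^g$ built from a $\mcalD(\setZ)$-retraction $r$ of $s$, \Cref{TrapezoidLemma} applies and shows that \eqref{PullbackSummandGradedModule}---whose coproduct data are $v=\eta_M$, $t=\eta_N$, $u=s$---is a pullback in $\mcalD(\setZ)$. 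Running the identical data in the (also compactly generated stable) category $\mcalD_{\graded{G}}(\setZ)$, with $W_g, Z_g$ supported in degree $g$, shows the square is moreover a pullback in $\mcalD_{\graded{G}}(\setZ)$; equivalently each degree-$g$ square is the pullback of \Cref{GradedSplitPullback} after untwisting $t^g$.

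Next I would define the graded structure as $P \defeq N \times_{N[G]} M[G]$, the pullback in $\mcalD_{\graded{G}}(R)$ along the genuine graded morphisms $\eta_N$ and $s[G]=\mcalG^0(s)$; it exists by presentability (\Cref{PropertiesDerivedGradedModules}). The comparison $\Phi = (\theta, \mathrm{forget}) \colon \mcalD_{\graded{G}}(R) \to \mathcal{E}$ is fully faithful by \Cref{MappingSpaceDgrPullback} and preserves pullbacks, because $\theta$ is exact and $\mathrm{forget}\colon\mcalD_{\graded{G}}(R)\to\mcalD_{\graded{G}}(\setZ)$ preserves all limits (\Cref{PropertiesDerivedGradedModules}). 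Hence $\Phi(P)$ is the pullback in $\mathcal{E}$, computed componentwise: its $\mcalD_{\graded{G}}(\setZ)$-component is the gradedwise pullback, equivalent to $M$ by the previous step, and its $\mcalD(R)$-component is $Q \defeq \theta(N)\times_{\theta(N[G])} \theta(M[G])$.

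The main obstacle is to identify $Q$ with the given $R$-module $M$, since the $R$-module structure on the pullback $Q$ is not visibly the given one ($\eta_M$ is only $\setZ$-graded, not $R$-linear). I would resolve this with the counit $\varepsilon_M \colon \mcalF^0\mcalG^0(M) = \theta(M[G]) \to M$ of $\mcalF^0\dashv\mcalG^0$ (concretely $m\, t^g \mapsto m$, see \Cref{FactorizationMG}): the $R$-linear map $\phi \defeq \varepsilon_M \circ \mathrm{pr}_{\theta(M[G])} \colon Q \to M$ has underlying $\setZ$-map the composite $\theta(M) \xrightarrow{\eta_M} \bigoplus_g \theta(M)\cdot t^g \xrightarrow{\varepsilon_M} \theta(M)$, which sends $m\in M_g$ to $m\,t^g$ to $m$ and is thus $\id_{\theta(M)}$; as $\mcalD(R)\to\mcalD(\setZ)$ is conservative, $\phi$ is an equivalence. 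Since both identifications $\theta(P)=Q\simeq M$ and $\mathrm{forget}(P)\simeq M$ restrict to $\id_{\theta(M)}$ in $\mcalD(\setZ)$, they glue to an equivalence $\Phi(P)\simeq M$ in $\mathcal{E}$. Full faithfulness of $\Phi$ then exhibits $P$ as the essentially unique lift of $M$ to $\mcalD_{\graded{G}}(R)$, and \eqref{PullbackSummandGradedModule} is a pullback there by construction. Apart from this counit identification and the compatibility of the two equivalences over $\mcalD(\setZ)$, the remaining work is routine bookkeeping of the inclusions, projections, and trapezoid data.
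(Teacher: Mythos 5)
Your proof is correct and follows essentially the same route as the paper: the square is exhibited as a pullback in \(\mcalD(\setZ)\) by applying the trapezoid lemma (\Cref{TrapezoidLemma}) with exactly the data \(W_g = M_g\), \(Z_g = N_g\), \(X = M[G]\), \(Y = N[G]\), the degree-\(g\) inclusions and projections as \(v_g, t_g, \widetilde{v_g}, \widetilde{t_g}\), and the retraction of \(s[G]\) inherited from the splitting of \(s\). Your handling of the induced \(\mcalD_{\graded{G}}(R)\)-structure—passing through the fiber product \(\mcalD(R) \times_{\mcalD(\setZ)} \mcalD_{\graded{G}}(\setZ)\) and identifying the \(\mcalD(R)\)-component of the pullback with the given \(M\) via the counit \(\varepsilon_M\)—is more detailed than the paper's one-line remark (which only invokes that the forgetful functors preserve finite limits), but it is a refinement of the same argument rather than a different one.
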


\begin{proof}
    The morphisms \(\eta_N\) and \(s[G]\) are in \(\mcalD_{\graded{G}}(R)\) and the forgetful functors \(\mcalD_{\graded{G}}(R) \to \mcalD_{\graded{G}}(\setZ) \to \mcalD(\setZ)\) preserve finite limits.
    So the last assertion follows from that the pullback of these morphisms in \(\mcalD_{\graded{G}}(R)\) has the underlying pullback diagram in \(\mcalD(\setZ)\).

    Since \(s\) splits in \(\mcalD(\setZ)\), so does \(s[G]\) in \(\mcalD(\setZ)\).
    Because of trapezoid lemma (\Cref{TrapezoidLemma}), it suffices to show that there exists a commutative diagram
    \begin{equation} \label{DiagramSummandGradedModule}
        \begin{tikzcd}
            & M_g \arrow[d, "v_g"] \arrow[r, "s_g"] \arrow[ldd, "\id_{M_g}"'] & N_g \arrow[d, "t_g"'] \arrow[rdd, "\id_{N_g}"] &     \\
            & {M[G]} \arrow[r, "{s[G]}"] \arrow[ld, "\widetilde{v_g}"]        & {N[G]} \arrow[rd, "\widetilde{t_g}"']          &     \\
            M_g \arrow[rrr, "s_g"] &                                                                 &                                                & N_g
        \end{tikzcd}
    \end{equation}
    in \(\mcalD(\setZ)\), where morphisms are
    \begin{equation*}
        v_g \colon M_g \to M \xrightarrow{\rho_M} M[G], \quad t_g \colon N_g \to N \xrightarrow{\rho_N} N[G]
    \end{equation*}
    and
    \begin{equation*}
        \widetilde{v_g} \colon M[G] \xrightarrow{e_M} M \xrightarrow{\proj} M_g, \quad \text{and} \quad \widetilde{t_g} \colon N[G] \xrightarrow{e_N} N \xrightarrow{\proj} N_g.
    \end{equation*}

    The commutativity of the left and right triangles of \eqref{DiagramSummandGradedModule} follows from \(e_M \circ \rho_M \simeq \id_M\) and \(e_N \circ \rho_N \simeq \id_N\) \eqref{DefComoduleGradedModulesDiagram}.
    
    The commutativity of the lower square of \eqref{DiagramSummandGradedModule} follows from the equivalences \(e_N \circ s[G] \simeq s \circ e_M\) on \(M[G] \to N\) and \(\proj \circ s \simeq s_g \circ \proj\) on \(M \to N_g\).

    The central square is the same as the commutative diagram
    \begin{center}
        \begin{tikzcd}
            M_g \arrow[d, "{\rho_{M, g}}"] \arrow[r, "s_g"] & N_g \arrow[d, "{\rho_{N, g}}"] \\
            M \cdot t^g \arrow[d] \arrow[r]                 & N \cdot t^g \arrow[d]          \\
            {M[G]} \arrow[r, "{s[G]}"]                      & {N[G]}                        
        \end{tikzcd}
    \end{center}
    in \(\mcalD(\setZ)\).
    So we can get the desired commutative diagram \eqref{DiagramSummandGradedModule}.

\end{proof}



Now, we can construct the ``graded part'' of an object in \(\coMod_G^{\comp{I}}(R)\) and it has a commutative diagram satisfying the assumption of trapezoid lemma.

\begin{lemma} \label{GradedPartComodule}
    Let \(R\) be a \(G\)-graded ring and let \(I\) be a finitely generated homogeneous ideal of \(R\).
    Take an object \((M, \rho)\) of \(\coMod_G^{\comp{I}}(R)\).
    For each $g \in G$, we define $M_g \in \mcalD(\Z)$ by the following pullback diagram:
    \begin{equation}\label{Pullback_g}
        \begin{tikzcd}
            M_g \arrow[r,"\rho_g"] \arrow[d,"\alpha_g"] & M \cdot t^g \arrow[d,"\beta_g"] \\
            M \arrow[r,"\rho"] & M\abracket{G}
        \end{tikzcd}
    \end{equation}
    in \(\mcalD(\setZ)\), which is functorial in \((M, \rho) \in \coMod_G^{\comp{I}}(R)\), where \(\beta_g\) is the morphism defined in \Cref{ConstMorphismBeta}.
    Then for each $g \in G$, there exists a morphism
    \begin{equation*}
        p_g \colon M \to M_g
    \end{equation*}
    in $\mcalD(\Z)$ with the following commutative diagram:
    \begin{equation} \label{M-ProjectionDiagram_g}
        \begin{tikzcd}
            & M_g \arrow[d, "\alpha_g"] \arrow[r, "\rho_g"] \arrow[ldd, "\id_{M_g}"'] & M \cdot t^g \arrow[d, "\beta_g"'] \arrow[rdd, "\id_{M \cdot t^g}"] &                \\
             & M \arrow[r, "\rho"] \arrow[ld, "p_g"]                                   & M\abracket{G} \arrow[rd, "\proj"']                                 &                \\
            M_g \arrow[rrr, "\rho_g"] &   &   & {M \cdot t^g,}
        \end{tikzcd}
    \end{equation}
    in \(\mcalD(\setZ)\).
\end{lemma}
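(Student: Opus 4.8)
The plan is to construct $p_g$ through the universal property of the defining pullback square \eqref{Pullback_g} and then read off the two triangles of \eqref{M-ProjectionDiagram_g}. Throughout I use the two structural relations already available: the splitting $\proj \circ \beta_g \simeq \id_{M \cdot t^g}$ coming from \Cref{ConstMorphismBeta} (this is literally the right-hand triangle of \eqref{M-ProjectionDiagram_g}), and the counit relation $e_M \circ \rho \simeq \id_M$ recorded in \eqref{DefComoduleGradedModulesDiagram}, where $e_M \colon M\abracket{G} \to M$ is the counit of the comonad $\mcalT^I$. A first consequence, obtained by composing the pullback identity $\rho \circ \alpha_g \simeq \beta_g \circ \rho_g$ with $e_M$ and using the counit, is the expression $\alpha_g \simeq e_M \circ \beta_g \circ \rho_g$, which I reuse below.

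To build $p_g \colon M \to M_g$ I specify its two legs into $M$ and $M \cdot t^g$, namely
\[
a \defeq e_M \circ \beta_g \circ \proj \circ \rho \colon M \to M
\quad\text{and}\quad
b \defeq \proj \circ \rho \colon M \to M \cdot t^g,
\]
the derived analogues of the classical projection onto the degree-$g$ summand and its $t^g$-twist. By the universal property of \eqref{Pullback_g}, producing $p_g$ with $\alpha_g \circ p_g \simeq a$ and $\rho_g \circ p_g \simeq b$ amounts to exhibiting a coherent homotopy $\rho \circ a \simeq \beta_g \circ b$. Since $a = e_M \circ (\beta_g \circ \proj \circ \rho)$, writing $P \defeq \rho \circ e_M$ for the idempotent on $M\abracket{G}$ cut out by the split monomorphism $\rho$, this required homotopy is exactly $P \circ (\beta_g \circ \proj \circ \rho) \simeq \beta_g \circ \proj \circ \rho$; that is, the degree-$g$ part $\beta_g \circ \proj \circ \rho$ of the coaction must be fixed by $P$, equivalently it must factor through $\rho$.

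The existence of this homotopy is the \emph{main obstacle}, and it is the one place where the comodule axioms genuinely enter. Informally it says that the $g$-graded component of the coaction again lands in the image of the coaction, which is the content of coassociativity; concretely I would deduce $P \circ (\beta_g \circ \proj \circ \rho) \simeq \beta_g \circ \proj \circ \rho$ from the coassociativity square in \eqref{DefComoduleGradedModulesDiagram} together with the compatibility of the projections $\proj$ with the comultiplication $\Delta$ of $\mcalT^I$. That compatibility in turn follows from the functoriality of the splitting in \Cref{CompletionGradedModuleSplitting} applied to $M[G] \in \mcalD_{\graded{G}}^{\comp{I}}(R)$ and the diagonal coalgebra structure $t^g \mapsto t^g \otimes t^g$ of $R[G]$ from \Cref{def-coaction}; tracking the higher homotopies through the two derived completions hidden in $\mcalT^I = \dcomp{I}{(-)[G]}$ is the technical part. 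Alternatively, one may verify the factorization on cofree comodules $M = N\abracket{G}$, where $\proj$, $\beta_g$ and $\Delta$ are all explicitly split, and then propagate it along the comonadic structure.

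With $p_g$ in hand the two triangles are routine diagram chases. The right triangle is $\proj \circ \beta_g \simeq \id_{M \cdot t^g}$, already noted. For the left triangle $p_g \circ \alpha_g \simeq \id_{M_g}$ I compare the two legs of $p_g \circ \alpha_g$ and of $\id_{M_g}$ into the pullback: using $\rho \circ \alpha_g \simeq \beta_g \circ \rho_g$, the splitting $\proj \circ \beta_g \simeq \id$, and $\alpha_g \simeq e_M \circ \beta_g \circ \rho_g$, one computes
\[
a \circ \alpha_g \simeq e_M \circ \beta_g \circ \proj \circ \beta_g \circ \rho_g \simeq e_M \circ \beta_g \circ \rho_g \simeq \alpha_g,
\qquad
b \circ \alpha_g \simeq \proj \circ \beta_g \circ \rho_g \simeq \rho_g.
\]
Hence both legs of $p_g \circ \alpha_g$ agree with those of $\id_{M_g}$, and the universal property of \eqref{Pullback_g} gives $p_g \circ \alpha_g \simeq \id_{M_g}$. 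Finally, since $\rho$, $e_M$, $\beta_g$, $\proj$ and the coassociativity homotopy are all natural in $(M, \rho) \in \coMod_G^{\comp{I}}(R)$, the data $M_g$, $\alpha_g$, $\rho_g$, $p_g$ and the whole diagram \eqref{M-ProjectionDiagram_g} are functorial, as required.
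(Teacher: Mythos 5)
Your construction of $p_g$ coincides with the paper's: your legs $a$ and $b$ are exactly the paper's $q_g = e_{M,g} \circ \proj_{M\abracket{G},g} \circ \rho$ and $r_g = \proj_{M\abracket{G},g} \circ \rho$, and your verification of the two triangles (including the identity $\alpha_g \simeq e_M \circ \beta_g \circ \rho_g$ and the two-leg comparison forcing $p_g \circ \alpha_g \simeq \id_{M_g}$) matches the paper's step for step. The one place where you stop short --- producing the homotopy $\rho \circ a \simeq \beta_g \circ b$ so that the universal property of \eqref{Pullback_g} applies --- is precisely where the paper's proof spends almost all of its effort. It assembles one large commutative diagram from: (i) the coassociativity square in \eqref{DefComoduleGradedModulesDiagram}; (ii) the compatibility of the degree-$g$ projections with $\mcalT^I(\rho)$ and with the comultiplication $\Delta_M$, both obtained by applying \Cref{CompletionGradedModuleSplitting} to the graded morphisms $\mcalG^I(\rho)$ and the unit $M[G] \to M\abracket{G}[G]$; (iii) the counit triangle $e_{M\abracket{G}} \circ \Delta_M \simeq \id_{M\abracket{G}}$; and (iv) the naturality of the counit $e$. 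These are exactly the ingredients you name, so your plan is sound and the approach is the paper's own; but as written the central step is asserted rather than carried out, and the alternative you float (checking on cofree comodules and ``propagating along the comonadic structure'') is not something the paper does and would itself need justification, since a general object of $\coMod_G^{\comp{I}}(R)$ is not a retract or limit of cofree ones in any way you have established at this point. To complete the argument you should write out the diagram built from (i)--(iv).
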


\begin{proof}
    Applying \(\mcalG^I\) for the morphism \(\rho \colon M \to M\abracket{G}\) of \(\mcalD^{\comp{I}}(R)\), we have a morphism
    \begin{equation*}
        \mcalG^I(\rho) \colon M[G] \to M\abracket{G}[G]
    \end{equation*}
    in \(\mcalD_{\graded{G}}^{\comp{I}}(R)\).
    Using \Cref{CompletionGradedModuleSplitting} for this morphism as in \Cref{ConstMorphismBeta}, we get projection morphisms
    \begin{equation*}
        \proj_{M\abracket{G}, g} \colon M\abracket{G} \to M \cdot t^g \quad \text{and} \quad \proj_{M\abracket{G}\abracket{G}, g} \colon M\abracket{G}\abracket{G} \to M\abracket{G} \cdot t^g
    \end{equation*}
    in \(\mcalD(\setZ)\) together with a commutative diagram
    \begin{center}
        \begin{tikzcd}
            M\abracket{G} \arrow[d, "\mcalT^I(\rho)"] \arrow[rr, "{\proj_{M\abracket{G}, g}}"] &  & M \cdot t^g \arrow[d, "\mcalT^I(\rho)_g"] \\
            M \abracket{G} \abracket{G} \arrow[rr, "{\proj_{M\abracket{G}\abracket{G}, g}}"]   &  & M \abracket{G} \cdot t^g                 
        \end{tikzcd}
    \end{center}
    in \(\mcalD(\setZ)\), where \(\mcalT^I(\rho)_g\) is the graded \(g\)-part of \(\mcalG^I(\rho)\).

    Similarly, the comultiplication morphism \(\Delta_M \defeq \Delta(M) \colon \mcalT^I(M) \to \mcalT^I \circ \mcalT^I(M)\) of the comonad \(\mcalT^I\) on \(\mcalD^{\comp{I}}(R)\) (\Cref{ComonadStructure}) is the derived \(I\)-completion of the morphism
    \begin{equation*}
        \mcalG^I(M) = M[G] \to M\abracket{G}[G] = \mcalG^I \circ \mcalF^I \circ \mcalG^I(M)
    \end{equation*}
    in \(\mcalD_{\graded{G}}^{\comp{I}}(R)\), which is induced from the unit morphism \(\id \to \mcalG^I \circ \mcalF^I\).
    So \Cref{CompletionGradedModuleSplitting} also gives a commutative diagram
    \begin{center}
        \begin{tikzcd}
            M \abracket{G} \abracket{G} \arrow[rr, "{\proj_{M\abracket{G}\abracket{G}, g}}"] &  & M\abracket{G}  \cdot t^g                  \\
            M \abracket{G} \arrow[rr, "{\proj_{M\abracket{G}, g}}"] \arrow[u, "\Delta_M"']   &  & M  \cdot t^g \arrow[u, "{\Delta_{M, g}}"]
        \end{tikzcd}
    \end{center}
    in \(\mcalD(\setZ)\), where \(\Delta_{M, g}\) is the graded \(g\)-part of \(\Delta_M\).

    Also, using the morphism \(\beta_g\) for \(M\) and \(M\abracket{G}\) defined in \Cref{ConstMorphismBeta}, we have a commutative diagram
    \begin{center}
        \begin{tikzcd}
            M\abracket{G}  \cdot t^g \arrow[rr, "\beta_g"'] \arrow[rrrr, "{e_{M\abracket{G}, g}}"', bend left] &  & M \abracket{G} \abracket{G} \arrow[rr, "e_{M\abracket{G}}"']              &  & M\abracket{G} \\
            M  \cdot t^g \arrow[u, "{\Delta_{M, g}}"] \arrow[rr, "\beta_g"]                                   &  & M \abracket{G} \arrow[u, "\Delta_M"'] \arrow[rru, "\id_{M\abracket{G}}"'] &  &              
        \end{tikzcd}
    \end{center}
    in \(\mcalD(\setZ)\), where the right triangle is the counit diagram of \(\mcalT^I\) in \Cref{ComonadStructure}, and a commutative diagram
    \begin{center}
        \begin{tikzcd}
            M\abracket{G}  \cdot t^g \arrow[rr, "\beta_g"'] \arrow[rrrr, "{e_{M\abracket{G}, g}}"', bend left]      &  & M \abracket{G} \abracket{G} \arrow[rr, "e_{M\abracket{G}}"']  &  & M\abracket{G}       \\
            M  \cdot t^g \arrow[u, "\mcalT^I(\rho)_g"] \arrow[rr, "\beta_g"] \arrow[rrrr, "{e_{M, g}}", bend right] &  & M \abracket{G} \arrow[u, "\mcalT^I(\rho)"'] \arrow[rr, "e_M"] &  & M \arrow[u, "\rho"]
        \end{tikzcd}
    \end{center}
    in \(\mcalD(\setZ)\), where the right square is given by the naturality of the counit \(e \colon \mcalF^I \circ \mcalG^I \to \id\) (\Cref{ComonadStructure}).

    We define morphisms
    \[
    q_g \colon M \xrightarrow{\rho} M\abracket{G} \xrightarrow{\proj_{M\abracket{G}, g}} M\cdot t^g \xrightarrow{e_{M, g}} M \quad \text{and} \quad
    r_g \colon M \xrightarrow{\rho} M\abracket{G} \xrightarrow{\proj_{M\abracket{G}, g}} M\cdot t^g
    \]
    in \(\mcalD(\setZ)\).
    Consider the following commutative diagram:
   \[
    \begin{tikzcd}[column sep=2cm]
    M \arrow[r,"\rho"'] \arrow[rdd,bend right=15,"\rho"]
    \arrow[rrr,bend left=10,"q_g"] \arrow[rrdd,bend right=50,"r_g"']
    & M\abracket{G}
    \arrow[r,"\proj_{M\abracket{G},g}"']
    \arrow[d,"\mcalT^I(\rho)"]
    & M \cdot t^g
    \arrow[r,"e_{M, g}"']
    \arrow[d,"\mcalT^I(\rho)_g"]
    & M \arrow[d,"\rho"] \\
    & M \abracket{G} \abracket{G}
    \arrow[r,"{\proj_{M\abracket{G}\abracket{G},g}}"]
    & M \abracket{G} \cdot t^g
    \arrow[r," e_{M\abracket{G},g}"']
    & M \abracket{G}
    \\
    & M \abracket{G}
    \arrow[r,"\proj_{M\abracket{G},g}"] \arrow[u,"\Delta_M"']
    & M  \cdot t^g
    \arrow[r,"\beta_g"] \arrow[u,"\Delta_{M, g}"']
    & M \abracket{G} \arrow[u,equal]
    \end{tikzcd}
    \]
    in \(\mcalD(\setZ)\), where the commutativity of the left triangle is the coassociativity of the comodule structure \(\rho\) on \(M\) (\eqref{DefComoduleGradedModulesDiagram}) and other commutative diagrams are given above.
    By the universal property of the pullback diagram \eqref{Pullback_g},
    the morphisms $q_g$ and $r_g$ induce a unique morphism
    \[
    p_g \colon M \to M_g
    \]
    in $\mcalD(\Z)$.

    The commutativity of the right triangle in the diagram \eqref{M-ProjectionDiagram_g} follows from \Cref{CompletionGradedModuleSplitting}.

    We now verify the commutativity of the left triangle in the diagram \eqref{M-ProjectionDiagram_g}.
    Consider the composition
    \[
    M_g \xrightarrow{\alpha_g} M \xrightarrow{p_g} M_g
    \]
    for $g \in G$.
    Then we have
    \begin{align*}
        & (M_g \xrightarrow{\alpha_g} M \xrightarrow{p_g} M_g \xrightarrow{\rho_g} M \cdot t^g) \simeq (M_g \xrightarrow{\alpha_g} M \xrightarrow{r_g} M \cdot t^g) \\
        & \simeq (M_g \xrightarrow{\alpha_g} M \xrightarrow{\rho} M\abracket{G} \xrightarrow{\proj} M \cdot t^g) \simeq (M_g \xrightarrow{\rho_g} M \cdot t^g \xrightarrow{\beta_g} M\abracket{G} \xrightarrow{\proj} M \cdot t^g) \simeq (M_g \xrightarrow{\rho_g} M \cdot t^g)
    \end{align*}
    by \eqref{Pullback_g}.
    Also, the equivalence
    \begin{align*}
        & (M_g \xrightarrow{\alpha_g} M \xrightarrow{p_g} M_g \xrightarrow{\alpha_g} M) \simeq (M_g \xrightarrow{\alpha_g} M \xrightarrow{q_g} M) \\
        & \simeq (M_g \xrightarrow{\alpha_g} M \xrightarrow{\rho} M\abracket{G} \xrightarrow{\proj} M \cdot t^g \xrightarrow{e_g} M) \simeq (M_g \xrightarrow{\rho_g} M \cdot t^g \xrightarrow{e_g} M) \\
        & \simeq (M_g \xrightarrow{\rho_g} M \cdot t^g \xrightarrow{\beta_g} M\abracket{G} \xrightarrow{e} M) \simeq (M_g \xrightarrow{\alpha_g} M \xrightarrow{\rho} M\abracket{G} \xrightarrow{e} M) \simeq (M_g \xrightarrow{\alpha_g} M)
    \end{align*}
    holds by \Cref{Pullback_g}.
    Hence, the uniqueness of \(p_g\) shows that
    \begin{equation*}
        (M_g \xrightarrow{\alpha_g} M \xrightarrow{p_g} M_g) \simeq \id_{M_g}.
    \end{equation*}
    So the left triangle in \eqref{M-ProjectionDiagram_g} is commutative.
    The other commutativity of \eqref{M-ProjectionDiagram_g} follows from the construction since the composition \(\rho_g \circ p_g\) is the same as \(r_g\).
\end{proof}




Summarizing the above results, we can produce a graded \(R\)-module from a derived \(I\)-formal \(R[G]\)-comodule.

\begin{theorem} \label{Pullback_lemma}
    Let \(R\) be a \(G\)-graded ring and let \(I\) be a finitely generated homogeneous ideal of \(R\).
    Take an object \((M, \rho)\) of \(\coMod_G^{\comp{I}}(R)\).
    For each $g \in G$, we define $M_g \in \mcalD(\Z)$ by the pullback diagram \eqref{Pullback_g} in \Cref{GradedPartComodule}.
    Then, there exists a morphism \(\rho_{\graded} \colon M_{\graded} \to M[G]\) in \(\mcalD_{\graded{G}}^{\comp{I}}(R)\) and a pullback diagram
    \begin{equation} \label{Pullback_sum}
        \begin{tikzcd}
            M_{\gr} \arrow[r,"\rho_{\gr}"] \arrow[d,"\alpha"] & M[G] \arrow[d,"\beta"] \\
            M \arrow[r,"\rho"] & M\abracket{G}
        \end{tikzcd}
    \end{equation}
    in \(\mcalD(R)\) such that \(\rho_{\graded}\) splits in \(\mcalD_{\graded{G}}(\setZ)\) and the underlying pullback diagram in \(\mcalD(\setZ)\) is
    \begin{equation} \label{Pullback_sum_Z}
        \begin{tikzcd}
            \bigoplus_{g \in G} M_g \arrow[rr, "\bigoplus_{g \in G} \rho_g"] \arrow[d, "\alpha'"] &  & \bigoplus_{g \in G} M \cdot t^g \arrow[d, "\beta'"] \\
            M \arrow[rr, "\rho"]                                                                  &  & {M\abracket{G},}                                   
        \end{tikzcd}
    \end{equation}
    where \(\alpha'\) and \(\beta'\) are morphisms induced from \(\{\alpha_g\}_{g \in G}\) and \(\{\beta_g\}_{g \in G}\), respectively.
    In particular, \(\alpha\) induces an isomorphism
    \begin{equation*}
        \dcomp{I}{M_{\graded}} \xrightarrow{\cong} M
    \end{equation*}
    in \(\mcalD^{\comp{I}}(R)\).
\end{theorem}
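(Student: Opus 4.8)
The plan is to feed the degree-wise data assembled in \Cref{GradedPartComodule} into the trapezoid lemma. The commutative diagram \eqref{M-ProjectionDiagram_g} is precisely an instance of the defining diagram \eqref{DefiningGradedPart} of \Cref{TrapezoidLemma}, taken in the compactly generated stable $\infty$-category $\mcalC = \mcalD(\setZ)$ with index set $G$: I set $X = M$, $Y = M\abracket{G}$, $Z_g = M \cdot t^g$ and $W_g = M_g$, with $s = \rho$, $t_g = \beta_g$, $u_g = \rho_g$, $v_g = \alpha_g$, $\widetilde{v_g} = p_g$ and $\widetilde{t_g} = \proj$. The one hypothesis not already recorded in \Cref{GradedPartComodule} is that $s = \rho$ admits a retraction, and this is supplied by the counit axiom of the comodule $(M, \rho)$: the right-hand triangle of \eqref{DefComoduleGradedModulesDiagram} gives $e_M \circ \rho \simeq \id_M$. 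Applying \Cref{TrapezoidLemma} then shows that \eqref{Pullback_sum_Z} is a pullback square in $\mcalD(\setZ)$ and that its top map $\bigoplus_{g} \rho_g$ splits; since the retraction constructed there is the coproduct of the degree-preserving maps $p_g \circ e_M \circ \beta_g \colon M \cdot t^g \to M_g$, it is a morphism of $\mcalD_{\graded{G}}(\setZ)$.

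Next I realize \eqref{Pullback_sum} by forming the pullback $M_{\graded} \defeq M \times_{M\abracket{G}} M[G]$ in $\mcalD(R)$, along the cospan $M \xrightarrow{\rho} M\abracket{G} \xleftarrow{\beta} M[G]$ of underlying $R$-modules, where $\beta$ is the completion morphism of \Cref{ConstMorphismBeta} and $M[G] = \mcalG^I(M)$. Because the forgetful functor $\mcalD(R) \to \mcalD(\setZ)$ preserves all limits, the underlying square of this pullback is the pullback in $\mcalD(\setZ)$ of the same cospan, which by the previous paragraph is exactly \eqref{Pullback_sum_Z}; in particular the underlying $\mcalD(\setZ)$-object of $M_{\graded}$ is $\bigoplus_g M_g$ and $\rho_{\graded}$ restricts to $\bigoplus_g \rho_g$, which splits in $\mcalD_{\graded{G}}(\setZ)$. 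I now apply \Cref{SummandGradedModule} with $N = M[G] \in \mcalD_{\graded{G}}(R)$ (whose $g$-graded part is $M \cdot t^g$) and $s = \rho_{\graded}$: its hypotheses hold by construction, so $M_{\graded}$ acquires an essentially unique graded $R$-module structure with $g$-graded part $M_g$, and $\rho_{\graded}$ is promoted to a morphism in $\mcalD_{\graded{G}}(R)$.

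It remains to verify completeness and the final isomorphism. The object $M[G] = \mcalG^I(M)$ is derived gradedwise $I$-complete by \Cref{ComonadGradedModules}, and $\rho_{\graded}$ splits in $\mcalD_{\graded{G}}(\setZ)$, so \Cref{PrincipalDerivedGradedwiseCompProp}\Cref{DerivedGrcompRetract} yields $M_{\graded} \in \mcalD_{\graded{G}}^{\comp{I}}(R)$; hence $\rho_{\graded}$ is a morphism of $\mcalD_{\graded{G}}^{\comp{I}}(R)$. Finally I apply the limit-preserving functor $\dcomp{I}{-}$ to \eqref{Pullback_sum}. Since $M$ and $M\abracket{G}$ are already derived $I$-complete, $\dcomp{I}{M} \simeq M$ and $\dcomp{I}{M\abracket{G}} \simeq M\abracket{G}$, while $\dcomp{I}{M[G]} \simeq M\abracket{G}$ and $\dcomp{I}{\beta}$ is an equivalence by idempotency of completion (\Cref{ComonadGradedModules}). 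Thus in the completed pullback square the right vertical map is an equivalence, and therefore so is its base change $\dcomp{I}{\alpha}\colon \dcomp{I}{M_{\graded}} \xrightarrow{\cong} M$; here $\dcomp{I}{M_{\graded}}$ denotes the completion of the underlying $R$-module, as permitted by \Cref{PrincipalDerivedGradedwiseCompProp}\Cref{DerivedGrcompAdicCompletion}.

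Essentially all the real content lives in the two lemmas already established: the trapezoid lemma, which glues the degree-wise pullbacks of \Cref{GradedPartComodule} into a single pullback carrying a splitting, and \Cref{SummandGradedModule}, which upgrades that splitting to a genuine graded $R$-module structure. The points that require care — and which I expect to be the only real obstacle — are the bookkeeping of the several forgetful functors $\mcalD(R) \to \mcalD(\setZ)$ and $\mcalD_{\graded{G}}(R) \to \mcalD_{\graded{G}}(\setZ) \to \mcalD(\setZ)$, so that the pullback formed in $\mcalD(R)$ really restricts to \eqref{Pullback_sum_Z}, and the check that the retraction produced by the trapezoid lemma is graded rather than merely a splitting of underlying objects.
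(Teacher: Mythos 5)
Your proposal is correct and follows essentially the same route as the paper: apply the trapezoid lemma to the degreewise diagrams \eqref{M-ProjectionDiagram_g} (using the counit retraction of \(\rho\)) to get the pullback \eqref{Pullback_sum_Z} with a split top map, lift the square to \(\mcalD(R)\) via limit-preservation of the forgetful functor, upgrade \(\rho_{\graded}\) to a graded morphism by \Cref{SummandGradedModule}, and deduce completeness from \Cref{PrincipalDerivedGradedwiseCompProp}\Cref{DerivedGrcompRetract} and the final isomorphism by completing the square. The only cosmetic difference is that you read off the gradedness of the retraction directly from its formula \(p_g \circ e_M \circ \beta_g\), where the paper invokes \Cref{GradedSplitPullback}; both are fine.
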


\begin{proof}
    We can apply trapezoid lemma (\Cref{TrapezoidLemma}) for the commutative diagram \eqref{M-ProjectionDiagram_g} of \(\mcalD(\setZ)\) in \Cref{GradedPartComodule} since \(\rho\) admits a retraction \(e \colon M\abracket{G} \to M\) in \(\mcalD(\setZ)\).
    Therefore, we have a pullback diagram \eqref{Pullback_sum_Z} in \(\mcalD(\setZ)\) together with a splitting property of the upper horizontal morphism \(\bigoplus_{g \in G} \rho_g\).
    
    Since \(\beta' \colon \bigoplus_{g \in G} M \cdot t^g \to M\abracket{G}\) in \(\mcalD(\setZ)\) is the underlying morphism of \(\beta \colon M[G] \to M\abracket{G}\) in \(\mcalD(R)\) and since \(\rho \colon M \to M\abracket{G}\) is a morphism in \(\mcalD(R)\), the pullback diagram \eqref{Pullback_sum_Z} in \(\mcalD(\setZ)\) induces a unique \(R\)-module structure on \(\bigoplus_{g \in G} M_g\) that makes the following diagram a pullback diagram in \(\mcalD(R)\):
    \begin{equation} \label{Pullback_sum_R}
        \begin{tikzcd}
            \bigoplus_{g \in G} M_g \arrow[rr, "\bigoplus_{g \in G} \rho_g"] \arrow[d, "\alpha'"] &  & {M[G]} \arrow[d, "\beta"] \\
            M \arrow[rr, "\rho"]                                                                  &  & {M\abracket{G}.}         
        \end{tikzcd}
    \end{equation}
    The upper horizontal morphism
    \begin{equation*}
        \bigoplus_{g \in G} \rho_g \colon \bigoplus_{g \in G} M_g \to M[G]
    \end{equation*}
    in \(\mcalD(R)\) satisfies the assumption in \Cref{SummandGradedModule} since its underlying morphism in \(\mcalD(\setZ)\) splits and comes from the coproduct of morphisms \(\rho_g \colon M_g \to M \cdot t^g\) in \(\mcalD(\setZ)\).
    Then, by \Cref{SummandGradedModule}, we have a morphism \(\rho_{\graded} \colon M_{\graded} \to M[G]\) in \(\mcalD_{\graded{G}}(R)\) such that whose underlying morphism in \(\mcalD(R)\) is \(\bigoplus_{g \in G} \rho_g\) above.

    Since \(\rho_{\graded}\) splits in \(\mcalD(\setZ)\), it admits a retraction in \(\mcalD_{\graded{G}}(\setZ)\) by \Cref{GradedSplitPullback}.
    By \Cref{ComonadGradedModules}, \(M[G]\) is derived gradedwise \(I\)-complete and thus so is \(M_{\graded}\) by \Cref{PrincipalDerivedGradedwiseCompProp}\Cref{DerivedGrcompRetract}.
    So this morphism \(\rho_{\graded}\) belongs to \(\mcalD_{\graded{G}}^{\comp{I}}(R)\) together with the pullback diagram \eqref{Pullback_sum_R} in \(\mcalD(R)\), which gives the desired result.
\end{proof}

\begin{definition} \label{DefGradedPullbackDiagram}
    Keep the notation in \Cref{Pullback_lemma}.
    For notational simplicity, for \((M, \rho) \in \coMod_G^{\comp{I}}(R)\), we will write the datum of the pullback diagram \eqref{Pullback_sum} in \(\mcalD(R)\) with the morphism \(\rho_{\graded}\) in \(\mcalD_{\graded{G}}^{\comp{I}}(R)\) as \(D(M, \rho)\).
    Moreover, since \(\rho_{\graded}\) splits in \(\mcalD_{\graded{G}}(\setZ)\), we have a pullback diagram
    \begin{center}
        \begin{tikzcd}
            M_g \arrow[r, "\rho_g"] \arrow[d]                 & M \cdot t^g \arrow[d] \\
            M_{\graded} \arrow[d] \arrow[r, "\rho_{\graded}"] & {M[G]} \arrow[d]      \\
            M \arrow[r, "\rho_M"]                             & M\abracket{G}        
        \end{tikzcd}
    \end{center}
    in \(\mcalD(\setZ)\) for each \(g \in G\) by \Cref{GradedSplitPullback}.
    We will write this datum as \(D_g(M, \rho)\).
    By construction, this assignment \((M, \rho) \mapsto D(M, \rho)\) (resp., \((M, \rho) \mapsto D_g(M, \rho)\)) is functorial in \((M, \rho) \in \coMod_G^{\comp{I}}(R)\).
\end{definition}

Although the pullback diagram \(D(M, \rho)\) in \Cref{DefGradedPullbackDiagram} is defined in \(\mcalD(R)\), it satisfies the following universal property for morphisms in \(\mcalD_{\graded{G}}^{\comp{I}}(R)\).

\begin{corollary} \label{UniversalPropertyGradedPart}
    Keep the notation in \Cref{Pullback_lemma}.
    Take a morphism \(\varphi \colon N \to M[G]\) in \(\mcalD_{\graded{G}}^{\comp{I}}(R)\) and a commutative diagram
    \begin{center}
        \begin{tikzcd}
            N \arrow[d, "\varphi'"'] \arrow[r, "\varphi"] & {M[G]} \arrow[d, "\beta"] \\
            M \arrow[r, "\rho"]              & M\abracket{G}            
        \end{tikzcd}
    \end{center}
    in \(\mcalD(R)\).
    Then, there exists a unique morphism \(\psi \colon N \to M_{\graded}\) in \(\mcalD_{\graded{G}}^{\comp{I}}(R)\) with a commutative diagram
    \begin{equation} \label{DefineGradedPartUniversal}
        \begin{tikzcd}
            N \arrow[rdd, "\varphi'"', bend right] \arrow[rrd, "\varphi", bend left] \arrow[rd, "\psi"] &                                 &                           \\
            & M_{\graded} \arrow[d, "\alpha"] \arrow[r, "\rho_{\graded}"] & {M[G]} \arrow[d, "\beta"] \\
            & M \arrow[r, "\rho"]             & M\abracket{G}            
        \end{tikzcd}
    \end{equation}
    in \(\mcalD(R)\) such that the graded \(g\)-part \(\psi_g \colon N_g \to M_g\) is the morphism induced from the commutative diagram
    \begin{equation} \label{DefineGradedPartMorphism}
        \begin{tikzcd}
            & N_g \arrow[d] \arrow[r, "\psi_g"] \arrow[ld] & M \cdot t^g \arrow[d, "\beta_g"] \\
            N \arrow[r, "\varphi'"] & M \arrow[r, "\rho"]                       & M\abracket{G}                   
        \end{tikzcd}
    \end{equation}
    in \(\mcalD(\setZ)\) and the pullback diagram \eqref{Pullback_g} for each \(g \in G\). 
\end{corollary}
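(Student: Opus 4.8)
The plan is to build $\psi$ by producing separately its underlying $R$-linear morphism and its gradedwise $\setZ$-linear data, and then to glue the two using the mapping-space pullback of \Cref{MappingSpaceDgrPullback}. First I would use the pullback square \eqref{Pullback_sum} in $\mcalD(R)$ supplied by \Cref{Pullback_lemma}: the hypothesis homotopy $\beta \circ \varphi \simeq \rho \circ \varphi'$ exhibits the pair $(\varphi, \varphi')$, viewed in $\mcalD(R)$, as a cone over $M[G] \xrightarrow{\beta} M\abracket{G} \xleftarrow{\rho} M$, so the universal property of \eqref{Pullback_sum} yields a unique $R$-linear morphism $\psi^{(R)} \colon N \to M_{\graded}$ in $\mcalD(R)$ with $\rho_{\graded} \circ \psi^{(R)} \simeq \varphi$ and $\alpha \circ \psi^{(R)} \simeq \varphi'$. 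Independently, for each $g \in G$ the outer square of \eqref{DefineGradedPartMorphism} commutes — its commutativity is exactly the $g$-graded part of the hypothesis, using that $\varphi$ is graded so that $\beta_g \circ \varphi_g \simeq \beta \circ \varphi \circ \iota_{N,g} \simeq \rho \circ \varphi' \circ \iota_{N,g}$, where $\beta_g$ is as in \Cref{ConstMorphismBeta} — so the pullback \eqref{Pullback_g} defining $M_g$ in \Cref{GradedPartComodule} produces a unique $\psi_g \colon N_g \to M_g$. Since $G^{ds}$ has only trivial morphisms, these assemble to a morphism $\psi^{\graded} \colon N \to M_{\graded}$ in $\mcalD_{\graded{G}}(\setZ)$.

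The key step is to check that $\psi^{(R)}$ and $\psi^{\graded}$ agree after forgetting to $\mcalD(\setZ)$, i.e.\ that the underlying $\setZ$-morphism of $\psi^{(R)}$ is $\bigoplus_{g} \psi_g$. Because $M_{\graded} \cong \bigoplus_g M_g$ sits in the $\setZ$-pullback \eqref{Pullback_sum_Z}, two morphisms $N \to M_{\graded}$ in $\mcalD(\setZ)$ coincide once their composites with the two legs $\alpha'$ and $\bigoplus_g \rho_g = \rho_{\graded}$ agree compatibly over $M\abracket{G}$. For $\psi^{(R)}$ these composites are $\varphi'$ and $\varphi$ by construction; for $\bigoplus_g \psi_g$ they are computed gradedwise from the defining relations $\alpha_g \circ \psi_g \simeq \varphi' \circ \iota_{N,g}$ and $\rho_g \circ \psi_g \simeq \varphi_g$, and these reassemble to $\varphi'$ and $\varphi$ precisely because $\varphi$ is graded and because $\alpha'$, $\bigoplus_g \rho_g$ are the maps induced by the families $\{\alpha_g\}$, $\{\rho_g\}$ in \Cref{Pullback_lemma}. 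In both cases the coherence homotopy over $M\abracket{G}$ is induced by the single hypothesis homotopy $\beta \circ \varphi \simeq \rho \circ \varphi'$, so $\psi^{(R)}$ and $\psi^{\graded}$ determine the same point of the homotopy pullback \eqref{Pullback_sum_Z}.

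With this compatibility in hand, \Cref{MappingSpaceDgrPullback}, applied inside the full subcategory $\mcalD_{\graded{G}}^{\comp{I}}(R)$ (both $N$ and $M_{\graded}$ being derived gradedwise $I$-complete), glues $\psi^{(R)}$ and $\psi^{\graded}$ into a single morphism $\psi \colon N \to M_{\graded}$ in $\mcalD_{\graded{G}}^{\comp{I}}(R)$ whose underlying $R$-morphism is $\psi^{(R)}$ and whose $g$-graded part is $\psi_g$. The commutativities in \eqref{DefineGradedPartUniversal} then follow: $\alpha \circ \psi \simeq \varphi'$ is the defining property of $\psi^{(R)}$, and $\rho_{\graded} \circ \psi \simeq \varphi$ is verified on underlying $R$-modules and gradedwise (where it reads $\rho_g \circ \psi_g \simeq \varphi_g$), hence holds as a graded morphism by another application of \Cref{MappingSpaceDgrPullback} for $\Map_{\mcalD_{\graded{G}}^{\comp{I}}(R)}(N, M[G])$. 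For uniqueness, any $\psi$ subject to \eqref{DefineGradedPartUniversal} has its underlying $R$-morphism pinned down by the pullback \eqref{Pullback_sum} and each graded part $\psi_g$ pinned down by the pullback \eqref{Pullback_g}, so $\psi$ is forced via \Cref{MappingSpaceDgrPullback}.

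I expect the main obstacle to be the second paragraph: one must match not merely the two legs $\alpha'$ and $\rho_{\graded}$ but also the coherence homotopy over $M\abracket{G}$, so that the identification of $\psi^{(R)}$ with $\bigoplus_g \psi_g$ takes place in the homotopy pullback rather than only on $\pi_0$. Phrasing this as an equivalence of cones over $M[G] \xrightarrow{\beta} M\abracket{G} \xleftarrow{\rho} M$, all arising from the one hypothesis square, is what makes the gluing through \Cref{MappingSpaceDgrPullback} legitimate.
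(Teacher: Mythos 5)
Your proposal is correct and follows essentially the same route as the paper's proof: both construct the $R$-linear morphism from the pullback \eqref{Pullback_sum} in $\mcalD(R)$ and the gradedwise morphism $\bigoplus_g \psi_g$ from the pullbacks \eqref{Pullback_g}, identify the two in $\mcalD(\setZ)$ via the uniqueness clause of the pullback (the paper phrases this as the forgetful functor $\mcalD(R) \to \mcalD(\setZ)$ preserving limits), and then glue through \Cref{MappingSpaceDgrPullback}. Your explicit attention to matching the coherence homotopy over $M\abracket{G}$, not just the two legs, is a point the paper treats more tersely but is the same underlying argument.
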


\begin{proof}
    Taking the coproduct of \(\psi_g\) in \eqref{DefineGradedPartMorphism} gives a morphism
    \begin{equation*}
        \psi' \defeq \bigoplus_{g \in G} \psi_g \colon N \to M_{\graded}
    \end{equation*}
    in \(\mcalD_{\graded{G}}(\setZ)\).
    We first show that this satisfies the commutativity \eqref{DefineGradedPartUniversal} in \(\mcalD(\setZ)\).
    Because of the universality of \(M_{\graded}\) proved in \Cref{Pullback_lemma} in \(\mcalD(\setZ)\), it suffices to show the equivalences
    \begin{equation*}
        (N \xrightarrow{\psi'} M_{\graded} \xrightarrow{\rho_{\graded}} M[G]) \simeq (N \xrightarrow{\varphi} M[G]) \quad \text{and} \quad (N \xrightarrow{\psi'} M_{\graded} \xrightarrow{\alpha} M) \simeq (N \xrightarrow{\varphi'} M).
    \end{equation*}
    Precomposing \(N_g \to N\) for each \(g \in G\), we have
    \begin{align*}
        (N_g \to N \xrightarrow{\psi'} M_{\graded} \xrightarrow{\rho_{\graded}} M[G]) & \simeq (N_g \xrightarrow{\psi_g} M_g \xrightarrow{\rho_g} M \cdot t^g \to M[G]) \simeq (N_g \xrightarrow{\varphi_g} M \cdot t^g \to M[G]) \\
        & \simeq (N_g \to N \xrightarrow{\varphi} M[G]) \quad \text{and} \\
        (N_g \to N \xrightarrow{\psi'} M_{\graded} \xrightarrow{\alpha} M) & \simeq (N_g \xrightarrow{\psi_g} M_g \xrightarrow{\alpha_g} M) \simeq (N_g \to N \xrightarrow{\varphi'} M). 
    \end{align*}
    Because of \(N = \bigoplus_{g \in G} N_g\), these equivalences imply the desired ones.

    On the other hand, since the pullback diagram \eqref{Pullback_sum} is in \(\mcalD(R)\) and since \(\varphi\) and \(\varphi'\) are morphisms in \(\mcalD(R)\), there exists a unique morphism \(\psi'' \colon N \to M_{\graded}\) with the commutative diagram \eqref{DefineGradedPartUniversal} in \(\mcalD(R)\).
    Since the forgetful functor \(\mcalD(R) \to \mcalD(\setZ)\) preserves limits, the uniqueness implies \(\psi'' \simeq \psi'\) in \(\mcalD(\setZ)\).
    By \Cref{MappingSpaceDgrPullback}, it gives a unique morphism \(\psi \colon N \to M_{\graded}\) in \(\mcalD_{\graded{G}}^{\comp{I}}(R)\) whose underlying morphism in \(\mcalD(R)\) (resp., \(\mcalD_{\graded{G}}(\setZ)\)) is \(\psi''\) (resp., \(\psi'\)) together with the commutative diagram \eqref{DefineGradedPartUniversal} in \(\mcalD(R)\).
\end{proof}

If \((M, \rho) \in \coMod_G^{\comp{I}}(R)\) is in the essential image of the comparison functor \eqref{ComparisonFunctorGradedModules}, then we can say more about the pullback diagram in \Cref{Pullback_lemma}.

\begin{construction} \label{PullbackDiagramGradedModules}
    Let \(R\) be a \(G\)-graded ring and let \(I\) be a finitely generated homogeneous ideal of \(R\).
    Let \(M_{\graded}\) be an object of \(\mcalD_{\graded{G}}^{\comp{I}}(R)\).
    The comparison functor \((F^0)'\) (resp., \((F^I)'\)) sends \(M_{\graded}\) to \((M_{\graded}, \rho_{M_{\graded}})\) in \(\coMod_G(R)\) (resp., \((\dcomp{I}{M_{\graded}}, \rho_{\dcomp{I}{M_{\graded}}})\) in \(\coMod_G^{\comp{I}}(R)\)).
    By \Cref{RemarkComoduleGradedModules}, the (\(0\)-formal) coaction
    \begin{equation*}
        \rho_{M_{\graded}} \colon M_{\graded} \to M_{\graded}[G] \quad \in \mcalD(R)
    \end{equation*}
    is equivalent to \(F^0(\eta^0_{M_{\graded}})\), where \(\eta^0_{M_{\graded}}\) is the unit morphism
    \begin{equation*}
        \eta^0_{M_{\graded}} \colon M_{\graded} \to \mcalG^0(\mcalF^0(M_{\graded})) = M_{\graded}[G] \quad \in \mcalD_{\graded{G}}(R)
    \end{equation*}
    of the adjunction \(\mcalF^0 \dashv \mcalG^0\).
    Similarly, the \(I\)-formal coaction
    \begin{equation*}
        \rho_{\dcomp{I}{M_{\graded}}} \colon \dcomp{I}{M_{\graded}} \to \dcomp{I}{M_{\graded}} \abracket{G} \quad \in \mcalD^{\comp{I}}(R)
    \end{equation*}
    is equivalent to \(F^I(\eta^I_{M_{\graded}})\), where \(\eta^I_{M_{\graded}}\) is the unit morphism
    \begin{equation*}
        \eta^I_{M_{\graded}} \colon M_{\graded} \to \mcalG^I(\mcalF^I(M_{\graded})) \cong \mcalG^0(\dcomp{I}{\mcalF^0(M_{\graded})}) = \dcomp{I}{M_{\graded}}[G] \quad \in \mcalD_{\graded{G}}^{\comp{I}}(R)
    \end{equation*}
    of the adjunction \(\mcalF^I \dashv \mcalG^I\).
    The identification \eqref{FunctioralIdentification} shows that this \(\eta^I_{M_{\graded}}\) is the composition with \(\eta^0_{M_{\graded}}\) and \(\gamma \defeq \mcalG^I(\mcalF^0(M_{\graded}) \to \mcalF^I(M_{\graded}))\).

    Since \(\rho_{\graded} = \eta^I_{M_{\graded}}\) can be seen as a split morphism in \(\mcalD_{\graded{G}}(\setZ)\) and the graded \(g\)-part of \(\dcomp{I}{M_{\graded}}[G]\) is \(\dcomp{I}{M_{\graded}} \cdot t^g\) by \Cref{ComonadGradedModules}, the following diagram
    \begin{equation} \label{PullbackDiagramGradedModules_g}
        \begin{tikzcd}
            M_g \arrow[rr] \arrow[d]                                        &  & \dcomp{I}{M_{\graded}} \cdot t^g \arrow[d] \\
            M_{\graded} \arrow[rr, "\rho_{\graded} = \eta^I_{M_{\graded}}"] &  & {\dcomp{I}{M_{\graded}}[G]}               
        \end{tikzcd}
    \end{equation}
    in \(\mcalD(\setZ)\) is a pullback diagram for each \(g \in G\) by \Cref{GradedSplitPullback}, where \(M_g\) is the graded \(g\)-part of \(M_{\graded}\).

    Moreover, we have a commutative diagram
    \begin{equation} \label{PullbackDiagramGradedModulesDiagram}
        \begin{tikzcd}
            M_{\graded} \arrow[dd] \arrow[r, "\rho_{M_{\graded}}"'] \arrow[rr, "\eta^I_{M_{\graded}}", bend left] & {M_{\graded}[G]} \arrow[r, "\gamma"'] & {\dcomp{I}{M_{\graded}}[G]} \arrow[dd, "\beta"] \\
                                                                                                            &                                       &                                                 \\
            \dcomp{I}{M_{\graded}} \arrow[rr, "\rho_{\dcomp{I}{M_{\graded}}}"]                              &                                       & \dcomp{I}{M_{\graded}}\abracket{G}       
        \end{tikzcd}
    \end{equation}
    in \(\mcalD(R)\) such that the lower horizontal morphism is the derived \(I\)-completion of the upper horizontal morphism, where \(\beta\) is the derived \(I\)-completion morphism in \Cref{ConstMorphismBeta}.

    By \Cref{Pullback_lemma} and \Cref{UniversalPropertyGradedPart} for \((\dcomp{I}{M_{\graded}}, \rho_{\dcomp{I}{M_{\graded}}}) \in \coMod_G^{\comp{I}}(R)\), the above diagram uniquely induces a morphism
    \begin{equation*}
        \psi \colon M_{\graded} \to (\dcomp{I}{M_{\graded}})_{\graded} \quad \in \mcalD_{\graded{G}}^{\comp{I}}(R)
    \end{equation*}
    together with the commutative diagram
    \begin{center}
        \begin{tikzcd}
            M_{\graded} \arrow[rrrd, "\eta^I_{M_{\graded}}"', bend left] \arrow[rdd, bend right] \arrow[rd, "\psi"] &                                                                                      &  &                                                \\
            & (\dcomp{I}{M_{\graded}})_{\graded} \arrow[rr, "\rho_{\graded}"] \arrow[d, "\alpha'"] &  & {\dcomp{I}{M_{\graded}}[G]} \arrow[d, "\beta"] \\
            & \dcomp{I}{M_{\graded}} \arrow[rr, "\rho_{\dcomp{I}{M_{\graded}}}"]                   &  & {\dcomp{I}{M_{\graded}}\abracket{G},}         
        \end{tikzcd}
    \end{center}
    in \(\mcalD(R)\).
    Taking the derived \(I\)-completion of this diagram, we can show that the morphism
    \begin{equation*}
        \dcomp{I}{\psi} \colon \dcomp{I}{M_{\graded}} \to \dcomp{I}{(\dcomp{I}{M_{\graded}})_{\graded}}
    \end{equation*}
    is an isomorphism in \(\mcalD^{\comp{I}}(R)\) since the vertical morphisms are isomorphisms after taking the derived \(I\)-completion.
    \Cref{NakayamaIsom} then implies that \(\psi\) is an isomorphism in \(\mcalD_{\graded{G}}^{\comp{I}}(R)\).

    Consequently, the pullback diagram \(D(\dcomp{I}{M_{\graded}}, \rho_{\dcomp{I}{M_{\graded}}})\) \eqref{Pullback_sum} in \Cref{Pullback_lemma} for \((M, \rho) = (\dcomp{I}{M_{\graded}}, \rho_{\dcomp{I}{M_{\graded}}})\) can be identified with the diagram \eqref{PullbackDiagramGradedModulesDiagram} in \(\mcalD(R)\).
    Especially, the morphism \(\rho_{\graded} \colon M_{\graded} \to \dcomp{I}{M_{\graded}}[G]\) in \(\mcalD_{\graded{G}}^{\comp{I}}(R)\) in \(D(\dcomp{I}{M}, \rho_{\dcomp{I}{M}})\) comes from the morphism
    \begin{equation*}
        (M_{\graded} \xrightarrow{\eta^I_{M_{\graded}}} \mcalG^I(\mcalF^I(M_{\graded}))) =  (M_{\graded} \xrightarrow{\eta^0_{M_{\graded}}} \mcalG^0(\mcalF^0(M_{\graded})) \xrightarrow{\gamma} \mcalG^I(\mcalF^I(M_{\graded})))
    \end{equation*}
    in \(\mcalD_{\graded{G}}^{\comp{I}}(R)\).

    Also, the outer square of the following diagram
    \begin{center}
        \begin{tikzcd}
            M_g \arrow[rr] \arrow[d] \arrow[dd, "\alpha_g"', bend right=60]                     &  & \dcomp{I}{M_{\graded}} \cdot t^g \arrow[d] \arrow[dd, "\beta_g", bend left=60] \\
            M_{\graded} \arrow[rr, "\rho_{\graded} = \eta^I_{M_{\graded}}"] \arrow[d, "\alpha"] &  & {\dcomp{I}{M_{\graded}}[G]} \arrow[d, "\beta"']                                \\
            \dcomp{I}{M_{\graded}} \arrow[rr, "\rho = \rho_{\dcomp{I}{M_{\graded}}}"]           &  & \dcomp{I}{M_{\graded}}\abracket{G}                                            
        \end{tikzcd}
    \end{center}
    in \(\mcalD(\setZ)\) is a pullback diagram and is the same as the pullback diagram \(D_g(\dcomp{I}{M_{\graded}}, \rho)\) (\Cref{DefGradedPullbackDiagram}) for \((\dcomp{I}{M_{\graded}}, \rho_{\dcomp{I}{M_{\graded}}})\) in \Cref{Pullback_lemma} for each \(g \in G\).
    \qedhere
\end{construction}

\subsection{Proof of the categorical equivalence}

In conclusion, we will prove the categorical equivalence (\Cref{ComonadicityGradedModules}).
To start with, the following theorem shows that the \(I\)-completion of graded limit is isomorphic to the limit of \(I\)-completed graded modules under a certain condition.
This is not only a crucial step to prove the categorical equivalence but also gives an interesting property of limits of graded perfectoid rings in our forthcoming work \cite{ishizuka2026Absolute}.

\begin{theorem} \label{LimitCommutative}
    Let $R$ be a $G$-graded ring and let $I$ be a finitely generated homogeneous ideal of \(R\).
    Take a diagram \(\{M_j\}_{j \in J} \colon J \to \mcalD_{\graded{G}}^{\comp{I}}(R)\) indexed by a (not necessarily small) simplicial set \(J\).
    Assume the existence of the limits of the diagrams
    \begin{align*}
        \{\mcalF^I(M_j)\}_{j \in J} = \{\dcomp{I}{M_j}\}_{j \in J} & \colon J \to \mcalD^{\comp{I}}(R); \quad j \mapsto \dcomp{I}{M_j} \quad \text{and} \\
        \{\mcalT^I(\mcalF^I(M_j))\} = \{\dcomp{I}{M_j}\abracket{G}\}_{j \in J} & \colon J \to \mcalD^{\comp{I}}(R); \quad j \mapsto \dcomp{I}{M_j} \abracket{G}.
    \end{align*}
    If the natural morphism
    \begin{equation} \label{AssumptionIsomMorphism}
        (\lim_{J} \dcomp{I}{M_j})\abracket{G} \to \lim_{J}(\dcomp{I}{M_j}\abracket{G})
    \end{equation}
    is an isomorphism in \(\mcalD^{\comp{I}}(R)\), then the limit $\grlim_{J} M_j$ of $\{M_j\}_{j \in J}$ exists in $\mcalD_{\graded{G}}^{\comp{I}}(R)$, and the natural morphism
    \begin{equation*}
        \mcalF^I(\grlim_{J} M_j) = \dcomp{I}{\grlim_{J} M_j} \to \lim_{J} \dcomp{I}{M_j} = \lim_{J} \mcalF^I(M_j)
    \end{equation*}
    is an isomorphism in \(\mcalD^{\comp{I}}(R)\), i.e., the limit is preserved by the functor \(\mcalF^I\).
\end{theorem}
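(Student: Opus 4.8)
The plan is to build the limit by hand from the data on the completed side and then verify its universal property. First I would set $L \defeq \lim_{J}\dcomp{I}{M_j}$, which exists in $\mcalD^{\comp{I}}(R)$ by hypothesis and, since the inclusion $\mcalD^{\comp{I}}(R)\hookrightarrow\mcalD(R)$ preserves limits, also computes the limit in $\mcalD(R)$. Applying the comparison functor $(\mcalF^I)'$ of \Cref{ConstComonad} to $\{M_j\}$ yields a diagram $\{(\dcomp{I}{M_j},\rho_{M_j})\}$ in $\coMod_G^{\comp{I}}(R)$. Because the forgetful functor $\coMod_G^{\comp{I}}(R)\to\mcalD^{\comp{I}}(R)$ creates those limits that are preserved by the comonad $\mcalT^I$ (dual to \cite[Corollary 4.2.3.5]{lurie2017Higher}, cf.\ the comodule setup of \Cref{BarrBeckLurie}), and since the hypothesis \eqref{AssumptionIsomMorphism} says precisely that $\mcalT^I$ preserves the limit $L$, the diagram acquires a limit $(L,\rho_L)$ in $\coMod_G^{\comp{I}}(R)$ whose underlying object is $L$ and whose coaction $\rho_L$ is $\lim_J\rho_{M_j}$ under the identification $\mcalT^I(L)\simeq\lim_J\mcalT^I(\dcomp{I}{M_j})$ coming from \eqref{AssumptionIsomMorphism}.

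Next I would feed $(L,\rho_L)$ into \Cref{Pullback_lemma} to obtain an object $L_{\graded}\in\mcalD_{\graded{G}}^{\comp{I}}(R)$, together with the pullback square $D(L,\rho_L)$ of \eqref{Pullback_sum} and, crucially, the isomorphism $\alpha\colon\dcomp{I}{L_{\graded}}=\mcalF^I(L_{\graded})\xrightarrow{\cong}L$ in $\mcalD^{\comp{I}}(R)$. This $L_{\graded}$ is the candidate limit, and the isomorphism $\alpha$ already delivers the second assertion---that $\mcalF^I$ preserves the limit---as soon as $L_{\graded}$ is identified with $\grlim_J M_j$.

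To make that identification I would verify, for every $N\in\mcalD_{\graded{G}}^{\comp{I}}(R)$, the equivalence $\Map(N,L_{\graded})\simeq\lim_J\Map(N,M_j)$. The key input is \Cref{UniversalPropertyGradedPart}, which upgrades the $\mcalD(R)$-pullback $D(M,\rho)$ to a pullback of mapping spaces: for any $(M,\rho)\in\coMod_G^{\comp{I}}(R)$ with graded part $M_{\graded}$, one has naturally in $(M,\rho)$ (this is where \Cref{MappingSpaceDgrPullback} enters)
\[
\Map_{\mcalD_{\graded{G}}^{\comp{I}}(R)}(N,M_{\graded})\simeq \Map_{\mcalD_{\graded{G}}(R)}(N,M[G])\times_{\Map_{\mcalD(R)}(\theta N,M\abracket{G})}\Map_{\mcalD(R)}(\theta N,M).
\]
I would apply this both to $(L,\rho_L)$, whose graded part is $L_{\graded}$, and to each $(\dcomp{I}{M_j},\rho_{M_j})$, whose graded part is $M_j$ by \Cref{PullbackDiagramGradedModules}. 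Since $\mcalG^I$ and the inclusion $\mcalD_{\graded{G}}^{\comp{I}}(R)\hookrightarrow\mcalD_{\graded{G}}(R)$ preserve limits, $L[G]=\mcalG^I(L)\simeq\lim_J\dcomp{I}{M_j}[G]$; by \eqref{AssumptionIsomMorphism}, $L\abracket{G}=\mcalT^I(L)\simeq\lim_J\dcomp{I}{M_j}\abracket{G}$; and $L\simeq\lim_J\dcomp{I}{M_j}$ by construction. As $\Map(N,-)$ carries limits to limits and pullbacks commute with limits, the pullback computing $\Map(N,L_{\graded})$ is the limit over $J$ of the pullbacks computing $\Map(N,M_j)$, giving the desired equivalence; by Yoneda this identifies $L_{\graded}$ with $\grlim_J M_j$ and finishes the proof.

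The main obstacle is this third step, and within it the precise role of \eqref{AssumptionIsomMorphism}. Neither $\mcalF^I$ nor the comonad $\mcalT^I$ preserves limits in general (cf.\ \Cref{NonIsomGradedLimit}), so among the three vertices of the pullback the ``comonad vertex'' $L\abracket{G}$ is the only one whose identification with a limit is not automatic; the hypothesis is exactly what turns it into $\lim_J\dcomp{I}{M_j}\abracket{G}$ and hence what lets the three-term pullback commute with $\lim_J$. The remaining difficulty is bookkeeping: checking that the equivalences of \Cref{UniversalPropertyGradedPart} are natural in the comodule variable, so that the comparison of pullbacks is compatible over $J$, and that the structure maps $\rho_L$ and $\beta$ of $D(L,\rho_L)$ are the limits of their counterparts for the $M_j$, which follows from naturality of the counit of $\mcalT^I$ and of the derived $I$-completion morphism.
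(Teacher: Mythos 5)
Your architecture is genuinely different from the paper's: you first form the limit $(L,\rho_L)$ inside the comodule category $\coMod_G^{\comp{I}}(R)$ and then extract its graded part via \Cref{Pullback_lemma}, whereas the paper never forms a limit of comodules at all --- it builds $\grlim_J M_j$ degree by degree as the limit over $J$ of the pullback squares $D_g(\dcomp{I}{M_j},\rho_{\dcomp{I}{M_j}})$ (each $M_j$ being \emph{already} graded, its $g$-parts are given for free), and then reassembles these with a single application of the trapezoid lemma, using \eqref{AssumptionIsomMorphism} exactly once to identify the corner $\lim_J(\dcomp{I}{M_j}\abracket{G})$ with $(\lim_J\dcomp{I}{M_j})\abracket{G}$.

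The difference matters because your foundational step has a genuine gap. You assert that the forgetful functor $\coMod_G^{\comp{I}}(R)\to\mcalD^{\comp{I}}(R)$ creates those limits that are preserved by $\mcalT^I$, and that \eqref{AssumptionIsomMorphism} supplies exactly this. It does not. Already in the $1$-categorical statement (the dual of Linton/Borceux), creating a $J$-indexed limit of comodules requires the limit to be preserved by $\mcalT^I$ \emph{and} by $\mcalT^I\circ\mcalT^I$ (the latter is needed even to verify coassociativity of the induced coaction on $L$), and in the $\infty$-categorical setting one needs all iterates to assemble the higher coherences; the dual of \cite[Corollary 4.2.3.5]{lurie2017Higher} assumes global preservation of $J$-indexed limits, not preservation of one specific diagram. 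The hypothesis \eqref{AssumptionIsomMorphism} only says $\mcalT^I$ preserves $\lim_J\dcomp{I}{M_j}$. Preservation by $(\mcalT^I)^2$ amounts to $\mcalT^I$ preserving the limit of the \emph{different} diagram $\{\dcomp{I}{M_j}\abracket{G}\}_j$, which (since $\mcalG^I$ preserves all limits) is equivalent to an instance of the very theorem you are proving, applied to the diagram $\{\mcalG^I(\dcomp{I}{M_j}\abracket{G})\}_j$ for which the hypothesis has not been verified --- so there is no bootstrap, and since $\mcalF^I$ and $\mcalT^I$ do not preserve limits in general (\Cref{NonIsomGradedLimit}) the extra preservation cannot be taken for granted. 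Without $(L,\rho_L)$ as an actual object of $\coMod_G^{\comp{I}}(R)$ you cannot invoke \Cref{Pullback_lemma} or \Cref{UniversalPropertyGradedPart}, so the rest of the argument does not get off the ground. (A secondary, more repairable issue: \Cref{UniversalPropertyGradedPart} is stated as existence and uniqueness of a single morphism, not as the natural mapping-space pullback you need; upgrading it functorially in $(M,\rho)$ is real work that your sketch defers.) If you want to keep your route, you must either strengthen the hypothesis to preservation by all iterates of $\mcalT^I$, or, better, follow the paper in exploiting that the $M_j$ are already graded so that no limit in the comodule category is ever required.
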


\begin{proof}
    For each \(j \in J\) and \(g \in G\), we take a diagram
    \begin{equation*}
        \begin{tikzcd}
                                                        & {M_{j, g}} \arrow[d, "\alpha_g"] \arrow[rr, "(\eta^I_{M_j})_g"] \arrow[ldd, "\id"'] &  & \dcomp{I}{M_j} \cdot t^g \arrow[d, "\beta_g"'] \arrow[rdd, "\id"] &                          \\
                                                        & \dcomp{I}{M_j} \arrow[rr, "\rho_{\dcomp{I}{M_j}}"] \arrow[ld]                       &  & \dcomp{I}{M_j} \abracket{G} \arrow[rd]                            &                          \\
            {M_{j, g}} \arrow[rrrr, "(\eta^I_{M_j})_g"] &                                                                                     &  &                                                                   & \dcomp{I}{M_j} \cdot t^g
        \end{tikzcd}
    \end{equation*}
    in \(\mcalD(\setZ)\) following \(D_g(\dcomp{I}{M_j}, \rho_{\dcomp{I}{M_j}})\) in \Cref{PullbackDiagramGradedModules} whose center square is a pullback diagram.
    Because of the assumption of the existence of limits, we can take the limits of this diagram for all \(j \in J\):
    \begin{equation} \label{LimitPullbackDiagram}
        \begin{tikzcd}
                                                                        & {\lim_{J}M_{j, g}} \arrow[d, "\lim_{J}\alpha_g"] \arrow[rr, "\lim_{J}(\eta^I_{M_j})_g"] \arrow[ldd, "\id"'] &  & \lim_{J}(\dcomp{I}{M_j}) \cdot t^g \arrow[d, "\lim_{J}\beta_g"'] \arrow[rdd, "\id"] &                                    \\
                                                                        & \lim_{J}(\dcomp{I}{M_j}) \arrow[rr, "\lim_{J}\rho_{\dcomp{I}{M_j}}"] \arrow[ld]                             &  & \lim_{J}(\dcomp{I}{M_j} \abracket{G}) \arrow[rd]                                    &                                    \\
            {\lim_{J}M_{j, g}} \arrow[rrrr, "\lim_{J}(\eta^I_{M_j})_g"] &                                                                                                             &  &                                                                                     & \lim_{J}(\dcomp{I}{M_j}) \cdot t^g
        \end{tikzcd}
    \end{equation}
    in \(\mcalD(\setZ)\), which represents the limit \(\lim_{J} M_{j, g}\) as a pullback diagram for each \(g \in G\).
    Then the graded limit
    \begin{equation*}
        M_{\infty} \defeq \grlim_J M_j \in \mcalD_{\graded{G}}(R)
    \end{equation*}
    exists since the graded \(g\)-part of \(M_{\infty}\) exists as \(\lim_{J} M_{j, g}\) for each \(g \in G\).

    Using trapezoid lemma (\Cref{TrapezoidLemma}) for the diagram \eqref{LimitPullbackDiagram} above, we get a commutative diagram
    \begin{equation} \label{LimitPullbackDiagramCoproduct}
        \begin{tikzcd}
            {\bigoplus_{g \in G} \lim_{J}M_{j, g}} \arrow[d, "\bigoplus_{g \in G} \lim_{J}\alpha_g"] \arrow[rrrr, "\bigoplus_{g \in G}\lim_{J}(\eta^I_{M_j})_g"] &  &  &  & {\lim_{J}(\dcomp{I}{M_j})[G]} \arrow[d, "\bigoplus_{g \in G} \lim_{J}\beta_g"'] \arrow[rd, "\beta"] &                                                         \\
            \lim_{J}(\dcomp{I}{M_j}) \arrow[rrrr, "\lim_{J}\rho_{\dcomp{I}{M_j}}"]                                                                                                         &  &  &  & \lim_{J}(\dcomp{I}{M_j} \abracket{G})                                                               & \lim_{J}(\dcomp{I}{M_j})\abracket{G} \arrow[l, "\cong"]
        \end{tikzcd}
    \end{equation}
    in \(\mcalD(\setZ)\), where the center square is a pullback diagram and the rightmost horizontal morphism is an isomorphism by the assumption \eqref{AssumptionIsomMorphism}.

    Also, considering the limit of diagrams \(D(\dcomp{I}{M_j}, \rho_{\dcomp{I}{M_j}})\) in \(\mcalD(R)\), we have a commutative diagram
    \begin{equation*}
        \begin{tikzcd}
            M_{\infty} \arrow[d] \arrow[rr, "\grlim_J \eta^I_{M_j}"]                   &  & {\grlim_{J} (\dcomp{I}{M_j}[G])} \arrow[d]             & {(\lim_{J}\dcomp{I}{M_j})[G]} \arrow[dd, "\beta"] \arrow[l, "\cong"] \\
            \lim_J M_j \arrow[d, "\lim_J \alpha"'] \arrow[rr, "\lim_{J} \eta^I_{M_j}"] &  & {\lim_J (\dcomp{I}{M_j}[G])} \arrow[d, "\lim_J \beta"] &                                                                      \\
            \lim_{J}\dcomp{I}{M_j} \arrow[rr, "\lim_J \rho_{\dcomp{I}{M_j}}"]          &  & \lim_J(\dcomp{I}{M_j}\abracket{G})                     & (\lim_J \dcomp{I}{M_j})\abracket{G} \arrow[l, "\cong"]              
        \end{tikzcd}
    \end{equation*}
    in \(\mcalD(R)\), where left lower square is a pullback diagram.
    Since the outer square of the above diagram is the same as \eqref{LimitPullbackDiagramCoproduct} in \(\mcalD(\setZ)\), then so is in \(\mcalD(R)\) as well.
    This gives a pullback diagram
    \begin{equation*}
        \begin{tikzcd}
            M_{\infty} \arrow[r, "\grlim_{J} \eta^I_{M_j}"] \arrow[d]        & {(\lim_{J}\dcomp{I}{M_j})[G]} \arrow[d, "\beta"] \\
            \lim_J(\dcomp{I}{M_j}) \arrow[r, "\lim_J \rho_{\dcomp{I}{M_j}}"] & (\lim_{J}\dcomp{I}{M_j})\abracket{G}            
        \end{tikzcd}
    \end{equation*}
    in \(\mcalD(R)\).
    The right vertical morphism is the derived \(I\)-completion morphism by \Cref{ConstMorphismBeta}, so the left vertical morphism is also the derived \(I\)-completion morphism. 
    \qedhere
\end{proof}

The assumption in \Cref{LimitCommutative} is satisfied if the diagram has a splitting:

\begin{lemma} \label{TakingGCommutativeSplit}
    Let \(R\) be a \(G\)-graded ring and let \(I\) be a finitely generated homogeneous ideal of \(R\).
    Any \(\mcalF^I\)-split cosimplicial object \(X^{\bullet} \colon \Delta \to \mcalD_{\graded{G}}^{\comp{I}}(R)\) satisfies the condition of \Cref{LimitCommutative}, namely, the canonical morphism
    \begin{equation*}
        \lim_{\Delta}(\dcomp{I}{X^{\bullet}})\abracket{G} \to (\lim_{\Delta} \dcomp{I}{X^{\bullet}}\abracket{G})
    \end{equation*}
    is an isomorphism in \(\mcalD^{\comp{I}}(R)\).
\end{lemma}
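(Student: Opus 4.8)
The plan is to read the desired statement as an instance of the absoluteness of split limits, so that no special properties of $\mcalT^I$ are needed. By \Cref{ComonadGradedModules} the comonad is $\mcalT^I(M) = M\abracket{G} = \dcomp{I}{M[G]}$ for $M \in \mcalD^{\comp{I}}(R)$; hence, taking $M_j = X^{\bullet}$ and $J = \Delta$ in \Cref{LimitCommutative}, the comparison morphism \eqref{AssumptionIsomMorphism} whose invertibility we must establish is exactly the canonical map
\[
\mcalT^I\Bigl(\lim_{\Delta} \dcomp{I}{X^{\bullet}}\Bigr) \to \lim_{\Delta} \mcalT^I\bigl(\dcomp{I}{X^{\bullet}}\bigr)
\]
expressing that $\mcalT^I$ commutes with the limit $\lim_{\Delta}\dcomp{I}{X^{\bullet}}$. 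Thus it suffices to prove that the endofunctor $\mcalT^I$ on $\mcalD^{\comp{I}}(R)$ preserves this one particular limit.

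The point is that this limit is \emph{absolute}. By hypothesis $X^{\bullet}$ is $\mcalF^I$-split, meaning that the cosimplicial object $\mcalF^I(X^{\bullet}) = \{\dcomp{I}{X^{\bullet}}\}$ is a split cosimplicial object of $\mcalD^{\comp{I}}(R)$ in the sense of the footnote to \Cref{BarrBeckLurie}. I would then invoke the standard fact that a split cosimplicial object extends canonically to a limit diagram which is preserved by \emph{every} functor (the dual of the statement for split simplicial objects, \cite[Lemma 6.1.3.16]{lurie2009Higher}; cf.\ the discussion of split diagrams in \cite[\S 4.7.2]{lurie2017Higher}). This is precisely the feature of $\mcalF^I$-split diagrams that drives the Barr--Beck--Lurie theorem (\Cref{BarrBeckLurie}). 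In particular $\lim_{\Delta}\dcomp{I}{X^{\bullet}}$ exists in $\mcalD^{\comp{I}}(R)$ and is carried to a limit diagram by any functor with source $\mcalD^{\comp{I}}(R)$.

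Applying this to $\mcalT^I \colon \mcalD^{\comp{I}}(R) \to \mcalD^{\comp{I}}(R)$ immediately shows that the displayed comparison map is an isomorphism, which is exactly condition \eqref{AssumptionIsomMorphism}. It is worth emphasizing why absoluteness is indispensable here: as recorded in the footnote to \Cref{intro}, the functor $\mcalF^I$, and therefore $\mcalT^I = \mcalF^I \circ \mcalG^I$, does \emph{not} preserve limits in general (cf.\ \Cref{NonIsomGradedLimit}), so for an arbitrary diagram the hypothesis of \Cref{LimitCommutative} can genuinely fail; it is only the extra codegeneracies of a split diagram that force preservation. Accordingly, I expect no real obstacle in this lemma. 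The single point requiring care is bookkeeping, namely verifying that the canonical map produced by the universal property of the split (absolute) limit agrees with the natural transformation \eqref{AssumptionIsomMorphism} of \Cref{LimitCommutative} under the identification $\mcalT^I(-) = (-)\abracket{G}$ of \Cref{ComonadGradedModules}, which is a routine naturality check.
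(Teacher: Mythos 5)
Your proposal is correct and follows essentially the same route as the paper: the paper's proof likewise observes that $\mcalF^I(X^{\bullet})=\dcomp{I}{X^{\bullet}}$ is a split cosimplicial object, so its limit is absolute and hence preserved by the functor $(-)\abracket{G}=\mcalF^I\circ\mcalG^I$, which is exactly condition \eqref{AssumptionIsomMorphism}. The extra remarks on why absoluteness is needed and on identifying the comparison map are consistent with, and only elaborate on, the paper's argument.
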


\begin{proof}
    Since \(X^{\bullet}\) is \(\mcalF^I\)-split, the cosimplicial object \(\mcalF^I(X^{\bullet}) = \dcomp{I}{X^{\bullet}}\) in \(\mcalD^{\comp{I}}(R)\) is split.
    Therefore, the limit \(\lim_{\Delta} \dcomp{I}{X^{\bullet}}\) is preserved by any functor from \(\mcalD^{\comp{I}}(R)\), in particular, by the functor \((-) \abracket{G} \defeq \mcalF^I(\mcalG^I(-))\).
    This shows the desired isomorphism.
\end{proof}

Now we are ready to show the main theorem of this paper:

\begin{theorem} \label{ComonadicityGradedModules}
    Let \(R\) be a \(G\)-graded ring and let \(I\) be a finitely generated homogeneous ideal of \(R\).
    Then the functor \(\mcalF^I \colon \mcalD_{\graded{G}}^{\comp{I}}(R) \to \mcalD^{\comp{I}}(R)\) \eqref{ForgetfulCompletionFunctorGradedModules} is comonadic, namely, the comparison functor \eqref{ComparisonFunctorGradedModules}
    \begin{equation*}
        (\mcalF^I)' \colon \mcalD_{\graded{G}}^{\comp{I}}(R) \xrightarrow{\simeq} \coMod_G^{\comp{I}}(R)
    \end{equation*}
    is an equivalence of \(\infty\)-categories.
\end{theorem}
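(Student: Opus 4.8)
The plan is to apply the Barr--Beck--Lurie theorem (\Cref{BarrBeckLurie}) to the adjunction $(\mcalF^I, \mcalG^I)$. By that theorem, it suffices to verify two things: that $\mcalF^I$ is conservative, and that for every $\mcalF^I$-split cosimplicial object $X^{\bullet} \colon \Delta \to \mcalD_{\graded{G}}^{\comp{I}}(R)$ the limit $\lim_{\Delta} X^{\bullet}$ exists in $\mcalD_{\graded{G}}^{\comp{I}}(R)$ and is preserved by $\mcalF^I$. Both of these have essentially been prepared by the preceding results, so the proof will amount to an assembly.

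First I would check conservativity. Given a morphism $\varphi \colon M \to N$ in $\mcalD_{\graded{G}}^{\comp{I}}(R)$ with $\mcalF^I(\varphi)$ an isomorphism in $\mcalD^{\comp{I}}(R)$, I note that $\mcalF^I(\varphi)$ is by definition the derived $I$-completion $\dcomp{I}{\varphi}$ of the underlying morphism of $\varphi$ in $\mcalD(R)$. This is exactly the hypothesis of \Cref{NakayamaIsom}, which then forces $\varphi$ to be an isomorphism in $\mcalD_{\graded{G}}(R)$, hence in the full subcategory $\mcalD_{\graded{G}}^{\comp{I}}(R)$. Thus $\mcalF^I$ is conservative.

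Next I would treat the limit condition. Let $X^{\bullet} \colon \Delta \to \mcalD_{\graded{G}}^{\comp{I}}(R)$ be $\mcalF^I$-split. By \Cref{TakingGCommutativeSplit}, the natural morphism $(\lim_{\Delta} \dcomp{I}{X^{\bullet}})\abracket{G} \to \lim_{\Delta}(\dcomp{I}{X^{\bullet}}\abracket{G})$ is an isomorphism, which is precisely the hypothesis \eqref{AssumptionIsomMorphism} of \Cref{LimitCommutative} with indexing simplicial set $J = \Delta$. Invoking \Cref{LimitCommutative}, I obtain that $\grlim_{\Delta} X^{\bullet}$ exists in $\mcalD_{\graded{G}}^{\comp{I}}(R)$ and that the comparison morphism $\mcalF^I(\grlim_{\Delta} X^{\bullet}) \to \lim_{\Delta} \mcalF^I(X^{\bullet})$ is an isomorphism, i.e., $\mcalF^I$ preserves this limit. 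With both conditions verified, \Cref{BarrBeckLurie} yields that $\mcalF^I$ is comonadic and the comparison functor $(\mcalF^I)'$ is an equivalence.

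The genuine difficulty has already been absorbed into \Cref{LimitCommutative}, whose proof relies on the trapezoid lemma (\Cref{TrapezoidLemma}) and the explicit pullback description of the graded parts of a formal comodule in \Cref{Pullback_lemma}. For the present theorem, I expect the only delicate point to be confirming that the Barr--Beck--Lurie hypotheses match exactly the statements I am invoking---namely that the conservativity clause is supplied by \Cref{NakayamaIsom} and that the limit-preservation clause for split cosimplicial objects is supplied by combining \Cref{TakingGCommutativeSplit} with \Cref{LimitCommutative}. No further computation should be needed.
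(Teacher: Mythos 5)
Your proposal matches the paper's proof essentially verbatim: both verify the Barr--Beck--Lurie hypotheses by deriving conservativity of \(\mcalF^I\) from \Cref{NakayamaIsom} and the preservation of \(\mcalF^I\)-split cosimplicial limits by combining \Cref{TakingGCommutativeSplit} with \Cref{LimitCommutative}. The argument is correct and no further comment is needed.
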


\begin{proof}
    By Barr--Beck--Lurie theorem (\Cref{BarrBeckLurie}), it suffices to show that the functor \(\mcalF^I\) is conservative and preserves the limit of any \(\mcalF^I\)-split cosimplicial object in \(\mcalD_{\graded{G}}^{\comp{I}}(R)\).
    The conservativity of \(\mcalF^I\) follows from \Cref{NakayamaIsom}.

    Take any \(\mcalF^I\)-split cosimplicial object \(X^{\bullet} \colon \Delta \to \mcalD_{\graded{G}}^{\comp{I}}(R)\).
    In \Cref{PropertiesDerivedGradedModules}\Cref{PresentabilityDgrR} and \Cref{TakingGCommutativeSplit}, we have shown that \(\grlim_{\Delta} X^{\bullet}\) exists and such \(X^{\bullet}\) satisfies the condition of \Cref{LimitCommutative} and thus the canonical morphism
    \begin{equation*}
        \mcalF^I(\grlim_{\Delta} X^{\bullet}) = \dcomp{I}{\grlim_{\Delta} X^{\bullet}} \xrightarrow{\cong} \lim_{\Delta} \dcomp{I}{X^{\bullet}} = \lim_{\Delta} \mcalF^I(X^{\bullet})
    \end{equation*}
    is an isomorphism in \(\mcalD^{\comp{I}}(R)\).
    Therefore, the Barr--Beck--Lurie theorem applies, and the proof is complete.
\end{proof}


\end{document}